\documentclass{amsart}

\makeatletter
\newcommand\notsotiny{\@setfontsize\notsotiny\@vipt\@viipt}
\makeatother

 \usepackage{mlmodern}

\usepackage[T1]{fontenc}

\usepackage[cal=boondoxo]{mathalfa}
\DeclareSymbolFontAlphabet{\mathcalorig} {symbols}

\usepackage{amssymb}
\usepackage{amsmath}
\usepackage{amsthm}
\usepackage{amssymb}
\usepackage{amsfonts}
\usepackage{latexsym}
\usepackage{float}

\usepackage{longtable}
\usepackage{afterpage}
\usepackage{array}
\usepackage{verbatim}
\usepackage{graphicx} 
\usepackage[svgnames,x11names]{xcolor}
\usepackage{mysects}  
\usepackage{fancyhdr} 
\usepackage[labelformat=simple]{caption}
\usepackage{setspace}
\usepackage{bm}

\usepackage{enumitem}

\usepackage{microtype}
\usepackage{lineno}
\usepackage{xcolor}
 \usepackage[normalem]{ulem}
 \newcommand\rsout{\bgroup\markoverwith{\textcolor{red}{\rule[0.5ex]{2pt}{1pt}}}\ULon}

\usepackage{hhline}
\def\thickhline{\noalign{\hrule height.8pt}}

\definecolor{refkey}{rgb}{1,1,1}   
\definecolor{labelkey}{rgb}{0,1,0} 
\definecolor{citekey}{rgb}{0,0,0}

\usepackage[multiple]{footmisc}
\usepackage[hidelinks]{hyperref}
\hypersetup{
    colorlinks=false, 
    linktoc=all,     
    linkcolor=DarkBlue,  
	citecolor=Black,
	}

\usepackage{appendix}
\newenvironment{psmatrix}
{\left[\begin{smallmatrix}}
{\end{smallmatrix}\right]}

\usepackage[
doi=false,isbn=false,url=true,eprint=true,style=alphabetic,sorting=nyt,maxbibnames=99,maxcitenames=99]{biblatex}
\renewbibmacro{in:}{}
\AtBeginBibliography{\footnotesize}
\renewcommand{\url}[1]{\nolinkurl{#1}}

\newtheorem{theorem}{Theorem}[section]

\newtheorem*{theorem*}{Theorem}
\newtheorem{proposition}[theorem]{Proposition}
\newtheorem*{proposition*}{Proposition}
\newtheorem{lemma}[theorem]{Lemma}
\newtheorem*{lemma*}{Lemma}
\newtheorem{corollary}[theorem]{Corollary}

\newtheorem*{question*}{Question}
\theoremstyle{definition}
\newtheorem{definition}[theorem]{Definition}

\newtheorem{remark}[theorem]{Remark}

\newtheorem*{acknowledgements}{Acknowledgements}

\newtheorem{notation}[theorem]{Notation}

\newcommand{\Aut}{\operatorname{Aut}}

\newcommand{\GL}{\operatorname{GL}}

\newcommand{\SL}{\operatorname{SL}}

\newcommand{\Vol}{\operatorname{Vol}}

\newcommand{\meas}{\operatorname{\meas}}

\newcommand{\half}{\frac{1}{2}}

\newcommand{\nc}{\newcommand}

\nc{\la}{\langle} \nc{\ra}{\rangle}
 \nc{\CA}{\cal A}
 \nc{\CBB}{\cal B}

\nc{\CDD}{\cal D}
\nc{\CE}{\cal E}
\nc{\CF}{\cal F} \nc{\CG}{\cal
G} \nc{\CH}{\cal H} \nc{\CI}{\cal I} \nc{\CJ}{\cal J}
\nc{\CK}{\cal K} \nc{\CL}{\cal L} \nc{\CM}{\cal M} \nc{\CN}{\cal
N} \nc{\CO}{\cal O} \nc{\CP}{\cal P} \nc{\CQ}{\cal Q}
\nc{\CR}{\cal R} \nc{\CS}{\cal S} \nc{\CT}{\cal T} \nc{\CU}{\cal
U} \nc{\CV}{\cal V} \nc{\CW}{\cal W} \nc{\CZ}{\cal Z}

\makeatletter
\g@addto@macro\bfseries{\boldmath}
\makeatother

\nc{\fa}{  \mathfrak a} \nc{\fg}{  \Gen} \nc{\fk}{  \mathfrak k}
\nc{\fh}{  \mathfrak h} \nc{\fm}{  \mathfrak m} \nc{\fn}{  \mathfrak n}
\nc{\fA}{  \mathfrak A} \nc{\fC}{  \Cl} \nc{\fI}{  \mathfrak I}
\nc{\fL}{  \mathfrak L} \nc{\fS}{  \mathfrak S}
\nc{\fz}{  \mathfrak z}

\nc{\nen}{\newenvironment} \nc{\ol}{\overline}
\nc{\ul}{\underline} \nc{\lra}{\longrightarrow}
\nc{\lla}{\longleftarrow} \nc{\Lra}{\Longrightarrow}
\nc{\Lla}{\Longleftarrow} \nc{\Llra}{\Longleftrightarrow}
\nc{\hra}{\hookrightarrow} 

\newcommand{\rt}{\textup{rt}}

\def\C{\mathbb C}

\def\Q{\mathbb Q}
\def\R{\mathbb R}
\def\Z{\mathbb Z}

\def\Zp{\mathbb{Z}_p}

\definecolor{deepmagenta}{rgb}{0.8, 0.0, 0.8}
 	\definecolor{earthyellow}{rgb}{0.88, 0.66, 0.37}
 	\definecolor{deepp}{rgb}{0.5, 0.0, 0.13}
    \definecolor{darkgreen}{rgb}{0.13, 0.8, 0.13}
    \definecolor{rpurple}{rgb}{1.02,0.414,0.421 }
    \definecolor{brown}{rgb}{0.788, 0.294, 0.0}
    
\newcommand*{\rom}[1]{\expandafter\@slowromancap\romannumeral #1@}
\newcommand{\RM}[1]{%
  \textup{\uppercase\expandafter{\romannumeral#1}}%
}

\newcommand{\Mod}[1]{\ (\text{mod}\ #1)}

\newcommand{\Cl}{\textup{Cl}}
\newcommand{\Ge}{\textup{Ge}}
\newcommand{\Gen}{\textup{Gen}}
\newcommand{\Packet}{\textup{Pkt}}
\newcommand{\rPKT}{\mathfrak R}
\newcommand{\iso}{\textup{iso}}
\newcommand{\prim}{\textup{prim}}

\DeclareMathOperator{\Sq}{Sq}

\DeclareMathOperator{\Supp}{Supp}
\DeclareMathOperator{\Sym}{Sym}

\makeatletter
\@addtoreset{equation}{section}
\makeatother
\numberwithin{equation}{subsection}
 
 \nc{\bq}{\mathbb Q}
 \nc{\br}{\mathbb R}
 \nc{\bz}{\mathbb Z}
 \nc{\bc}{\mathbb C}
 \nc{\bn}{\mathbb N}
 \nc{\bX}{\mathbb{X}}
 \nc{\cL}{ \mathcal{L}}
 \nc{\G}{\Gamma}
 \nc{\sm}{\setminus}
 \nc{\sub}{\subset}
 \nc{\lm}{\lambda}
  \nc{\Lm}{\Lambda}
 \nc{\al}{\alpha}
 \nc{\bt}{\beta}
 \nc{\om}{\omega}
 \nc{\dl}{\delta}
 \nc{\g}{\gamma}
 \nc{\Dl}{\Delta}
 \nc{\Om}{\Omega}
 \nc{\s}{\sigma}
 \nc{\ro}{\rho}
 \nc{\te}{\theta}
 \nc{\SLZ}{\operatorname{SL}_2(\bz)}
 \nc{\SLZp}{\operatorname{SL}_2(\bz[\frac{1}{p}])}
 \nc{\SLZl}{\operatorname{SL}_2(\bz[\frac{1}{\ell}])}
 \nc{\SLR}{\operatorname{SL}_2(\br)}
 \nc{\GLR}{\operatorname{GL}_2(\br)}
 \nc{\PGLR}{\operatorname{PGL}_2(\br)}
 \nc{\PSLR}{\operatorname{PSL}_2(\br)}
 \nc{\PSLZ}{\operatorname{PSL}_2(\bz)}
 
 \nc{\SLC}{\operatorname{SL}_2(\bc)}
 \nc{\uH}{\mathbb H}
 \nc{\fD}{ \mathcal{D}}
 \nc{\fE}{ \mathcal{E}}
 \nc{\fO}{ \mathcal{O}}
 \nc{\haf}{\frac{1}{2}}
 \nc{\qtr}{\frac{1}{4}}
 \nc{\shaf}{{\scriptstyle\frac{1}{2}}}
 \nc{\hlm}{{\scriptstyle\frac{\lambda}{2}}}

 \nc{\8}{\infty}
 \nc{\7}{{-\infty}}
 \nc{\inv}{^{-1}}
 \nc{\eps}{\varepsilon}
 \nc{\aG}{\mathbf{G}}
 \nc{\spn}{\operatorname{Span}}
 \nc{\Cm}{\operatorname{CM}}

 \nc{\sgn}{\operatorname{sgn}}

\def\ve {\varepsilon}

\nc{\new}{{\color{red}{\label{ATTENTION ******}}}}

\nc{\upar}{\upshape\arabic*.\ }
\nc{\upal}{itemsep=0.3cm,label=\upshape\alph*.}

\newcounter{daggerfootnote}

\usepackage{chngcntr}

\begin{document}


\renewcommand{\footnoterule}{{\hrule}\vspace{3.5pt}} 
\renewcommand{\thefootnote}{(\arabic{footnote})}

\title{On indefinite integral  ternary quadratic forms}

\author[A. Gamburd]{Alexander Gamburd}
\address{CUNY Graduate Center, NY, NY 10016, USA \vspace*{.3em}}
\email{agamburd@gmail.com}

\author[A. Ghosh]{Amit Ghosh}
\address{Oklahoma State University, Stillwater, OK 74078, USA \vspace*{.3em}}
\email{ghosh@okstate.edu}

\author[P. Sarnak]{Peter Sarnak}
\address{Institute for Advanced Study \&  Princeton University, Princeton, NJ 08540, USA \vspace*{.3em}}
\email{sarnak@math.ias.edu \vspace{.4em}}

\author[J. P. Whang]{Junho Peter Whang}
\address{\parbox{11cm}{Department of Mathematical Sciences \& RIMS, Seoul National University,
Seoul  08826, S. Korea \vspace*{.3em}}}
\email{jwhang@snu.ac.kr}

\begin{abstract}We resolve two problems pertaining to indefinite integral ternary quadratic
forms, one highlighted by Margulis and the other initiated by Serre, both from 1990.  To do so we develop tools for dealing with high ramification in problems involving sums over classes of such forms weighted by  their diophantine invariants.
\end{abstract}

\maketitle

{\footnotesize \tableofcontents}


\setcounter{footnote}{0}

\pagestyle{myheadings}



\renewcommand{\thefootnote}{(\arabic{footnote}) } 



\section{Introduction}\label{intro}\ 

We answer two well studied questions concerning indefinite  ternary quadratic forms. The first was initiated by Markoff \cite{markoff1902}, who examined the values 
\begin{equation}\label{mu}
\mu(F)= \frac{1}{|D(F)|}\left(\inf_{\substack{{\bf x}\in \mathbb{Z}^3\backslash\{{\bf 0}\}}}|F({\bf x})|\right)^3,
\end{equation}
as $F=F(x_1,x_2,x_3)$ varies over real ternary indefinite quadratic forms of non-zero determinant $D(F)$. Here $F({\bf x}) = f_{11}x_1^2+f_{22} x_2^2+f_{33}x_3^2+2f_{12}x_1x_2+2f_{13}x_1 x_3 +2f_{23}x_2x_3 = {\bf x}^t A 
{\bf x} $ with symmetric $A =(f_{ij})$.   $F$ is \emph{integral} if $f_{ij} \in \mathbf{Z}$ and \emph{primitive} if $\text{GCD}(f_{ij})=1$. Since $D(-F)=-D(F)$, it suffices to limit our consideration to $F$'s with positive determinant.   The quantity $\mu(F)$ is invariant by real dilations of $F$, and also by the action of $\GL(3,\mathbb{Z})$ on the $F$'s induced by the linear change of the variable ${\bf x}$. The set of values $\mu(F)$ taken up to the above equivalence is the corresponding Markoff spectrum. The largest three points were determined by Markoff together with the $F$'s achieving them, namely
\begin{equation}\label{mu0}
\mu_1=\frac{2}{3}, \quad \mu_2=\frac{2}{5},\quad \mu_3=\frac{1}{3},
\end{equation}
with $\mu_1$ achieved by the form $F=2x_1^2-2x_2^2-2x_3^2+ 2x_1x_2+2x_2x_3+2x_1x_3$.

Oppenheim \cite{oppen} determined the next point $\frac{8}{25}$,\footnote{Venkov \cite{venkov} determined $\mu_5> \mu_6> \ldots >\mu_{11}=\frac{2}{9}$.} and provided the {insightful conjecture} that these points 
\begin{equation}\label{mu2}
\frac{2}{3}= \mu_1 > \mu_2 > \mu_3 > \ldots \mu_k > \ldots ,
\end{equation}
are discrete and converge to zero, and correspond to classes of $F$'s which are integral and primitive. The celebrated proof of the Oppenheim conjecture by Margulis \cite{MARG1987, Mar-Opp} established this {correspondence}, and that  for all $X\geq 1$
\begin{equation}\label{mu3}
\text{MAR}(X) = \#\left\{\mu_j \geq \frac{1}{X}\right\},
\end{equation}
is finite. In his 1990 ICM plenary address \cite[p.~208]{Margulis1990DynamicalAE}, Margulis asked for a bound for this count.\footnote{The $\mu_j$'s are counted with multiplicity. Mohammadi \cite{mohammadi}, using \cite{emv}, gave an upper bound of $O(X^A)$ with a large $A$ for $\text{MAR}(X)$. See also Li and Margulis~\cite{Mar-Li} for further applications of homogeneous dynamics to closely related questions.}

Following suggestions by Grunewald and Margulis, Martini \cite{Martini} in his thesis made a numerical study of the asymptotic behavior of $\text{MAR}(X)$ as $X\to \infty$. He goes as far as finding that $\mu_{145}=\frac{16}{245}$ and is led to conjecture that
\[
\text{MAR}(X) \approx 1.2\, X^2,
\]
(see also \cite{MARG1997} Sec.~2.5).

It turns out that his data did not go far enough, as the following shows:

\begin{theorem}\label{IThm1}\  
There is a positive constant $\gamma$ such that
\[
\textrm{\textup{MAR}}(X) \sim \gamma X\log X, \quad \text{as}\quad X  \to \infty.
\]
\end{theorem}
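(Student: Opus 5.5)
By Margulis' resolution of the Oppenheim conjecture, every $\mu_j>0$ comes from a primitive integral form $F$ with $D(F)>0$. For such $F$ the infimum in \eqref{mu} is attained, and it equals an integer $m(F)\ge 1$. It is nonzero exactly when $F$ is anisotropic over $\mathbb Q$; isotropic forms give $\mu=0$ and do not count. So
\[
\textrm{MAR}(X)=\sum_{m\ge 1}\#\bigl\{[F]\ \text{primitive, anisotropic},\ m(F)=m,\ D(F)\le Xm^3\bigr\},
\]
where classes are taken up to $\GL(3,\mathbb Z)$. The plan is to turn this into a sum over \emph{genera}, whose local structure is completely explicit.

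\textbf{Step 1: make $m(F)$ local.} For indefinite ternary forms, strong approximation for the spin group gives the following. An integer $t$ is represented by $F$ if and only if it is represented everywhere locally, except for the spinor exceptional integers, which are of the shape $t=c\,s^2$ for finitely many square classes $c$ attached to the genus. I will use this to show two things:
\begin{itemize}
\item[(a)] every class in a genus $G$ is the unique class in its spinor genus, and the number of spinor genera in $G$ is a power of $2$ controlled by $\omega(D)$;
\item[(b)] $m(F)$ equals a local quantity $m(G)$ (the least $t\ge1$ that is $p$-adically represented for all $p$), up to a correction that depends only on the spinor genus.
\end{itemize}
Anisotropy is itself local: $F$ is anisotropic at some (hence at an even, nonzero number of) places, and because $F$ is indefinite these must be finite primes $p\mid 2D$. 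A $p$-adic form that is anisotropic, or that has high $p$-power ramification, fails to represent small $t$ for purely local reasons. This is what forces $m(G)>1$.

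\textbf{Step 2: build a generating Dirichlet series.} Define
\[
Z(s)=\sum_{G}\frac{\#\{\text{classes in }G\}}{(D(G)/m(G)^3)^{s}} ,
\]
with $G$ running over genera of primitive anisotropic forms. By Step 1, a genus is a compatible collection of local Jordan decompositions at the primes $p\mid 2D$ with $\prod_p\det=D$. The class count and $m(G)$ are (nearly) products of local data, and the anisotropy condition is a parity condition on local Hasse invariants. After inclusion–exclusion over that parity condition, $Z(s)$ becomes a finite combination of Euler products.
\begin{itemize}
\item The generic local factor for $p\nmid 2D$ is trivial.
\item For $p\,\|\,D$ the local factor has average size about $2p^{-s}$: there are two choices of local square class, and the spinor doubling contributes as well.
\end{itemize}
This suggests $Z(s)\approx \zeta(s)^2H(s)$ with $H$ holomorphic near $s=1$. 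A Tauberian theorem (or Perron's formula with a mild zero-free region) would then give $\textrm{MAR}(X)\sim \gamma X\log X$, with $\gamma$ the residue constant. Two features are worth flagging. The factor $m(G)^{-3s}$ only helps convergence, since $m(G)\le p^{k}$ for the ramification present. Genera with $m(G)=1$ must nevertheless be anisotropic somewhere, so the anisotropy is carried by a prime $p\mid D$ at which $F$ still represents $1$.

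\textbf{Main obstacle.} The hard part is uniform control at highly ramified primes, where $v_p(D)$ is large. There the local factors have to be computed exactly, including $m_p$, the number of local classes, and the interaction with spinor exceptions. I also need the total contribution of such $p$ to be $O(X)$ rather than $X\log X$. My first approach is to classify $p$-adic ternary lattices by scaling: write $F=p^{a}(\text{unimodular})\perp p^{b}(\cdots)$ and compute the local Dirichlet series $\sum_{k}c_p(k)\,p^{-ks}$ in closed form, checking that it equals $1+2p^{-s}+O(p^{-2s}+p^{-3s+\ldots})$. Separately, I would bound the spinor-exceptional corrections to $m(F)$. These occur only when $m(G)=c\,s^2$ is itself exceptional, and the plan is to show that they perturb the count by $O(X)$.
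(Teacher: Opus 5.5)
Your Step~2 does not work, and the error sits exactly where the difficulty lies. First, the weight is $(D(G)/m(G)^3)^{-s}=m(G)^{3s}D(G)^{-s}$, so $m$ \emph{enlarges} the terms rather than helping convergence. The only a priori local bound is $m(G)=K^*(G)\le 2r(G)$ (Prop.~\ref{upper-bound}). With it, a highly ramified part $r$ can contribute weight up to about $r^3/r$, which is not summable. Second, $m(G)$ is not a product of local data. It is the least positive integer in $\bigcap_p\mathcal R_p(G)$. At a highly ramified odd $p$ the set $\mathcal R_p(G)$ can impose a character condition $\left(\frac{t}{p}\right)=\left(\frac{a}{p}\right)$ on $t$ prime to $p$, and at $p\mid s(D)$ it may exclude some multiples of $p$. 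The least integer satisfying several such conditions modulo $8P(D)$ is a global quantity; for a single prime it is already the least non-residue $n_p$ (Lemma~\ref{prop1}). So $Z(s)$ is not an Euler product, even after inclusion--exclusion over the Hasse parity, and no closed-form local factor $1+2p^{-s}+\cdots$ can encode $m$. For a \emph{fixed} highly ramified part one can localize $m$: by Cor.~\ref{minimal-ripe-new}, $K^*(G)$ depends only on $G$ at primes $\le 2r$, which leaves finitely many packets, each counted by Landau--Selberg--Delange. But this is precisely what fails to be uniform in $r$.

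What is missing is a bound, summable over root packets $H$, showing that the minimum of a typical class above $H$ is much smaller than $r(H)^{1/3}$. Your stated target, that the highly ramified contribution be $O(X)$, is false. Each fixed root packet contributes exact order $X\log X$, and non-squarefree determinants carry a positive proportion of $\gamma$: squarefree $D$ give only $\tfrac74 c\approx 0.5$, while $\gamma>\frac{57}{4\pi^2}$. What is actually needed is that the tail $r(H)\ge A$ be $o_A(1)\cdot X\log X$, i.e.\ a dominating function.

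The paper obtains $f_X^{(\mathbf C)}(H)\ll 2^{\omega_3(r)}\log^+ r/r^{9/16}$ (Prop.~\ref{martini-tail}) from three inputs your plan lacks:
\begin{itemize}
\item Burgess' bound fed into a Brun lower-bound sieve (Prop.~\ref{brun}). This produces a power of $r$ many pairwise coprime squarefree $k_i\ll r^{7/48}$ in $\mathcal R(H)$, each with all prime factors $\gg r^{7\delta/48}$.
\item Jones--Watson (Prop.~\ref{propD2}). Any class $C\ge H$ with $(s(C),k_i)=1$ represents $k_i$, which disposes of spinor exceptions and multi-class genera at once (Prop.~\ref{mechanical}).
\item Linnik's theorem, which gives $\kappa(C)\ll r^{7/2}$ for the rare classes whose $s(C)$ meets every $k_i$ (Prop.~\ref{kappa-finite}).
\end{itemize}
Dominated convergence then finishes the argument.

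Your Step~1 is also too weak for any uniform estimate. A bound on the number of spinor genera in terms of $\omega(D)$ costs powers of $\log X$; one needs $h(G)\le 2^{2+\omega_3(r)}$, independent of $s$ (Prop.~\ref{propE1}). You also give no bound on how far $\kappa(C)$ can exceed $m(G)$ when $m(G)$ is spinor-exceptional, which is exactly what the Jones--Watson and Linnik inputs supply.
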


\begin{remark}
$\gamma$ is given by an explicit convergent infinite series of positive terms  (see Sec.~\ref{Mar-sec}, \eqref{mar-asym-form}) {which in principle can be used to estimate $\gamma$ numerically}. 
\end{remark}

The second question that we answer concerns the density of integral isotropic forms $F$, that is ones for which 
\begin{equation}\label{kappaF}
\kappa(F) = \inf_{\substack{{\bf x}\in \mathbb{Z}^3\backslash\{{\bf 0}\}}}|F({\bf x})|,
\end{equation}
is zero (i.e.  $F({\bf x})=0$ has an integral  non-zero solution). 

Let $\Sym(\R)\simeq\R^6$ denote the affine space of real symmetric $3\times 3$ matrices. Given a large set of $F$'s, say those in $X\Omega$, the dilate of a compact convex domain $\Omega \subset \Sym(\R)$ with piecewise smooth boundary containing the origin in its interior, Serre \cite{serre} (in the case $\Omega=[-1,1]^6$) asked to estimate how many are isotropic. Using sieve methods, he showed that 
\begin{equation}\label{iso1}
\textrm{\textup{ISO}}_{\prim}(X\Omega)= \#\left\{F\in X\Omega: F\  \text{isotropic and primitive}\right\} \ll_{\Omega} \frac{X^6}{\sqrt{\log X}}.\footnote{The addition of the primitivity condition in this counting is natural from our setup as well as in interpreting the count as the rational $F$'s of height at most $X$ when $\Omega = [-1,1]^6$.}
\end{equation}

Also using sieve methods, Hooley \cite{Hooley} established a matching lower bound
\begin{equation}\label{iso2}
\textrm{\textup{ISO}}_{\prim}(X\Omega)  \gg_{\Omega} \frac{X^6}{\sqrt{\log X}}.
\end{equation}

Using homogeneous dynamics and in particular the equidistribution results of Eskin and Oh \cite{eskinoh} (which in turn rely on Ratner's unipotent measure rigidity) we show that the isotropic $F$'s have a natural density.

\begin{theorem}\label{IThm2}\ 
We have
\[ \textrm{\textup{ISO}}_{\prim}(X\Omega) \sim \varpi\frac{\Vol(\Omega^{\iso})X^6}{\sqrt{\log X}}, \quad \text{as}\quad X\to \infty,
\]
where $\Vol(\Omega^{\iso})$ is the Euclidean volume of the locus $\Omega^{\iso}$ of real isotropic forms in $\Omega$ and
\[\varpi=\frac{2}{\Gamma(1/2)}\prod_p \lambda_p^{\iso}\cdot\left(1- p^{-1}\right)^{-\frac{1}{2}},\] where $\lambda_p^{\iso}$ for each prime $p$ is the $p$-adic probability  that a symmetric $3\times 3$ matrix with $\Z_p$-coefficients is primitive and isotropic (see Sec.~\ref{comp} for the explicit expression and comparison with conjectures).

\end{theorem}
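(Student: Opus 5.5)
The plan is to count isotropic primitive forms class by class. A primitive integral $F$ with $D(F)=d\neq 0$ lies in a genus, and each genus is a finite union of $\SL(3,\Z)$-orbits. Isotropy over $\Q$ is a local condition (Hasse--Minkowski), so it is constant on a genus: a genus is isotropic iff $F$ is isotropic over every $\Q_p$ (over $\R$ this is automatic for indefinite forms). We therefore write
\[
\textrm{ISO}_{\prim}(X\Omega)=\sum_{\substack{\text{isotropic genera } \mathcal G}}\ \sum_{F_0\in \mathcal G/\sim}\#\{F\in \SL(3,\Z)\cdot F_0\cap X\Omega\}.
\]

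For a fixed orbit, the count $\#\{\gamma\in \SL(3,\Z)/\mathrm{SO}_{F_0}(\Z): \gamma\cdot F_0\in X\Omega\}$ is a lattice-point count on the affine symmetric variety $\{D=d\}$. It is governed by Duke--Rudnick--Sarnak / Eskin--McMullen, or more precisely by the Eskin--Oh equidistribution used uniformly in $d$. Summed over a genus, Siegel's mass formula turns the leading term into $\Vol(\{A\in X\Omega: \det A=d\})$ times a product of local densities $\prod_p \beta_p(F_0)$. This is the point where Eskin--Oh is needed to pass from individual orbits to genera with uniformity. We then sum over $d\le cX^3$ and over the isotropic genera of determinant $d$. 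Since the real fibre volumes vary smoothly in $d/X^3$, the main term becomes
\[
\Vol(\Omega^{\iso})\,X^6\cdot \frac{1}{X^3}\sum_{d\le cX^3}\ \sum_{\mathcal G \text{ iso}, \det=d}\prod_p\beta_p(\mathcal G),
\]
with the real volume weighted appropriately.

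The arithmetic sum is a Dirichlet series question. The local condition "isotropic at $p$" for $p\nmid 2d$ is automatic. For $p\,\|\, d$, heuristically half the classes are isotropic, and this produces a factor like $\prod_{p|d}\tfrac12$ on average. The generating series $\sum_d a(d)d^{-s}$ therefore behaves like $\zeta(s)^{1/2}G(s)$ with $G$ holomorphic near $s=1$. A Selberg--Delange argument gives $\sum_{d\le Y}a(d)\sim \frac{G(1)}{\Gamma(1/2)}Y(\log Y)^{-1/2}$, and this is the source of $\Gamma(1/2)$, of $(1-p^{-1})^{-1/2}$ (the Euler factors of $\zeta^{1/2}$ at $s=1$), and of the local probabilities $\lambda_p^{\iso}$. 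Matching the Euler factors to $\lambda_p^{\iso}$ is a $p$-adic Haar-measure computation: the mass-weighted sum over genera of $\det=d$ at $p$ recovers the $p$-adic measure of primitive isotropic matrices with given $p$-valuation of determinant.

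The main obstacle is the uniformity in $d$. The determinants range up to $X^3$, and we need the orbit counts with error terms that survive summation over all genera and classes. This is the high-ramification problem mentioned in the abstract: forms with $d$ divisible by high prime powers, or with large spinor-genus or class-number behaviour, where equidistribution is not effective. The plan has two parts. First, restrict to $d$ with bounded "bad part" (square-full part $\le T$), where Eskin--Oh gives equidistribution of genus-averaged orbits with a rate uniform in the squarefree part. Second, bound the remaining contribution from the highly ramified $d$ by a crude upper-bound sieve in the style of Serre's bound, giving $o(X^6(\log X)^{-1/2})$ as $T\to\infty$. Letting $T\to\infty$ slowly then yields the asymptotic with constant $\varpi$.
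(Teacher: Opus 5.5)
\textbf{There is a genuine gap.} Your overall architecture is the paper's: class-by-class Eskin--Oh equidistribution, Siegel's mass formula to convert $\sum_{\Ge(C)=G}\rho(C)$ into local densities, and Selberg--Delange for a Dirichlet series of shape $\zeta(s)^{1/2}G(s)$. However, the step you single out as the main obstacle rests on an ingredient that does not exist, and the step that is actually needed is missing.

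\textbf{The claimed rate.} Eskin--Oh's equidistribution goes through Ratner's theorem and is ineffective. It gives no rate at all, uniform in the squarefree part or otherwise. Nor is a rate needed. The paper uses only the qualitative statement $N_\Omega(C)=\eta\,\mu_1(\Omega)D(C)^2\rho(C)(1+o(1))$ as $D(C)\to\infty$. The $o(1)$ depends only on $D(C)$, because the theorem applies to every sequence of classes. Summing over $D(C)\le N$ is then harmless: classes of bounded determinant contribute $O(1)$ against a main term of size $N^2/\sqrt{\log N}$.

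High ramification is not a dynamical issue here. On the arithmetic side it disappears through the exact multiplicativity of Prop.~\ref{mult}. In the alternative argument of Prop.~\ref{siegel-estimates}, it is handled by dominated convergence over root packets, using $|D(G)|\nu(G)\ll 1$ from Lemma~\ref{masslem}. No sieve is involved at that stage.

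\textbf{The missing archimedean truncation.} Equidistribution controls cones $W_{\Omega_1}(N)$ over a \emph{fixed} compact set $\Omega_1\subset V_1^+(\R)$. For $F\in X\Omega$ with $|D(F)|\le \eps X^3$, the projection $\tilde F$ leaves every fixed compact set. So your assertion that ``the real fibre volumes vary smoothly in $d/X^3$'' does not give the count in that region. The paper proceeds in three steps:
\begin{enumerate}[label=\upshape\arabic*.]
\item it proves the cone asymptotic \eqref{815};
\item it approximates $X\Omega\cap\{|D|\ge \eps X^3\}$ by such cones;
\item it bounds the isotropic forms in the thin region near $D=0$ by Serre's upper-bound sieve.
\end{enumerate}
That is where the sieve belongs, not on the highly ramified $d$.

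\textbf{Two further points to make explicit.} First, for the Euler product you need more than ``half the classes on average''. An isotropic genus has all Hasse invariants equal to $1$, so the product formula imposes no global constraint. Hence $\Gen^{\iso}(d)=\prod_p\Gen^{\iso}_p(d)$ and $\tilde\nu^{\iso}$ is exactly multiplicative. Second, the global constant relating the Haar normalization in Eskin--Oh to Siegel's $\rho$ must be pinned down. The paper does this by running the same argument over all forms and comparing with \eqref{siegel-sum}, which yields \eqref{812}.

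\textbf{Bookkeeping of the logarithm.} The determinants run up to $\asymp X^3$, so your Selberg--Delange step produces $(\log X^3)^{-1/2}$, not $(\log X)^{-1/2}$. Carried through, your argument yields $\varpi\Vol(\Omega^{\iso})X^6/\sqrt{\log X^3}$, as do \eqref{815} and \eqref{101}. The factor $3^{-1/2}$ should not be silently dropped when matching to the displayed statement.
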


\begin{remark}\label{rem1.4} Our proof of Theorem~\ref{IThm2} yields a stronger local density for the primitive integral isotropic $F$'s in the form: 
$$ d \lambda^{\text{iso}} = \varpi \frac{\prod_{i \leq j} d f_{ij}}{\sqrt{\log^{+} D(F)}}, $$ restricted to the real cone of isotropic $F$'s.  See Sec.~\ref{iso-revisited} for the exact statement and details.\end{remark}

The proofs of Theorems~\ref{IThm1} and~\ref{IThm2} rely on some weighted class number asymptotics which are of independent interest. Denote by $\Cl$ the  $\GL(3,\mathbb{Z})$ classes of integral forms $F$, which we also take to be primitive. The invariants $D(F)$ and $\kappa(F)$ descend to the class $C$ of $F$. Given $d\neq 0$, there are finitely many classes $C \in \Cl$ with $D(C)=d$. Their number is the class number $h(d)$. The collection of classes with  $D(C)=d$ partition  into genera denoted by $G$ with $D(G)=d$. Their number for a given $d$ is denoted by $g(d)$. It is well known \cite{meyer} that the number of classes in a genus for indefinite ternary forms is small, with the decisive result being that of Eichler \cite{eichler} that the number of classes in a spinor genus is one.

Denote by $\Gen$ the set of all genera of our primitive ternary forms. We show as $X\to \infty$:
\begin{enumerate}[label=\upshape(\Alph*)]\label{ResultA}
\item[{\bf(A)}.] \hfill$ \begin{aligned}[t] \quad\quad\quad \sum_{\substack{G\in \Gen\\ 1\leq D(G)\leq X}} 1= \sum_{1\leq d\leq X}g(d) \sim \frac{57}{4\pi^2}X\log{X},\end{aligned}$\hfill\hfill\mbox{}
\end{enumerate}
and
\begin{enumerate}[label=\upshape(\Alph*)]
\item[{\bf(B)}.] \hfill$ \begin{aligned}[t] \quad\quad\quad \sum_{\substack{C\in \Cl\\ 1\leq D(C)\leq X}} 1= \sum_{1\leq d\leq X}h(d) \sim \frac{57}{4\pi^2}X\log{X}.\end{aligned}$\hfill\hfill\mbox{}
\end{enumerate}

\remark {In Section \ref{clnbr} we show that almost all genera, in the sense of density when ordered by their determinants, consist of a single class. With this, {\bf (B)}\footnote{Very little is known about the similar sum for indefinite binary forms. Hooley \cite{hooley1984} conjectures that it is asymptotic to $\frac{25}{12\pi^2}X(\log{X})^2$, and shows that on average the class number of a genus exceeds one.} is deduced from {\bf (A)} and it is also the source of the asymptotic evaluations of various packet dependent (see Section \ref{packets-def}) sums in Sections \ref{local-rep} and \ref{Mar-sec}. }

What is perhaps surprising and more challenging to prove, is that the $\text{MAR}(X)$ sums are also of the same  size as {\bf (A)} and {(\bf B)}, namely
\begin{enumerate}[label=\upshape(\Alph*)]
\item[{\bf(C)}.] \hfill$ \begin{aligned}[t] \quad\quad\quad \sum_{\substack{C\in \Cl\\ 1\leq D(C)\leq \kappa(C)^3 X}} 1 \sim  \gamma X\log{X}.\end{aligned}$\hfill\hfill\mbox{}
\end{enumerate}
At the end of Section 7, it is shown that
\[\gamma>\frac{57}{4\pi^2}=\frac{19}{20}\frac{\zeta(2)}{\zeta(4)},\]
reflecting the fact that $\kappa(C)$ is often bigger than $1$.

For the proof of Theorem~\ref{IThm2} we make use of Siegel's class invariant $\rho(C)$ (\cite{siegel}). This is an archimedean density associated with $F\in C$ which also takes into account the global infinite group $\text{Aut}_{F}(\mathbb{Z})$, the unit group. More precisely, for $F\in C$, $F$ lies in the six dimensional space $\Sym(\R)$. The map $\psi_F: \text{Mat}(3\times 3,\mathbb{R}) \to\Sym(\R)$ is given by
\[ \psi_F(Z) = Z^{\intercal}FZ.
\]
Let $T$ be a small neighborhood of $F$ in $\Sym(\R)$ and let
\[ Y = \left\{ Z: \psi_F(Z)\in T\right\}.
\]
$\text{Aut}_{F}(\mathbb{Z})$ acts on $Y$ by multiplication on the left and stabilizes $Y$. If $Y_0=Y/\text{Aut}_{F}(\mathbb{Z})$, then
\begin{equation}\label{rho1}
\rho(F)= \lim_{\substack{T\to F}}\frac{\text{vol}(Y_0)}{\text{vol}(T)},
\end{equation}
where the volumes are normalised euclidean volumes in their corresponding spaces. One can also express $\rho(F)$ in terms of $\text{vol}(\text{Aut}_{F}(\mathbb{R})/\text{Aut}_{F}(\mathbb{Z}))$ with suitably normalized Haar measure on $\text{Aut}_{F}(\mathbb{R})$ and $D(F)$ (see \cite{siegelIAS} pg.121). $\rho(F)$ is a class invariant.\footnote{In the case of indefinite binary quadratic forms, $\rho(F)$ is the regulator.}

Siegel \cite{siegel} proves the following as a special case of general such averages (over all classes $C$, not only primitive ones), namely
\begin{equation}\label{siegel-sum}
\sum_{\substack{1\leq D(C)\leq X}} D(C)\rho(C)  \sim \frac{\zeta(2)\zeta(3)}{2}X.
\end{equation}

{These Siegel sums over classes $C$ of regulated weights $D(C)\rho(C)$ arise naturally when counting using orbits and can be dealt with quite generally and in fine detail using Sato-Shintani zeta-functions \cite{Shin}, \cite{IbuSai}.
}

 Let $\text{Cl}^{\text{iso}}$ denote the classes of isotropic primitive integral {ternary forms} with positive determinants. We show that as $X \to \infty$, {the Siegel sum over the thin subset of isotropics satisfies} 
\begin{enumerate}[label=\upshape(\Alph*)]
\item[{\bf(D)}.] \hfill$ \begin{aligned}[t] \quad\quad\quad \sum_{\substack{C\in \Cl^{\text{iso}}\\ 1\leq D(C)\leq X}} D(C)\rho(C) \sim \frac{\zeta(2)\zeta(3)}{2}\varpi\frac{ X}{\sqrt{\log X}},\end{aligned}$\hfill\hfill\mbox{}
\end{enumerate}
where $\varpi$ is the constant from Theorem \ref{IThm2}.

The paper is divided into two parts. The first proving Theorem~\ref{IThm1} concerning anisotropic ternaries and the second Theorem~\ref{IThm2} on isotropic ternary forms. Along the way we develop some tools that are used in both.

The primary difficulty in the $\textup{MAR}(X)$ asymptotics is to get a handle on $\kappa(F)$. This is already apparent for the simplest non-tamely ramified $F$'s of the form $n_px_1^2 -px_2^2+pn_px_3^2$ where $n_p$ is the least positive non quadratic residue modulo a prime $p$. Then for these $F$'s and $p \equiv 1 \pmod{4}$,  $\kappa(F) = n_p$ and these produce ``large'' points in the Markoff spectrum. 

Watson (\cite{watson54,watson1957}) proves the bound $\kappa(F) \ll_{\ve} |D(F)|^{\frac{1}{4}+\ve}$ for primitive $F$ and $\ve>0$ arbitrarily small. In Section \ref{spec}, we elaborate and extend his method, and also incorporate Burgess' bounds \cite{burgess63} for character sums, leading to $\kappa(F) \ll_{\ve} |P(D)|^{\frac{1}{4}}|D|^{\ve}$ (see Section~\ref{Watson}), where ${P(D) \ll \sqrt{D}}$ is the product of the odd primes $p$ such that $p^2 \! \mid \!  D\  (=D(F))$. With this and known bounds for the class numbers $h(D)$, we show  that $\textup{MAR}(X) \ll_{\ve}X^{1+\ve}$ (see Prop.~\ref{thm2}), {which according to {\bf (A)} is nearly the right order of magnitude}.

To go further and obtain sharp bounds and asymptotics, we need to inspect in detail the classes and genera of our forms. The genera can be quite complicated especially at the prime $p=2$, but since we are assuming that our forms are primitive, we are able to give a sufficiently explicit description (see Sec.~\ref{genera}) and use it, {together with the local nature of the genus,} to derive the asymptotic {\bf (A)}.

{ The sums {\bf (B)} and {\bf (C)} are exotic ones and to handle them, as well as {\bf (D)}, we introduce in Section~\ref{packets-def}  the notion of \emph{packets} which  deals with high ramification}. We factor the determinant $D$ as $rs$ where $s$ is odd and squarefree (so tamely ramified), and $r$ is coprime to $s$ and is highly ramified in that if $p \! \mid \!  r$ (with $p\neq 2$), then $p^2 \! \mid \!  r$. The prime $2$, when it occurs, is put into the $r$ factor due to the complexities associated with its invariants. A highly ramified or \emph{root packet} $H$ is a collection of local $\mathbb{Z}_p$ classes of forms for each $p \! \mid \!  r$.
A sketch of the use of packets for an asymptotic count is given in the beginning of Sec.~\ref{packets-def}.

 The set of root packets is denoted by  $\rPKT$ and any $H \in \rPKT$ has associated {local} invariants $D(H)$, $r(H)$ and $K(H)$. Here $K(H)$ is defined in an analogous manner as $\kappa(F)$, namely $K(H)$ is the least absolute value of the integers represented by all the $\mathbb{Z}_p$ forms  in $H$. The integrands defining {\bf(B)}, {\bf(C)} and {\bf(D)} are after normalization, functions $f_X^{{\bf(B)}}(H)$, $f_X^{{\bf(C)}}(H)$ and $f_X^{{\bf(D)}}(H)$ on $\rPKT$, and involve sums over the tame part $s$ with $(s,2r(H))=1$. Thus, to prove the existence and compute the limits, it suffices to show that as $X \to \infty$ the pointwise limit $f(H)$ exists for each $H\in \rPKT$, and to show that the $f_X$'s are dominated by a function in $\mathcal{l}^1(\rPKT)$, so as to apply the dominated convergence theorem.

For ${\bf (B)}$ we show in Section~\ref{clnbr} that for each $H\in \rPKT$ the $X$-limit $f^{{\bf (B)}}(H)$ exists and coincides with the genus limit. This follows by identifying the spinor kernel (see Prop.~\ref{propE1}) and studying the corresponding multiplicative function of $s$ using the robust version of the Landau-Selberg-Delange method formulated and proved in Granville-Koukoulopoulos \cite{GranK}. Specifically, Proposition~\ref{splitting} shows that the main contribution comes from genera $G$ whose class number $h(G)$ is equal to one. Together with the dominating bounds in Proposition~\ref{class-tail}, this leads to the class number asymptotics in ${\bf (B)}$.

The proof of the asymptotics of ${\bf (C)}$ is carried out in Sections~\ref{local-rep} and~\ref{Mar-sec}. It requires a much finer analysis of $\kappa(C)$. The first step is a {crude} but uniform bound for $\kappa(C)$ in terms of its root packet invariant $r(C)=r(D(C))$, namely $\kappa(C) \ll r(C)^\frac{7}{2}$ (see Prop.~\ref{kappa-finite}). With that {and with $H$ fixed (and with it the high ramification) one can exploit that almost all genera have class number one, which renders the corresponding $\kappa(C)=K(C)$ to be local. This allows for the determination of the pointwise limits $f^{{\bf (C)}}(H)$ for $H\in \rPKT$ in terms of the local (albeit complicated) data involving $H$ (see \eqref{MH}) }

{The most challenging step is to find an $\mathcal{l}^1(\rPKT)$  function dominating the $f_X^{{\bf (C)}}$'s. This requires controlling both the local obstructions to representability by the class $C$ with given root packet $H$, as well as possible local to global obstructions. For the latter, we use the sufficient condition for representability of Jones-Watson (see Prop. \ref{propD2}). In order to deal with both issues,}  we use a lower-bound sieve to produce many small (in terms of $r(H)$) integers free from small prime factors, and which are represented by many classes $C$ whose root packet is our given $H$ (see Prop.~\ref{brun}). This coupled with a combinatorial bootstrap (see Prop.~\ref{mechanical}) which pivots on the points produced in Proposition~\ref{brun} leads to an explicit $\mathcal{l}^1(\rPKT)$ dominating function (see Prop.~\ref{martini-tail}), and with it a proof of ${\bf (C)}$, completing Part I.

For Part II, we use homogeneous dynamics and in particular Oh \cite{heeoh} to localize the problem along the lines outlined in \cite{sarnakletter}. The problem treated there is that of the density of integral ternary quadratic $F$'s whose determinants are prime numbers, and it is executed fully in Kotsovolis-Woo \cite{KotWoo} including what is more difficult for that problem, namely the definite forms. In Section~\ref{serre-p} we review the reduction of Theorem~\ref{IThm2} to the asymptotics of ${\bf (D)}$. 

{The various Haar measure normalizations of volumes of quotients associated with classes $C$ coming from homogeneous dynamics and Siegel's $\rho(C)$ are related by a global normalization. We determine its value by comparing Siegel's $\eqref{siegel-sum}$ with a direct count of all forms.}

{To execute ${\bf (D)}$  we use Siegel's mass formula, which asserts that the sum over the isotropic genus of $\rho(C)$ is given by a product of local masses. Conway and Sloane \cite{CSMass} have explicit computations of these masses and are included for our use in Prop. \ref{massprop} and Appendix \ref{local-appendix}. At this point, we could proceed as we do in Section ~\ref{isotropic-siegel-b} to use our packet decomposition and determine the limit in {\bf (D)} as an infinite sum. The infinite product structure for these isotropic genera is connected with their Hasse invariants being equal to one, as well as an extra invariance of the corresponding masses. In fact, collecting these sums of isotropic classes of a given determinant $D$ yields a multiplicative function of $D$  (see Prop.~\ref{mult}). Forming the Euler product associated to the sums {\bf (D)}, as done in Section \ref{comp}, leads to the explicit Theorem \ref{exp-form}. This allows us to deduce the asymptotics of {\bf (D)}, the factorization of $\varpi$ and its interpretation, as well as the finer localization of the asymptotic, all from the Riemann zeta-function!\footnote{The tables in the Appendix computing the Conway-Sloane masses have been used in much of the paper. The fact that $\varpi$ factorizes and agrees with the local $p$-adic probabilities adds confidence in their correctness.} In particular, the reader who is interested only in Theorem \ref{IThm2} may read Part 2 of the paper which, using this treatment, is independent of Part 1.  }

To end, it is interesting to note that unlike the previous works on isotropic counts towards  Theorem~\ref{IThm2} which used sieve methods and in particular lower bound sieves, our proof of Theorem~\ref{IThm2} makes no use of the latter. On the other hand our proof of Theorem~\ref{IThm1} which on its face has nothing to do with prime numbers, makes crucial use of lower bound sieves which allow us to tame $\kappa(C)$.

\begin{acknowledgements}\ 

AGa, AGh and JPW  would like to thank the Institute for Advanced Study Summer Collaborators program for providing a productive research environment and financial support for this collaboration, begun in June, 2025.

AGa gratefully acknowledges support from a Simons Foundation grant No. 964948.

AGh thanks the IAS and Princeton University for making possible visits during parts of the years 2024-26. He  gratefully acknowledges support from a Simons Foundation grant No. 634846,  and additional support from his home department.

PS thanks both G. Margulis and Z. Rudnick for discussions years ago about related problems, and also W. Duke for related discussions.

JPW thanks Princeton University for hosting his sabbatical visit during this work. JPW was supported by the Samsung Science and Technology Foundation under Project Number SSTF-BA2201-03, and by the National Research Foundation of Korea (NRF) grant funded by the Korea government (MSIT) (RS-2025-02293115). JPW also thanks Jihye Oh for helpful comments on the manuscript.

Some of the computation for the tables in the paper were done using  SageMath, the Sage Mathematics Software System.

\end{acknowledgements}


\section{{\bf Part 1} : Spectrum of anisotropic forms}\label{spec}\

Following the discussion in the Introduction, we consider the set $\mathfrak{F}$ of primitive non-singular indefinite integral  ternary  quadratic forms with positive determinant.

Let $F({\bf x}) = f_{11}x_1^2 +f_{22}x_2^2 + f_{33}x_3^2 + 2f_{12}x_1x_2 + 2f_{13}x_1x_3 + 2f_{23}x_2x_3$ be such a form with the corresponding integral matrix  $A_F$ and determinant $D(F)=\det{A_F}$. It is isotropic if it represents zero non-trivially over the integers; otherwise it is anisotropic. The minimum of its absolute values  is $\kappa(F)$ given in \eqref{kappaF}. If $F$ is a diagonal form, we will write $F=<f_{11},f_{22},f_{33}>$.

 We define the following terms:
 \begin{definition}\label{pre-def}\ 
\begin{enumerate}[label=\upshape\arabic*.\ ]
\item $C(F)$ is the class of $F$ under the linear action of $\GL(3,\mathbb{Z})$. If $F$ is a primitive form, then  all forms in $C(F)$ are primitive,  and so $\Cl$ will denote the set of all such {\it primitive classes}. All the invariants of $F$ above descend to $\Cl$. So we can speak of {\it isotropic {\upshape (}anisotropic{\upshape)} classes}. The number of $C\in \Cl$ for a fixed ${\it D}(C) =d$ is finite and we denote that number by $h(d)$. For a class $C\in \Cl$,  $\mu(C)$ is defined in \eqref{mu}; the set of these numbers counted with multiplicity form the  {\it Markoff spectrum}.\footnote{That this set corresponds to the same set when allowing all real indefinite forms as in Theorem~\ref{IThm1} follows from the Oppenheim Conjecture proved by Margulis \cite{MARG1987,Mar-Opp}.}  We denote by $\Cl(d)$ the set of classes $C\in \Cl$ with ${\it D}(C)=d$.
\item For each prime $p\leq \infty$, let $\Gen_p$ be the set of $\GL(3,\Z_p)$-equivalence classes of elements in $\mathfrak F$, with $\Z_{\infty}=\R$. The genus ${\Ge}(F)$ of $F\in\mathfrak F$ is defined as the set of all forms $F'\in\mathfrak F$ having the same image as $F$ under the diagonal map
\[\Ge:\mathfrak F\to\prod_{p\leq\infty}\Gen_p;\]   two forms are in the same genus if they are $\mathbb{Z}_p$-equivalent for all $p$. We shall often identify a genus with the corresponding element of $\prod_{p\leq\infty}\Gen_p$. The genus of a class $C\in \text{Cl} $ is $\Ge(C)=\Ge(F)$ for any representative $F$ of $C$.  The set of genera of forms in $\mathfrak F$ will be denoted by  $\Gen=\Ge(\mathfrak F)\subseteq\prod_{p\leq\infty}\Gen_p$. Given $G\in\Gen$, the number of classes whose genus is $G$ is denoted by $h(G)$.  We denote the genera of classes of determinant $d$ by $\Gen(d)$ and the number of such  by $g(d)$.
\item ${\it SG}(F)$ is the spinor genus of $F$. By a theorem of Eichler, the spinor genus of an indefinite quadratic form $F\in\mathfrak F$ consists of a single class. See Sec.~\ref{spinor} for a detailed description.
\item If $\mathfrak{X}$ is a set containing classes, we will let $\mathfrak{X}^{(1)}$  denote the subset of all classes in $\mathfrak{X}$ whose genus has exactly one class, and use $\mathfrak{X}^{(2+)}$ to denote those with at least two classes. We will use similar superscripts for the counting functions (see Sec. \ref{Def-5.1} for an example).
\end{enumerate}
\end{definition}

\begin{notation} \ 
\begin{enumerate}[label=\upshape\arabic*.\ ] 
\item $\omega(n)$ denotes the number of distinct prime factors of $n$; while $\tau(n)$ is the number of positive divisors of $n$.
\item For $0\neq d\in\Z$ and integer $k\geq1$, let $\omega_k(d)=\{\text{$p$ prime}:p^k \! \mid \!  d\}$. Note $\omega_1=\omega$.
\item For $d\neq 0$ and $p<\infty$ a prime, $v_p(d)$ is the $p$-adic valuation of $d$, so $p^{v_p(d)}\parallel d$.
\item $\log^{+} |x| = \max(1, \log |x|)$.
\end{enumerate}
Also see Appendix \ref{notations} for a table of frequently used notations that are introduced later in the paper. \qed
\end{notation}
 

We will first consider preliminary estimates for $\textrm{\textup{MAR}}(X)$ defined in $\eqref{mu3}$, with the goal to derive {\bf (C)}. For this we need good estimates for $\kappa(F)$, which we consider next.


\subsection{Watson's papers   revisited}\label{Watson}\ 

As noted in the Introduction,  Watson (in \cite{watson54} and \cite{watson1957}) gave the first bounds for the key invariant $\kappa(F)$. In this subsection we review and refine his method, and also incorporate Burgess's bounds for character sums which are crucial in our work.

\begin{definition}\label{def2}\ \\
	For any $d\neq 0$, let $P_d \leq \sqrt{|d|}$ be the largest positive squarefree odd integer such that   $p \! \mid \! P_d$ implies $p^2 \! \mid \! d$. Note that $P_d =1$ if and only if $d=2^{v_2(d)}d_0$ where $d_0$ is squarefree.
\end{definition} 

\begin{theorem}\label{thmD1} 
	Let $f =\sum_{1\leq i\leq j\leq 3} f_{ij}x_ix_j$ be a primitive, non-singular and indefinite quadratic form  over the integers, with discriminant $d$. Let $n_f^{+} $ and $n_f^{-} $  denote the  least positive value and largest negative value  attained by $f$, respectively.  Then,
    \[\kappa(f) \leq |n_f^{\pm}| \ll_{\ve} P_{d}^{\theta}|d|^{\eps},\]
    for any $\eps >0 $ and  $d$ sufficiently large. Here 
     (i). $\theta = \frac{1}{2}$ using Polya-Vinogradov bounds, (ii). $\theta = \frac{1}{4}$ using Burgess' bounds,  and (iii). $\theta=0$ on GRH (or GLH).
\end{theorem}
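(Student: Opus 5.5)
The plan is to reduce the existence of a small represented value to a problem about integers in prescribed residue classes modulo the highly ramified odd primes, and then solve that problem with character sum estimates. I treat $n_f^{+}$; the case of $n_f^{-}$ follows by applying the same argument to $-f$, which has the same $|d|$ and the same $P_d$. The first inequality $\kappa(f)\le |n_f^{\pm}|$ is immediate from the definition \eqref{kappaF}.

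\emph{Step 1: local conditions.} I would fix a candidate integer $n>0$ that is squarefree and coprime to $2d$, and ask when the genus of $f$ represents $n$ locally at every place.
\begin{itemize}
\item At $\infty$ there is no condition, since $f$ is indefinite.
\item At an odd $p\nmid d$, a unimodular ternary form over $\Z_p$ represents every $p$-adic unit.
\item At an odd $p$ with $p\,\|\, d$, the unimodular Jordan component has rank $2$. It therefore represents all units modulo $p$, and by Hensel all units of $\Z_p$, so again there is no condition.
\item At an odd $p$ with $p^2\mid d$, i.e.\ $p\mid P_d$, the unimodular component may have rank $1$, say $u_px^2$. Then, since $n$ is a unit, the only condition is the Legendre symbol condition $\left(\frac{n}{p}\right)=\left(\frac{u_p}{p}\right)$. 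If the unimodular rank is $2$ or $3$, there is no condition.
\item At $p=2$, the Jordan decomposition of a primitive form shows that representing an odd $n$ is a condition on $n$ modulo $8$, and at least one odd class modulo $8$ is admissible.
\end{itemize}
So local representability of $n$ amounts to: $n$ lies in a fixed admissible class modulo $8$, satisfies $\chi_p(n)=\epsilon_p$ for $p$ in a subset $S$ of the primes dividing $P_d$, and is coprime to $d$.

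\emph{Step 2: local to global.} By Eichler's theorem (Definition~\ref{pre-def}, item 3) the spinor genus of $f$ is a single class. For indefinite ternary forms, the genus and the spinor genus represent the same integers except for the spinor exceptional square classes of Schulze-Pillot/Jones--Watson, which have the shape $n\in -d\,(\Q^\times)^2$ locally. Taking $n$ squarefree and coprime to $d$ avoids these classes, apart from finitely many degenerate configurations, which I would treat by hand. Such an $n$ is then represented by every class in the genus, and in particular by $f$. This is the step whose precise formulation I would check most carefully.

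\emph{Step 3: analytic count.} It remains to show that some $n\le N$, with $N=P_d^{\theta}|d|^{\eps}$, satisfies all the conditions of Step 1. I would detect the conditions as follows:
\begin{itemize}
\item the Legendre conditions by $\prod_{p\in S}\tfrac12\bigl(1+\epsilon_p\chi_p(n)\bigr)$;
\item the class modulo $8$ by characters modulo $8$;
\item squarefreeness and coprimality with $d$ by Möbius sums over divisors.
\end{itemize}
Expanding the product, the main term is $\gg N\,2^{-|S|}\prod_{p\mid d}(1-1/p)\gg N|d|^{-\eps}$. The error is a sum of $2^{|S|}\ll |d|^{\eps}$ incomplete sums of nonprincipal characters of conductor $q\mid 8P_d$, after the Möbius cut, which costs another $\tau(d)$ factor. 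Each such sum is bounded in one of three ways:
\begin{itemize}
\item by Pólya--Vinogradov, $\ll q^{1/2}\log q$, which gives $\theta=\tfrac12$;
\item by Burgess, $\ll N^{1-1/r}q^{(r+1)/4r^2+\eps}$, which is $o(N|d|^{-\eps})$ once $N\ge q^{1/4+\eps}$, giving $\theta=\tfrac14$;
\item under GRH (or GLH), by $\ll N^{1/2}q^{\eps}$, giving $\theta=0$.
\end{itemize}

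\emph{Main obstacle.} I expect the main difficulty to be combining the sieve conditions (squarefree, coprime to $d$) with the many characters while losing only $|d|^{\eps}$. In particular, the divisor cut-offs must be kept below $N^{1-\delta}$ so that Burgess still applies to each piece. I would handle this by restricting $n$ to primes, or to products of few large primes, in the window $[N^{1/2},N]$, using the Burgess-type bound for sums over short intervals. I would also double-check the $2$-adic admissibility claim and the spinor exception analysis.
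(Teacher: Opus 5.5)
Your outline follows the paper's argument, which is Watson's. It imposes local conditions only at $2$ and at the primes $p\mid P_d$. It then uses Jones--Watson (Prop.~\ref{propD2}) together with Eichler's theorem to pass from the genus to $f$. Finally it counts admissible $n$ with real characters modulo $8P_d$, using Polya--Vinogradov, Burgess, or GRH. The gap is in the step you flag yourself as the main obstacle. You impose squarefreeness on $n$ to meet the hypothesis of Prop.~\ref{propD2}. You then propose to handle the resulting sieve by restricting $n$ to primes, or to products of a few large primes, in $[N^{1/2},N]$. That step would fail. Polya--Vinogradov and Burgess bound $\sum\chi(m)$ over runs of consecutive integers, not over primes, and nothing of Burgess strength is available for prime sums of length $P_d^{1/4+\varepsilon}$. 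The standard unconditional way to produce primes in prescribed classes modulo $8P_d$ is Linnik's theorem, which only reaches $P_d^{L}$. This is exactly why Prop.~\ref{kappa-finite} ends up with $P(C)^L$. So this route delivers neither $\theta=\frac{1}{4}$ nor $\theta=\frac{1}{2}$.

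The obstacle is self-inflicted. Instead of squarefreeness, the paper fixes one prime $3\le q\ll d^{\varepsilon}$ with $q\nmid d$. It then counts $n=qn_1$ where $(n_1,2dq)=1$, $n_1$ lies in a fixed class modulo $8$, and the Legendre conditions at $p\mid P_d$ hold. Then $q\parallel n$ holds automatically, and Prop.~\ref{propD2} applies. The only sieve condition left is coprimality to $2dq$. M\"obius inversion over $\delta\mid 2dq$ removes it at the cost of $\tau(2dq)\ll d^{\varepsilon}$ sums $\sum_{m\le \xi/\delta}\chi(m)$ over initial segments, to which Prop.~\ref{propC1} applies verbatim. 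If you want to keep squarefree $n$, that also works. Write $\mu^2(n)=\sum_{a^2\mid n}\mu(a)$, apply the character bounds for $a\le A$, and bound the range $a>A$ trivially by $N/A$. Primes are not needed either way.

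There are two smaller corrections. First, the spinor-exceptional $n$ are not the classes $-d(\Q^\times)^2$. By Jones--Watson (see the proof of Prop.~\ref{propD2}), an exceptional $n$ satisfies $dn=n_1n_2^2$ with $n_1$ squarefree and $1<n_1\mid d$. Your conclusion that a squarefree $n>1$ coprime to $2d$ is never exceptional is correct, but the reason is precisely the hypothesis of Prop.~\ref{propD2}. Second, at odd $p\nmid d$ you must also represent $n$ when $p\mid n$. This is fine because such a form represents all of $\Z_p$ (Lemma~\ref{triv}), not just the units.
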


\begin{remark}\label{remD1}
	Watson's proof and statement in  \cite{watson1957} is given for positive values of $f$, but the argument also holds for the negative values. We give the details for positive values.
\end{remark}
In what follows (only in this subsection), we consider forms in Watson's sense. This is relevant since he constructs odd integers $n$ satisfying $f({\bf x})=n$ with $n \ll |d|^{\frac{1}{4}+\eps}$ (using Polya-Vinogradov). If $F$ is a primitive form in the Gaussian definition that we use in this paper, then it is possible for the value set to be only even integers. However, in Watson's definition, such a $F$ must necessarily be imprimitive, and so is excluded from his theorem. However a $F$ such as this is necessarily of the form $2f$ with $f$ primitive in Watson's sense, and so the conclusion of his theorem still holds for $F$. 

A significant result used in Watson's proof is the following:

\begin{proposition}\label{propD2}\upn{({Therem~2 of Jones-Watson \cite{JW1956}})}\ \\
	Suppose $n$ is represented by at least one form  in the genus  $G(f)$. With $d$ the discriminant of $f$, suppose there exists $q$  an odd prime such that $q\nmid d$ but $q\parallel n$. Then $n$ is represented by all forms in  $G(f)$.
\end{proposition}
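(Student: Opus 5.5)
The plan is to reduce the statement to spinor genus theory and then use Eichler's theorem (Definition~\ref{pre-def}, item 3): for indefinite ternary forms each spinor genus is a single class. So it suffices to show that $n$ is represented by every spinor genus in $G(f)$. Fix a lattice $L$ in the genus of $f$ that represents $n$, say $n=Q(v)$ with $v\in L$, and write $V=L\otimes\Q$. I would use the standard adelic description of representations by spinor genera (Kneser, Hsia, Schulze-Pillot). The spinor genera in $\Ge(L)$ are parametrized by the group
\[
J_\Q\big/\,\Q^\times\prod_p\theta(O^+(L_p)),
\]
where $J_\Q$ is the idele group, $\theta$ is the spinor norm, and $O^+(V_\infty)$ is included at the archimedean place. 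Among these, the spinor genera representing $n$ form a coset of the image of the subgroup generated by the local spinor norms $\theta(O^+(V_p)_v)$ of the stabilizers of $v$. Concretely, a spinor genus $\sigma L$ with $\sigma\in O^+(V_{\mathbb A})$ represents $n$ as soon as $\theta(\sigma)$ lies in
\[
\Q^\times\prod_p \theta\big(O^+(L_p)\big)\,\theta\big(O^+(V_p)_{v}\big).
\]
This holds because we may move $v$ locally by elements fixing it, and because strong approximation for $\mathrm{Spin}(V)$ applies since $V$ is indefinite.

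It therefore suffices to show that this subgroup is already all of $J_\Q$. I will exploit the single place $q$; at the other places no control is needed. First, since $q$ is odd and $q\nmid d$, the lattice $L_q$ is unimodular. Hence $O^+(L_q)$ contains products of symmetries in unit vectors, and $\theta(O^+(L_q))\supseteq \Z_q^\times\Q_q^{\times2}$. Second, the stabilizer $O^+(V_q)_v$ equals $O^+(v^\perp)$, where $v^\perp$ is a binary quadratic space of discriminant $-nd$ up to squares. For a binary space the spinor norms are exactly the norms from $E_q=\Q_q(\sqrt{-nd})$, or all of $\Q_q^\times$ if $-nd$ is a square. Since $q\parallel n$ and $q\nmid d$, the valuation $v_q(-nd)$ is odd. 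So $E_q/\Q_q$ is ramified, and its norm group contains an element of $q$-adic valuation $1$. Combining this with the units from $O^+(L_q)$ gives
\[
\theta\big(O^+(L_q)\big)\,\theta\big(O^+(V_q)_v\big)=\Q_q^\times .
\]

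Next I globalize. The quotient of $J_\Q$ by $\Q^\times\prod_p\theta(O^+(L_p))$ is an elementary abelian $2$-group on which the component at $q$ is now trivial. I need every class of this quotient to be represented by an idele supported at the single place $q$. My first attempt would be to argue with the standard generalized-class-group picture: the relevant group is $\Q^\times\backslash J_\Q$ modulo the squares of ideles and modulo units at all primes $p\nmid 2d$, since $L_p$ is unimodular there. I would then pick a global $a\in\Q^\times$ that matches a given class at the finitely many bad places and is a unit at all other primes except $q$; this is possible by Dirichlet's theorem on primes in progressions combined with the freedom at $q$. Alternatively I would show directly that the coset of spinor genera representing $n$ is stable under the full group, because the local factor at $q$ surjects onto $\Q_q^\times$.

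The main obstacle I expect is this last, global bookkeeping step. The local computation at $q$ is clean, but converting "surjective at one place" into "all spinor genera represent $n$" requires the precise exceptional-set description (Schulze-Pillot's criterion). That criterion says $n$ is a spinor exception only if $E=\Q(\sqrt{-nd})$ satisfies $\theta(O^+(L_p))\subseteq N_p(E_p)$ for every $p$. Our computation at $q$ violates this inclusion, because the unramified units are not all norms from a ramified quadratic extension. Invoking this criterion would be my fallback if the direct argument gets messy.
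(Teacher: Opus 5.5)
Your local analysis at $q$ is correct: $L_q$ is unimodular, so $\theta(O^+(L_q))=\Z_q^\times\Q_q^{\times2}$, and $E_q=\Q_q(\sqrt{-nd})$ is ramified. The globalization you try first, however, would fail.

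\textbf{The gap in the direct route.} The group $\Z_q^\times\Q_q^{\times2}$ already lies in $\theta(O^+(L_q))$ and has index $2$ in $\Q_q^\times$. So ideles supported at the single place $q$ have at most two images in $J_\Q/\Q^\times\prod_p\theta(O^+(L_p))$. Nothing in the hypotheses bounds the number of spinor genera by $2$, and in general it is larger; compare the quotient of $\Sq^*(\Z/P\Z)$ in Prop.~\ref{propE1}. The Dirichlet step cannot rescue this, because $q$ is fixed. Up to squares, the admissible $a\in\Q^\times$ are $\pm$ products of the bad primes and $q$, and these do not reach every square class at the bad places. For the same reason, surjectivity of the local factor at $q$ only makes the set of representing spinor genera stable under a subgroup of order at most $2$.

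\textbf{What is missing.} You need input from all places at once. The stabilizer spinor norms give $\Q^\times\prod_p\theta(O^+(V_p)_v)\supseteq\Q^\times N_{E/\Q}(J_E)$ with $E=\Q(\sqrt{-nd})$. This is a genuine quadratic field since $v_q(-nd)=1$. By global class field theory, $\Q^\times N_{E/\Q}(J_E)$ has index $2$ in $J_\Q$; this is the ``all or half'' dichotomy. Now take the idele equal to a nonsquare unit $u$ at $q$ and $1$ elsewhere. It lies in $\prod_p\theta(O^+(L_p))$, but its Artin symbol is $(u,-nd)_q=\left(\frac{u}{q}\right)=-1$. So it is not in $\Q^\times N_{E/\Q}(J_E)$, and together the two subgroups give all of $J_\Q$. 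Note that the nonsquare unit from $O^+(L_q)$ does the work here, not the valuation-one norm from the stabilizer. This is exactly the content of your Schulze-Pillot fallback, which is correct. Combined with Eichler's theorem it gives a complete proof, so make it your main argument and drop the ``supported at $q$'' reduction.

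\textbf{Comparison with the paper.} The paper takes a purely classical route and cites Theorem~2 of Jones--Watson. That theorem says: if $n$ is represented by some but not all forms of the genus, write $dn=n_1n_2^2$ with $n_1$ squarefree; then $1<n_1\mid d_0\mid d$. Since $q\parallel n$ and $q\nmid d$, we get $q\mid n_1$, forcing $q\mid d$, a contradiction. Your argument explains why such a criterion holds. Here $E=\Q(\sqrt{\pm n_1})$, and a spinor exception forces $E$ to be unramified at every odd prime $p$ with $L_p$ unimodular, i.e.\ every odd prime divisor of $n_1$ must divide $d$. Your route therefore transfers more readily to other settings, but it rests on the heavier Kneser--Schulze-Pillot machinery and class field theory. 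The paper's deduction takes two lines once Jones--Watson is granted.
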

\begin{proof} We follow Watson \cite{watson1957}. Write $dn=n_1n_2^2$, where $n_1$ is squarefree. If $n$ is represented by at least one form  in the genus  $G(f)$ but not by all, then by Thm.~2\;(i)\&(ii) of \cite{JW1956}, $n_1 >1$ and  $n_1 \! \mid \! d_0$. Here, $d_0  \! \mid \! d$ is defined in that paper. But $q\parallel n$ and $q\nmid d$ implies $q \! \mid \!n_1$ so that $q \! \mid \! d$, a contradiction. 
\end{proof}

\vspace{6pt}

The proof of Theorem~\ref{thmD1} to construct "small" $n$'s such that $f({\bf x})=n$ is solvable over $\mathbb{Z}$ goes as follows: first determine conditions on $n$ such that $f({\bf x})=n$ is solvable over $\mathbb{Z}_p$ for all $p\geq 2$. Then, Prop.~\ref{propA1} shows that $n$ is represented by some form in $G(f)$. Restricting the $n$'s to satisfy Prop.~\ref{propD2} then shows that such $n$'s satisfy $f({\bf x})=n$. It then suffices to count the number of $n$'s satisfying the local and divisibility conditions.

If  $p\nmid 2d$, Prop.~\ref{propA3} implies $f$ is $\mathbb{Z}_p$-equivalent to a unit diagonal form of type $g=a_1x_1^2+a_2x_2^2+a_3x_3^2$ with $p$-adic units $a_j$. If $p\nmid n$, take $x_3=0$, and otherwise take $x_3=1$ so that Prop.~\ref{propA3}(1) implies $f({\bf x})=n$ is solvable over $\mathbb{Z}_p$ for all $n\in \mathbb{Z}$ with $p\nmid 2d$.

If $p\! \mid \! d$ with $p$ odd, since $f$ is primitive,   Prop.~\ref{propA3}(4) implies  $f$ is  $\mathbb{Z}_p$-equivalent to $h_1= a_1x_1^2 +p^{t_{11}}a_2x_2^2 + p^{t_{12}}a_3x_3^2$ or $h_2 = a_1x_1^2 + p^{t_2}(x_2^2 + a_3x_3^2)$ or $h_3 = (x_1^2 + a_2x_2^2) + p^{t_3}a_3x_3^2$, with units $a_j$. The first two cases occur only if  $p^{2} \! \mid \!  d$ while in the last, $p^{t_3}\parallel d$. If $p \! \mid \! n$, the equation  $f=n$ may not be  solvable over $\mathbb{Z}_p$, so we choose $n$ to satisfy $(n,2d)=1$. Then if $p\parallel d$, only the case $h_3 \equiv n$ occurs, and that is solvable with $x_3=0$. It remains to consider the cases $p^2 \! \mid \! d$ with $p>2$ and the case $p=2$, for $n$'s satisfying $(n,2d)=1$.

We will require
\begin{lemma}\label{lemD1}\ 
	If $f$ is primitive, there exists an integer $M$ with $(M,2d)=1$ such that $f = M$ is solvable over $\mathbb{Z}$. 
\end{lemma}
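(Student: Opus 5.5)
The plan is to construct a single integral vector ${\bf x}$ such that $f({\bf x})$ is a unit at every prime dividing $2d$. This is a purely local problem at finitely many primes, and the local solutions are glued together by the Chinese remainder theorem. In keeping with the convention of this subsection, $f=\sum_{i\le j} f_{ij}x_ix_j$ is primitive in Watson's sense, meaning $\gcd(f_{ij})=1$.

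\emph{Local step.} Fix a prime $p \mid 2d$. We want ${\bf x}_p\in \mathbb{Z}^3$ with $p\nmid f({\bf x}_p)$. Because $f$ is primitive, its reduction $\bar f\in \mathbb{F}_p[x_1,x_2,x_3]$ is a nonzero polynomial. It therefore suffices to show that $\bar f$ is not the zero \emph{function} on $\mathbb{F}_p^3$. We split into two cases.
\begin{itemize}
\item[(a)] If $p$ is odd, every variable occurs in $\bar f$ with degree at most $2<p$. A nonzero polynomial whose degree in each variable is less than $p$ does not vanish identically on $\mathbb{F}_p^3$. This follows by the usual induction on the number of variables, since a one-variable polynomial of degree $<p$ has fewer than $p$ roots.
\item[(b)] If $p=2$, we use $x^2=x$ on $\mathbb{F}_2$. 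As a function, $\bar f$ agrees with the multilinear polynomial $\sum_i \bar f_{ii}x_i+\sum_{i<j}\bar f_{ij}x_ix_j$. This polynomial is nonzero because some $f_{ij}$ is odd. A nonzero multilinear polynomial over $\mathbb{F}_2$ is a nonzero function: evaluate it at the indicator vector of a minimal monomial in its support.
\end{itemize}
This is exactly the point where Watson's notion of primitivity is needed. A Gauss-primitive form $2g$ takes only even values and would fail here, as remarked above.

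\emph{Gluing.} Let $p_1,\dots,p_k$ be the primes dividing $2d$. By the Chinese remainder theorem, applied coordinatewise, we choose ${\bf x}\in\mathbb{Z}^3$ with ${\bf x}\equiv {\bf x}_{p_i}\pmod{p_i}$ for every $i$. Since $f$ has integer coefficients, $f({\bf x})\equiv f({\bf x}_{p_i})\not\equiv 0\pmod{p_i}$ for each $i$. Hence $M:=f({\bf x})$ satisfies $(M,2d)=1$. In particular $M\neq 0$, because $0$ is divisible by every $p_i$. Thus $f({\bf x})=M$ is solvable over $\mathbb{Z}$.

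The only point requiring care is the local non-vanishing at $p=2$. The naive degree argument breaks there, since the degree $2$ is not less than $p$, and we handle it by the reduction $x^2=x$ to a multilinear polynomial. If desired, one could also take $M>0$ or $M<0$ by adding a large multiple of $2d$ to ${\bf x}$ in a direction where $f$ is positive or negative, using indefiniteness. This is not required for the lemma as stated.
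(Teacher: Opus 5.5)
Your proof is correct, and it takes a genuinely different, more elementary route than the paper.

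The paper works with target values rather than vectors:
\begin{itemize}
\item It uses the local Jordan decompositions to see that $f$ represents a unit $a_i$ over $\mathbb{Z}_{p_i}$ at each odd $p_i\mid d$, and an odd residue $b$ modulo $8$ over $\mathbb{Z}_2$.
\item It glues these by the Chinese remainder theorem into a single integer $M_0$ coprime to $2d$, which is then represented by $f$ over every $\mathbb{Z}_p$.
\item It invokes a local--global theorem (Cassels, Thm.~8.1) to obtain a global representation of $M=M_0u^2$ with $u$ coprime to $2d$.
\end{itemize}

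You instead apply the Chinese remainder theorem to the vector ${\bf x}$ itself. The representation of $M=f({\bf x})$ is then global by construction, and no local--global input is needed. Your only local inputs are the elementary non-vanishing of a nonzero polynomial on $\mathbb{F}_p^3$ when its degree in each variable is $<p$, and the multilinearisation at $p=2$. That step can be phrased even more simply: if some $f_{ii}$ is odd take ${\bf x}=e_i$; otherwise $f(e_i+e_j)\equiv f_{ij}\pmod{2}$ for an odd $f_{ij}$ with $i<j$.

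Your argument also works verbatim in any number of variables and for any finite set of primes. It does not use indefiniteness or nonsingularity; it uses only primitivity in Watson's sense, which you rightly flag as essential at $p=2$.

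In principle, the paper's route gives control over the $p$-adic square class of $M$ at each $p\mid 2d$, since $M$ equals $M_0$ up to a unit square. That extra control is not used afterwards, however. The subsequent construction only needs some globally represented $M$ coprime to $2d$ to serve as the reference in the conditions $\left(\frac{n}{p}\right)=\left(\frac{M}{p}\right)$ and $n\equiv M\pmod{8}$. Your $M=f({\bf x})$ therefore serves equally well.
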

\begin{proof} This is a consequence of  \cite{Cassels} Thm.~8.1. and the chinese remainder theorem as follows.
	
	For $p$ odd, since $f$ is primitive, it is $\mathbb{Z}_p$ equivalent to a form of the type $a_1x_1^2 + p^t\phi(x_2,x_3)$ with $p\nmid a_1$. So $f$ represents $a_i$ over $\mathbb{Z}_{p_i}$ for each $p_i \! \mid \! d$, odd.
	For $p=2$,  $f$ is equivalent to a form $2^{t_1}a_1x_1^2 + 2^{t_2}\phi(x_2,x_3)$ with $\phi$ an odd form, $a_1$ odd and  $t_1t_2=0$. So $f$ represents an odd integer, say $b$ modulo 8. Then, there exists an integer $M_0$ such that $M_0 \equiv a_i \Mod{p_i}$ for all odd $p_i  \! \mid \! d$ and $M_0 \equiv b \Mod{8}$, so that $(M_0,2d)=1$. Thus $M_0$ is represented by $f$ in $\mathbb{Z}_p$ for all $p\geq 2$. Theorem 8.1 of  \cite{Cassels} then says that $f$ represents some integer $M=M_0u^2$ for some $u$ coprime to $2d$, as required.\end{proof}

\vspace{6pt}

Let $P_d$ be as in Def.~\ref{def2}. For any $p \! \mid \! P_d$, choose $n$ to satisfy $\genfrac(){}{0}{n}{p}= \genfrac(){}{0}{M}{p} $. Then if $f({\bf y})=M$, we have $f(t{\bf y})=t^2M = n$ is solvable over $\mathbb{Z}_p$. Finally, choose $n$ so that $n \equiv M \Mod{8}$, so that $f({\bf x})=n$ is solvable over $\mathbb{Z}_2$. 

We thus have to count the number of positive $n$'s satisfying (i) $(n,2d)=1$, (ii) $n \equiv M \Mod{8}$ and (iii) $\genfrac(){}{0}{n}{p}= \genfrac(){}{0}{M}{p} $ for all $p \! \mid \! P_d$. Such $n$ satisfy the equation $f({\bf x})=n$ over $\mathbb{Z}_p$ for any $p\geq 2$, and so by  Prop.~\ref{propA1} is represented over $\mathbb{Z}$ by some form in $G(f)$. Choose any prime $3\leq q \ll d^{\eps}$ such that $q \nmid d$ and restrict the $n$'s further to satisfy $q\parallel n$. By   Prop.~\ref{propD2}, such $n$ are represented by all forms in $G(f)$. Putting $n=qn_1$, it suffices to count the number of $n_1$'s satisfying (i) $(n_1,2dq)=1$, (ii) $n_1 \equiv M_1 \Mod{8}$ for some fixed odd $M_1$ and (iii) $\genfrac(){}{0}{n_1}{p}= \genfrac(){}{0}{Mq}{p} $ for all $p \! \mid \! P_d$.

The number of such $n$'s in the interval $1\leq n \leq \xi$ is
\begin{equation}\label{D.0.1}
\sum_{\substack{1\leq n \leq \xi\\ (n,2dq)=1}}\prod_{p \! \mid \! P_d}  \frac{1}{2}\left(1+ \genfrac(){}{0}{nMq}{p}\right) \prod_{\sigma=\pm 1}\frac{1}{2}\left(1+ \genfrac(){}{0}{2\sigma}{nM_1}\right).
\end{equation}
The main term comes from the principal character in the sum on the right, and is 
\begin{equation}\label{D.0.2}
2^{-\omega(P_d)-2}\sum_{\substack{1\leq n \leq \xi \\ (n,2dq)=1}} 1 = 2^{-\omega(P_d)-2}\frac{\phi(2dq)}{2dq} \xi + O(\tau(d)).
\end{equation}
The remainder is a sum over the non-principal real characters modulo $8P_d$. If $\chi$ denotes a generic such character, the remainder is bounded by 
\begin{equation}\label{D.0.3}
\max_{\substack{\chi \Mod{8P_d}}}\ |\sum_{\substack{1\leq n \leq \xi \\ (n,2dq)=1}} \chi(n)| \ll \max_{\substack{\chi\Mod{8P_d}\\ \delta  \mid  2dq}}\ d^{\eps}|\sum_{1\leq n \leq \frac{\xi}{\delta} } \chi(n)| \ll P_d^{\alpha}\xi^{\beta}d^{\eps},
\end{equation}
for suitable constants $\alpha$ and $\beta <1$. Note that $P_d^2 \! \mid \! d$ so that $P_d\ll d^\frac{1}{2}$.

We require

\begin{proposition}\label{propC1}\ \\ Let $\chi$ be a non-principal character of modulus $q>1$ and let \[S_{\chi}(N)= \sum_{M<n\leq M+N} \chi(n),\] for $M\geq 0$.
	\begin{enumerate}[label=\upshape\arabic*.\ ]
		\item \upn{(Polya-Vinogradov):} \quad $S_{\chi}(N) \ll \sqrt{q}\log{q}$.
		\item On GRH, $S_{\chi}(N) \ll \sqrt{N}q^{\ve}$.
		\item \upn{(Burgess):} \quad Let $q=kl$, where $l\geq 1$ is the maximal cube-free factor. Let $r\geq 1$ be any integer. Then
		\[ S_{\chi}(N) \ll_{r} N^{1-\frac{1}{r}}k^{\frac{1}{r}}l^{\frac{r+1}{4r^2}}(qN)^{\ve}.\]
	\end{enumerate}
	All implicit constants depend at most on $\ve>0$.
\end{proposition}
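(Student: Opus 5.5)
These three bounds are classical, so the plan is to reduce each one to the standard literature and only make explicit the reductions needed for the precise shape stated, namely an arbitrary, possibly imprimitive modulus $q=kl$ and an arbitrary interval $(M,M+N]$. The common first step is to pass from $\chi$ to the primitive character $\chi^*$ of conductor $q^*\mid q$ that induces it. We write
\[
S_\chi(N)=\sum_{e\mid q}\mu(e)\chi^*(e)\sum_{M/e<m\le (M+N)/e}\chi^*(m).
\]
Using a crude version of this when $q^*<q$ costs at most a factor $\tau(q)\ll q^{\ve}$, which is absorbed into the stated bounds. We also use the trivial bound $N$ whenever $q^*=1$ cannot occur, i.e.\ $\chi$ non-principal forces $q^*>1$.

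\emph{Polya--Vinogradov.} For primitive $\chi^*$ we expand $\chi^*(n)=\tau(\bar\chi^*)^{-1}\sum_{a \bmod q^*}\bar\chi^*(a)e(an/q^*)$ and use $|\tau(\bar\chi^*)|=\sqrt{q^*}$. Summing the geometric series over the interval gives
\[
S_{\chi^*}\ll \frac{1}{\sqrt{q^*}}\sum_{0<|a|<q^*/2}\frac{q^*}{|a|}\ll\sqrt{q^*}\log q^* .
\]
For imprimitive $\chi$ one can argue either by the Möbius reduction above or, more cleanly, by the standard argument (e.g.\ Davenport, Ch.~23) giving $\sqrt{q}\log q$ directly.

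\emph{GRH bound.} This is the conditional bound of Montgomery--Vaughan type. We take a smooth weight $w$ approximating $\mathbf 1_{(M,M+N]}$ with transition length $\Delta$ and write the smoothed sum via Mellin inversion as $\frac{1}{2\pi i}\int L(s,\chi^*)\hat w(s)\,ds$. We then shift the contour to $\Re s=\frac12+\ve$ and use the Lindelöf-type consequence of GRH, $L(\frac12+\ve+it,\chi^*)\ll (q(|t|+1))^{\ve}$, together with $\hat w(s)\ll N^{1/2+\ve}(N/\Delta)^{A}/|s|^{A}$ on that line. This gives $\sqrt N q^{\ve}$ up to an error $O(\Delta)$ from the unsmoothing. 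Choosing $\Delta=\sqrt N$ balances the two terms. Since we may assume $N\le q$ (periodicity and $\sum_{n \bmod q}\chi=0$), factors of $N^{\ve}$ are absorbed into $q^{\ve}$.

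\emph{Burgess.} This is Burgess's theorem for composite moduli \cite{burgess63}, and we would follow his argument. The steps are: shift $n\mapsto n+ab$ with $a\le A$, $b\le B$, $AB\approx N$; average, so that
\[
|S_\chi(N)|\ll \frac{1}{AB}\sum_{x \bmod q}\nu(x)\Bigl|\sum_{b\le B}\chi(x+b)\Bigr|+AB;
\]
and apply Hölder with exponent $2r$. This requires two inputs: the multiplicative-energy bound $\sum_x\nu(x)^2\ll (AN)^{1+\ve}$, and the $2r$-th moment
\[
\sum_{x \bmod q}\Bigl|\sum_{b\le B}\chi(x+b)\Bigr|^{2r}.
\]
The moment is where the factorisation $q=kl$ enters, and we expect it to be the main obstacle. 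Modulo the cube-free part $l$ we use the Weil bound for complete character sums of $\prod(x+b_i)^{\pm1}$ (non-degenerate tuples) together with a CRT/Hensel argument at primes dividing $l$ to square. Modulo $k$ we bound the sum trivially, which is the origin of the factor $k^{1/r}$. This yields a bound of the shape $qB^{r}+q^{1/2}B^{2r}l^{\ve}$ suitably weighted by $k$. Optimising $A,B$ then gives $N^{1-1/r}k^{1/r}l^{(r+1)/(4r^2)}(qN)^{\ve}$. Since this is exactly Burgess's published statement, in the paper we would simply cite it and verify that the stated normalisation of $k,l$ matches his.
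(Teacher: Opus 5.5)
\textbf{Items 1 and 3.} These match the paper, whose proof only cites Iwaniec--Kowalski, Ch.~12 (with (12.56) for the non-cube-free Burgess bound) and calls the extension to imprimitive characters routine. Your M\"obius reduction and Burgess outline expand that citation in the usual way. One correction for item 1: a loss of $\tau(q)\ll q^{\ve}$ is \emph{not} absorbed into $\sqrt q\log q$. However, the M\"obius reduction only costs the number of squarefree $e\mid q$ coprime to $q^*$, which is at most $\tau(q/q^*)\le 2\sqrt{q/q^*}$. So it does give $\sqrt q\log q$, as does Davenport's argument.

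\textbf{Item 2 has a genuine gap.} Your estimate $\hat w(s)\ll N^{1/2+\ve}(N/\Delta)^{A}|s|^{-A}$ on $\Re s=\tfrac12+\ve$ holds only when the support of $w$ starts near the origin, i.e.\ $M\ll N$.
\begin{itemize}
\item If $M\gg N$, then $\hat w(\sigma+it)=\int w(u)u^{\sigma-1+it}\,du$ has size $\asymp NM^{\sigma-1}$ for $|t|\ll M/N$.
\item Integration by parts gives decay only once $|t|\gg M/\Delta$, because each step gains $u/|t|\approx M/|t|$, not $N/|t|$.
\item With the Lindel\"of bound you therefore get $\int|L(\tfrac12+\ve+it,\chi)|\,|\hat w(\tfrac12+\ve+it)|\,dt\approx NM^{-1/2}\cdot(M/N)\cdot(qM)^{\ve}=\sqrt M\,(qM)^{\ve}$.
\end{itemize}
So the contour shift proves only $S_\chi(N)\ll\sqrt{M+N}\,q^{\ve}$. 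After reducing to $0\le M<q$ this is no better than $q^{1/2+\ve}$, and reducing to $N\le q$ does not help.

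This is not a repairable slip. A bound $\sqrt N\,q^{\ve}$ uniform in $M$ is not known to follow from GRH. For the Legendre symbol it would force every run of consecutive quadratic residues mod $p$ to have length $\ll p^{\ve}$.

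What your argument does prove correctly is the initial-segment bound $\sum_{n\le N}\chi(n)\ll\sqrt N\,q^{\ve}$, reducing $N$ modulo $q$ by periodicity. That is the only form the paper uses: item 2 enters only through \eqref{D.0.3} and Remark~\ref{remD2}, where the sums run over $1\le n\le\xi/\delta$. So state and prove item 2 for $M=0$ (or $M\ll N$) only; the statement's ``for $M\ge0$'' should not be read as applying to the GRH bound.
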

\begin{proof} See \cite{IwaK} Chapter 12 for details. The extension to non-primitive characters is straightforward, while the statement for the Burgess bound without the cube-free restriction is stated in equation (12.56).\end{proof}

\vspace{6pt}

By Prop.~\ref{propC1}, the Polya-Vinogradov estimate gives $\alpha= \frac{1}{2}$ and $\beta =0$, allowing us to choose $\xi$ of size $P_d^{\frac{1}{2}}d^{\eps}$. We may apply Burgess' estimate with $k=8$ in Prop.~\ref{propC1} 
giving $\alpha= \frac{r+1}{4r^2}$ and $\beta =1-\frac{1}{r}$ allowing one to choose $\xi$ of size $P_d^{\frac{r+1}{4r}}d^{\eps}$. We then choose $r$ arbitrarily large to get $\xi$ of size $P_d^{\frac{1}{4}}d^{\eps}$. Finally,  GRH (using Prop.~\ref{propC1}) gives $\alpha= 0$ and $\beta =\frac{1}{2}$ giving $\xi$ of size $d^{\eps}$. In general, one may choose $\theta = \frac{\alpha}{1-\beta}$, with $\xi = P_d^{\theta}d^{\eps}$.

\begin{remark}\label{remD2}\ 
	One can remove the $d^{\ve}$ in the estimates above as follows. The estimate in \eqref{D.0.3}, with a sum over divisors,  can be replaced with 
	\[
	\max_{\substack{\chi\!\Mod{8P_d}\\ \delta|2dq}}\ \tau(d)|\sum_{1\leq n \leq \frac{\xi}{\delta} } \chi(n)| \ll P_d^{\alpha}\xi^{\beta}\tau(d),
	\]
	say.
	The main term in \eqref{D.0.2} is
	\[ \gg 2^{-\omega(P_d)}(\log\log{d})^{-1}\xi - C\tau(d).
	\]
	Thus, it suffices to choose $\xi$ large enough to satisfy $\xi^{1-\beta} \gg L(d)P_d^{\alpha}$, with $L(d)=2^{\omega(P_d)}\tau(d)(\log\log{d})$. 
	
	Applying GRH (Prop.~\ref{propC1}) with $\beta = \frac{1}{2}$ and $\alpha = \frac{1}{2}\ve$ gives a non-empty count when  $\xi \approx L(d)^2P_d^{\ve}$.
	
	Applying the Burgess bound, with $\alpha= \frac{r+1}{4r^2} +\frac{\ve^2}{4} $, $\beta =1-\frac{1}{r}+\frac{\ve^2}{4}$ and $r = \frac{2}{\ve}$, allows us to choose $\xi \approx L(d)^{4\ve^{-1}}P_d^{\frac{1}{4}+ \ve}$ . 
	
	Let \[\kappa(d) = \max_{\substack{F\in\mathfrak F\\ D(F)=d}} \kappa(F).\]Using these bounds, and averaging over $d$ in some interval of length $X$, one gets crude upperbounds for the average of $\kappa(d)$  of the type (i). $O\!\left((\log{X})^{A}\right)$ to a sufficiently high power $A$, or (ii)  $ O\!\left((\log{X})^{3+\ve}\right)$, on GRH. However, much better bounds are obtained if one uses the Polya-Vinogradov inequality. This is because on average, $P_d$ is small and then the term $L(d)$ plays the dominant role. So taking $\alpha = \half +\ve$ and $\beta=0$ gives $\kappa(d) \ll L(d)P_d^{\half +\ve}$. Then
	\[
	\sum_{1\leq d\leq X} \kappa(d)\ll \log\log{X} \sum_{1\leq P^2r\leq X} 2^{\omega(P)}\tau(P^2r)P^{\half +\ve},
	\] 
	so that with $\tau(P^2r) \leq 3^{\omega(P)}\tau(r)$, since $P$ is squarefree, and summing over $r$ first gives the bound
	\[
	\sum_{1\leq d\leq X} \kappa(d) \ll \log\log{X} \sum_{1\leq P\leq \sqrt{X}} 6^{\omega(P)}P^{\half +\ve} \left(\frac{X}{P^2}\log\left(\frac{X}{P^2}\right)\right),
	\]
	giving us
	\begin{lemma}\label{avg-kappa}
		\[ \sum_{1\leq d\leq X} \kappa(d) \ll X(\log X)\log\log{X}. \]
	\end{lemma}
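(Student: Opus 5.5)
The plan is to start from the pointwise bound in Remark~\ref{remD2}: with the Polya--Vinogradov choice $\alpha=\half+\ve$, $\beta=0$,
\[
\kappa(d)\ll L(d)P_d^{\half+\ve},\qquad L(d)=2^{\omega(P_d)}\tau(d)\log\log d ,
\]
uniformly over all $F\in\mathfrak F$ with $D(F)=d$. It will be worth checking that $\xi\gg L(d)P_d^{\alpha}$ really suffices. The main term \eqref{D.0.2} is $\gg 2^{-\omega(P_d)}(\log\log d)^{-1}\xi$ because $\phi(2dq)/(2dq)\gg(\log\log d)^{-1}$. The error is at most $\tau(d)$ times the character-sum bound, summed over the $2^{\omega(P_d)+2}$ characters with coefficients $2^{-\omega(P_d)-2}$, so it is $\ll \tau(d)P_d^{\half}\log P_d$. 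The $\log P_d$ is absorbed into $P_d^{\ve}$. Also $q\ll \log d$ can be chosen (the least prime not dividing $2d$), so the extra factor $q$ in $n=qn_1$ is $\ll\log d$. This is harmless for the final bound, since only a fixed power of $\log\log X$ or a bounded log factor must be controlled; if it is not, I would absorb it into $L(d)$ and recheck.

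Next, parametrize each $d\le X$ by writing $d=P^2r$ with $P=P_d$ squarefree odd. Each $d$ yields one such pair (not unique as a pair in general, but each $d$ is counted at least once, which is enough for an upper bound). Bound $\log\log d\le\log\log X$ and use $\tau(P^2r)\le\tau(P^2)\tau(r)=3^{\omega(P)}\tau(r)$. This gives
\[
\sum_{d\le X}\kappa(d)\ll \log\log X\sum_{P\le\sqrt X}6^{\omega(P)}P^{\half+\ve}\sum_{r\le X/P^2}\tau(r).
\]
Evaluating the inner sum by Dirichlet's divisor bound, $\sum_{r\le Y}\tau(r)\ll Y\log Y$ for $Y\ge1$, then gives
\[
\ll X\log X\,\log\log X\sum_{P}6^{\omega(P)}P^{-\frac32+\ve}.
\]
The remaining series converges, since $6^{\omega(P)}\ll_\ve P^{\ve}$ and the exponent stays below $-1$ for small $\ve$. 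This yields the lemma.

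The main obstacle is the uniformity of the pointwise bound, namely making sure no stray $d^{\ve}$ survives. This is the content of Remark~\ref{remD2}: every $d^{\ve}$ has been replaced by $\tau(d)$ or by powers of $P_d$, and those are then absorbed by the convergent sum over $P$. A crude $d^{\ve}$ would destroy the $X\log X$ shape, so the care needed is exactly in the divisor-sum reduction in \eqref{D.0.3} and in the size of the auxiliary prime $q$.
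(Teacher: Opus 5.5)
Your argument is the one in the paper (Remark~\ref{remD2}): the Polya--Vinogradov pointwise bound $\kappa(d)\ll L(d)P_d^{\frac12+\ve}$, the parametrization $d=P^2r$, the inequality $\tau(P^2r)\le 3^{\omega(P)}\tau(r)$, the divisor sum over $r$, and convergence of $\sum_P 6^{\omega(P)}P^{-\frac32+\ve}$. The one point where you go beyond the paper is the auxiliary prime $q$, and there your reasoning fails. Since $\kappa(F)\le n=qn_1\le q\xi$, a pointwise bound $q\ll\log d$ multiplies the whole sum by $\log X$, so you only get $X(\log X)^2\log\log X$. A factor $\log d$ is not a power of $\log\log X$, and absorbing it into $L(d)$ would break the lemma rather than rescue it. The paper's remark also drops this factor of $q$ without comment, so an actual argument is needed here.

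The fix is to bound $q$ on average rather than pointwise. Take $q=q(d)$ to be the least odd prime not dividing $d$. Then $q(d)=\ell$ forces $M_\ell:=\prod_{3\le p<\ell}p$ to divide $d$. For $d=P^2r$ with $P=P_d$, put $g=(M_\ell,P)$. Since $M_\ell$ is squarefree, $h=M_\ell/g$ divides $r$, and hence
\[
\sum_{r\le X/P^2,\ h\mid r}\tau(r)\le\tau(h)\sum_{m\le X/(hP^2)}\tau(m)\ll\tau(h)\frac{X}{hP^2}\log X .
\]
Write $P=gP'$ and run your computation with this extra constraint. This gives
\[
\sum_{\substack{d\le X\\ q(d)=\ell}} q(d)\,2^{\omega(P_d)}\tau(d)P_d^{\frac12+\ve}\ \ll_\ve\ \ell\, X\log X\prod_{3\le p<\ell}\frac{2+6p^{-\frac12+\ve}}{p}.
\]
The right-hand side is summable over primes $\ell$, because the factors $(2+6p^{-\frac12+\ve})/p$ tend to $0$, so the product decays faster than any power of $\ell$. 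Summing over $\ell$ and multiplying by $\log\log X$ then gives the lemma as you describe.
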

	It does not appear easy to remove the $\tau(d)$ term in the analysis above since complications arise if we remove the coprimality condition $(n,d)=1$ in \eqref{D.0.3}
\end{remark}

Estimates for ``large'' values of $\kappa(F)$'s are given in the following

\begin{lemma}\label{prop1}\ \\ 
\indent Let $p \equiv 1 \Mod{4}$ be a prime, and let $n_p$ be the least positive quadratic non-residue modulo $p$. Then, there is $F \in \mathfrak{F}$ of determinant $n_p^2p^2$ with $\kappa(F) = n_p$. In particular, the numbers $\frac{n_p}{p^2}$ form a subset of the Markoff spectrum.
\end{lemma}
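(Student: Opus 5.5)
We take the explicit form from the Introduction, with the sign flipped so that the determinant is positive:
\[
F(x_1,x_2,x_3) = n_p x_1^2 - p x_2^2 - p n_p x_3^2 .
\]
It has determinant $n_p\cdot(-p)\cdot(-pn_p) = n_p^2p^2>0$. It is indefinite, and it is primitive because $\gcd(n_p,p)=1$. Since $p\equiv 1 \Mod 4$, $1$ is a residue, so $n_p\ge 2$, and clearly $n_p<p$. So $F\in\mathfrak F$. We have $F(1,0,0)=n_p$, hence $\kappa(F)\le n_p$. The whole proof then consists of showing that no nonzero integral $\mathbf x$ gives $|F(\mathbf x)|<n_p$, including the value $0$. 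Once $\kappa(F)=n_p$ is known, $\mu(F)=n_p^3/(n_p^2p^2)=n_p/p^2$ by \eqref{mu}, which gives the last assertion.

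\textbf{Nonzero values.} Suppose $F(\mathbf x)=m$ with $0<|m|<n_p$, and reduce mod $p$: $m\equiv n_p x_1^2 \Mod p$.

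If $p\mid x_1$, then $p\mid m$. This forces $|m|\ge p>n_p$, a contradiction.

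If $p\nmid x_1$, then $m$ is a quadratic non-residue mod $p$. However, every integer in $[1,n_p-1]$ is a residue by minimality of $n_p$. Also $-1$ is a residue because $p\equiv 1\Mod 4$. Hence every $m$ with $0<|m|<n_p$ is a residue, again a contradiction.

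\textbf{Anisotropy (the main point).} Suppose $F(\mathbf x)=0$ with $\mathbf x$ primitive. Mod $p$ we get $n_p x_1^2\equiv 0$, so $x_1=py$. Dividing by $p$ gives
\[
p n_p y^2 = x_2^2+n_p x_3^2 .
\]
Reducing mod $p$, we get $x_2^2\equiv -n_p x_3^2 \Mod p$. Since $-n_p$ is a non-residue (as $-1$ is a residue), this forces $p\mid x_3$ and then $p\mid x_2$. Together with $p\mid x_1$, this contradicts the primitivity of $\mathbf x$. Hence $F$ is anisotropic and $\kappa(F)=n_p$.

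I expect no real obstacle. The only delicate point is using $p\equiv 1\Mod 4$ twice: once to handle negative values of $F$, and once to make $-n_p$ a non-residue in the descent.
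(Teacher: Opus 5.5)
Your proof is correct and follows the same route as the paper's. That route is: reduce modulo $p$, use the minimality of $n_p$, use that $-1$ is a residue for $p\equiv 1 \pmod 4$ to handle negative values, and check anisotropy modulo $p$.

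Your version is slightly more careful in two places. You spell out the one-step descent for anisotropy, which the paper only asserts. Your sign choice also gives determinant $+n_p^2p^2$ directly, whereas the paper's form $n_px_1^2-p(x_2^2-n_px_3^2)$ has determinant $-n_p^2p^2$ and has to be negated (harmlessly) to lie in $\mathfrak F$.
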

\begin{proof} The indefinite form  $F = n_px_1^2 - p(x_2^2 -n_px_3^2)$ is anisotropic, as may be verified modulo $p$. The quadratic nature of residues modulo $p$ is symmetric about the origin. It suffices to verify that if $1\leq m <n_p$, then $F=m$ has no solutions.  But every such $m$ is a quadratic residue and the congruence $F\equiv m$ is unsolvable modulo $p$.
\end{proof}

It is known that $n_p \gg \log{p}$ for infinitely many $p \equiv 1 \Mod{8}$, and $n_p \ll (\log{p})^2$ on GRH (\cite{ankeny}). It then follows that 

\begin{corollary}\label{prop1cor}\ \\
	\indent There are infinitely many $d$'s for which there are   classes  $C^{(d)}$ with $D(C^{(d)})=d$ and $\kappa(C^{(d)}) \gg \log{d}$. Moreover, on GRH these classes also have the upper-bound $\kappa(C^{(d)}) \ll (\log{d})^2$.
\end{corollary}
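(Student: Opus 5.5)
The plan is to derive the corollary directly from Lemma~\ref{prop1}, using the classical facts about the least quadratic non-residue quoted just before the statement.

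For the first assertion, I would invoke the known result that $n_p \gg \log p$ for infinitely many primes $p \equiv 1 \Mod{8}$; in particular these primes satisfy $p\equiv 1 \Mod{4}$. For each such $p$, Lemma~\ref{prop1} supplies a form $F$ with $D(F)=d:=n_p^2p^2$ and $\kappa(F)=n_p$. We take $C^{(d)}$ to be the class of $F$; since $\kappa$ is a class invariant, $\kappa(C^{(d)})=n_p$.

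It remains to compare $\log p$ with $\log d$. Trivially $n_p<p$, so $p^2\le d< p^4$, and hence $\log d \asymp \log p$. This gives $\kappa(C^{(d)})=n_p\gg \log p \gg \log d$. The values $d$ are distinct for distinct $p$, because $p$ is the largest prime factor of $d$ (as $n_p<p$). So we obtain infinitely many $d$.

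For the GRH statement, I would use Ankeny's bound $n_p\ll(\log p)^2$ for the same family of $d=n_p^2p^2$. Together with $\log p\le \log d$, this yields $\kappa(C^{(d)})=n_p\ll(\log d)^2$.

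I do not expect a real obstacle here. The only points needing care are checking that the primes $p\equiv 1\Mod 8$ fall within the hypothesis $p\equiv1\Mod4$ of Lemma~\ref{prop1}, and that $\log d\asymp\log p$, which uses $n_p<p$ as shown above.
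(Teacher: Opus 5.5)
Your proposal is correct and matches the paper's argument: the corollary is read off from Lemma~\ref{prop1} with $d=n_p^2p^2$, using $n_p\gg\log p$ for infinitely many $p\equiv 1 \Mod{8}$ and Ankeny's GRH bound $n_p\ll(\log p)^2$. Your checks that $\log d\asymp\log p$ (via $n_p<p$) and that distinct $p$ give distinct $d$ make explicit what the paper leaves implicit.
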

\qquad

\subsection{\texorpdfstring{Upper and lower bounds for $\text{MAR}(X)$}{Upper and lower bounds for MAR(X)}}\

Following \eqref{mu3}, we write

\begin{equation}
\textrm{\textup{MAR}}(X) = \sum_{\substack{C\in \Cl \\ 0<D(C)\leq \kappa(C)^3 X}}1\quad \text{and} \quad   \textrm{\textup{MAR}}^\#(X) = \sum_{\substack{C\in \Cl \\ {\it D}(C)\leq \kappa(C)^3 X\\ {\it D}(C)\  \text{squarefree}}} 1 ,
\end{equation}
the goal being to obtain estimates for large $X$.

For any fixed pair $t\geq 0$ and $d>0$, define
\begin{equation}
\lambda(d,t) = \# \{C\in \Cl(d):  \kappa(C) =t \},
\end{equation}
and
\begin{equation}
\mathcal{S}(X,t)=\sum_{\substack{d\leq X}} \lambda(d,t) .\footnote{Note that $\mathcal{S}(X,0)$ counts the number of isotropic classes $C$ with ${\it D(C})\leq X$, which we consider in Sec.~\ref{serre-p}.}
\end{equation}

Then,
\begin{equation}\label{Mar-sum}
\textrm{\textup{MAR}}(X) = \sum_{t=1}^{\infty} \mathcal{S}(t^3X,t).
\end{equation}

The sum over $t$ is finite, with a more precise statement derived from Thm.~\ref{thmD1}. We have 

\begin{proposition}[Corollary to Thm.~\ref{thmD1}]\label{propWatson}\ \\
\indent Let $\eps >0$ be arbitrarily small. There is a constant $C(\eps) >0$ such that as $d\to\infty$
\begin{enumerate}[label=\upshape(\roman*)]
\item[\upshape{1.\ }]  $\lambda(d,t)=0$ if   $t \geq C_{\eps}P_d^{\frac{1}{4}}d^{ \eps}$ ,
\end{enumerate}
 and 
 \begin{enumerate}[label=\upshape(\roman*)]
\item[\upshape{2.\ }]  on GRH for any $d$, or unconditionally for bounded $P_d$, we have $\lambda(d,t)=0$ if $t \geq C_{\eps}d^{ \eps}$.
\end{enumerate}
\end{proposition}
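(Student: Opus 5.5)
The statement is a direct translation of Theorem~\ref{thmD1} into the language of $\lambda(d,t)$, so the plan is to unwind the definitions. Fix $d$ large and a class $C\in\Cl(d)$ with $\kappa(C)=t$. Take a representative $F\in C$. It is primitive in the Gaussian sense; by the discussion after Remark~\ref{remD1}, either $F$ is primitive in Watson's sense or $F=2f$ with $f$ Watson-primitive. In the second case the determinants differ by a power of $2$. The invariant $P_d$ depends only on odd primes, and a bounded power of $2$ changes $|d|^{\eps}$ only by a constant, so we can pass freely between $F$ and $f$. In either case Theorem~\ref{thmD1} gives
\[
\kappa(F)\le |n_F^{+}|\ll_{\eps} P_d^{\theta}|d|^{\eps}.
\]

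For part 1 we use case (ii) of Theorem~\ref{thmD1}, with $\theta=\frac14$ from the Burgess bound (Proposition~\ref{propC1}(3), taking $r$ large as in the discussion after \eqref{D.0.3}). This yields a constant $C_{\eps}$ such that every $C\in\Cl(d)$ satisfies $\kappa(C)<C_{\eps}P_d^{1/4}d^{\eps}$ once $d$ is sufficiently large. Hence no class of determinant $d$ can have $\kappa(C)=t\ge C_{\eps}P_d^{1/4}d^{\eps}$, which says exactly that $\lambda(d,t)=0$. Isotropic classes have $\kappa=0$, and the bound is trivial for them.

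For part 2 there are two routes:
\begin{itemize}
\item Under GRH, case (iii) of Theorem~\ref{thmD1} gives $\theta=0$, which yields the bound $C_{\eps}d^{\eps}$ directly. By enlarging the constant we can cover the finitely many small $d$, so it holds for all $d$.
\item Unconditionally, if $P_d\le B$ is bounded, then $P_d^{1/4}\le B^{1/4}$, and we absorb this factor into the constant in part 1.
\end{itemize}

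The only point needing care is that the implied constant in Theorem~\ref{thmD1} is uniform over all forms of determinant $d$. This holds because that constant comes from Proposition~\ref{propC1} and from the choice of the auxiliary prime $q\ll d^{\eps}$, and neither depends on $F$. That uniformity lets a single $C_{\eps}$ work for every class in $\Cl(d)$.
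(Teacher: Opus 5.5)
Your proposal is correct and follows the paper's intended argument. The paper states this proposition as an immediate corollary of Theorem~\ref{thmD1}: take $\theta=\frac14$ (Burgess) for part 1, and for part 2 take $\theta=0$ under GRH or absorb a bounded $P_d^{1/4}$ into the constant, using that the $\ll_{\varepsilon}$ constant is uniform over all classes of determinant $d$ to turn the bound on $\kappa(C)$ into $\lambda(d,t)=0$. Your treatment of Watson versus Gauss primitivity matches how the paper handles that point: it changes $d$ only by a bounded power of $2$ and leaves $P_d$ unchanged.
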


\begin{remark}\label{rem1} In general, following the analysis in Sec.~\ref{Watson}, one has $\lambda(d,t) =0 $ if $t \geq C_{\eps}P_d^{\theta}d^{ \eps}$ where $\theta = \frac{\alpha}{1-\beta}$, with the constants obtained from bounds  for character sums of real characters of conductor $8P_d$.
\end{remark}

\begin{remark}\label{rem2} One sees from Corl.~\ref{prop1cor} that one may not improve the lower bound in Prop.~\ref{propWatson}(1) to below $\log d$. Moreover, in an interval $1\leq d\leq D$, the number of $d$'s with $P_d \gg d^{\ve}$ is at  most $O(D^{1-2\ve})$,  so that  $\lambda(d,t)=0$ for $t \gg d^{2\ve}$ for almost all $d$ in that interval. 
\end{remark}

We now can get an upper-bound for $\textup{MAR}(X) $ by estimating the finite sum in $\eqref{Mar-sum}$.  For this, we will require  estimates for the number of genera and classes of given determinant.

 \begin{proposition}\label{thm1}\  
 For any $\eps>0$, we have \[g(d) \leq h(d) \ll_{\ve}  d^{\ve}.\]
 \end{proposition}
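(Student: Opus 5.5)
The inequality $g(d)\le h(d)$ is immediate: every genus in $\Gen(d)$ contains at least one class, and distinct genera contain disjoint sets of classes. So the content is the upper bound $h(d)\ll_\ve d^\ve$. I would prove it in two steps: first bound the number of genera $g(d)$, then bound the number of classes in each genus.

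\emph{Step 1: counting genera.} A genus $G\in\Gen(d)$ is determined by its image in $\prod_{p\le\infty}\Gen_p$.
\begin{enumerate}
\item At $p=\infty$ there is a single choice, since $D>0$ and indefiniteness force signature $(1,2)$.
\item At $p\nmid 2d$ the form is $\Z_p$-unimodular, so it is $\Z_p$-equivalent to $\langle 1,1,d\rangle$. Its local class is therefore unique.
\item For odd $p\mid d$, Prop.~\ref{propA3} gives a diagonal Jordan form $\langle p^{t_1}a_1,p^{t_2}a_2,p^{t_3}a_3\rangle$. Here $0=t_1\le t_2\le t_3$ by primitivity and $t_1+t_2+t_3=v_p(d)$. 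The local class is then determined by the exponents together with the square classes of the units on each Jordan component, of which there are at most two per component. This gives at most $8(v_p(d)+1)^2$ local classes.
\item At $p=2$, the Jordan decomposition has blocks that are either $1\times1$ odd or $2\times 2$ even. There are $O((v_2(d)+1)^2)$ choices of scales and a bounded number of unit-type data per block. This gives $O((v_2(d)+1)^2)$ local classes.
\end{enumerate}
Multiplying over $p\mid 2d$,
\[ g(d)\le \prod_{p\mid 2d} c\,(v_p(d)+1)^2 \le c^{\omega(2d)}\tau(2d)^2\ll_\ve d^\ve, \]
using the standard bounds $\tau(n)\ll_\ve n^\ve$ and $c^{\omega(n)}\ll_\ve n^\ve$.

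\emph{Step 2: classes in a genus.} By Eichler's theorem (Definition~\ref{pre-def}(3)), each spinor genus of an indefinite ternary form is a single class. Hence $h(G)$ equals the number of proper spinor genera in $G$. That number is the index
\[ [J_\Q:\Q^\times\prod_p \theta(SO(L_p))], \]
where $J_\Q$ is the idele group, $\theta$ is the spinor norm, and $L_p$ is the local lattice. This quotient is an elementary abelian $2$-group, since all squares lie in the denominator. For $p\nmid 2d$, the lattice $L_p$ is unimodular of rank $3$, so $\theta(SO(L_p))\supseteq \Z_p^\times(\Q_p^\times)^2$. Consequently the quotient is generated by the images of $\Q_p^\times/\Z_p^\times(\Q_p^\times)^2$ for $p\mid 2d$, together with the unit-square data at those primes. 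So it has $\F_2$-rank $O(\omega(2d))$, and $h(G)\le 2^{O(\omega(2d))}\ll_\ve d^\ve$ (Cassels, Ch.~11, gives this in the form we need). Combining with Step 1,
\[ h(d)=\sum_{G\in\Gen(d)}h(G)\le g(d)\max_G h(G)\ll_\ve d^{2\ve}. \]
Renaming $\ve$ gives the proposition.

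The main obstacle is bookkeeping rather than any deep input. One point needing care is the $2$-adic local classification, where Jordan decompositions are not unique; here I only need a crude upper bound on the number of Jordan types, not a canonical form. The other is justifying the rank bound on the spinor-genus quotient at the ramified primes; I would take that directly from the standard adelic description of spinor genera.
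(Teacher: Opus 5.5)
Your proof is correct and follows essentially the same route as the paper. You bound $g(d)$ by counting local $\Z_p$-classes at each $p\mid 2d$: the paper gets at most $A\,v_p(d)$ per prime, you get $O((v_p(d)+1)^2)$, and both give $d^{\ve}$. You then bound the classes per genus by Eichler's theorem together with a $2^{O(\omega(d))}$ bound on the number of spinor genera, as the paper does. The only difference is that the paper quotes the spinor-genus count $O(2^{\omega(d)})$ from Jones--Watson, while you derive it from the adelic index $[J_\Q:\Q^\times\prod_p\theta(SO(L_p))]$, which is a valid self-contained justification.
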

 
\begin{proof}
The genera are determined by the $\mathbb{Z}_p$-inequivalent  forms for each prime $p$. Given the determinant $d\neq 0$, there is only one such class for each odd $p\nmid d$ (by Prop.~\ref{propA3}(3)). By Prop.~\ref{propA3}(4) and Props.~\ref{propB1} and~\ref{propB2}, there is an absolute constant $A>0$ such that the number of such classes is bounded by $Av_p(d)$, where $p^{v_p(d)}\parallel d$. This is because each such form, being primitive, is of the type $h_1+p^{e_2}h_2+p^{e_3}h_3$, with a bounded number $A$ of choices of forms $h_i$, and with $n_2e_2+n_3e_3 = v_p(d)$, with $1\leq n_i \leq 3$ the dimension of $h_i$ and $1\leq e_2< e_3$. Thus, the number of genera $g(d) \ll \prod_{p|d} Av_p(d) \ll \tau(d)^{1+\log_2A} \ll d^{\ve}$ (using an upper-bound for the divisor function). Trivially, we see that the average value of $g(d)$ is bounded above by a power of $\log{d}$. 

 The number of spinor genera in each genus is  $O(2^{\omega(d)}) = O(d^{\ve})$ (see for example \cite{JW1956}). Since there is only one class in each indefinite spinor genus \cite{eichler}, we see that the class number is  $  O(d^{2\eps})$.
 
 In Sec.~\ref{genera} and~\ref{clnbr}, a detailed analysis gives the asymptotics for the sums of $g(d)$ and $h(d)$ respectively.
 
\end{proof}
Using the estimates above from Prop.~\ref{thm1} and Prop.~\ref{propWatson}, we have the upper-bound 

\begin{proposition}\label{thm2} For any $\ve>0$
\[\textrm{\textup{MAR}}(X) \ll_{\ve} X^{1+ \eps},\] as $X \to \infty$. 
\end{proposition}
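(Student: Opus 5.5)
Starting from \eqref{Mar-sum}, I would write $\textup{MAR}(X)=\sum_{t\geq1}\mathcal{S}(t^3X,t)$ and rewrite this as a sum over determinants. A class $C$ with $D(C)=d$ and $\kappa(C)=t\geq 1$ contributes exactly when $d\leq t^3X$. By Prop.~\ref{propWatson}(1), $\lambda(d,t)=0$ unless $t\leq C_\ve P_d^{1/4}d^{\ve}$. Hence every contributing class satisfies
\[
d\leq \kappa(C)^3X\ll P_d^{3/4}d^{3\ve}X .
\]
So the task becomes counting classes $C$ whose determinant $d$ obeys $d^{1-3\ve}\ll P_d^{3/4}X$. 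For each such $d$ there are at most $h(d)\ll d^{\ve}$ classes, by Prop.~\ref{thm1}. This gives
\[
\textup{MAR}(X)\ll \sum_{d\,:\,d^{1-3\ve}\ll P_d^{3/4}X} d^{\ve}.
\]

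The main obstacle is that $P_d$ can be as large as $\sqrt d$. A naive bound $P_d\leq\sqrt d$ only gives $d^{5/8}\ll X$, that is $d\ll X^{8/5}$, which is too weak. I would therefore parametrize $d=P^2m$ with $P=P_d$ squarefree and odd. The only property needed is $P^2\mid d$, so overcounting is harmless. The condition then reads $(P^2m)^{1-3\ve}\ll P^{3/4}X$, so
\[
m\ll X^{1+4\ve}P^{-5/4+O(\ve)}.
\]
The condition also bounds $P$: since $P^{2}\leq d$, we get $P^{2-6\ve}\ll P^{3/4}X$, hence $P\ll X^{4/5+O(\ve)}$.

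Bounding $d^{\ve}\ll X^{2\ve}$ (all relevant $d\ll X^{8/5+O(\ve)}$), summing over $m$ first and then over $P$ gives
\[
\textup{MAR}(X)\ll X^{2\ve}\sum_{P\ll X^{4/5+O(\ve)}} \left( X^{1+4\ve}P^{-5/4+O(\ve)}+1\right).
\]
The first term sums to $X^{1+O(\ve)}$ because $\sum_P P^{-5/4+O(\ve)}$ converges. The ``$+1$'' term contributes at most $X^{4/5+O(\ve)}$. Renaming $\ve$ yields $\textup{MAR}(X)\ll_\ve X^{1+\ve}$.

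The summation is routine; the one point to check is the exponent bookkeeping. The key fact is $\tfrac34<\tfrac54$ after dividing by $P^2$, so the sum over $P$ converges. This is exactly where Burgess ($\theta=\tfrac14$) is used: Polya--Vinogradov ($\theta=\tfrac12$) would give $P^{3/2}$ against $P^{2}$, which still converges ($\sum P^{-1/2}$ per $P$ fails, though). Indeed with $\theta=\tfrac12$ one gets $m\ll XP^{-1/2}$, so the sum over $P$ diverges and gives only a power-saving-free bound. So Prop.~\ref{propWatson}(1) with $\theta=\tfrac14$ is essential. Any $\theta<\tfrac13$ would suffice, since convergence needs $3\theta-2<-1$.
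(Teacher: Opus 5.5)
Your argument is correct and is essentially the paper's proof: both feed the Burgess form of Prop.~\ref{propWatson}(1) and the bound $h(d)\ll d^{\ve}$ into a sum over $d=P^2m$, summed first over $m$ and then over $P\ll X^{4/5+\ve}$. The one difference is in the bookkeeping, and it favours you. The paper bounds $\textup{MAR}(X)$ by counting pairs $(d,t)$ with $d\le t^3X$ and $t\ll P_d^{\theta}X^{\ve}$; this produces $\sum_P P^{4\theta-2}$ and makes $\theta=\tfrac14$ exactly borderline. You instead count each class once at $t=\kappa(C)$, which gives $\sum_P P^{3\theta-2}$, so any $\theta\le\tfrac13$ would suffice. (Your closing remarks on P\'olya--Vinogradov are garbled in places, but their conclusion, a bound of size $X^{2+\ve}$ when $\theta=\tfrac12$, is right.)
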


Next, we note that a  lower-bound for $\text{MAR}(X)$ can be obtained by restricting the determinants to be squarefree, giving us  $\text{MAR}^\#(X)$. Such a sum is particularly simple to analyse for two reasons: as we show $\kappa(F)$ is bounded by $2$, and all indefinite spinor genera contain only one equivalence class of forms (\cite{eichler}).  The following gives an asymptotic formula for $\text{MAR}^\#(X)$, with the constant determined by a detailed analysis of local classes of forms (using  Appendices \ref{prelim} and \ref{2classes}):

\begin{proposition}\label{Mar-sf}\ 
 As $X \to\infty$
\[ 
\text{\textup{MAR}}(X)\  \geq\  \text{\textup{MAR}}^\#(X) \sim  \frac{7}{4}cX\log{X} +O(X),
\]
where  $c = \prod_p (1+\frac{2}{p})(1-\frac{1}{p})^2 \approx 0.2867...$\ and  $\frac{7}{4}c \approx 0.502$.
\end{proposition}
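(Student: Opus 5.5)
We aim to compute $\text{MAR}^\#(X)$ exactly up to $O(X)$ by reducing everything to local data at each prime. For squarefree $d$, the spinor genus and the genus coincide (the spinor kernel condition forces one spinor genus per genus when $p^2\nmid d$ for odd $p$; at $2$ the squarefree restriction leaves $v_2(d)\le 1$, and we check the finitely many local types). Combined with Eichler's theorem, this gives $h(d)=g(d)$. Hence $\text{MAR}^\#(X)=\sum_{G}\mathbf 1[D(G)\le \kappa(G)^3X]$ over genera of squarefree determinant, where $\kappa$ is now a genus invariant.

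The first step is to show $\kappa(C)\in\{1,2\}$ for such $C$ (isotropic classes must be excluded, or shown to have zero density; for squarefree $d$ isotropy is a Hasse-invariant condition). We run Watson's argument with $P_d=1$. For odd $p\mid d$ with $p\parallel d$, the local form is $\langle 1,a_2\rangle\perp\langle p a_3\rangle$ up to scaling, and it represents every unit. At $2$, a primitive form with $v_2(d)\le1$ either represents $1$ or represents $2$ (or $-1,-2$). For $n=\pm1,\pm2$, local representability at all $p$ plus Prop.~\ref{propA1} gives representation by the genus, hence by the class. The subtle point is that Prop.~\ref{propD2} does not apply to $n=\pm1$. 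We instead use that the genus has one class (spinor genus equals genus equals class), so representation by some form in the genus is representation by $C$. Then $\kappa(C)=1$ unless the $2$-adic type forbids $\pm1$, in which case $\kappa(C)=2$.

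Next, we count. We parametrise genera of squarefree determinant $d$ by the local types at $p\mid d$. For odd $p\parallel d$ there are two local classes, distinguished by $\left(\frac{-a_1a_2}{p}\right)$, subject to a global product formula (the Hasse invariant / oddity relation). At $2$ we read off the finite list from Appendix~\ref{2classes}, recording which types have $\kappa=2$. This gives $\text{MAR}^\#(X)=\sum_{d\le X}^{\flat} a(d)+\sum_{d\le 8X}^{\flat} b(d)$, where $a,b$ are essentially multiplicative functions, of shape $2^{\omega(d)}$ times local factors at $2$, counting genera with $\kappa=1$ and $\kappa=2$ respectively. The product-formula constraint halves the count, compensating the extra factor. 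A standard Dirichlet series argument gives $\sum_{d\le X,\,\mu^2(d)=1}2^{\omega(d)}\sim \frac{6}{\pi^2}\cdot\prod_p(1+\frac{2}{p})(1-\frac1p)^2\cdot\frac{\pi^2}{6}X\log X$. Indeed $\sum \mu^2(n)2^{\omega(n)}n^{-s}=\zeta(s)^2\prod_p(1+2p^{-s})(1-p^{-s})^2$, which has a double pole, so the count is $cX\log X+O(X)$ with $c=\prod_p(1+\frac2p)(1-\frac1p)^2$.

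The factor $\frac74$ should come from weighting the $2$-adic cases. The odd-$d$ and $2\parallel d$ contributions have modified Euler factors at $2$. The genera with $\kappa=2$ are counted up to $8X$ instead of $X$, and this factor $8$ multiplies their proportion. We will carry out this bookkeeping and confirm it totals $\frac74 c$.

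The main obstacle is the $2$-adic analysis. We must determine exactly which primitive $2$-adic classes with $v_2(d)\le1$ fail to represent $\pm1$ (Gaussian versus Watson primitivity matters: forms with all values even give $\kappa=2$). We must also verify the single-class-per-genus claim at $2$ and get the local densities right to produce $\frac74$. I would tabulate the $2$-adic Jordan decompositions from Appendix~\ref{2classes} first and compute each case's representation of $\pm1,\pm2$ by hand.
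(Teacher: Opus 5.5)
Your strategy is the same as the paper's: one class per genus for squarefree $d$ (the paper simply cites Prop.~\ref{propA2} rather than redoing a spinor computation), local representability of $\pm1$ or $\pm2$ plus Prop.~\ref{propA1} to get $\kappa(C)\in\{1,2\}$ for anisotropic $C$, a count of genera via the Hasse-invariant product formula (Props.~\ref{jonesT29} and \ref{jonesT46}), and $\sum_{d\le Y,\,\mu^2(d)=1}2^{\omega(d)}=cY\log Y+O(Y)$. However, your proposal stops exactly where the content of the statement lies. You never determine the primitive $\Z_2$-classes with $v_2(d)\in\{0,1\}$, their Hasse invariants, or which of them represent $\pm1$. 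So you never obtain $\lambda(d,1)$ and $\lambda(d,2)$, and the constant $\frac74$ is only announced (``should come from weighting the $2$-adic cases''), not derived. As written, this is an outline rather than a proof of the asymptotic.

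\textbf{The $2$-adic input you need.}

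\emph{Case $d$ odd.} There are exactly two $\Z_2$-classes, with opposite $c_2$, and each represents $1$ or $-1$. Each odd $p\mid d$ also has two classes, of opposite $c_p$. Fixing $c_\infty=1$, the product formula leaves $2^{\omega(d)}$ genera, exactly one isotropic. Hence $\lambda(d,1)=2^{\omega(d)}-1$.

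\emph{Case $2\parallel d$.} There are two properly primitive classes $\langle a_1,a_2,2a_3\rangle$, with $c_2=\pm1$ and both representing $\pm1$; they give $2^{\omega(d)-1}$ genera. There is also a \emph{single} improperly primitive class: $V_1\perp\langle 2s_1\rangle$ and $V_2\perp\langle 2s_2\rangle$ are equivalent by sign walking. This class is isotropic, which forces $c_2=1$, so it yields only $2^{\omega(d)-2}$ genera, all of whose values are even. Hence $\lambda(d,1)=2^{\omega(d)-1}-1$ and $\lambda(d,2)=2^{\omega(d)-2}-1$.

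These exact counts are what fix the constant. Counting $V_1\perp\langle 2s_1\rangle$ and $V_2\perp\langle 2s_2\rangle$ separately would double the $\kappa=2$ contribution and give $\frac{11}{4}c$ instead.

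\textbf{The arithmetic.} Write $\Sigma^{o}(Y)$ and $\Sigma^{e}(Y)$ for the sums of $2^{\omega(d)}$ over odd, respectively even, squarefree $d\le Y$. Then $\Sigma^{o}(Y)=\frac12 cY\log Y+O(Y)$ and $\Sigma^{e}(Y)=\frac12 cY\log Y+O(Y)$, since the Euler factor $1+2\cdot 2^{-s}$ equals $2$ at $s=1$. Therefore
\[\mathcal S^\#(X,1)=\Sigma^{o}(X)+\tfrac12\Sigma^{e}(X)+O(X)=\tfrac34 cX\log X+O(X),\qquad \mathcal S^\#(8X,2)=\tfrac14\Sigma^{e}(8X)+O(X)=cX\log X+O(X),\]
and $\textup{MAR}^\#(X)=\mathcal S^\#(X,1)+\mathcal S^\#(8X,2)\sim\frac74 cX\log X$. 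The isotropic classes (one or two per $d$) contribute only $O(X)$.
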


\begin{remark}
The proof of the asymptotic formula in Theorem~\ref{IThm1} (proved in Secs. 4-7) utilises the notion of  packets and  root packets.  The squarefree case in Prop.~\ref{Mar-sf} is (essentially) the pointwise limit for the case of trivial root packets.
\end{remark}
\vspace{10pt}
\begin{proof}[Proof of Proposition~\ref{thm2}] \ 

Using the terminology in Remark \ref{rem1} above, the conditions on $d$ and $t$ in (1.0.4) give us  $P_d \ll X^{\frac{1}{2-3\theta}+\eps}$ and $d \ll X^{\frac{2}{2-3\theta}+\eps}$. Using Prop.~\ref{thm1}, write  (1.0.4) as $\textrm{\textup{MAR}}(X) \ll X^{\eps}\textrm{\textup{MAR}}_1$ where

\[\begin{aligned}
\textrm{\textup{MAR}}_1 = \sum_{\substack{1\leq d\leq t^3X\\ 1\leq t \ll P_d^{\theta}X^{ \eps}}} 1  &\ll \sum_{1\leq P  \ll  X^{\frac{1}{2-3\theta}+\eps}}\sum_{\substack{1\leq d\leq t^3X\\ 1\leq t\ll P^{\theta}X^{ \eps}\\P^2|d}} 1 \\ & \ll \sum_{1\leq P\ll  X^{\frac{1}{2-3\theta}+\eps}}\sum_{\substack{ 1\leq t\ll P^{\theta}X^{ \eps}}}\left(\frac{t^3X}{P^2} +1\right) ,\end{aligned}\]

giving the estimate
\[
\textrm{\textup{MAR}}_1 \ll X^{1+\eps} \sum_{1\leq P\ll  X^{\frac{1}{2-3\theta}+\eps}} P^{4\theta -2}\quad + \ \  X^{\frac{1+\theta}{2-3\theta}+\eps}\ .
\]
 If $0\leq \theta \leq\frac{1}{4}+{\eps}$, then $\textrm{\textup{MAR}}_1 \ll X^{1+\eps} $ and so $\textrm{\textup{MAR}}(X) \ll X^{1+ \eps}$ on taking $\theta = \frac{1}{4}$ from Prop.~\ref{propWatson}.

\end{proof}

 \begin{proof}[Proof of Proposition~\ref{Mar-sf}]\ 

Suppose $F$ is a given ternary form over $\mathbb{Z}$ with ${\it D}(F)=d \neq 0$ and $d$ is squarefree. 
We have by Prop.\,~\ref{propA3}

\begin{enumerate}[label=\upshape\arabic*.,itemsep=0.3cm]
\item If  $p>2$ and $p\nmid d$, then  $F$ is $\Zp$-equivalent to $<1,1,d>$, giving one $\Zp$-equivalent class that represents any $l\in \Zp$. All such forms satisfy $c_p(F)=1$.

\item If $p>2$ and $p \! \mid \!  d$, we must have $e_1=0$, $m_1=2$, $e_2=1$ and $m_2=1$. Then, $F$ is equivalent to forms of the type $x_1^2 + \eta_1x_2^2 + p\eta_2x_3^2$, with $\eta_j = 1$ or $r$, where $r$ is a fixed quadratic non-residue modulo $p$, and with $\eta_1\eta_2 \equiv \frac{d}{p}u^2 \Mod{p} $ for some unit $u$. Thus, there are two equivalence classes giving forms $F_1=<1,1,d>$ and $F_2=<1,r,rd>$. One checks that $c_p(F_1)=\left(\frac{-1}{p}\right)$  and $c_p(F_2)= -\left(\frac{-1}{p}\right)$. Both $F_1$ and $F_2$ represent all $l\in \mathbb{Z}$ with $p\nmid l$; in particular, they both represent $\pm 1$ and $\pm2$.
\end{enumerate}

For $p=2$, we use Props.\,~\ref{propB1}, ~\ref{propB2} and the tables in Appendix~\ref{local-appendix}:
\begin{enumerate}[label=\upshape\arabic*.,itemsep=0.3cm]
\item Suppose $d$ is odd. Then, $F$ is $\mathbb{Z}_2$-equivalent to a form with matrix $\{\mathcal{U}_1\}$ of dimension 3. We have the forms given in Prop.\,~\ref{propB2}\,(3) and have exactly two $\mathbb{Z}_2$-equivalence classes for each congruence class $d \Mod{8}$. Denoting these forms by $G_{i,j}$, with $i\in \{1,2\}$ and $j\in \{1,3,5,7\}$, one has the data given in the form   ``$G_{i,j}:\ \{c_2(G_{i,j});\ \text{Minimal}\ \Z-\text{value attained}\}$'' :

{\small \begin{align*}G_{1,1}=&<1,1,1>:\ \{-1; +1\}, \quad G_{2,1}=<1,3,3>:\ \{+1; \pm 1\},\quad  d\equiv 1 \Mod{8};\\
 G_{1,5}=&<1,1,5>:\ \{-1;\pm 1\}, \quad G_{2,5}=<1,3,7>:\ \{+1; \pm 1\},\quad d\equiv 5 \Mod{8};\\
 G_{1,3}=&<1,1,3>:\ \{+1; \pm 1\}, \quad G_{2,3}=<3,3,3>:\ \{-1; \pm 1\},\quad d\equiv 3 \Mod{8};\\
 G_{1,7}=&<1,1,7>:\ \{+1; \pm 1\}, \quad G_{2,7}=<3,3,7>:\ \{-1; -1\},\quad d\equiv 7 \Mod{8}.
\end{align*}}

 All these forms represent $1$ or $-1$ over $\mathbb{Z}_2$.

\item Suppose $2\!\parallel \! d$. Now $F$ is $\mathbb{Z}_2$-equivalent to a Type-1 form with matrix $\{\mathcal{U}_1,2\mathcal{U}_2\}$ of dimensions two and one respectively, or a Type-2 form with matrix $\{\mathcal{V_j},2\mathcal{U}_1,\}$, with $\dim(\mathcal{U}_1)=1$. 
 
 \begin{enumerate}[itemsep=0.3cm,label=\upshape\alph*.]
\item Type-1 : we have forms obtained from Prop.\,~\ref{propB2}\,(i. and ii.), giving us the six diagonal forms $a_1x_1^2+a_2x_2^2+2a_3x_3^2$ with  coefficients $(a_1,a_2,a_3)$ in the set 
\[\quad\quad\quad \{(1,1,\frac{d}{2}),\ (3,3,\frac{d}{2}),\ (1,5,\frac{5d}{2}),\ (3,7,\frac{5d}{2}),\ (1,3,\frac{3d}{2})\},\ (1,7,\frac{7d}{2})\] 
modulo 8. Denoting these forms by $H_{1,j} = <a_1,a_2,da_3>$ with appropriate values for the $a_i$, and with $1\leq j\leq 6$ respectively, we see that they all represent both $\pm 1$ over $\mathbb{Z}_2$. Then, by using the methods of \cite{Conway1993} with oddity fusion and sign walking, we see that there are two $\mathbb{Z}_2$- classes, with the classes given below: 
\begin{align*} c_2 = -1:& \quad H_{1,1} \approx H_{1,4} \approx H_{1,5} \quad \text{if}\quad d\equiv 2 \pmod{8},\\
c_2 = +1:& \quad H_{1,2} \approx H_{1,3} \approx H_{1,6} \quad \text{if}\quad d\equiv 2 \pmod{8}, \\
c_2 = +1:& \quad H_{1,1} \approx H_{1,4} \approx H_{1,6} \quad \text{if}\quad d\equiv 6 \pmod{8}, \\
c_2 = -1:& \quad H_{1,2} \approx H_{1,3} \approx H_{1,5} \quad \text{if}\quad d\equiv 6 \pmod{8}.
\end{align*}
So for each congruence class $d = \pm 2 \pmod{8}$, there are exactly two classes taking both the values $c_2 = \pm 1$, and both classes representing $\pm 1$ over $\mathbb{Z}_2$. 

\item Type-2 : Let $V_1(x,y)= 2xy$ and $V_2(x,y)= 2(x^2+xy+y^2)$, (see Prop.~\ref{propB1}). We obtain forms $H_{2,1}= V_1(x_1,x_2)+ 2s_1x_3^2$ with $s_1 \equiv 7\frac{d}{2} \Mod{8}$ and $H_{2,2}= V_2(x_1,x_2)+ 2s_2x_3^2$ with $s_2 \equiv 3\frac{d}{2} \Mod{8}$. They are equivalent and isotropic. Moreover, these forms  represent $2$ but not $\pm 1$ over $\mathbb{Z}_2$.\qed
\end{enumerate}
\end{enumerate}

We use Props.~\ref{jonesT29} and~\ref{jonesT46} to create indefinite forms over $\mathbb{Z}$ as follows. We choose the Hasse invariant data  with $c_{\infty}=1$ and index $i=1$ and treat the $d$ even and odd cases separately. We put $R=\omega(d)$.

For $d$ odd, we isolate one prime $q$, and choose the other $c_p$ values for odd $p \! \mid \! d$ randomly, giving $2^{R-1}$ possibilities, all representing both $\pm1$. We then  pick the forms  $G_{i,j}$'s. The product of the Hasse invariants of the chosen forms gives, say $v=\pm 1$. We finally choose the one form such that $c_qv$=1. This gives a total of $2^{\omega(d)}$  indefinite integral forms $F$ that represent $1$ (or $-1$) over $\Zp$ for all $p$.  By Prop.~\ref{propA1}, there is a form $F^{*}$ such that $|F^*|$ represents 1 over $\mathbb{Z}$, and thus 
by Prop.~\ref{propA2} all forms in that genus represent $1$ or $-1$. For our application, we need anisotropic forms. By Prop.~\ref{isotropics} we see that there is exactly one isotropic class, so that we obtain $2^{\omega(d)}-1$ anisotropic classes.

If $d\geq 3$ is even, the analysis is much like the above, except for the choice of the forms for $p=2$. We again distinguish an odd prime $q$, and have $2^{R-2}$ choices from odd primes $p\neq q$; all of them represent both $\pm 1$ and $\pm 2$. For $p=2$ with  forms of Type 1, there are 2 classes  giving $c_2= 1$ or $-1$ respectively, so that on choosing the one choice for $p=q$ gives us $2\times 2^{R-2}$ classes $F$ that represent 1 (or $-1$) over $\mathbb{Z}$. For classes of Type 2 when $p=2$, there is one choice with  $c_2=1$'s,  representing $ 2$ and not $\pm 1$, giving us a total of $2^{R-2}$ forms over $\Z$ representing $ 2$. To count the isotropic classes, there is one from Type 1  and one from Type 2. Thus, there are $\left(2^{\omega(d)-1}-1\right)$ anisotropic classes $C$ with $\kappa(C)=1$ and $\left(2^{\omega(d)-2}-1\right)$  with $\kappa(F)=2$.

Thus, if $d\geq 3$ is odd, $\lambda(d,1)= 2^{\omega(d)}-1$ and $\lambda(d,t)=0$ if $t\geq 2$. When $d$ is even, we have $\lambda(d,1)=\left(2^{\omega(d)-1}-1\right)$, $\lambda(d,2)= 2^{\omega(d)-2}-1$ and $\lambda(d,t)=0$ if $t\geq 3$. Let us write \[\mathcal S^\#(X,t)=\sum_{\substack{1\leq d\leq X\\d\textup{ squarefree}}}\lambda(d,t).\]
We thus have, $\mathcal{S}^\#(Y,t)=0$ if $t\geq 3$, while
\[\begin{aligned}
\mathcal{S}^\#(Y,1) &= \sum_{3\leq d\leq Y}\nolimits ^{\#o} \left( 2^{\omega(d)} -1\right) +\sum_{3\leq d\leq Y}\nolimits ^{\#e}\left( 2^{\omega(d)-1} -1\right) + O(1),\\  &= \frac{1}{2}\sum_{d\leq Y}\nolimits ^{*}2^{\omega(d)} + \frac{1}{2}\sum_{d\leq Y}\nolimits ^{*o}2^{\omega(d)} + O(Y),\end{aligned}\]
and
\[\begin{aligned}
\mathcal{S}^\#(Y,2) &= \sum_{3\leq d\leq Y}\nolimits ^{\#e}  \left( 2^{\omega(d)-2} -1\right),\\
&=  \frac{1}{4}\sum_{d\leq Y}\nolimits ^{\#}2^{\omega(d)} - \frac{1}{4}\sum_{d\leq Y}\nolimits ^{\#o}2^{\omega(d)} + O(Y) ,\end{aligned}\]
where all sums are over squarefree numbers, $\sum^{\#o}$ means a sum over odd, while $\sum^{\#e}$ means even integers.

The inner sums can be evaluated asymptotically using 
\[
\sum_{1\leq d\leq Y}\nolimits ^{\#}  2^{\omega(d)} = cY\log Y + O(Y),\]
and 
\[
\sum_{1\leq d\leq Y}\nolimits ^{\#o}  2^{\omega(d)} = \frac{1}{2}cY\log Y + O(Y),\]
with $c= \prod_p (1+\frac{2}{p})(1-\frac{1}{p})^2$. 

Then, 

\[\begin{aligned}
\textrm{\textup{MAR}}^\#(X) &= \mathcal{S}^\#(X,1) + \mathcal{S}^\#(8X,2),\\
 &= \frac{7}{4}cX\log{X} +O(X).
\end{aligned}\]
\end{proof}

\section{Genus count asymptotics}\label{genera}\ 

In this section, we obtain the asymptotic formula ${\bf (A)}$ in the Introduction.  The analysis proceeds by an explicit evaluation of the Hasse invariant for  inequivalent local forms, and by appealing to the classical theorems given in Appendix~\ref{prelim}, as well as a detailed description of the primitive $\Z_2$-classes, see Appendix \ref{local-appendix}.  The next Theorem is a refinement of {\bf (A)}.

	\begin{theorem}\label{avg-genera}
		\[ \sum_{1\leq d\leq X} g(d) =  \frac{19}{20}\frac{\zeta(2)}{\zeta(4)}X\log{X} + O(X),
		\]
\end{theorem}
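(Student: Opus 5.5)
The plan is to express $g(d)$ through local data and then average with a Dirichlet series. By the classical existence theorem for genera recalled in Appendix~\ref{prelim} (Props.~\ref{jonesT29},~\ref{jonesT46}, the same tool used in the proof of Prop.~\ref{Mar-sf}), a genus in $\Gen(d)$ is the same as a choice of local primitive $\Z_p$-class of determinant $d$ for each $p\mid 2d$, subject to one constraint. The real class is fixed: indefinite with $D>0$. For $p\nmid 2d$ the local class is unique by Prop.~\ref{propA3}(3). The constraint is the Hilbert reciprocity condition $\prod_{p\le\infty}c_p=1$ on the Hasse invariants.

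Write $N_p^{\pm}(d)$ for the number of primitive $\Z_p$-classes of determinant $d$ with $c_p=\pm1$, and $N_p=N_p^++N_p^-$. With $c_\infty$ fixed, counting tuples that satisfy reciprocity gives
\[
g(d)=\tfrac12\Big(\prod_{p\mid 2d}N_p(d)\;\pm\;\prod_{p\mid 2d}\big(N_p^+(d)-N_p^-(d)\big)\Big).
\]
By case 2 in the proof of Prop.~\ref{Mar-sf}, an odd $p\,\|\,d$ has exactly two classes, one with each Hasse sign, so the second product vanishes. It therefore survives only when $d=2^k m$ with $m$ odd and squarefull. There are $O(X^{1/2})$ such $d\le X$, and each term is $\ll d^{\ve}$ by Prop.~\ref{thm1}, so this part contributes $O(X^{1/2+\ve})$. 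Up to this error, $g(d)=\tfrac12\prod_{p\mid 2d}N_p(d)$.

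Next I tabulate the local counts. For odd $p$, $N_p(d)$ depends only on $k=v_p(d)$ (the unit part of $d$ modulo squares just permutes classes). From Prop.~\ref{propA3}(4) I get $N_p(1)=2$ and, for $k\ge2$, an explicit count $N_p(k)\ll k$ of the Jordan splittings $a_1x_1^2+p^{e_2}(\cdot)+p^{e_3}(\cdot)$ with $e_2\ge1$. For $p=2$, $N_2(d)$ depends on $v_2(d)$ and on the odd part of $d$ modulo $8$. I read these counts off Props.~\ref{propB1},~\ref{propB2} and the tables of Appendix~\ref{local-appendix}; the cases $v_2(d)\le1$ are already done in the proof of Prop.~\ref{Mar-sf}.

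The averaging then runs as follows. Write $d=2^k m$ with $m$ odd. Then $\tfrac12 N_2(2^k,m \bmod 8)\prod_{p\mid m}N_p(v_p(m))$ is a function $a_k(m)$ of $m$. To handle the dependence on $m\bmod 8$, I expand the indicator of a residue class in Dirichlet characters $\chi\bmod 8$. The generating series is then
\[
\sum_{m \text{ odd}}\frac{\chi(m)\prod_{p\mid m}N_p(v_p(m))}{m^s}=L(s,\chi)^2H_\chi(s),
\]
where $H_\chi(s)=\prod_{p>2}(1-\chi(p)p^{-s})^2\big(1+2\chi(p)p^{-s}+\sum_{k\ge2}N_p(k)\chi(p^k)p^{-ks}\big)$ converges absolutely for $\Re s>\tfrac12$. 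For nonprincipal $\chi$ there is no pole at $s=1$, and a standard convolution argument gives a contribution $O(Y)$. The principal character gives $\tfrac{1}{2}\cdot\tfrac14\,\big(\tfrac{1}{2}\big)^2\,\ldots$ times $H(1)\,Y\log Y+O(Y)$ via the hyperbola method. Summing over $k$ with $Y=X/2^k$, weighted by the average of $N_2(2^k,\cdot)$ over the four odd classes mod $8$, yields $c\,X\log X+O(X)$. The sum over $k$ converges because $N_2(2^k,\cdot)\ll k$.

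The last step is to verify that the constant equals $\frac{19}{20}\frac{\zeta(2)}{\zeta(4)}$. The odd-prime Euler factors $(1-p^{-1})^2\big(1+2p^{-1}+\sum_{k\ge2}N_p(k)p^{-k}\big)$ should simplify to $(1+p^{-2})$ times a correction. Together with the $2$-adic factor, $\tfrac12\sum_k 2^{-k}\overline{N_2}(k)$, they should reassemble into $\frac{19}{20}\prod_p(1+p^{-2})$; note that the $p=2$ factor of $\zeta(2)/\zeta(4)$ is $\tfrac54$. The main obstacle is getting $N_p(k)$ and especially the $2$-adic counts $N_2(2^k,u)$ exactly right for all $k$. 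I would generate the odd-$p$ counts from the Jordan-decomposition parametrization, and the $2$-adic ones from the Conway–Sloane tables in Appendix~\ref{local-appendix}. I would then check the resulting local series against the closed form $(1+p^{-2})(1-p^{-1})^{-2}$ times the expected factor, adjusting with the oddity-fusion and sign-walking equivalences as in Prop.~\ref{Mar-sf}.
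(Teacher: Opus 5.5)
Your route is the paper's. You use local class counts, then halve by Hilbert reciprocity at an odd prime where both Hasse signs occur equally often. Your formula $g(d)=\tfrac12\bigl(\prod N_p\pm\prod(N_p^+-N_p^-)\bigr)$ is a tidy packaging of the paper's ``choose half the forms at $q$'' step. Your exceptional set ($d=2^km$, $m$ odd squarefull) is, like the paper's $d=2^ad_1^2$, of size $O(\sqrt X)$. You then average a multiplicative function, as the paper does.

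The gap is that the proposal never establishes the constant $\frac{19}{20}\frac{\zeta(2)}{\zeta(4)}$, which is the real content of the theorem beyond the order $X\log X$. Both local inputs are deferred as ``the main obstacle''. Your hedge that the odd Euler factor is $(1+p^{-2})$ ``times a correction'' is also not right: there is no correction.

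Two computations are missing.

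\emph{(i) Odd primes.} You need $N_p(k)=2k$ for every odd $p$ and $k\ge1$. By Prop.~\ref{propA3}(4):
\begin{itemize}
\item the splitting $ax_1^2+p^{k/2}(x_2^2+bx_3^2)$ ($k$ even) gives $2$ classes;
\item the splitting $(x_1^2+ax_2^2)+p^kbx_3^2$ gives $2$ classes;
\item the splitting $ax_1^2+p^{t_2}bx_2^2+p^{t_3}cx_3^2$ with $1\le t_2<t_3$, $t_2+t_3=k$ gives $4$ classes per pair, i.e.\ $2k-4$ for $k$ even and $2k-2$ for $k$ odd.
\end{itemize}
Hence $1+\sum_{k\ge1}2kx^k=(1+x^2)/(1-x)^2$, so the odd Euler factor is exactly $1+p^{-2}$. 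The odd part therefore contributes $(1-\tfrac12)^2\prod_{p>2}(1+p^{-2})=\tfrac15\frac{\zeta(2)}{\zeta(4)}$.

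\emph{(ii) The prime $2$.} You need the $2$-adic count $h_2(2^k)=\sum_{2u+v=k}T(u,v)$. Here $T(u,v)$ is the number of primitive $\Z_2$-classes of Jordan shape $(u,v)$, tabulated in Sec.~\ref{genera} and read off from Appendix~\ref{local-appendix}. Summing gives $\sum_k h_2(2^k)2^{-k}=\tfrac{19}{2}$. So the $2$-adic weight $\tfrac12\sum_k h_2(2^k)2^{-k}$ equals $\tfrac{19}{4}$, and $\tfrac{19}{4}\cdot\tfrac15=\tfrac{19}{20}$.

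Finally, your expansion in characters mod $8$ is harmless but unnecessary. Scaling $f\mapsto uf$ by $u\in\Z_2^*$ is a bijection from primitive classes of determinant $d$ onto those of determinant $u^3d\equiv ud$ modulo squares. So $N_2$ depends only on $v_2(d)$, exactly as you noted for odd $p$, and the nonprincipal characters contribute nothing.
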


\begin{proof}
	
	Let $d\geq 2$ and $F\in \mathfrak{F}$  with $D(F)=d$. Let $u$ denote a generic unit in $\Zp$.
	
	By Prop.\,~\ref{propA3}, we have
	
	\begin{enumerate}[itemsep=0.3cm,label=\upshape\arabic*.\ ]
		\item If  $p>2$ and $p\nmid d$, then  $F$ is $\Zp$-equivalent to $<1,1,d>$, giving one $\Zp$-equivalent class.  All such forms satisfy $c_p(F)=1$.
		
		\item If $p>2$ and $p\! \mid \! d$, let $p^{v_p(d)}\parallel d$ with $v_p(d)\geq 1$ and put $d=p^{v_p(d)}d_0$. Then, any $F$ is $\mathbb{Z}_p$-equivalent to one of three types: 
		
		\begin{enumerate}[itemsep=0.3cm,label=\upshape\alph*.]
			\item \underline{Type I} : If $v_p(d)$ is even, $F_p = ax_1^2 + p^{\frac{v_p(d)}{2}}(x_2^2 +bx_3^2)$, with $ab= d_0u^2$. For each $a$, $b$ is determined, giving us exactly 2 choices for such $F_p$'s. One has 
			\[ c_p(F_p)= \left[\left(\frac{a}{p}\right)\left(\frac{-d_0}{p}\right)\right]^{\frac{v_p(d)}{2}}. \]
			
			So if $4 \! \mid \! v_p(d)$, both class of $F_p$ have $c_p(F_p)=1$, while the two choices for $F_p$ give both values $c_p(F_p)=\pm 1$ when $2\parallel v_p(d)$. 
			\item \underline{Type II}: There are two choices $F_p = (x_1^2 + ax_2^2) + p^{v_p(d)}bx_3^2$, with $ab = u^2$.  Now $c_p(F_p) = \left[\left(\frac{-a}{p}\right)\right]^{v_p(d)}$. So if $2 \! \mid \! v_p(d)$, both classes of $F_p$ have $c_p(F_p)=1$, while the two choices for $F_p$ give both values $c_p(F_p)=\pm 1$ when $2\nmid v_p(d)$.
			\item \underline{Type III}: $F_p = ax_1^2 + p^{t_2}bx_2^2 + p^{t_3}cx_3^2$ with $t_2+ t_3=v_p(d) $, $1 \leq t_2 < t_3$, and $abc = d_0u^2$. Given $t_2$ and $t_3$, there are 4 choices of such forms. Thus, there are (i). $4(\frac{v_p(d)}{2}-1)$ such forms if $v_p(d)$ is even, and (ii) there are $4(\frac{v_p(d)-1}{2})$ such if $v_p(d)$ is odd.
			
			One verifies that \[ c_p(F_p)= \left(\frac{-1}{p}\right)^{t_2v_p(d)}\left(\frac{-ad_0}{p}\right)^{t_2}\left(\frac{-ab}{p}\right)^{v_p(d)}. \]
			Then, if $v_p(d)$ is odd, there are $2(v_p(d) -1)$ such classes, with half taking the value $c_p =1$. The case of $v_p(d)$ even leads to an uneven distribution of the $c_p$'s, and we do not list them here.
		\end{enumerate}
		Combining all the three types leads to the statement that for each $v_p(d)$, there are exactly $2v_p(d)$ classes. Moreover,  when $v_p(d)$ is odd, the classes are divided evenly giving values of $c_p=\pm 1$. 
		
	\end{enumerate}

We now consider the prime $p=2$. Given a nonsingular primitive ternary quadratic form $f$ with $\Z_2$-coefficients, there exist unique $u,v\geq0$ such that $f$ can be written up to equivalence as
		\[f(x_1,x_2,x_3)=\begin{cases}ax_1^2+2^ubx_2^2+2^{u+v}cx_3^3 ,\quad\text{or}\\
		 V_i(x_1,x_2)+2^vcx_3^2 ,\quad\text{or}\\
		 ax_1^2+2^uV_i(x_2,x_3),
		\end{cases}\]
		for some $a,b,c\in\Z_2$ with $(abc,2)=1$ and $i\in\{1,2\}$.
        
        The classification of $\mathbb{Z}_2$-inequivalent classes  of determinant $d=2^k\in\Z_2/(\Z^*)^2$ a power of $2$ is given in Appendix \ref{local-appendix}. We write $q=2^{u}$ and $r=2^{u+v}$. Given any nonzero square class $d\in\Z_2/(\Z_2^*)^2$, the number of form classes of determinant $d$ with fixed $(u,v)$ such that $2u+v=v_2(d)$ is given by the table below:
		\[\begin{tabular}{ |c|c c c c| }
			\hline
			$u,v$ & $0$ & $1$ & $2$ & $\geq3$ \\
			\hline
			$0$ & $2$ & $3$ & $5$ & $8$ \\
			$1$ & $3$ & $2$ & $4$ & $8$ \\
			$2$ & $5$ & $4$ & $4$ & $8$ \\
			$\geq3$ & $8$ & $8$ & $8$ & $16$ \\
			\hline
		\end{tabular}\\ \]
        
		Writing $h_2(d)$ for the number of $\Z_2$-equivalence classes of nonsingular primitive ternary $\Z_2$-quadratic forms, we have
		\[\sum_{k=0}^\infty h_2(2^k)x^k=\frac{2-x+4x^2-3x^3+5x^4-4x^5+5x^6-4x^7+4x^8}{(1-x)^2},\]
	
	so that \begin{equation}\label{g2}\sum_{k=0}^\infty h_2(2^{k})2^{-k} = \frac{19}{2}.\end{equation}
	
	Now, for each prime $p$, let $g_p(d)$ denote the cardinality of $\Gen_p(d)$. Using the notation above, if $p$ is odd then we have 
    \[g_p(d) = \begin{cases} 2v_p(d)\quad \text{if}\quad p \! \mid \!  d,\\ \quad 1 \ \quad\quad \text{if}\  p\nmid d,\end{cases}\]\\  
    while for $p=2$ we have
    \[g_2(d) \leq \begin{cases} 8v_2(d)\quad \text{if}\quad 2 \! \mid \!  d,\\ \quad 2 \ \quad\quad \text{if}\  2\nmid d. \end{cases}\]\\
    Hence we get the  bound 
		\begin{equation}\label{gbound1}
		g(d)\leq \left(3+(-1)^d\right)2^{\omega(d)}\prod_{p|d}v_p(d) \ll_{\ve} d^{\ve}.
		\end{equation}
	
	To get sharper results, we use Props.~\ref{jonesT29} and~\ref{jonesT46} as follows. We split the $d$'s into two sets, namely those that have no odd prime factors $p$ with $v_p(d)$ odd, and the complement, the generic $d$'s. The former are necessarily of the form $2^ad_1^2$ with $d_1$ odd. In an interval $1\leq d\leq X$, there are $O(\sqrt{X})$ of such. For the remaining, we pick an arbitrary odd prime $q \! \mid \! d$ with $v_q(d)$ odd. By the analysis above, we pick arbitrary forms for each finite prime $p\neq q$ with Hasse-invariants $c_p$, there being  $g_2(d)\prod_{\{p\neq q, \text{odd}\}}g_p(d)$ such. We choose the sign $c_{\infty}$ to be 1, and put $\nu = \prod_{\{p\neq q\}}c_p$. Then we choose half of the forms associated with $p=q$, chosen so that $c_{\infty}c_qv =1$ so as to satisfy all conditions needed in Props.~\ref{jonesT29} and~\ref{jonesT46}, giving us $\frac{1}{2}g_q(d)$ such forms. Choosing the index $i=1$ then gives us indefinite forms $F$ with the relevant rational and ring invariants. This then shows that for the generic $d$'s, we get
	\begin{equation}\label{gbound2}
		g(d)= \frac{1}{2}g_2(d)\prod_{p\geq 3}g_p(d)=
		\begin{cases}
		2^{\omega(d)}\prod_{p|d}v_p(d) & \text{if } 2\nmid d,\\
		\\
		2^{\omega(d)-2}g_2(d)\prod_{p|d, p>2}v_p(d) & \text{if } 2 \! \mid \!  d.                                
		\end{cases}
		\end{equation}
	
	Let $\theta(d)= 2^{\omega(d)}\prod_{p|d}v_p(d)$ if $d>1$ is odd, is $1$ if $d=1$ and is zero otherwise. Then \[ \sum_{d=1}^{\infty} \frac{\theta(d)}{d^{s}} = (1+2^{-2s})^{-1}(1-2^{-s})^{2}\frac{\zeta(2s)}{\zeta(4s)}\zeta(s)^2,
		\] 
		so that 
		\[\sum_{1\leq d \leq X} \theta(d) = \frac{\zeta(2)}{5\zeta(4)}X\log{X} + O(X).
		\]
		Since $g(d)\ll d^{\ve}$, we see from \eqref{gbound2} that
		\[
		\sum_{1\leq d\leq X} g(d) = \frac{\zeta(2)}{5\zeta(4)}\sum_{k\geq 0} \frac{g_2(2^k)}{2^{k+1}} X\log{X} + O(X).
		\]
		so that using \eqref{g2}, the result follows.
\end{proof}


\section{Preliminaries for  proofs of the Theorems and {\bf (B)}-{\bf(D)}}\label{packets-def}\

Our analysis in the previous sections relied on properties of the genera, and in the case of squarefree determinants the fact that there is only one class in a genus. To go further, we will have to explore the effect of the spinor genera on our sums. We will need some  results on the nature of the spinor genera, and this is given in the next subsection. We give first a brief sketch on how we will  proceed by considering problem $\bf{(B)}$, which then  motivates the notion of packets.

Consider the sum $I(X):=\sum_{1\leq d\leq X}h(d)$, and for simplicity here assume that the $d$'s are odd integers. We write $d=rs$, where $(r,s)=1$, with $r \in \mathcal{P}$ odd, powerful (or squarefull) and $s\in \mathcal{S}$ odd, squarefree. Fix a parameter $1\leq A\leq X$ and write $I(X)= I_A(X)+ I\!I_A(X)$, where $ I\!I_A(X)$ is the tail given by
\[
I\!I_A(X) = \sum_{\substack{r\in \mathcal{P}, A\leq r\leq X\\ rs\leq X, (r,s)=1}}h(rs).
\]
By using properties of the genera and spinor genera (see the proof of Prop.~\ref{thm1} and Prop.~\ref{propE1},(1)), one has $h(d)\ll \tau(s)2^{\omega_3(r)}$, so that
\begin{align*}
I\!I_A(X) &\ll \sum_{r\in \mathcal{P}, A\leq r\leq X}2^{\omega_3(r)}\sum_{s\in \mathcal{S},  s\leq X/r, (r,s)=1}\tau(s),\\
&\ll X\log{X}\sum_{r\in \mathcal{P}, r\geq A} \frac{2^{\omega_3(r)}}{r},\\
&\ll X\log{X}\cdot \frac{(\log{A})^{10}}{A},
\end{align*}
giving us control of the tail-sums. This uniform bound allows one to reduce ${\bf (B)}$  to studying $I_A(X)$ for $A$ fixed, as $X\to \infty$, so that uniformity in $r$ is not needed.

To analyze $I_A(X)$, we break it up into a sum over genera for each fixed $r$, namely $I_A(X)= \sum_{r\in \mathcal{P}, r\leq A}J_r(X)$, where
\[
J_r(X) = \sum_{\substack{s\in \mathcal{S}, rs\leq X\\ (r,s)=1}}\sum_{G\in \Gen(rs)} h(G),
\]
where $h(G)$ is the number of spinor classes in $G$.

For fixed $r$ as above, there is a projection map
\[ \pi_r : \Gen(rs) \to \Packet(r),
\]
where $\pi_r(G)$ is determined by the local classes of $G$ at the primes dividing $2r$ (we give the precise definitions of packets in Sec.~\ref{packs}). Since $s$ is squarefree, one sees that $|\pi_r^{-1}(H)|=2^{\omega(s)-1}$ for any $H\in \Packet(r)$ provided $s>1$. One also sees from the structure of the spinor genera (Prop.~\ref{propE1}(2)) that if $G,G'\in\Gen(rs)$ satisfy $\pi_r(G)=\pi_r(G')=H$, then $h(G)=h(G')$ is a function of $H$ and $s$, say $f(H,s)$. Then,
\[J_r(X) = \sum_{H\in \Packet(r)}\sum_{\substack{s\in \mathcal{S}, rs\leq X\\ (r,s)=1}}f(H,s)\sum_{\substack{G\in \Gen(rs)\\\pi_r(G)=H}} 1\sim\sum_{H\in \Packet(r)}\sum_{\substack{s\in \mathcal{S}, rs\leq X\\(r,s)=1}}f(H,s)2^{\omega(s)-1}.\]Suffice it to say here that the contribution of values of $s$ for which $f(H,s)>1$ turns out to be of lower order, so that that the sum over $s$ can be evaluated asymptotically as $c(r,H)X\log{X}$, giving us $I_A(X)\sim \sum_{r\leq A}\sum_{H}c(r,H)\times X\log{X}$. To execute this strategy, we modify our approach by using the dominated convergence theorem, rather than taking the direct approach in obtaining these asymptotics. The analysis of ${\bf (C)}$ is much more involved since the dependency of the minima function $\kappa(C)$ has to be taken into account when doing the factorization.


\subsection{Spinor genera}\label{spinor}\ 

We establish here a structural result (Prop.~\ref{propE1}) on the set of classes of ternary  forms in a given genus. Our analysis will bring forth the role of the decomposition of a positive integer $d\geq1$ into a product of factors $r(d)$ and $s(d)$ defined as follows:

\begin{definition}\ For each $d\in\Z_{\geq1}$, let $R(d)$ denote the set of prime numbers $p$ such that $p=2$ or $v_p(d)\geq2$. We write $r(d)=\prod_{p\in R(d)}p^{v_p(d)}$. Then  $s(d)=d/r(d)$ is an odd squarefree integer with $(r(d),s(d))=1$. 
\end{definition}

Given a commutative ring $\mathcal{A}$, we shall write $\mathcal{A}^*$ for the group of units in $\mathcal{A}$, and write $\Sq^*(\mathcal{A})=\mathcal{A}^*/(\mathcal{A}^*)^2$ for the group of unit square classes in $\mathcal{A}$. We note that $\Sq^*(\mathcal{A}_1\times \mathcal{A}_2)=\Sq^*(\mathcal{A}_1)\times\Sq^*(\mathcal{A}_2)$ for all commutative rings $\mathcal{A}_1$ and $\mathcal{A}_2$. For our application $\mathcal{A}$ is  $\mathbb{Z}$, $\mathbb{Z}_p$, $\Q_p$, or $\Z/n\Z$. We have
\begin{align*}
\Sq^*(\Z_2)&\simeq\Sq^*(\Z/8\Z)=(\Z/8\Z)^*,\quad\text{and}\\
\Sq^*(\Z_p)&\simeq\Sq^*(\Z/p\Z)\simeq\{\pm1\}\quad\text{for $p\geq3$ prime.}
\end{align*}
If $p\geq2$ prime, we have a group isomorphism 
$\Sq^*(\Q_p)\simeq\Z/2\Z\times\Sq^*(\Z_p)$
given by $p^\alpha a\mapsto ([\alpha],a)$ for all $\alpha\in\Z$ and $a\in\Z_p^*$.

In these paragraphs, following \cite{Conway1993} we treat $-1$ as a ``prime'' so that each nonzero $d\in\Z$ has a unique prime factorization $d=(-1)^a\prod_{p}p^{v_p(d)}$ with $a=v_{-1}(d)\in\{0,1\}$. Thus $-1$ is a prime divisor of a nonzero integer $d$ if and only if $d< 0$. We write $\Q_{-1}=\R$ so that $\Sq^*(\Q_{-1})\simeq\{\pm1\}$. We formally define the symbol $\Sq^*(\Z_{-1})$ to mean the trivial group \[\Sq^*(\Z_{-1})=\{1\}\leq\Sq^*(\Q_{-1}).\]Below, when we say $p$ is a prime, we shall include $p=-1$ unless otherwise specified.

Let $d\geq1$. Given a genus $G\in\Gen(d)$, writing $\Pi=\{-1,2\}\cup\{\textup{$q\mid d$ prime}\}$ we shall express $\Cl(G)$ as a suitable quotient of $\prod_{q\in\Pi}\Sq^*(\Z_q)$ by a subgroup generated by spinor operators, which we now define.

\begin{definition}[\emph{Spinor operators}]
Let $n\geq1$. Let $p$ be prime. Let $F$ be a nonsingular integral $n$-ary quadratic form.
\begin{enumerate}[label=\upshape\arabic*.\ ]
	\item The \emph{spinor norm} is the homomorphism $sn:O_F(\Q_p)\to\Sq^*(\Q_p)$ given by
	\[sn(B)=F(v_1)\cdots F(v_k)\in\Sq^*(\Q_p),\] where $v_1,\dots,v_k$ is any sequence of vectors in $\Q_p^n$ such that $B$ is a product of reflections $B=r_{v_1}\circ\cdots\circ r_{v_k}$ in said vectors.
	\item The group $\mathcal A_p(F)\leq\Sq^*(\Q_p)$ of \emph{$p$-adically automorphous numbers} for $F$ is
	\[\mathcal A_p(F)= sn(SO_F(\Z_p))\leq\Sq^*(\Q_p).\]
	\item Let $\Pi$ be a finite set of primes. We define the group homomorphism
\[\Delta_p^{(\Pi)}:\Sq^*(\Q_p)\to\prod_{q\in\Pi}\Sq^*(\Z_q),\quad\Delta_p^{(\Pi)}(A)_q=\left\{\begin{array}{l l} A/p^{v_p(A)} & q=p,\\p^{v_p(A)} & q\neq p.\end{array}\right.\]
If $A\in\mathcal A_p(F)$, we call $\Delta_p^{(\Pi)}(A)$ the \emph{spinor operator} associated to $(p,F,A,\Pi)$.
\end{enumerate}
\end{definition}

The content of the following lemma is found in \cite[Ch.~9]{Conway1993} Sections 9.4--9.6. See also \cite[Ch.3]{Montesinos}.

\begin{lemma}\label{p-autonum}
Let $n\geq3$ and $d\geq1$ be integers. Let $F$ be a nonsingular integral $n$-ary quadratic form of determinant $d$. Let $p \! \mid \!  -2d$ be prime satisfying $p\neq2$. Assume that $F$ is $\GL_n(\Z_p)$-equivalent to a diagonal form
\[a_1p^{t_1}x_1^2+\dots+a_np^{t_n}x_n^2,\] with $a_i\in\Z_p^*$ for all $i=1,\dots,n$. Then the group of $p$-adically automorphous numbers for $F$ is given by $\mathcal A_p(F)=\langle S_1\cup S_2\rangle$ where:
\begin{enumerate}[label=\upshape\arabic*.\ ]
	\item $S_1=\{a_ip^{t_i}/a_jp^{t_j}:1\leq i,j\leq n\}$, and
	\item $S_2=\Sq^*(\Z_p)$ if $t_i=t_j$ for some $i\neq j$, and $S_2=\emptyset$ otherwise.
\end{enumerate}
\end{lemma}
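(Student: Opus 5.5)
We compute the image of $SO_F(\Z_p)$ under the spinor norm by producing explicit generators and then showing nothing else can occur. We may replace $F$ by the diagonal form $\sum a_ip^{t_i}x_i^2$, since $\mathcal A_p(F)$ depends only on the $\GL_n(\Z_p)$-class of $F$. Throughout, $e_1,\dots,e_n$ denotes the standard basis, so $F(e_i)=a_ip^{t_i}$.

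\emph{Lower bound: the elements of $S_1$.} The reflection $r_{e_i}$ in $e_i$ is integral: it negates the $i$-th coordinate and so lies in $O_F(\Z_p)$. The product $r_{e_i}r_{e_j}$ is therefore in $SO_F(\Z_p)$, and its spinor norm is
\[
a_ip^{t_i}\cdot a_jp^{t_j}\equiv a_ip^{t_i}/a_jp^{t_j}
\]
modulo squares. This gives $S_1\subseteq\mathcal A_p(F)$.

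\emph{Lower bound: $S_2$ when $t_i=t_j$ with $i\neq j$.} The binary piece $p^{t}(a_ix_i^2+a_jx_j^2)$ is $p^t$ times a unimodular binary form over $\Z_p$, with $p$ odd. Such a form represents every unit class, so there is $v=xe_i+ye_j$ with $x,y\in\Z_p$ and $F(v)=p^t\varepsilon$ for a prescribed unit $\varepsilon$. The reflection
\[
r_v(w)=w-\frac{2B(w,v)}{F(v)}\,v
\]
is integral, because $B(w,v)\in p^t\Z_p$ for all $w\in\Z_p^n$ and $F(v)$ has valuation exactly $t$. Composing with $r_{e_i}$ gives an element of $SO_F(\Z_p)$ with spinor norm $\varepsilon a_i$ modulo squares. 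As $\varepsilon$ varies over the units, this produces all of $\Sq^*(\Z_p)$ times an element already in the group generated by $S_1$. Hence $S_2\subseteq\mathcal A_p(F)$.

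\emph{Upper bound, the main obstacle.} We must show that every element of $SO_F(\Z_p)$ has spinor norm in $\langle S_1\cup S_2\rangle$. The standard route, following Conway--Sloane \S9.4--9.6 (O'Meara \S92), has two steps:
\begin{enumerate}[label=\upshape(\roman*)]
\item Show that $O_F(\Z_p)$ is generated by integral reflections $r_v$, i.e.\ those with $2B(\cdot,v)/F(v)$ integral. For $p$ odd this is O'Meara's theorem that integral orthogonal groups of $\Z_p$-lattices are generated by symmetries.
\item Classify the possible values $F(v)$ for integral reflections. Decompose $F$ into its Jordan constituents $p^{t}L_t$ with $L_t$ unimodular. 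An integral reflection forces $v$ to be, up to scaling, a vector whose component in the constituent of minimal valuation $t$ is primitive. Then $F(v)$ equals $p^t$ times a unit represented by the rank of $L_t$: a single class $a_i$ if $\dim L_t=1$, and any unit if $\dim L_t\ge2$.
\end{enumerate}
The delicate point in step (ii) is controlling the cross terms when $v$ has components in several constituents. We would handle this by a valuation argument: write $F(v)=\sum_s p^s F|_{L_s}(v_s)$ and note that integrality of the reflection forces the minimal-valuation term to dominate. Products of an even number of such values then lie in $\langle S_1\cup S_2\rangle$, which completes the proof. Since the lemma is cited from \cite[Ch.~9]{Conway1993}, it would be acceptable to quote step (i) and carry out only the valuation bookkeeping in step (ii).
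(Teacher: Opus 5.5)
Your proof is correct. The paper itself gives no argument for this lemma; it only cites Conway--Sloane (\S\S9.4--9.6) and Montesinos. What you have written is a self-contained version of the classical Kneser/O'Meara argument behind that citation. The lower bound, via the explicit integral symmetries $r_{e_i}r_{e_j}$ and $r_vr_{e_i}$, is complete as stated. The upper bound correctly isolates the one external input: for odd $p$, the group $O_F(\Z_p)$ is generated by integral symmetries. Everything else is bookkeeping, which the citation lets the authors skip.

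The one thing to tighten is step (ii). The phrase ``the constituent of minimal valuation'' is wrong if read as the Jordan constituent of least scale. For $x_1^2+px_2^2+p^2x_3^2$, the symmetry $r_{e_2}$ is integral even though $e_2$ has no component in the unimodular constituent. The correct bookkeeping runs as follows.
\begin{enumerate}
\item Take $v$ primitive and write $v=\sum_s v_s$ with $v_s\in L_s$. Let $m_s$ be the largest $m$ with $v_s\in p^mL_s$, and set $m_s=\infty$ if $v_s=0$.
\item Unimodularity of each $L_s$ gives $B(L,v)=p^{\mu}\Z_p$ with $\mu=\min_s(s+m_s)$.
\item Every term $p^sF_s(v_s)$ of $F(v)$ has valuation at least $s+2m_s\geq\mu$. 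Equality to $\mu$ is possible only when $s=\mu$ and $m_\mu=0$.
\item Integrality of $r_v$ means $v_p(F(v))\leq\mu$, so $v_p(F(v))=\mu$. This forces $L_\mu\neq0$, $v_\mu$ primitive, $F_\mu(v_\mu)$ a unit, and $F(v)\equiv p^\mu F_\mu(v_\mu)\pmod{p^{\mu+1}}$.
\end{enumerate}
Hence the square class of $F(v)$ is $p^{\mu}u$ with $u$ a unit represented by $L_\mu$. That is exactly what your even-product conclusion needs.

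One minor point: under the paper's convention, $p=-1$ is also allowed in the lemma. In that case $\Sq^*(\Z_{-1})$ is trivial, and the claim reduces to Cartan--Dieudonn\'e over $\R$.
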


\begin{proposition}\label{propE1}
Let $d\geq1$ be an integer. Let $P=8\prod_{p^3\mid d\textup{ odd}}p$, and let $\Pi$ be the set of prime divisors of $P$. Then we have an identification $\prod_{q\in\Pi}\Sq^*(\Z_q)=\Sq^*(\Z/P\Z)$, and for each $G\in\Gen(d)$ there is a bijection
\[\Cl(G)\simeq\frac{\Sq^*(\Z/P\Z)}{\langle\Delta_p^{(\Pi)}(\mathcal A_p(G_p)):\textup{$p\mid -2r(d)$ prime}\rangle\cdot \langle p:p\mid s(d)\textup{ prime}\rangle}.\]
In particular, we have:
\begin{enumerate}[label=\upshape\arabic*.\ ]
	\item $h(\Ge(F))\leq 2^{2+\omega_3(d)}$, and
	\item if $G,G'\in\Gen(d)$ satisfy $G_p=G_p'$ for all primes $p \! \mid \!  2r(d)$, then $h(G)=h(G')$.
\end{enumerate}
\end{proposition}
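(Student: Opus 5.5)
We would start from the standard adelic description of the classes in an indefinite genus, as developed in \cite[Ch.~9]{Conway1993}. By Eichler's theorem, $\Cl(G)$ is in bijection with the set of spinor genera in $G$. The latter is the quotient of the idele group $J_\Q$ by the subgroup $\Q^*\cdot J_\Q^{2}\cdot\prod_{p}\mathcal A_p(G_p)$, where at $p=-1$ the form is indefinite, so $\mathcal A_{-1}=\Sq^*(\R)$.

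The first step is to reduce this quotient to a finite group of unit square classes. We use $\Q^*$ to move every idele to one whose components are units at all finite primes; this is possible because $\Q$ has class number one. What remains is $\prod_p\Sq^*(\Z_p)$ modulo the images of $\pm1$ and of the $\Delta_p$ of the automorphous numbers. For $p\nmid 2d$, and for odd $p\mid d$ with $v_p(d)\le 2$, we claim $\Sq^*(\Z_p)\subseteq\mathcal A_p$. In these cases the Jordan decomposition of Prop.~\ref{propA3} has two equal exponents $t_i=t_j$, so Lemma~\ref{p-autonum}(2) applies. When $v_p(d)=1$ the unit block has rank $2$; when $v_p(d)=2$ the form is either of Type~I or of Type~II with $t$'s $(0,0,2)$, and again two exponents coincide. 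Hence those factors die, and this is exactly why $\Pi$ consists of $2$, $-1$ and the odd $p$ with $p^3\mid d$. Via the Chinese remainder theorem, $\prod_{q\in\Pi}\Sq^*(\Z_q)=\Sq^*(\Z/P\Z)$. The subgroup we quotient by consists of the $\Delta_p^{(\Pi)}(\mathcal A_p(G_p))$ for $p\mid -2r(d)$, together with the primes $p\mid s(d)$. For the latter, $\mathcal A_p$ contains $p$ times a unit by Lemma~\ref{p-autonum}(1), using the ratio $p\,a_3/a_1$, and the unit part is absorbed because $\Sq^*(\Z_p)\subseteq\mathcal A_p$. So the contribution is $\Delta_p(p)$, which is just $p$ in each $\Sq^*(\Z_q)$. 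Odd primes with $p^2\Vert d$ lie in $r(d)$ but not in $\Pi$; their $\Delta_p$ images must be kept as generators, which is why the statement uses $p\mid -2r(d)$. The primes $p\nmid 2d$ contribute only $\Delta_p(p)$, and these are already accounted for by the global $\Q^*$ quotient. Checking this bookkeeping carefully is the main technical step; we follow Conway--Sloane's ``spinor operator'' formalism to make it precise.

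\textbf{Part 1.} The group $\Sq^*(\Z/P\Z)$ has order $4\cdot 2^{\omega_3(d)}$: order $4$ from $(\Z/8\Z)^*$, order $2$ from each odd $p\mid P$, and $\Sq^*(\Z_{-1})$ is trivial. The class number is the order of a quotient of this group, which gives $h\le 2^{2+\omega_3(d)}$.

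\textbf{Part 2.} Take $G,G'\in\Gen(d)$ with $G_p=G'_p$ for all $p\mid 2r(d)$. Their real components also agree, since both are indefinite with $d>0$. Then $\mathcal A_p(G_p)=\mathcal A_p(G'_p)$ for every $p\mid -2r(d)$. The remaining generators $\langle p: p\mid s(d)\rangle$ depend only on $d$. So the two quotients coincide and $h(G)=h(G')$.

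The main obstacle is the identification of the idelic quotient with the stated finite quotient. The delicate points are the sign and real place, the prime $2$ (whose automorphous numbers are not given by Lemma~\ref{p-autonum}, but only enter as a subgroup, which suffices), and verifying that global rationals produce precisely the generators $\Delta_p$ with no extra relations. We will cite \cite[Ch.~9, Thm.~9.4–9.6]{Conway1993} for this.
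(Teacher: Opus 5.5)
Your proof follows the paper's argument: Eichler's theorem together with Conway--Sloane's description of the spinor genera as $\prod_{q\in\Pi'}\Sq^*(\Z_q)$ modulo the spinor operators $\Delta_p(\mathcal A_p)$. Lemma~\ref{p-autonum} then kills the tractable places ($p=-1$ and odd $p$ with $v_p(d)\le 2$) and gives $\mathcal A_p=\langle p,\Sq^*(\Z_p)\rangle$ for $p\mid s(d)$. The key identification is cited from \cite{Conway1993} exactly as in the paper, and your idelic preamble only motivates it.

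One remark needs correcting. For $p\nmid 2d$ one has $\mathcal A_p(G_p)=\Sq^*(\Z_p)$, which contains no element of odd valuation, so these primes contribute no generator $\Delta_p(p)$ at all. Moreover, $\Delta_p(p)$ is not trivialized by $\Q^*$. If it were a relation for every such $p$, Dirichlet's theorem would force $h(G)=1$ always.

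A minor point: $|\Sq^*(\Z/P\Z)|=2^{2+\omega_3(d)}$ only when $8\nmid d$; otherwise it is smaller. This is harmless for the bound in Part 1.
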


\begin{proof}
First, by a theorem of Eichler we know that each spinor genus of nonsingular indefinite integral quadratic forms of $3$ or more variables contains a unique class. Let $\Pi'=\{-1,2\}\cup\{\text{$p$ prime}:p\mid d\}$. It follows by \cite[Ch.~9, Thm.~17 and Sec.~9.4]{Conway1993} (See also \cite[Sec.~3.5]{Montesinos}) that the set of spinor genera in the genus of $F$ is in bijection with
\[\frac{\prod_{q\in\Pi'}\Sq^*(\Z_q)}{\langle\Delta_p^{(\Pi')}(\mathcal A_p(F)):p\in\Pi'\rangle}.\]
Now, suppose that $p=-1$ or that $p\geq3$ is an odd prime satisfying $v_p(d)\in\{1,2\}$. Then $F$ is $\GL_2(\Z_p)$-equivalent to a diagonal form
$ax_1^2+bp^{t_2}x_2^2+cp^{t_3}x_3^2$
with $a,b,c\in\Z_p^*$ such that $t_1=0$ or $t_1=t_2$. By Lemma~\ref{p-autonum}, we have $\Delta_p^{(\Pi')}(\mathcal A_p(F))\supseteq\Delta_p(\Sq^*(\Z_p))=\Sq^*(\Z_p)$; in the terminology of \cite{Conway1993}, such primes are called \emph{tractable}. Moreover, if $p \! \mid \!  s(d)$, i.e., ~$p\geq3$ is an odd prime such that $v_p(d)=1$, then we have $\mathcal A_p(F)=\langle p,Sq^*(\Z_p)\rangle$ again by Lemma~\ref{p-autonum}. We thus have
\begin{align*}
\Cl(\Ge(F))&\simeq\frac{\prod_{p\in\Pi'}\Sq^*(\Z_p)}{\langle\Delta_p^{(\Pi')}(\mathcal A_p(F)):p\in\Pi'\rangle}\\
&=\frac{\prod_{p\in\Pi'}\Sq^*(\Z_p)}{\langle\Delta_p^{(\Pi')}(\mathcal A_p(F)):\textup{$p\mid -2r(d)$ prime}\rangle\cdot \langle\Delta_p^{(\Pi')}(p),\Sq^*(\Z_p):p\mid s(d)\rangle}\\
&\simeq\frac{\prod_{q\in\Pi}\Sq^*(\Z_q)}{\langle\Delta_p^{(\Pi)}(\mathcal A_p(F)):\textup{$p\mid -2r(d)$ prime}\rangle\cdot \langle p:p\mid s(d)\rangle}
\end{align*}
from which the desired result follows.
\end{proof}

\subsection{Sums over squarefree numbers}\label{sect:sqfree}\ 

As indicated above, our sums over $d$ are decomposed into subsums, where we will first sum over the corresponding squarefree $s(d)$'s. These sums are much easier to handle and lead to needing asymptotics and upper-bounds for sums of the type shown below.

\begin{proposition}\label{sqfree}
Let $r\geq1$ be an integer. For any set $\mathcal{C}_r\subseteq(\Z/r\Z)^*$ of invertible congruence classes modulo $r$, we have
\[ \sum_{\substack{1\leq s\leq X\\\textup{$s$ odd squarefree}\\p\in\mathcal C_r\:\forall \textup{prime}\:p\mid s}}2^{\omega(s)}\sim K(r,\mathcal{C}_r)\cdot X(\log X)^{2\frac{\lvert\mathcal{C}_r\rvert}{\varphi(r)}-1},\]
as $X\to\infty$, where
\[K(r,\mathcal{C}_r)=\frac{1}{\Gamma\left(2\frac{|\mathcal C_r|}{\varphi(r)}\right)}\prod_{\substack{p=2\; \textup{or}\\p\notin\mathcal C_r}}\left(1-\frac{1}{p}\right)^{2\frac{|\mathcal C_r|}{\varphi(r)}}\prod_{\substack{\textup{$p$ odd prime}\\p\in\mathcal C_r}}\left(1+\frac{2}{p}\right)\left(1-\frac{1}{p}\right)^{2\frac{|\mathcal C_r|}{\varphi(r)}}.\]
\end{proposition}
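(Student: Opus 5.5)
This is a Selberg–Delange type statement. The plan is to apply the robust version of the Landau–Selberg–Delange method of Granville–Koukoulopoulos \cite{GranK} to the multiplicative function
\[
f(s)=\mu^2(s)\,2^{\omega(s)}\,\mathbf 1_{2\nmid s}\prod_{p\mid s}\mathbf 1_{p\in\mathcal C_r}.
\]
This $f$ is supported on squarefree odd integers and satisfies $f(p)=2\cdot\mathbf 1_{p\in\mathcal C_r}$ for odd $p$. When $p\nmid r$, the condition $p\in\mathcal C_r$ is a condition on $p \bmod r$. Primes dividing $r$ are not invertible modulo $r$, so $f(p)=0$ for them; this matches the product over $p\notin\mathcal C_r$ in the constant $K(r,\mathcal C_r)$. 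The first step is therefore to check the hypothesis of \cite{GranK}: $f(p)$ must equal $\kappa$ on average over primes with a good error term, where
\[
\kappa=2\frac{|\mathcal C_r|}{\varphi(r)}.
\]
Since $r$ is fixed, the prime number theorem in arithmetic progressions modulo $r$ (Siegel–Walfisz is more than enough) gives
\[
\sum_{p\le x}f(p)\log p=\kappa x+O_r\!\bigl(x/(\log x)^A\bigr).
\]
In addition $|f(p)|\le 2$, and $f(p^k)=0$ for $k\ge2$.

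The Granville–Koukoulopoulos theorem then yields
\[
\sum_{s\le X}f(s)\sim \frac{c_f}{\Gamma(\kappa)}X(\log X)^{\kappa-1},\qquad c_f=\prod_p\Bigl(\sum_{k\ge0}\frac{f(p^k)}{p^k}\Bigr)\Bigl(1-\frac1p\Bigr)^{\kappa}.
\]
This requires $\kappa>0$, which holds whenever $\mathcal C_r\neq\emptyset$. If $\mathcal C_r=\emptyset$, the sum is identically $1$ (only $s=1$ contributes) and $K=1/\Gamma(0)=0$; I would treat this degenerate case separately and interpret the statement appropriately there.

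The remaining step is to evaluate the local factors. For $p=2$, and for $p$ with $p\notin\mathcal C_r$ (including $p\mid r$), we have $f(p)=0$, so the local factor is $(1-1/p)^\kappa$. For odd $p\in\mathcal C_r$ the local factor is $(1+2/p)(1-1/p)^\kappa$. This reproduces $K(r,\mathcal C_r)$ exactly.

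As an alternative to quoting \cite{GranK}, one could argue directly with Dirichlet series, using orthogonality of characters modulo $r$. Write
\[
F(\sigma)=\sum f(s)s^{-\sigma}=\prod_{p\in\mathcal C_r}\bigl(1+2p^{-\sigma}\bigr).
\]
Then $\log F(\sigma)=\kappa\log\zeta(\sigma)+\frac{2}{\varphi(r)}\sum_{\chi\ne\chi_0}\bigl(\sum_{a\in\mathcal C_r}\bar\chi(a)\bigr)\log L(\sigma,\chi)+(\text{holomorphic in }\Re\sigma>1/2)$. Hence $F(\sigma)=\zeta(\sigma)^\kappa G(\sigma)$, where $G$ is analytic and zero-free in a classical zero-free region near $\sigma=1$, because $L(1,\chi)\neq0$. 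The standard Selberg–Delange contour argument then gives the asymptotic with constant $G(1)/\Gamma(\kappa)$, and this matches the Euler product above by the usual convergence argument.

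I expect the main obstacle to be justifying that $G(1)$ equals the conditionally convergent Euler product $c_f$. This requires a product over primes weighted by the non-principal characters, which converges only conditionally. The identity follows from the prime number theorem in progressions, and it is handled cleanly in \cite{GranK}, which is why I would rely on that reference. Uniformity in $r$ is not needed here, since the paper only uses this result for fixed $r$, with tails controlled separately.
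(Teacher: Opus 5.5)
Your proposal is correct and follows the paper's proof. The paper writes the sum as the partial sums of the coefficients of $\prod_{p\ \mathrm{odd},\,p\in\mathcal C_r}(1+2p^{-s})$, uses the prime number theorem in progressions to get $\sum_{p\le X}a_p\log p=\kappa X+O(X/\log X)$ with $\kappa=2|\mathcal C_r|/\varphi(r)$, notes $|a_n|\le\tau(n)$, and applies \cite[Theorem 1]{GranK}, reading off $K(r,\mathcal C_r)$ from the Euler factors exactly as you do. Your stronger input $O_r(x/(\log x)^A)$ for arbitrary $A$ is, if anything, the safer hypothesis for that theorem, whose error term improves with $A$; this matters when $\kappa\le 1$, as in Prop.~\ref{splitting}. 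Your caveat about the degenerate case $\mathcal C_r=\emptyset$ is a point the paper leaves implicit.
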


\begin{proof}
Our sum can be written as $\sum_{1\leq n\leq X} a_n$ where the terms $a_n$ are defined by the Dirichlet series
\[\sum_{n=1}^\infty\frac{a_n}{n^s}=\prod_{\substack{\textup{$p$ odd prime}\\p\in\mathcal C_r}}\left(1+\frac{2}{p^s}\right).\]
By the prime number theorem in arithmetic progressions, we have
\[ \sum_{p\leq X}a_p\log p=2\sum_{\substack{p\leq X,p\in\mathcal C_r,\\\textup{$p$ odd prime}}}\log p=2\frac{\lvert\mathcal{C}_r\rvert}{\varphi(r)}X+O\left(\frac{X}{\log X}\right),\]
and moreover  $|a_n|\leq\tau(n)$ for all $n$. The  result follows by an application of the Landau--Selberg--Delange method given in \cite[Theorem 1]{GranK}.
\end{proof}

\begin{proposition}\label{sqfree-twist}
Let $r\geq1$ be an integer. For a nontrivial character $\chi:(\Z/r\Z)^*\to\C^*$, as $X\to\infty$,
\[ \sum_{\substack{1\leq s\leq X\\\textup{$s$ odd squarefree}\\(s,r)=1}}2^{\omega(s)}\chi(s)=o(X\log X).\]
\end{proposition}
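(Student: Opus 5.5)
We will decompose the twisted sum according to the fibres of $\chi$ and reduce to Proposition~\ref{sqfree}, using the extra generality there that $\mathcal{C}_r$ may be an arbitrary set of residue classes.

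\emph{Reduction to complete multiplicativity of $\chi$.} The function $s\mapsto 2^{\omega(s)}\chi(s)$ on odd squarefree $s$ coprime to $r$ is multiplicative, with value $2\chi(p)$ at each prime $p\nmid 2r$. Its Dirichlet series is therefore
\[\prod_{p\nmid 2r}\left(1+\frac{2\chi(p)}{p^{s}}\right).\]
Let $m$ be the order of $\chi$, so that the values $\chi(p)$ are $m$-th roots of unity. The sum we want is $\sum_{n\leq X}b_n$ with $|b_n|\leq\tau(n)$ and
\[\sum_{p\leq X}b_p\log p=2\sum_{\substack{p\leq X\\ p\nmid 2r}}\chi(p)\log p.\]

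\emph{Main step: Landau--Selberg--Delange with a twisted parameter.} By the prime number theorem in arithmetic progressions, for nontrivial $\chi$ we have $\sum_{p\leq X}\chi(p)\log p=o(X)$; indeed it is $O(X/\log X)$ for fixed $r$, since $L(1,\chi)\neq 0$. Hence $\sum_{p\leq X}b_p\log p=\kappa X+O(X/\log X)$ with $\kappa=0$.

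We then apply \cite[Theorem 1]{GranK}, exactly as in the proof of Proposition~\ref{sqfree}, with $\kappa=0$ and divisor bound parameter $2$. This gives
\[\sum_{n\leq X}b_n=\frac{c}{\Gamma(0)}X(\log X)^{-1}+O\!\left(X(\log X)^{-1-\delta}\right)=O\!\left(X(\log X)^{1-\delta'}\right),\]
since $1/\Gamma(0)=0$. The resulting bound is $O(X(\log X)^{-1+\epsilon})$, or at worst $o(X\log X)$, depending on the precise error term in \cite{GranK}, where the error typically has the shape $X(\log X)^{\kappa-1-\delta}$ plus a contribution governed by the bound $\tau$. In either case the result is $o(X\log X)$, as claimed.

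\emph{Fallback via fibres of $\chi$.} If one prefers to avoid the degenerate parameter, split the sum by prime factors instead. The positive-coefficient version of Proposition~\ref{sqfree} with $\mathcal{C}_r=(\Z/r\Z)^*$ shows that the full untwisted sum is $\sim K X\log X$. The twisted sum can then be written via orthogonality on the subgroup $\ker\chi$: the characteristic function of $\ker\chi$ is $\frac{1}{m}\sum_{j}\chi^{j}$, and one compares each $\chi^{j}$-twisted sum with the others. This route is circular, however, so the cleaner route is the one above.

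\emph{Main obstacle.} The main obstacle is verifying that the hypotheses of \cite{GranK} hold uniformly when the prime sum has complex mean $0$, and that their error term is genuinely $o(X\log X)$ rather than merely $O(X\log X)$. Since $|b_n|\leq\tau(n)$, the error term $X(\log X)^{2-1-A}$ with $A>0$ suffices. For fixed $r$ the prime number theorem error $O(X/(\log X)^{A})$ is available, which settles this.
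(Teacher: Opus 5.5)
Your main step is the paper's argument. The paper also writes the sum as the partial sums of the multiplicative function with Euler factors $1+2\chi(p)p^{-s}$, uses $|a_n|\le\tau(n)$ and $\sum_{p\le X}a_p\log p=O(X/\log X)$ from the prime number theorem in progressions (so the parameter is $0$ and the main terms vanish via $1/\Gamma(-j)=0$), and invokes \cite[Theorem 1]{GranK}. Your ``fallback via fibres'' paragraph is superfluous and can be dropped: Proposition~\ref{sqfree} constrains the prime factors of $s$, not the residue class of $s$ itself, so it does not apply directly there.
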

\begin{proof}
Our sum can be written as $\sum_{1\leq n\leq X} a_n$ where the terms $a_n$ are defined by the Dirichlet series
\[\sum_{n=1}^\infty\frac{a_n}{n^s}=\prod_{\substack{\textup{$p$ odd prime,}\\(p,r)=1}}\left(1+\frac{2\chi(p)}{p^s}\right).\]
We have
\[ \sum_{p\leq X}a_p\log p=\frac{1}{\varphi(r)}\sum_{a\in (\Z/r\Z)^*}2\chi(a)\cdot X +O\left(\frac{X}{\log X}\right)=O\left(\frac{X}{\log X}\right),\]
and   $|a_n|\leq\tau(n)$ for all $n$. The  result follows from Theorem 1 of \cite{GranK}.
\end{proof}

\begin{corollary}\label{useful}
For all real numbers $A>0$, and $X\geq2$, we have
 \[ \sum_{\substack{1\leq s\leq AX\\\textup{$s$ odd squarefree}}}2^{\omega(s)}\ll AX\log X\cdot\log^+\!A ,\]
where the implied constant is independent of $A$ and $X$.
\end{corollary}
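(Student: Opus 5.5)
The bound is uniform in $A$ and $X$, so I would not go through the asymptotic of Proposition~\ref{sqfree}, whose error term is not explicit enough to give uniformity. Instead I would drop the restrictions ``odd'' and ``squarefree'' (all terms are nonnegative) and use the elementary bound $2^{\omega(s)}\leq\tau(s)$. Then it suffices to bound $\sum_{n\leq Y}\tau(n)$ with $Y=AX$, and the classical estimate $\sum_{n\leq Y}\tau(n)\leq Y(\log Y+1)$ (from $\sum_{n\leq Y}\tau(n)=\sum_{m\leq Y}\lfloor Y/m\rfloor\leq Y\sum_{m\leq Y}1/m$) holds for every real $Y\geq1$ with an absolute constant. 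If $Y<1$, the sum is empty, and the bound is trivial.

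Now I would compare $\log(AX)+1$ with $\log X\cdot\log^+\!A$. Split into two cases. If $A\leq e$, then $\log^+\!A=1$ and $\log(AX)+1\leq\log X+2\ll\log X$, since $X\geq2$. If $A>e$, then
\[
\log(AX)+1=\log A+\log X+1\leq 3\log A\cdot\log X/\log 2,
\]
because both $\log A\geq1$ and $\log X\geq\log2$, so each summand is at most a constant times the product $\log A\log X$. In both cases,
\[
\sum_{n\leq AX}\tau(n)\ll AX\log X\cdot\log^+\!A
\]
with an absolute implied constant, which gives the corollary.

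There is no real obstacle here. The only point needing care is that the implied constant must not depend on $A$. This is why I would avoid quoting Proposition~\ref{sqfree} with a fixed modulus and use instead the explicit divisor-sum inequality. I would also check the degenerate range $AX<1$ separately, where the sum vanishes.
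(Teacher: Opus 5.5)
Your proof is correct, but it obtains the key estimate $\sum_{s\le Y}2^{\omega(s)}\ll Y\log Y+1$ (with $Y=AX\ge1$) by a different route from the paper.

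The paper simply invokes Proposition~\ref{sqfree} with $r=1$. For a fixed modulus this is an asymptotic in the single variable $Y=AX$, so it gives an absolute constant $c$ with the sum at most $c\,Y\log Y+1$ for all $Y\ge1$. The paper then controls $J(X)=\log(AX)/\log X$: it notes $J\le 1$ when $A\le 1$, and for $A>1$ uses that $J$ is decreasing, so $J(X)\le J(2)\ll\log^+\!A$.

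Your stated reason for avoiding Proposition~\ref{sqfree} is therefore not a real obstruction. Uniformity in $A$ and $X$ is automatic there, because they enter only through the product $AX$.

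You instead discard the odd and squarefree conditions and use $2^{\omega(s)}\le\tau(s)$ together with Dirichlet's elementary bound $\sum_{n\le Y}\tau(n)\le Y(\log Y+1)$. This makes the corollary self-contained and independent of the Landau--Selberg--Delange input from Granville--Koukoulopoulos, and it gives explicit constants. The only cost is a worse leading constant, which is irrelevant for an $\ll$ bound.

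The final comparison of $\log(AX)+1$ with $\log X\cdot\log^+\!A$ is essentially the same elementary step in both arguments. You split at $A=e$, while the paper uses the monotonicity of $J$.
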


\begin{proof}
If $AX<1$, then the left hand sum is zero, so that it suffices to assume $AX\geq 1$. By Prop.~\ref{sqfree} with $r=1$, there is a constant $c\geq1$ such that for any $A>0$ and $X\geq2$ we have
\[\sum_{\substack{1\leq s\leq AX\\\textup{$s$ odd squarefree}}}2^{\omega(s)}\leq c\cdot AX\log (AX)+1\leq c\cdot AX\log AX+ 2AX\log X\cdot\log^+\!A .\]
If $J(X)=\frac{\log AX}{\log X}$, then $0<J(X)\leq 1 \leq \log^+\!A$ if $A\leq 1$, and if $A>1$, then $J(X)$ is decreasing so that $J(X)\leq J(2)\leq 100\log^+\!A$. The result then follows.
\end{proof}


\subsection{Packets}\label{packs}\ 

 We give here a precise definition of what we mean by packets, show some properties and  consequences. While the definition makes sense in greater generality for integral quadratic forms in more variables, our focus here will be on forms from $\mathfrak F$, the set of all primitive non-singular integral ternary indefinite quadratic forms of positive determinant.

\begin{definition}[\emph{Packets}]\
\begin{enumerate}[label=\upshape\arabic*.\ ]
	\item If $S$ is a finite set of primes containing $2$, we define a \emph{packet} with \emph{support $S$} to be an element of $\prod_{p\in S}\Gen_p$. We write $\Supp(H)$ for the support of a packet $H$.
	\item We denote the set of all packets by $\Packet$.
	\item Let $S$ be a finite set of primes containing $2$.
	\begin{enumerate}[itemsep=0.3cm,label=\upshape\alph*.]
	\item If $f\in\mathfrak F$, we define the \emph{$S$-restriction} of $f$ to be the packet $f|_S=(f_p)_{p\in S}$ where $f_p$ denotes the class of $f$ in $\Gen_p$. The assignment $(-)|_S:\mathcal F\to\Packet$ descends to functions on $\Cl$ and $\Gen$.
	\item If $H\in\Packet$, we define the \emph{$S$-restriction} of $H$ to be the packet with support $\Supp(H)\cap S$ given by $H|_S=(H_p)_{p\in\Supp(H)\cap S}$.
\end{enumerate}
\end{enumerate}
\end{definition}

\begin{definition}[\emph{Determinant}]\ 
Given a packet $H$, the \emph{determinant} of $H$ is
\[D(H)=\prod_{p\in \Supp(H)}p^{v_p(D(H_p))}\in\Z ,\]
where $D(H_p)\in\Z_p/(\Z_p^*)^2$ denotes the square class of $D(F_p)$ for any choice of representative $F_p$ of the class $H_p$.
\end{definition}

\begin{remark}\
\begin{enumerate}[label=\upshape\arabic*.\ ]
	\item If $F$ is a form, class, genus, or a packet and if $\varphi:\Z\setminus\{0\}\to\mathbb R$ is any function, we shall often write $\varphi(F)=\varphi(D(F))$, so for example $D(F)=r(F)s(F)$.
	\item Given $H\in\Packet$, it is possible to have $p\in\Supp(H)$ such that $v_p(H)=0$. Thus, the determinant does not in general determine the support of $H$.
\end{enumerate}
\end{remark}

As Prop.~\ref{propE1} shows, given a genus $G\in\Gen$ much information is contained in the restriction of $G$ to the set of primes dividing $2r(D(G))$. This motivates the following definition.

\begin{definition}[\emph{Root packets and a partial order}]\label{partial}\
\begin{enumerate}[label=\upshape\arabic*.\ ]
	\item Let $F$ be a form, class, genus, or packet. Recall that $R(D(F))$ is the set of primes $p$ such that $p=2$ or $v_p(D(F))\geq2$. The \emph{root packet} of $F$ is the packet
	\[F_\rt=F|_{R(D(F))}.\]
	The set of all root packets will be denoted by  $\mathfrak R$. 
	\item We define a partial order $\leq$ on $\Packet$ as follows:  $H\leq H'$ if
\begin{enumerate}[label=\upshape\alph*.]
	\item $\Supp(H)\subseteq \Supp(H')$,
	\item $H_{\rt}=H'_{\rt}$, and
	\item $H=H'|_{\Supp(H)}$.
\end{enumerate}
	 More generally, if $H$ is a packet and $F$ is a form, class, or genus, then we write $H\leq F$ to mean that $H_{\rt}=F_{\rt}$ and $H=F|_{\Supp(H)}$.
\end{enumerate}
\end{definition}

The root packets are precisely the minimal elements in $\Packet$ with respect to the partial ordering. If $F$ is a form, class, genus, or a packet then there is a unique root packet $H\in\mathfrak R$ such that $F\geq H$. Thus, root packets and the relation $\leq$ provide a convenient way to partition form classes and genera (Lemma~\ref{trivlem}).

\begin{definition} Let $d\in\Z_{\geq1}$ and let $H\in\Packet$ be a packet. If $Y$ is $\Cl$ (resp.~$\Gen$, resp.~$\Packet$), we write $Y_H$  for the set of classes (resp.~genera, resp.~packets) $X$ of forms such that $X\geq H$. Then, we write $Y_H(d)= Y(d)\cap Y_H$. In particular, we write 
\[ h_H(d)=|\Cl_H(d)|\quad \text{and} \quad g_H(d)=|\Gen_H(d)|.\]
\end{definition}

\begin{lemma}\label{trivlem}
For $d\in\Z_{\geq1}$,
\begin{enumerate}[label=\upshape\arabic*.]
	\item We have the partitions
	\[\Cl(d)=\coprod_{H\in\mathfrak R}\Cl_H(d)\quad\text{and}\quad\Gen(d)=\coprod_{H\in\mathfrak R}\Gen_H(d),\]
so that \[h(d)=\sum_{H\in\mathfrak R}h_H(d)\quad  \text{and}\quad g(d)=\sum_{H\in\mathfrak R}g_H(d).\]
	\item Given a packet $H\in\Packet$, we have
\[\Cl_H(d)=\coprod_{G\in\Gen_H(d)}\Cl(G)\quad \text{and} \quad h_H(d)=\sum_{G\in\Gen_H(d)}h(G).\]
\end{enumerate}
\end{lemma}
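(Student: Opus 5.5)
The plan is to derive Lemma~\ref{trivlem} formally from the definitions of root packets and of the partial order $\leq$. Everything reduces to one assertion: for a class $C$ (or genus $G$) with $D(C)=d$, there is exactly one root packet $H\in\mathfrak R$ with $C\geq H$, namely $H=C_\rt$.

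\emph{Existence.} By definition, $C_\rt=C|_{R(D(C))}$ is a root packet. Its support is $R(D(C))$, and $D(C_\rt)=\prod_{p\in R(d)}p^{v_p(d)}=r(d)$. Hence $R(D(C_\rt))=R(d)$ (the prime $2$ lies in both), so $(C_\rt)_\rt=C_\rt$, and therefore $C\geq C_\rt$ by the definition of $H\leq F$.

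\emph{Uniqueness.} Suppose $H\in\mathfrak R$ and $C\geq H$. Then $H_\rt=C_\rt$ and $H=C|_{\Supp(H)}$. Since $H$ is a root packet, $H=H'_\rt$ for some $H'$. The one point to check is that a root packet $H$ satisfies $H=H_\rt$, i.e.\ $\Supp(H)=R(D(H))$. If $H=H'_\rt$, its support is $R(D(H'))$, and on those primes $v_p(D(H))=v_p(D(H'))$. So $R(D(H))=R(D(H'))$, which gives $H=H_\rt=C_\rt$.

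The same argument applies verbatim to genera, since every invariant involved depends only on the local classes. This yields the disjoint unions $\Cl(d)=\coprod_{H\in\mathfrak R}\Cl_H(d)$ and $\Gen(d)=\coprod_{H\in\mathfrak R}\Gen_H(d)$. Taking cardinalities gives the formulas for $h(d)$ and $g(d)$; each sum is finite because $\Cl(d)$ is finite.

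For part 2, fix any packet $H$. The condition $C\geq H$, namely $H_\rt=C_\rt$ and $H=C|_{\Supp(H)}$, involves only the local classes $C_p$ for $p\in\Supp(H)\cup R(d)$. These local classes are genus invariants, so $C\geq H$ holds if and only if $\Ge(C)\geq H$. Since each class lies in exactly one genus, $\Cl_H(d)$ is the disjoint union of $\Cl(G)$ over $G\in\Gen_H(d)$. Summing cardinalities gives $h_H(d)=\sum_{G\in\Gen_H(d)}h(G)$.

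I expect no real obstacle here. The only step needing care is the idempotence $H_\rt=H$ for root packets, which comes from comparing valuations of determinants on the support.
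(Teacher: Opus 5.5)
Your argument is correct and takes the same approach as the paper, which dismisses the lemma as ``immediate from the definitions''; you supply the unwinding it leaves implicit. That unwinding consists of two facts: $C_\rt$ is the unique root packet below $C$ (via $H_\rt=H$ for $H\in\mathfrak R$, which the paper asserts without proof just before the lemma), and the relation $C\geq H$ depends only on $\Ge(C)$.

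The only step you leave tacit is that for a packet $H'$ one has $\Supp(H'_\rt)=\Supp(H')\cap R(D(H'))=R(D(H'))$. This holds because $2\in\Supp(H')$, and every odd $p$ with $v_p(D(H'))\geq 2$ lies in $\Supp(H')$ by the definition of $D(H')$.
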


\begin{proof}
These are all immediate from the definitions.
\end{proof}





\section{Class number asymptotics}\label{clnbr}

We obtain the asymptotic formula {\bf (B)} using the strategy outlined in the introduction. We use Lemma~\ref{trivlem} to decompose the sum over root packets $H$. For each such $H$, we give in Prop.~\ref{splitting}  a lower order   upper-bound for the number of classes $C\in\Cl_H$ with determinant $D(C)\leq X$ such that $h(\Ge(C))>1$. To prove ${\bf (B)}$ we use the dominated convergence theorem for a sum over the $H$'s, for which we need a dominating bound (Prop.~\ref{class-tail}), and  pointwise convergence (Prop.~\ref{avg-class-pointwise}). The proof is then completed in Thm.~\ref{avg-class}.

Recall that, if $H\in\Packet$ is a packet of forms and $d\ge1$, then $h_H(d)=|\Cl_H(d)|$ where $\Cl_H(d)=\{C\in\Cl(d):C\geq H\}$.
\begin{definition}\label{Def-5.1}\
	For any $d\in\Z$ and $H\in\Packet$, we define
        \begin{align*}
    &\Cl_H^{(1)}(d)=\{C\in \Cl_H(d):h(\Ge(C))=1\},\quad h_H^{(1)}(d)=|\Cl_H^{(1)}(d)|,\\
    \intertext{and}
    &\Cl_{H}^{(2+)}(d)=\{C\in \Cl_H(d):h(\Ge(C))\geq 2\},\quad h_H^{(2+)}(d)=|\Cl_H^{(2+)}(d)|.
    \end{align*}
\end{definition}
	It follows from the definition that, for any packet $H$ and $d\geq1$, we have
\begin{equation}\label{hbound}
    h_H^{(1)}(d)\leq g_H(d)\leq h_H^{(1)}(d)+h_H^{(2+)}(d) = h_H(d).\end{equation}

\begin{lemma}\label{lemgen1}
	Let $H\in\mathfrak R$ be a root packet and $d\geq1$ an integer. Then,\ 
	\begin{enumerate}[label=\upshape\arabic*.\ ]
		\item  Any two genera in $\Gen_H(d)$ contain the same number of classes, denoted $i_H(d)$.
	\end{enumerate}
	Moreover, we have
	\begin{enumerate}[label=\upshape\arabic*.\ ]
		\item[\upshape{2}.\ ]  $i_H(d)=h_H(d)/g_H(d)\leq 2^{2+\omega_3(r(d))}$,
		\item[\upshape{3}.\ ]  $g_H(d)\leq 2^{\omega(s(d))}$, \text{and}
		\item[\upshape{4}.\ ]  $h_H(d)\leq 2^{2+\omega_3(r(d))}\cdot 2^{\omega(s(d))}$.
	\end{enumerate}
\end{lemma}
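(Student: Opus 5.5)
The plan is to read everything off the structural description of $\Cl(G)$ in Prop.~\ref{propE1} and the local classification of Sec.~\ref{genera}.

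\emph{Part 1.} Let $G,G'\in\Gen_H(d)$. By the definition of $\Gen_H$, both restrict to the root packet $H$. Since $R(d)$ is exactly the set of primes dividing $2r(d)$, we get $G_p=H_p=G'_p$ for every prime $p\mid 2r(d)$. At $p=-1$ both are indefinite of positive determinant, so they also agree there. Prop.~\ref{propE1}(2) then gives $h(G)=h(G')$. More explicitly, in the quotient formula the groups $\mathcal A_p(G_p)$ for $p\mid -2r(d)$ depend only on these local data. The remaining generators $\langle p:p\mid s(d)\rangle$ depend only on $d$. Hence the whole quotient $\Sq^*(\Z/P\Z)/\langle\cdots\rangle$ is the same for $G$ and $G'$. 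We call this common value $i_H(d)$.

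\emph{Part 2.} By Lemma~\ref{trivlem}(2),
\[h_H(d)=\sum_{G\in\Gen_H(d)}h(G)=g_H(d)\,i_H(d),\]
so $i_H(d)=h_H(d)/g_H(d)$ whenever $g_H(d)\ge1$; we interpret the statement as vacuous, or adopt a convention, when $g_H(d)=0$. The bound $i_H(d)\le 2^{2+\omega_3(d)}$ follows from Prop.~\ref{propE1}(1). Here $\omega_3(d)=\omega_3(r(d))$, because $s(d)$ is squarefree. Equivalently, the quotient has order at most
\[|\Sq^*(\Z/P\Z)|=|(\Z/8\Z)^*|\cdot 2^{\omega_3(d)}.\]

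\emph{Part 3.} A genus in $\Gen_H(d)$ is determined by its local classes at all $p\le\infty$. At $p=\infty$ the class is fixed by indefiniteness and the sign of the determinant. At $p\mid 2r(d)$ the class is fixed by $H$. At odd $p\nmid d$ the class is unique by Prop.~\ref{propA3}(3). So the only freedom is at the primes $p\mid s(d)$, where $v_p(d)=1$. The count in Sec.~\ref{genera} (case 2 there, Type II with $v_p=1$) gives exactly two local classes at such $p$, namely $\langle 1,1,d\rangle$ and $\langle 1,\eta,\eta d\rangle$. The map $\Gen_H(d)\to\prod_{p\mid s(d)}\Gen_p(d)$ is therefore injective, and so
\[g_H(d)\le 2^{\omega(s(d))}.\]
One could sharpen this to $2^{\omega(s)-1}$ using the Hasse product formula, but that is not needed.

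\emph{Part 4.} Combine Parts 2 and 3: $h_H(d)=g_H(d)\,i_H(d)\le 2^{2+\omega_3(r(d))}\,2^{\omega(s(d))}$.

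The only step needing care is Part 1: checking that the spinor data entering Prop.~\ref{propE1} at $p=-1$ and at $p=2$ depend only on $H$. This is handled by $2\in R(d)$ and the fixed real signature.
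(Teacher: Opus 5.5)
Your proof is correct and follows the paper's route. Parts 1--2 come from Prop.~\ref{propE1}, with $h_H(d)=\sum_{G\in\Gen_H(d)}h(G)$ from Lemma~\ref{trivlem}; Part 3 comes from the odd-prime local classification (Prop.~\ref{propA3}: one class at odd $p\nmid d$, two at each $p\mid s(d)$); and Part 4 combines them. One harmless slip: $|\Sq^*(\Z/P\Z)|=4\cdot 2^{\#\{p\ \text{odd}:\,p^3\mid d\}}$, which is strictly less than $2^{2+\omega_3(d)}$ when $8\mid d$, so your stated equality should be an inequality. Since you only use it as an upper bound, nothing breaks.
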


\begin{proof}
	Parts (1) and (2) are consequences of Prop.~\ref{propE1}. Part (3) follows from Prop.~\ref{propA3}. Part (4) follows by combining parts (2) and (3).
\end{proof}

\begin{proposition}\label{splitting}
	Let $H\in\Packet$ be a packet. As $X\to\infty$,
	\[\sum_{1\leq d\leq X}h_H^{(2+)}(d)=O_H(X).\]
\end{proposition}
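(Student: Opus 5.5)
We show that, once the root packet is fixed, a genus has more than one class only when the tame part $s(d)$ avoids a fixed set of residue classes modulo $P$. Proposition~\ref{sqfree} then saves a power of $\log X$. First, reduce to a root packet. Every $C\geq H$ has $C\geq H_{\rt}$, so $h_H^{(2+)}(d)\le h_{H_\rt}^{(2+)}(d)$. Also $d\ge 1$ with $C\ge H_\rt$ forces $r(d)=r(H_\rt)=:r$. Hence we may assume $H\in\mathfrak R$, and the sum runs over $d=rs$ with $s\le X/r$ odd, squarefree and $(s,2r)=1$.

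By Lemma~\ref{lemgen1}, all genera in $\Gen_H(rs)$ have the same class number $i_H(rs)\le 2^{2+\omega_3(r)}$, and $g_H(rs)\le 2^{\omega(s)}$. Therefore
\[\sum_{d\le X}h_H^{(2+)}(d)\le 2^{2+\omega_3(r)}\sum_{\substack{s\le X/r,\ (s,2r)=1\\ s \text{ odd squarefree},\ i_H(rs)\ge2}}2^{\omega(s)}.\]

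Next, read off from Prop.~\ref{propE1} when $i_H(rs)\ge 2$. Let $P=8\prod_{p^3\mid r,\,p \text{ odd}}p$; this depends only on $r$. Then $\Cl(G)$ is the quotient of $\Sq^*(\Z/P\Z)$ by $W_H\cdot\langle p: p\mid s\rangle$. Here $W_H=\langle\Delta_p^{(\Pi)}(\mathcal A_p(H_p)): p\mid -2r\rangle$ depends only on $H$, since the local classes at primes dividing $2r$ are those of $H$ and the archimedean factor is fixed (indefinite, positive determinant). The image of a prime $p\mid s$ in $\Sq^*(\Z/P\Z)$ is its class $p \bmod P$, an element of the finite abelian $2$-group $Q_H=\Sq^*(\Z/P\Z)/W_H$. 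Thus $i_H(rs)=1$ exactly when the images of the primes $p\mid s$ generate $Q_H$. If $Q_H$ is trivial there is nothing to prove. Otherwise, $i_H(rs)\ge2$ forces all primes dividing $s$ into some maximal proper subgroup $M<Q_H$. For fixed $M$, this means every prime $p\mid s$ lies in a set $\mathcal C_M\subseteq(\Z/P\Z)^*$ of density $|\mathcal C_M|/\varphi(P)=[Q_H:M]^{-1}\le 1/2$.

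Finally, apply Proposition~\ref{sqfree} to each of the finitely many $M$. Imposing $(s,r)=1$ only removes primes that are already absent, since the primes dividing $P$ also divide $2r$; the remaining primes of $r$ dividing $s$ can be excluded by enlarging the modulus to $Pr$, which preserves the density. The sum over $s$ is then
\[\ll_{H} \frac{X}{r}(\log X)^{2\cdot\frac12-1}=O_H(X).\]
Summing over the at most $O_H(1)$ maximal subgroups gives the claim.

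The main obstacle is the identification step: one must check that the contribution of a tame prime $p\mid s$ in Prop.~\ref{propE1} really is just its residue modulo $P$. This relies on the identification $\prod_{q\in\Pi}\Sq^*(\Z_q)=\Sq^*(\Z/P\Z)$, under which $\Delta_p^{(\Pi)}(p)$ is the diagonal image of $p$. One must also check that $W_H$ is independent of $s$. Both follow from the proof of Prop.~\ref{propE1}, but this is where care is needed, including the $-1$ and $2$ components.
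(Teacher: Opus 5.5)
Your proposal is correct and is essentially the paper's proof. Both arguments reduce to a root packet and bound $h_H^{(2+)}(rs)$ by $g_H(rs)\,i_H(rs)\le 2^{\omega(s)}2^{2+\omega_3(r)}$ on the set where $i_H(rs)>1$. Both then use Prop.~\ref{propE1} to force every prime dividing $s$ into an index-$2$ subgroup of $(\Z/P\Z)^*$, and apply Prop.~\ref{sqfree} (exponent $2\cdot\tfrac12-1=0$) to each of the finitely many such subgroups.

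The differences are cosmetic. You restrict to maximal subgroups of $Q_H=\Sq^*(\Z/P\Z)/W_H$, whereas the paper simply sums over all index-$2$ subgroups of $(\Z/P\Z)^*$. Your modulus-enlarging step for the condition $(s,r)=1$ is unnecessary, since dropping that condition only enlarges a sum of nonnegative terms.
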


\begin{proof}
	We may assume without loss of generality that $H\in\mathfrak R$ is a root packet. Using the notation of Lemma~\ref{lemgen1}, note that
	\[\sum_{1\leq d\leq X}h_H^{(2+)}(d)=\sum_{\substack{1\leq d\leq X,\\i_H(d)>1}}g_H(d)i_H(d)\leq\sum_{\substack{1\leq d\leq X,\\i_H(d)>1},}2^{\omega(s(d))}\cdot 2^{2+\omega_3(d)}.\]
	Let $r=r(H)$. Since $i_H(d)=0$ if $r(d)\neq r$, it remains only to show that
	\[\sum_{\substack{1\leq s\leq X,(s,2r)=1\\\textup{$s$ squarefree},\\i_H(rs)>1}}2^{\omega(s)}=O_H(X).\]
	Let $P=8\prod_{p^3\mid r\text{ odd}}p$. Let $Q(P)$ denote the set of all subgroups of index $2$ in $(\Z/P\Z)^*$. By Prop.~\ref{propE1}, we have
	\[\sum_{\substack{1\leq s\leq X,(s,2r)=1,\\\textup{$s$  squarefree},\\i_H(rs)>1}}2^{\omega(s)}\leq\sum_{\mathcal{C}\in Q(P)}\sum_{\substack{1\leq s\leq X,(s,2r)=1,\\\textup{$s$  squarefree},\\p\in\mathcal C\:\forall\text{prime}\:p\mid s}}2^{\omega(s)}.\]
	Since $Q(P)$ is finite, we deduce by Prop.~\ref{sqfree} that the right hand side is $O_H(X)$.
\end{proof}

\vspace{4pt}

\begin{proposition}[\emph{Domination}]\label{class-tail}
For any $H\in\Packet$ and any $X\geq2$,
\[\frac{1}{X\log X}\sum_{1\leq d\leq X} h_H(d)\ll  \frac{2^{\omega_3(r(H))}}{D(H)},\]
where the implied constant is independent of $H$ and $X$.
\end{proposition}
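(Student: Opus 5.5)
We reduce at once to the case where $H\in\mathfrak R$ is a root packet. For a general packet $H$, every class $C\geq H$ satisfies $C\geq H_{\rt}$. Moreover $D(H_{\rt})\le D(H)$, so the bound for $H$ would follow from the bound for $H_{\rt}$ only if we could compare $D(H)$ and $D(H_{\rt})$. This comparison fails in the wrong direction, so the general case needs the tame part of $H$ as well.

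We therefore treat a general $H$ directly. Set $r=r(H)$, and note that $H_{\rt}$ fixes $r(d)=r$ for every $d$ with $h_H(d)\neq0$. Write $d=rs$ with $s$ odd, squarefree and $(s,2r)=1$. The constraint $C\geq H$ also forces every prime $p\in\Supp(H)\setminus R(D(H))$ with $v_p(D(H_p))=1$ to divide $s$. Let $t=D(H)/r$, the product of these tame primes. Then $t\mid s$, and we write $s=ts'$ with $(s',2rt)=1$ and $s'$ squarefree.

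Lemma~\ref{lemgen1}(4) gives
\[
h_H(d)\le h_{H_{\rt}}(d)\le 2^{2+\omega_3(r)}\,2^{\omega(s)}=2^{2+\omega_3(r)}\,2^{\omega(t)}\,2^{\omega(s')}.
\]
The factor $2^{\omega(t)}$ is too lossy. We remove it by refining Lemma~\ref{lemgen1}(3): fixing the local classes at the primes of $t$ (as $H$ does) leaves at most $2^{\omega(s')}$ genera, since each odd prime $p\parallel d$ contributes exactly $2$ local classes by Prop.~\ref{propA3}. The spinor bound $i_H(d)\le 2^{2+\omega_3(r)}$ from Lemma~\ref{lemgen1}(2) is unaffected. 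This gives
\[
h_H(rts')\le 4\cdot 2^{\omega_3(r)}\,2^{\omega(s')}.
\]

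Summing over $d\le X$ amounts to summing over odd squarefree $s'\le X/(rt)=X/D(H)$. Corollary~\ref{useful} with $A=1/D(H)$ gives
\[
\sum_{s'\le X/D(H)}2^{\omega(s')}\ll \frac{X}{D(H)}\,\log X\cdot\log^+(1/D(H)) =\frac{X\log X}{D(H)},
\]
since $\log^+(1/D(H))=1$ because $D(H)\ge1$. This is uniform in $H$, and multiplying by $4\cdot2^{\omega_3(r)}$ yields the claimed bound $\frac{1}{X\log X}\sum_{d\le X}h_H(d)\ll 2^{\omega_3(r(H))}/D(H)$. If $X<D(H)$ the sum is empty.

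The only step needing care is the refined genus count: for fixed local data at the primes in $\Supp(H)$, the remaining freedom lies only at the primes dividing $s'$, and each contributes at most a factor $2$. This follows from the local classification in Prop.~\ref{propA3}, together with the fact that a genus is determined by its local classes. The global Hasse constraint only reduces the count, so the upper bound holds.
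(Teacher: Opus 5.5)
Your argument is correct and is essentially the paper's proof. You write $d=D(H)s'$ with $s'$ odd and squarefree, and bound the number of genera by $2^{\omega(s')}$: the local data on $\Supp(H)$ is fixed, and there are two $\Z_p$-classes at each $p\mid s'$. You bound the classes per genus by $2^{2+\omega_3(r(H))}$ via Prop.~\ref{propE1}, then apply Cor.~\ref{useful} with $A=1/D(H)$. The only blemish is your opening sentence announcing a reduction to root packets, which you immediately and rightly abandon, so it should simply be deleted.
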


\begin{proof}
Let us write $\delta=\prod_{p\in\Supp(H)}p$. We have by Props.~\ref{propA3} and~\ref{propE1} that
\begin{align*}
\sum_{1\leq d\leq X} h_H(d)&=\sum_{\substack{1\leq s\leq X/D(H),\\(s,2\delta(H))=1,\\\textup{$s$ squarefree}}} g_H(D(H)s)i_H(D(H)s)\\
&\leq \sum_{\substack{1\leq s'\leq X/D(H),\\\textup{$s'$ odd squarefree}}} 2^{\omega(s')}\cdot 2^{2+\omega_3(r)}\ll X\log X\cdot\frac{2^{\omega_3(r(H))}}{D(H)}
\end{align*}
where the last bound follows from Cor.~\ref{useful}.
\end{proof}

\vspace{4pt}

\begin{proposition}[\emph{Pointwise convergence}]\label{avg-class-pointwise}
Let $H\in\Packet$ be a packet. As $X\to\infty$, we have
\[\sum_{1\leq d\leq X} g_H(d)\sim \sum_{1\leq d\leq X}h_H(d)\sim \sum_{1\leq d\leq X}h_H^{(1)}(d)\sim A(H)\cdot X\log X ,\]
where
\[A(H)=\frac{1}{ 2^{|\Supp(H)|+2}}\cdot\frac{1}{D(H)}\prod_{p\in\Supp(H)}\left(1-\frac{1}{p}\right)^2\prod_{p\notin\Supp(H)}\left(1+\frac{2}{p}\right)\left(1-\frac{1}{p}\right)^2.\]
\end{proposition}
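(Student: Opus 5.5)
By \eqref{hbound} we have $h_H^{(1)}(d)\le g_H(d)\le h_H(d)$. By Prop.~\ref{splitting}, $\sum_{d\le X}(h_H(d)-h_H^{(1)}(d))=\sum_{d\le X}h_H^{(2+)}(d)=O_H(X)$. So the three sums differ by $O_H(X)=o(X\log X)$, and it suffices to prove the asymptotic for $\sum_{d\le X}g_H(d)$. As in Prop.~\ref{splitting}, we reduce to the case where $H$ is a root packet. For a general packet $H$ with root packet $H_{\rt}$, the support includes finitely many extra primes $q$ with $v_q(D(H))=1$, and the sum becomes one over genera whose local classes at those $q$ are prescribed. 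The root-packet argument below handles this verbatim, with $s=s'\prod q$ and the extra local conditions fixed. Write $r=r(H)$ and $S=\Supp(H)$. Then every $d$ with $g_H(d)\neq0$ has the form $d=D(H)s'$ with $s'$ odd, squarefree and coprime to $\delta=\prod_{p\in S}p$.

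The key step is an exact formula for $g_H(D(H)s')$. Genera in $\Gen_H(d)$ are tuples $(G_p)_{p}$ with $G_p=H_p$ for $p\in S$, $G_\infty$ indefinite of positive determinant, and $G_p$ the unique class $\langle1,1,d\rangle$ for odd $p\nmid d$. At each $p\mid s'$ there are two local classes, with $c_p=\pm1$, as in the proof of Prop.~\ref{Mar-sf}. Such a tuple comes from a global form exactly when the product formula for Hasse invariants holds (Props.~\ref{jonesT29},~\ref{jonesT46}). When $s'>1$, we pick one prime $q\mid s'$ and let $c_q$ fix the parity, as in the proof of Theorem~\ref{avg-genera}. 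This gives $g_H(D(H)s')=2^{\omega(s')-1}$ for $s'>1$.

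The sum is therefore
\[\frac12\sum_{\substack{s'\le X/D(H),\ (s',2\delta)=1\\ s'\ \text{squarefree}}}2^{\omega(s')}+O(1).\]
We evaluate it with Prop.~\ref{sqfree}, taking modulus $r=\delta$ and $\mathcal C_\delta=(\Z/\delta\Z)^*$, so the exponent of the logarithm is $2\cdot 1-1=1$ and $\Gamma(2)=1$. The constant is $\prod_{p\mid 2\delta}(1-\frac1p)^2\prod_{p\nmid 2\delta}(1+\frac2p)(1-\frac1p)^2$, and replacing $X$ by $X/D(H)$ gives $\log(X/D(H))\sim\log X$. Since $2\in S$, $\{p\mid 2\delta\}=S$. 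The result is $\frac{1}{2D(H)}\prod_{p\in S}(1-\frac1p)^2\prod_{p\notin S}(1+\frac2p)(1-\frac1p)^2\,X\log X$.

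The main obstacle is the normalization factor $2^{-|S|-2}$ in $A(H)$ against the $\frac12$ obtained above. The missing factor $2^{-|S|-1}$ must come from the Hasse-invariant counting at the primes of $S$ together with the choice of sign at $\infty$. I expect the parametrization of packets implicit in the statement to carry weights for the sign choices at each $p\in S$ (e.g.\ counting $c_p$-data and the real place), which would make the total factor $2^{-(|S|+2)}$. I will check this normalization first against the squarefree case of Prop.~\ref{Mar-sf} and against the total of Theorem~\ref{avg-genera} obtained by summing over root packets. If $A(H)$ is instead intended with a different convention for counting $\Gen_H$, I will adjust the local bookkeeping at $p\in S$ (the term counted by $\frac12 g_q$ in the proof of Theorem~\ref{avg-genera}) accordingly. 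The remaining steps are routine.
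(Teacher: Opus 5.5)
There is a genuine gap, and it sits exactly at the factor you flagged as the ``main obstacle.'' Your reduction to $\sum_{d\le X}g_H(d)$ via \eqref{hbound} and Prop.~\ref{splitting} is fine. The key formula is not: it is false that $g_H(D(H)s')=2^{\omega(s')-1}$ for every odd squarefree $s'>1$ coprime to $\delta$. A genus in $\Gen_H(d)$ must have $G_q=H_q$ for each $q\in\Supp(H)$. In particular $d$ must lie in the $q$-adic square class $D(H_q)\in\Z_q/(\Z_q^*)^2$. The integer $D(H)=\prod_{p\in\Supp(H)}p^{v_p(D(H_p))}$ records only the powers of $q$ and forgets the unit parts. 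So this is a real congruence condition on $s'$: a Legendre-symbol condition modulo each odd $q\in\Supp(H)$, and a condition modulo $8$ at $q=2$. For $s'$ failing it, $g_H(D(H)s')=0$, so you are summing over far too many $s'$. For instance, take the $8$ root packets with $\Supp(H)=\{2\}$ and $D(H)=1$. Your formula gives them a total of $c'X\log X$, with $c'=\prod_{p>2}(1+\frac2p)(1-\frac1p)^2$. But $\sum_{d\le X,\ d\text{ odd squarefree}}g(d)=\sum 2^{\omega(d)}\sim\frac14c'X\log X$ (see the proof of Prop.~\ref{Mar-sf}).

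The factor you are missing is the density of these conditions, $\prod_{q\in\Supp(H)}|\Sq^*(\Z_q)|^{-1}=\frac14\cdot2^{-(|\Supp(H)|-1)}=2^{-|\Supp(H)|-1}$; together with your $\frac12$ this gives exactly $2^{-|\Supp(H)|-2}$. It does not come from sign choices at $p\in\Supp(H)$ or at $\infty$. Those local classes are fixed by $H$ (respectively by indefiniteness and $D>0$), so they contribute no freedom. The Hasse-invariant product formula costs only the single $\frac12$ already present in $2^{\omega(s')-1}$. There is also no convention to adjust, since $g_H$ is unambiguously defined.

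To prove the density, detect the conditions by inserting $\prod_{q\in\Supp(H)}|\Sq^*(\Z_q)|^{-1}\sum_{\chi}\chi\bigl(D(H)s'/D(H_q)\bigr)$, where $\chi$ runs over the characters of $\Sq^*(\Z_q)$. Expand the product and take the main term from the principal character using Prop.~\ref{sqfree}. Each remaining term involves a nontrivial real character of $s'$ modulo $8\prod_{q\in\Supp(H),\ q\text{ odd}}q$, and is $o(X\log X)$ by Prop.~\ref{sqfree-twist}. Prop.~\ref{sqfree} alone cannot supply this, because it constrains the residue classes of the prime factors of $s'$ rather than of $s'$ itself. This twisted-sum equidistribution step is the missing ingredient; without it the proposal does not reach $A(H)$.
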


\begin{proof}
Combining \eqref{hbound}  with Prop.~\ref{splitting}, it suffices to show that
\[\sum_{1\leq d\leq X} g_H(d)\sim A(H)\cdot X\log X,\]
as $X\to\infty$. Let us write $\delta=\prod_{p\in\Supp(H)} p$. Then $g_H(d)\neq0$ precisely if $d$ is of the form $D(H)s$ where $s\geq1$ is a squarefree positive integer satisfying $(s,2\delta)=1$, such that $D(H)s=D(H_q)\mod(\Z_q^*)^2$ for every prime $q\in\Supp(H)$. For $s\neq1$ satisfying the requisite properties, we have $g_H(D(H)s)=2^{\omega(s)-1}$ by arguing as in the proof of Thm. \ref{avg-genera} by consideration of Hasse invariants.
Writing $X(q)$ for the set of all (quadratic) characters of $\Sq^*(\Z_q)$, by Props.~\ref{sqfree} and \ref{sqfree-twist} we have
\begin{align*}
&\sum_{1\leq d\leq X}g_H(d)=\sum_{\substack{1\leq s\leq X/D(H),(s,2\delta)=1\\ \textup{$s$ squarefree}}}g_H(D(H)s),\\
&\sim\sum_{\substack{1< s\leq X/D(H),(s,2\delta)=1\\\textup{$s$  squarefree}}}\frac{2^{\omega(s)-1}}{2\cdot 2^{|\Supp(H)|}}\prod_{q\in\Supp(H)}\sum_{\chi\in X(q)}\chi(D(H)s/D(H_q)),\\ &\sim\frac{1}{ 2^{|\Supp(H)|+2}}\prod_{p\in\Supp(H)}\left(1-\frac{1}{p}\right)^2\prod_{p\notin\Supp(H)}\left(1+\frac{2}{p}\right)\left(1-\frac{1}{p}\right)^2 \frac{X\log{X}}{D(H)},
\end{align*} with the main term coming from the principal character.
\end{proof}

\vspace{4pt}

\begin{theorem}
[\emph{Formula {\bf(B)}}]\label{avg-class}
As $X\to\infty$,
\[\sum_{1\leq d\leq X} h(d)\sim \frac{19}{20}\frac{\zeta(2)}{\zeta(4)}X\log X.\]
\end{theorem}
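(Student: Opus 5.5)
We decompose over root packets and apply dominated convergence on $\rPKT$ with counting measure. By Lemma~\ref{trivlem},
\[\frac{1}{X\log X}\sum_{1\leq d\leq X}h(d)=\sum_{H\in\rPKT}f_X(H),\qquad f_X(H):=\frac{1}{X\log X}\sum_{1\leq d\leq X}h_H(d).\]
By Prop.~\ref{avg-class-pointwise}, $f_X(H)\to A(H)$ for each fixed $H$. The same proposition also gives the analogous limit for the genus counts:
\[\frac{1}{X\log X}\sum_{d\leq X}g_H(d)\to A(H).\]

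For domination, Prop.~\ref{class-tail} gives $f_X(H)\ll 2^{\omega_3(r(H))}/D(H)$ uniformly in $X\geq2$, and for a root packet $D(H)=r(H)$. So it suffices to show
\[\sum_{H\in\rPKT}\frac{2^{\omega_3(r(H))}}{r(H)}<\infty.\]
We group the packets by $r=r(H)$, which runs over integers of the form $2^a m$ with $m$ odd and powerful. The number of $H$ with $r(H)=r$ is $\prod_{p\mid 2r}|\Gen_p(p^{v_p(r)})|$. By the local counts in the proof of Thm.~\ref{avg-genera}, this is at most $\prod_{p\mid r\text{ odd}}2v_p(r)$ times $h_2(2^{v_2(r)})$, where $h_2(2^k)\ll k+1$. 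The resulting Dirichlet series is an Euler product. At odd $p$ its local factor is $1+\sum_{k\geq2}2k\,2^{[k\geq3]}p^{-k}=1+O(p^{-2})$, so the product converges. The factor at $2$ is $\sum_k h_2(2^k)2^{-k}=19/2$ by \eqref{g2}, which is finite. Hence the dominating function is summable.

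Dominated convergence then yields
\[\frac{1}{X\log X}\sum_{d\leq X}h(d)\to\sum_{H\in\rPKT}A(H).\]
Running the identical argument with $g_H$ in place of $h_H$ (the domination still holds since $g_H\leq h_H$) gives
\[\frac{1}{X\log X}\sum_{d\leq X}g(d)\to\sum_{H\in\rPKT}A(H).\]
By Thm.~\ref{avg-genera} this limit equals $\frac{19}{20}\frac{\zeta(2)}{\zeta(4)}$, which proves the theorem without evaluating $\sum_H A(H)$ directly.

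The only step that needs care is the summability of the dominating series. Its difficulty lies in controlling the number of local classes per root packet, particularly at $p=2$. That count is already supplied by the table giving \eqref{g2} together with the odd-prime count $g_p=2v_p$, so the argument is mostly bookkeeping.
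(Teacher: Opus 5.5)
Your proposal is correct and is essentially the paper's proof: dominated convergence on $\mathfrak R$ with the bound of Prop.~\ref{class-tail}, pointwise limits from Prop.~\ref{avg-class-pointwise}, a second dominated-convergence pass for the genus sums using $g_H\leq h_H$, and then Thm.~\ref{avg-genera}. The only difference is that you check summability of $2^{\omega_3(r(H))}/r(H)$ with an Euler product, where the paper just uses that the numerator is $O_\epsilon(r^\epsilon)$; your count of root packets with $r(H)=r$ misses a factor $|\Sq^*(\Z_p)|$ for each $p\in\Supp(H)$ (the unit square class of $D(H_p)$ is not determined by $r$), but this multiplies each local factor by a bounded amount and does not affect convergence.
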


\begin{proof}
For each $X\geq2$, consider the function $f_X^{({\bf B})}(H):\mathfrak R\to\R_{\geq0}$ defined by
\[f_X^{({\bf B})}(H)=\frac{1}{X\log X}\sum_{1\leq d\leq X}h_H(d),\]
and let $N:\mathfrak R\to\R_{\geq0}$ be given by $N(H)=2^{\omega_3(r(H))}/r(H)$. We note that
\[\sum_{H\in\mathfrak R}N(H)=\sum_{H\in\mathfrak R}\frac{2^{\omega_3(r(H))}}{r(H)}\leq\sum_{r\in |R|}\frac{2^{\omega_3(r)}\cdot 4\cdot 2^{\omega(r)}\prod_{p\mid r}v_p(r)}{r},\]
where $|R|=\{r(d):d\in\Z_{\geq1}\}$. Because the numerator in each summand of the last series is $O_\epsilon(r^\epsilon)$, we conclude that $N\in L^1(\mathfrak R)$. It follows by Prop.~\ref{class-tail} that
$f_X^{({\bf B})}(H)\ll N(H)$
valid for all $H\in\mathfrak R$ and all $X\geq2$, where the implied constant is independent of $H$ and $X$. Thus $f_X^{({\bf B})}\in L^1(\mathfrak R)$ for every $X\geq2$. By the dominated convergence theorem the pointwise limit $f^{({\bf B})}=\lim_{X\to\infty} f_X^{({\bf B})}$ is in $L^1(\mathfrak R)$ and
\[\lim_{X\to\infty}\frac{1}{X\log X}\sum_{1\leq d\leq X}h(d)=\lim_{X\to\infty}\sum_{H\in\mathfrak R}f_X^{({\bf B})}(H)=\sum_{H\in\mathfrak R}f^{({\bf B})}(H)<\infty.\]
Now, we have
\begin{align*}
\sum_{H\in\mathfrak R}f^{({\bf B})}(H)&=\sum_{H\in\mathfrak R}\left[\lim_{X\to\infty}\frac{1}{X\log X}\sum_{1\leq d\leq X}h_H(d)\right],\\
&=\sum_{H\in\mathfrak R}\left[\lim_{X\to\infty}\frac{1}{X\log X}\sum_{1\leq d\leq X}g_H(d)\right]\quad\text{by Prop.~\ref{avg-class-pointwise}},\\
&=\lim_{X\to\infty}\frac{1}{X\log X} \sum_{H\in\mathfrak R}\sum_{1\leq d\leq X}g_H(d)=\lim_{X\to\infty}\frac{1}{X\log X} \sum_{1\leq d\leq X} g(d),
\end{align*}
where the first equality on the last line follows again by the dominated convergence theorem and the fact that $g_H(d)\leq h_H(d)$ for all $H\in\mathfrak R$ and $d\in\Z$. The desired result follows by Thm.~\ref{avg-genera}.
\end{proof}


\section{Local representations}\label{local-rep}

In preparation for the proof of ${\bf (C)}$, the asymptotic for $\textup{MAR}(X)$, we will need some results that control the size of $\kappa(C)$ when we factorize $D(C)$. The main result in this section is Prop.~\ref{kappa-finite}, while an analog of Prop.~\ref{splitting} used to prove ${\bf (B)}$ is given in Prop.~\ref{lambdas}. To obtain these, we need to understand  integers that are locally everywhere represented by a genus or a packet of forms. The partial order introduced (in Def.~\ref{partial}) on the set of packets plays an important role in our analysis. We begin with some definitions and some preliminary results.

\begin{definition}
Let $F$ be a form, class, genus, or packet, and for each prime $p$ let $F_p$ be its image in $\Gen_p$ .
\begin{enumerate}[label=\upshape\arabic*.\ ]
	\item Let $\mathcal{R}_p(F)$ denote the set of all integers represented by $F_p$ over $\mathbb Z_p^3\setminus\{{\bf 0}\}$.
	\item $\mathcal {R}(Z)$ will denote the set of all  integers represented by all the $F_p$'s with $p\in \Supp(F)$. When $F$ is a form, class, or genus, we take $\Supp(F)$ to mean the set of all primes, so that
	\[\mathcal {R}(F)=\bigcap_{p\in\Supp(F)}\mathcal R_p(F).\]  
	\item $K(F)$ (resp.~$K^*(F)$) will denote the least nonnegative (resp.~positive) integer in   $\mathcal{R}^{\textup{(abs)}}(F) := \left\{|l|\ : l\in \mathcal{R}(F)\right\}$.
    \item If $F$ is a form or class, let $\kappa(F)$ (resp.~$\kappa^*(F)$) denote the least nonnegative (resp.~positive) integer in $\left\{|t|:t\in F(\Z^3\setminus\{\bf{0}\})\right\}$.
\end{enumerate}
\end{definition}

\begin{remark}
If $F$ is a form or class, $\mathcal R(F)$ may be different from $R(F)=F(\Z^3\setminus\{{\bf 0}\})$. Moreover when $G$ is a genus, $K(G) = \min_{\Ge(C)=G}\kappa(C)$.
\end{remark}

It is easy to see that $K^*:\Packet\to\Z_{\geq0}$ is an increasing function where $\Packet$ is equipped with the partial order in Def.~\ref{partial} and $\Z_{\geq0}$ is equipped with the usual linear order. One might wonder if there exist arbitrarily long chains $H_1< H_2<\dots$ of packets with a strictly increasing sequence $K^*(H_1)<K^*(H_2)<\dots$ of $K^*$-values. We shall show in Prop.~\ref{upper-bound} that this cannot occur.

\begin{lemma}\label{triv}
Let $F\in\mathfrak F$ and let $p$ be an odd prime. If $p\nmid D(F)$, then $\mathcal R_p(F)=\Z$.
\end{lemma}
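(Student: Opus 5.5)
The plan is to diagonalise $F$ over $\Z_p$ with unit coefficients and then solve $F(\mathbf x)=n$ separately according to whether $p\nmid n$ or $p\mid n$. Since $p$ is odd and $p\nmid D(F)$, Prop.~\ref{propA3}(3) shows that $F$ is $\GL(3,\Z_p)$-equivalent to a unit diagonal form
\[
g=a_1x_1^2+a_2x_2^2+a_3x_3^2,\qquad a_1,a_2,a_3\in\Z_p^*.
\]
Representation of an integer over $\Z_p^3\setminus\{\mathbf 0\}$ is a class invariant, so it suffices to show that $g$ represents every $n\in\Z$ by a nonzero vector in $\Z_p^3$. This is the observation already made at the start of the proof of Theorem~\ref{thmD1}; the aim here is to make it fully rigorous, including the cases $n=0$ and $p\mid n$.

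\textbf{Case $p\nmid n$.} I will use the binary part $a_1x_1^2+a_2x_2^2$. The quadratic form $a_1x_1^2+a_2x_2^2$ over $\mathbb F_p$ represents every element of $\mathbb F_p^*$. The standard pigeonhole argument gives this: the sets $\{a_1x^2\}$ and $\{n-a_2y^2\}$ each have $(p+1)/2$ elements in $\mathbb F_p$, so they intersect. This produces a solution mod $p$ with $(x,y)\neq(0,0)$, since $n\not\equiv0$. At that solution the gradient $(2a_1x,2a_2y)$ is nonzero mod $p$, because $p$ is odd and the $a_i$ are units. Hensel's lemma then lifts the solution to $\Z_p$. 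Taking $x_3=0$ gives a representation of $n$.

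\textbf{Case $p\mid n$, including $n=0$.} I will set $x_3=1$ and solve
\[
a_1x_1^2+a_2x_2^2=n-a_3.
\]
Since $p\mid n$, the right-hand side $n-a_3$ is a $p$-adic unit. By the previous case this equation has a solution in $\Z_p^2$, and the resulting vector $(x_1,x_2,1)$ is nonzero. For $n=0$ this shows in particular that $g$ is isotropic over $\Z_p$.

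Combining the two cases gives $\mathcal R_p(F)\supseteq\Z$, and the reverse inclusion is trivial, so $\mathcal R_p(F)=\Z$. I do not expect a real obstacle. The only point needing care is the nondegeneracy required for Hensel's lemma, which is exactly where $p$ odd and $p\nmid D(F)$ enter: both ensure the diagonal coefficients are units and that $2$ is invertible.
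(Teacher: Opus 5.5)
Your proof is correct and essentially the paper's: the paper's one-line proof also pairs a mod-$p$ existence statement, taken from Chevalley--Warning, with Hensel's lemma, whereas you get the mod-$p$ solution by diagonalising over $\Z_p$ and counting squares in $\mathbb F_p$. Your case split ($x_3=0$ when $p\nmid n$, $x_3=1$ when $p\mid n$) is exactly the argument the paper already sketches in Section~\ref{Watson} via Prop.~\ref{propA3}(1).
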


\begin{proof}
This follows from Hensel's lemma and the Chevalley--Warning Theorem.
\end{proof}

\begin{proposition}\label{upper-bound}
Let $H\in\Packet$ be a packet. Then there is a positive integer $N \! \mid \!  2 r(H)$ such that, for any odd $m\in\mathbb N$ coprime to $D(H)$, we have $Nm\in \mathcal{R}^{\textup{(abs)}}(H)$. In particular, \[K^*(H)\leq 2\,  r(H).\]
\end{proposition}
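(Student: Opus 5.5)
We work one prime of $\Supp(H)$ at a time and glue the local choices together. For each $p\in\Supp(H)$ we will pick an exponent $a_p\geq0$ and a nonempty set of admissible unit square classes $U_p\subseteq\Sq^*(\Z_p)$. The requirement is that $p^{a_p}u$ is represented by $H_p$ for every $u\in U_p$. We then set $N=\prod_{p}p^{a_p}$ and arrange that every odd $m$ coprime to $D(H)$ makes $\pm Nm/p^{a_p}$ land in $U_p$ for the appropriate sign. The difficulty is that a general $m$ can have any square class at $p$, so we need $U_p$ to be all of $\Sq^*(\Z_p)$ for odd $p$, possibly after allowing the sign. At $p=2$ we need $U_2$ to contain $\pm u$ for every $u\in(\Z/8\Z)^*$ after a single global sign choice; this must be handled jointly, or we accept the absolute value in the definition of $\mathcal R^{(\mathrm{abs})}$.

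\emph{Odd $p\in\Supp(H)$ with $v_p(D(H))=0$ or $1$.} If $p\nmid D(H)$, Lemma~\ref{triv} gives $\mathcal R_p=\Z$, so we take $a_p=0$. If $p\parallel D(H)$, the form is $x_1^2+ax_2^2+pbx_3^2$, which contains a unimodular binary part. That binary part represents every unit, since a nondegenerate unimodular binary form over $\Z_p$ with $p$ odd is universal on units by Chevalley--Warning and Hensel. So again $a_p=0$.

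\emph{Odd $p$ with $p^2\mid D(H)$, so $p\mid r(H)$.} By Prop.~\ref{propA3} we write $H_p\simeq a_1x_1^2+p^{t_2}a_2x_2^2+p^{t_3}a_3x_3^2$ with $0=t_1\le t_2\le t_3$ and $t_2+t_3=v_p(D)$. If two of the $t_i$ are equal, the corresponding binary piece $p^{t}(a_ix^2+a_jy^2)$ represents $p^{t}u$ for every unit $u$, so we take $a_p=t\le v_p(D)$. Otherwise the $t_i$ are distinct. The key trick is that $a_1x^2+p^{t}a_2y^2$ with $t$ even represents $a_1(x^2+p^t a_1a_2y^2)$. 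This gives only the class of $a_1$, however, so for the other square class we use $x=p^{k}$-scaled values. Concretely, $a_1p^{2k}$ and $a_2p^{t_2}$ realise square classes $a_1$ and $a_2$ with exponent parities $0$ and $t_2$. If $t_2$ or $t_3$ has a parity allowing a different square class at the same exponent parity, we combine them; this is the delicate case.

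The fallback is to use both $x_1^2$ and $x_1^2+p^{t}(\cdot)$. Since $x_1^2a_1+p^{t_2}a_2y^2$ with $y$ a unit and $x_1$ divisible by a high enough power of $p$ gives $p^{t_2}(a_2+p^{\text{large}})\sim p^{t_2}a_2$, only two classes are reachable in general: $a_1$ at even exponent and $a_2$ at exponent $\equiv t_2$. We therefore choose $a_p\in\{0,t_2,t_3\}$, each at most $v_p(D)$, and let the sign $\pm$ together with the freedom of the other primes absorb the mismatch. This forces $N$ to depend on the residue class of $m$, which contradicts the statement. The statement's wording ``a positive integer $N$ such that for any $m$'' therefore suggests we should instead show that one exponent works universally. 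That holds when $p\equiv 3\pmod 4$, because the sign $-1$ swaps the square classes. It also holds when some $t_i$ coincide.

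This odd-prime, distinct-$t_i$ step is the main obstacle, and we would first check against the Appendix~\ref{prelim} classification whether primitivity plus $t_2+t_3=v_p$ always yields a universal exponent. For $p=2$ we would read off from the tables in Appendix~\ref{local-appendix} an exponent $a_2\le v_2(D)+1$ and verify case by case that all odd classes mod $8$ are reached up to sign; the factor $2$ in $2r(H)$ allows for the $V_i$ types, which represent only $2\cdot$unit. Finally, taking $m=1$ gives $K^*(H)\le N\le 2r(H)$.
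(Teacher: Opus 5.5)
\textbf{There is a genuine gap.} It sits exactly at the step you flag as the main obstacle: an odd prime $p\mid r(H)$ with $H_p\simeq ax_1^2+p^{t_2}bx_2^2+p^{t_3}cx_3^2$ and distinct exponents. Your claim that ``only two classes are reachable in general'' is false. So is your conclusion that $N$ must depend on the square class of $m$, ``which contradicts the statement.''

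The missing idea is a pigeonhole on parities. Among the three exponents $0,t_2,t_3$, two have the same parity. Scaling the variable with the smaller exponent by the right power of $p$ puts both terms at the same level:
\begin{itemize}
\item if $t_2$ is even, $H_p(p^{t_2/2}u,v,0)=p^{t_2}(au^2+bv^2)$;
\item if $t_2$ is odd and $t_3$ is even, $H_p(p^{t_3/2}u,0,v)=p^{t_3}(au^2+cv^2)$;
\item if $t_2$ and $t_3$ are both odd, $H_p(0,p^{(t_3-t_2)/2}u,v)=p^{t_3}(bu^2+cv^2)$.
\end{itemize}
In each case you get $p^{e_p}$ times a unimodular binary form. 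By Prop.~\ref{propA3}(1) such a form represents every unit of $\Z_p$. You already used this when two $t_i$ coincide; the point is that equal parity suffices, not equality. You did write down $a_1x^2+p^ta_2y^2$ with $t$ even, but you only substituted a unit or a high power of $p$ for $x$. The substitution $x=p^{t/2}u$ is exactly the step you skipped.

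This gives a single exponent, independent of $m$: $e_p=t_2$ if $t_2$ is even and $e_p=t_3$ otherwise. Then $e_p\le v_p(H)$, and $e_p=0$ when $v_p(H)\le 1$. Moreover, both $\pm p^{e_p}m'$ are represented for every unit $m'$.

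Having both signs at every odd prime also settles your worry about the global sign. The sign only has to be chosen at $p=2$, and any choice there is then compatible with all odd $p\in\Supp(H)$.

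At $p=2$ your proposal only promises a table check, but the argument is short.
\begin{itemize}
\item If $H_2$ is diagonalizable, the same parity trick gives $2^{e_2}(\alpha u^2+\beta v^2)$ with $\alpha,\beta$ odd. Prop.~\ref{binquad-lem}(2) then shows that one of $\pm 2^{e_2}m$ is represented for each odd $m$.
\item Otherwise $H_2$ contains some $2^tV_i$ with $t\le v_2(H)/2$. This represents $2^{t+1}m$ for every odd $m$, of either sign: trivially for $V_1=2xy$, and by Prop.~\ref{binquad-lem}(1) for $V_2$. So $e_2=t+1\le v_2(H)+1$, which is the source of the factor $2$ in $2r(H)$.
\end{itemize}
Setting $N=\prod_{p\in\Supp(H)}p^{e_p}$ gives $N\mid 2r(H)$ and $Nm\in\mathcal R^{(\mathrm{abs})}(H)$ for all odd $m$ coprime to $D(H)$. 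Taking $m=1$ yields $K^*(H)\le 2r(H)$.
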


\begin{proof} 
Let $p\in \Supp(H)$ be an odd prime. We may assume $H_p$ is the class of the diagonal form
\[H_p(x_1,x_2,x_3)=ax_1^2+p^{t_2} bx_2^2+p^{t_3}cx_3^2, \]
for some integers $a,b,c$ coprime to $p$ and $0\leq t_2\leq t_3$ such that $t_2+t_3=v_p(H)$. We have the following cases.
\begin{enumerate}[label=\upshape\arabic*.\ ]
	\item If $t_2$ is even, then $ H_p(p^{\frac{1}{2}t_2}u,v,0)=p^{t_2}(au^2+bv^2)$.
	\item If $t_2$ is odd and $t_3$ is even, then $H_p(p^{\frac{1}{2}t_3}u,0,v)=p^{t_3}(au^2+cv^2)$.
	\item If $t_2$ and $t_3$ are odd, then $H_p(0,p^{\frac{1}{2}(t_3-t_2)}u,v)=p^{t_3}(bu^2+cv^2)$.
\end{enumerate}
Defining $e_p(H)=t_2$ if $t_2$ is even, and $e_p(H)=t_3$ otherwise, we deduce by Prop.~\ref{propA3}(1) that $p^{e_p(H)}m\in\mathcal R_p(H)$ for any integer $m$ coprime to $p$. In fact, if $v_p(H)=0$, then we have $\mathcal R_p(H)=\Z$ by Lemma~\ref{triv}.

We now consider the prime $p=2$. If $H_2$ is diagonalizable, then we define $e_2(H)$ as above and we see that $2^{e_2(H)}m\in \mathcal{R}^{\textup{(abs)}}_2(H)$ for every odd $m\in\mathbb N$ by Prop.~\ref{binquad-lem}. If $H_2$ is not diagonalizable, then by Appendix ~\ref{Z2} the form $H_2$ represents
$2^{t}V_{i}(x,y)$
for some $i\in\{1,2\}$ and $0\leq t\leq v_2(d)/2$, where $V_1(x,y)=2xy$ and $V_2(x,y)=2(x^2+xy+y^2)$ as defined in Prop.~\ref{propB1}. In this case, defining $e_2(H)=t+1$, we conclude by Prop.~\ref{binquad-lem} that $2^{e_2(H)}m\in \mathcal{R}^{\textup{(abs)}}_2(H)$ for every odd $m\in\mathbb N$.

Putting $N=\prod_{p\in \Supp(H)} p^{e_p(H)}$, it then  follows by the above that, if $m\in\mathbb N$ is odd and coprime to $D(H)$, then $Nm\in\mathcal{R}^{\textup{(abs)}}(H)$. We note that $e_2(H)\leq v_2(H)+1$ and $e_p(H)\leq v_p(H)$ for all odd $p\in \Supp(H)$, and moreover $e_p(H)=0$ for odd $p\in\Supp(H)$ if $v_p(H)\leq 1$. We conclude that $N \! \mid \!  2r(H)$, as stated.
\end{proof}

\begin{corollary}\label{minimal-ripe-new}
Let $H\in\mathfrak R$ be a root packet and let
\[T(H)=\{p:2\leq p\leq 2\,r(H)\}\supseteq\Supp(H).\]
If $F$ is a form, class, genus, or packet and $F\geq H$, then \[K^*(F)=K^*(F|_{T(H)}).\]
\end{corollary}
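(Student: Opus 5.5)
Since $F|_{T(H)}$ imposes local conditions only at the primes of $T(H)\cap\Supp(F)$, while $\mathcal R(F)$ imposes them at all primes of $\Supp(F)$, we have $\mathcal R(F)\subseteq \mathcal R(F|_{T(H)})$. Hence $K^*(F|_{T(H)})\leq K^*(F)$ immediately. For the reverse inequality, set $k=K^*(F|_{T(H)})$. The plan is to show that the positive integer $k$ (or $-k$) that realizes this minimum is also locally represented at every prime $p\in\Supp(F)\setminus T(H)$. Then $k\in\mathcal R^{\textup{(abs)}}(F)$ and so $K^*(F)\leq k$.

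\emph{Bounding $k$.} Since $F\geq H$, we have $F_\rt=H$, and in particular $F|_{T(H)}\geq H$ in the partial order, so $r(F|_{T(H)})=r(H)$. This is because every $p\in\Supp(F)$ with $p=2$ or $v_p(D(F))\geq 2$ already lies in $\Supp(H)\subseteq T(H)$. Apply Prop.~\ref{upper-bound} to the packet $F|_{T(H)}$, whose root part is $H$, taking $m=1$. This gives $N\mid 2r(H)$ with $N\in\mathcal R^{\textup{(abs)}}(F|_{T(H)})$, so $1\leq k\leq N\leq 2r(H)$. One should check that $m=1$ is admissible; it is, since $1$ is odd and coprime to everything.

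\emph{Representing $k$ at the primes outside $T(H)$.} Let $p\in\Supp(F)\setminus T(H)$. Then $p>2r(H)\geq k$, so $p\nmid k$, and $p$ is odd. Also $p\notin R(D(F))$, since $R(D(F))=\Supp(H)\subseteq T(H)$; hence $v_p(D(F))\leq 1$. If $v_p(D(F))=0$, Lemma~\ref{triv} gives $\mathcal R_p(F)=\Z$. If $v_p(D(F))=1$, then $F_p$ is $\Z_p$-equivalent to a form $(x_1^2+ax_2^2)+pbx_3^2$ with $a,b$ units, as in the Type II case of Prop.~\ref{propA3}. Its unit binary part represents every $p$-adic unit by Prop.~\ref{propA3}(1), so every integer prime to $p$, in particular $\pm k$, lies in $\mathcal R_p(F)$. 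Therefore the element $\pm k\in\mathcal R(F|_{T(H)})$ lies in $\mathcal R_p(F)$ for all $p\in\Supp(F)$, which gives $K^*(F)\leq k$.

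\emph{Main obstacle.} The delicate step is confirming that $\Supp(F)\cap T(H)$ contains every prime where $F$ is not tame: that $2$ and all $p$ with $v_p\geq 2$ lie in $\Supp(H)$, using the definition of root packet and of $\leq$. A second subtlety arises when $F$ is itself a packet with $\Supp(F)\not\supseteq T(H)$. There we must still apply Prop.~\ref{upper-bound} to $F|_{T(H)}$; since its hypotheses only involve the support and $r$, this is fine. The sign issue (whether $+k$ or $-k$ is the represented value) causes no trouble, because the tame-prime argument handles both signs.
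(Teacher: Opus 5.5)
Your proof is correct and follows the paper's own argument. You bound $K^*(F|_{T(H)})$ by $2r(H)$ using Prop.~\ref{upper-bound}, so all its prime factors lie in $T(H)$, and then you observe that at every prime $p\notin T(H)$ both $\pm K^*(F|_{T(H)})$ are locally represented. The differences are cosmetic: you apply Prop.~\ref{upper-bound} to $F|_{T(H)}$ rather than to $F$, and you spell out the local representability at the primes with $v_p(D(F))\le 1$ (via Lemma~\ref{triv} and Prop.~\ref{propA3}(1)), which the paper leaves implicit.
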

\begin{proof}
Note that we have
\[K^*(F|_{T(H)})\leq K^*(F)\leq 2r(F)=2r(H)\]
by Prop.~\ref{upper-bound}. In particular, every prime divisor of $K^*(F|_{T(H)})$ lies in $T(H)$. Thus, for any prime $p\notin T(H)$, we have $\pm K^*(F|_{T(H)})\in\mathcal{R}_p(F)$, and it follows that $K^*(F|_{T(H)})\in\mathcal R^{\textup{(abs)}}(F)$. It follows that $K^*(F|_{T(H)})=K^*(F)$ as desired.
\end{proof}

\vspace{4pt}

We now state and prove a result that is used in Sec.~\ref{Mar-sec} for the proof of ${\bf (C)}$ by the dominated convergence theorem.
 
\begin{proposition}\label{kappa-finite}
For each integer $q>1$, let $B(q)$ be the maximum of the minimal primes in each invertible congruence class modulo $q$. For any  $C\in\Cl$, we then have
\[\kappa^*(C)\leq 2r(C)B(8P(C))\ll |r(C)|P(C)^L\leq |r(C)|^{\frac{7}{2}},\]
where $P(C)=P(D(C))$ denotes the product of the odd prime divisors of $r(C)$, $L\leq 5$ denotes Linnik's constant, and the implied constant in the bound is independent of $C$.  In particular, if $H\in\mathfrak R$ is a root packet, then
\[\kappa_H:=\max\{\kappa(C):\kappa\in\Cl_H\}\ll |r(H)|^{\frac{7}{2}}.\]
\end{proposition}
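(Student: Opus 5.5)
Let $C\in\Cl$ with root packet $H=C_{\rt}$, so $r(H)=r(C)$, and let $d=D(C)$. The plan is to produce one explicit integer of size at most $2r(C)B(8P(C))$ that every class in the genus of $C$ represents. We then pass from the genus to $C$ itself using Jones--Watson (Prop.~\ref{propD2}).

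\emph{Step 1 (local representability).} Apply Prop.~\ref{upper-bound} to the packet $C|_{\Supp}$, taking the support to be all primes dividing $2d$. This gives $N\mid 2r(C)$ such that $\pm Nm\in\mathcal R_p(C)$ for every odd $m$ coprime to $d$ and every $p\mid 2d$. The sign is the one furnished by $\mathcal R^{(\textup{abs})}$; since $C$ is indefinite, either sign is allowed at $\infty$. Inspecting that proof, the condition at $p\mid 2r(C)$ only involves $m$ through its class in $\Sq^*(\Z_p)$. At primes $p\,\|\,d$ with $p\nmid 2r$, the form $x_1^2+ax_2^2+p\,b x_3^2$ represents all $p$-adic units. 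At $p\nmid 2d$, Lemma~\ref{triv} applies. Hence every $m$ coprime to $2d$ that lies in a suitable prescribed class modulo $8P(C)$ is admissible. Here I take $P(C)$ to be the odd radical of $r$, and the class is the one realised in the proof of Prop.~\ref{upper-bound} (e.g.\ $m\equiv 1$ works for the diagonal $p$ where $a u^2+bv^2$ represents all units). The tricky point is that the residue class mod $8$ depends on $H_2$; I would use that Prop.~\ref{upper-bound} already shows all odd $m$ work, so the congruence restriction is only needed to exclude the divisibility issues in Step 2.

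\emph{Step 2 (choice of a prime and passage to the class).} Choose $m=q$ to be a prime with $q\equiv 1 \pmod{8P(C)}$, or in whatever admissible invertible class is needed, with $q\nmid d$. By definition of $B$ we may take $q\le B(8P(C))$. We must ensure $q\nmid s(d)$, since $s$ is unconstrained by the modulus. If $q\mid s$, I would instead observe that the representability at $p=q\mid s$ still holds for units, but Prop.~\ref{propD2} requires $q\nmid d$. So the main obstacle is to avoid primes dividing $s(d)$ while keeping $q\le B(8P)$.

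My first attempt is the following. If the minimal prime $q$ in the class divides $s$, then $q\parallel d$, and I can still use it. Then $n=Nq$ is represented by some form of the genus via Prop.~\ref{propA1}. Representation by $C$ then follows from the spinor-genus description (Prop.~\ref{propE1}), since the tame prime $q$ acts trivially: $p\mid s$ are among the generators quotiented out. Alternatively, I would apply Prop.~\ref{propD2} after replacing the auxiliary prime by one dividing $n$ exactly and not dividing $d$. With this, $\kappa^*(C)\le Nq\le 2r(C)B(8P(C))$.

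\emph{Step 3 (numerics).} Linnik's theorem gives $B(q)\ll q^L$, so $B(8P)\ll P^L$. Since $P^2\mid r$ we have $P\le r^{1/2}$, and $L\le5$ yields $r\cdot P^{5}\le r^{7/2}$. For $\kappa_H$, note that $\kappa(C)\le\kappa^*(C)$ and $r(C)=r(H)$ for $C\in\Cl_H$.
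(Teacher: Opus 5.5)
Your Step~2 has a genuine gap, and it is the very obstacle you flag: nothing in your argument produces a prime $q\le B(8P(C))$ with $q\nmid d$.

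\begin{itemize}
\item \emph{No fixed class works.} Since $s(C)$ is an arbitrary odd squarefree number coprime to $2r(C)$, the least prime in any fixed invertible class modulo $8P(C)$ may divide $s(C)$. This includes the class of $1$. In fact $s(C)$ may have prime factors in \emph{every} invertible class, in which case no choice of class avoids it (this can only happen when $h(\Ge(C))=1$; see below).
\item \emph{The spinor-genus fix fails.} Prop.~\ref{propE1} only parametrizes the spinor genera, equivalently the classes, in $\Ge(C)$. It says nothing about which integers each of them represents, so ``the tame prime $q$ is among the quotiented generators'' does not yield $Nq\in\mathcal R^{(\mathrm{abs})}(C)$.
\item \emph{Jones--Watson gives nothing when $q\mid s(C)$.} Then $q^2\mid dNq$, so $q$ drops out of the squarefree part of $dNq$ and the argument gives no information at $q$. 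An integer represented by the genus need not be represented by every class; this is exactly why Prop.~\ref{propD2} requires $q\nmid d$ and $q\,\|\,n$.
\item \emph{The alternative is circular.} ``Replacing the auxiliary prime by one not dividing $d$'' is the assertion to be proved, not an argument for it.
\end{itemize}

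The missing idea is a case split on $h(\Ge(C))$.

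\emph{One class.} If the genus $G$ of $C$ has a single class, no auxiliary prime is needed. Prop.~\ref{upper-bound} with $m=1$ gives $N\in\mathcal R^{(\mathrm{abs})}(G)$ with $N\mid 2r(C)$. Prop.~\ref{propA1} transfers this to $C$, so $\kappa^*(C)=K^*(G)\le 2r(C)$.

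\emph{Two or more classes.} Use the quotient description of Prop.~\ref{propE1}, with $P=8\prod_{p^3\mid d,\ p\text{ odd}}p$, which divides $8P(C)$. If $h(G)\ge 2$, the images of the primes $p\mid s(C)$ generate a \emph{proper} subgroup of $\Sq^*(\Z/P\Z)$, since otherwise the quotient would be trivial. Hence they lie in an index-two subgroup. So there is an invertible class $a$ modulo $8P(C)$ mapping outside it (in particular $a\not\equiv 1$), and this class contains no prime divisor of $s(C)$. The least prime $q\equiv a$ then satisfies $q\le B(8P(C))$, $q\nmid 2d$ and $q\,\|\,Nq$. Props.~\ref{upper-bound}, \ref{propA1} and \ref{propD2} then give $Nq\in\mathcal R^{(\mathrm{abs})}(C)$.

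So the spinor structure is not used to pass from genus to class. It guarantees that a residue class avoided by $s(C)$ exists precisely when that passage is nontrivial.

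Your Steps~1 and~3 are fine. Step~1 over-restricts slightly: Prop.~\ref{upper-bound} already covers every odd $m$ coprime to $d$, with no congruence condition.
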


\begin{proof}
Let $G$ be the genus of $C$. If $G$ contains a unique class, then \[\kappa^*(C)=K^*(G)\leq 2|r(G)|=2|r(C)|,\] by Prop.~\ref{upper-bound} as desired. If $G$ contains more than one class, then by Prop.~\ref{propE1} it follows that there exists an invertible congruence class $a\mod 8P(C)$ such that $s(C)$ is not divisible by any prime in this class. Choosing a minimal prime $p\equiv a\mod 8P(C)$, we see by Prop.~\ref{upper-bound} that $Np\in\mathcal{R}^{\textup{(abs)}}(G)$ for some $N \! \mid \!  2r(C)$. By Prop.~\ref{propD2}, we conclude that $Np\in\mathcal{R}^{\textup{(abs)}}(C)$ showing $\kappa^*(C)\leq 2r(C)B(8P(C))$. The remaining statements follow from Linnik's theorem and  Xylouris \cite{Xylouris2011} showing that $L\leq 5$ (any value of $L$ suffices for our application, but we have chosen $L=5$ to give an explicit rate of growth for the upper-bound in the proposition).
\end{proof}

\vspace{4pt}

As in Section~\ref{clnbr}, we shall next show that  genera containing more than one spin class  play a negligible role in the analysis of $\text{MAR}(X)$.

\begin{definition}\label{lambda-def}
Given any root packet $H\in\mathfrak R$, any $d\in\Z_{\geq1}$, and any $t\in\Z_{\geq0}$, let
\begin{align*}
\lambda_H(d,t)&=\lvert\{C\in\Cl_H(d):\kappa(C)=t\}\rvert,\\
\lambda_H^{(1)}(d,t)&=\lvert\{C\in\Cl_H^{(1)}(d):\kappa(C)=t\}\rvert,\\
\lambda_H^{(2+)}(d,t)&=\lvert\{C\in\Cl_H^{(2+)}(d):\kappa(C)=t\}\rvert,\quad\text{and}\\
\lambda_H^g(d,t)&=\lvert\{G\in\Gen_H(d):K(G)=t\}\rvert.
\end{align*}
\end{definition}
\begin{proposition}\label{lambdas}
Let $H$ be a root packet, let $d\in\Z_{\geq1}$, and let $t\in\Z_{\geq0}$.
\begin{enumerate}[label=\upshape\arabic*.\ ]
	\item We have \[\lambda_H^{(1)}(d,t)\leq\lambda_H^g(d,t)\leq \lambda_H(d,t)=\lambda_H^{(1)}(d,t)+\lambda_H^{(2+)}(d,t).\]
	\item Assume that $t>0$. If
    \[\mathcal F(H,t)=\left\{H'\in\Packet_H:\Supp(H')=T(H),K^*(H')=t\right\},\]
where $T(H)=\{p:2\leq p\leq 2r(H)\}$ is defined as in Corollary~\ref{minimal-ripe-new}, then
\[\left|\lambda_H^g(d,t)-\sum_{H'\in\mathcal F(H,t)}g_{_{H'}}(d)\right|\leq 1.\]
	\item We have $\lambda_H^{(2+)}(d,t)\leq h_H^{(2+)}(d)$. In particular,
	\[\sum_{1\leq d\leq X}\lambda_H^{(2+)}(d,t)=O_H(X).\]
\end{enumerate}
\end{proposition}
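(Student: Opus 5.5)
Parts 1 and 3 are structural and follow the pattern of \eqref{hbound}; part 2 is the only place where Corollary~\ref{minimal-ripe-new} enters.

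For part 1, I compare the three sets through the map $C\mapsto\Ge(C)$. If $C\in\Cl_H^{(1)}(d)$ has $\kappa(C)=t$, then its genus $G$ contains only $C$. By the Remark after the definition of $K$, $K(G)=\min_{\Ge(C')=G}\kappa(C')=\kappa(C)=t$. So $C\mapsto\Ge(C)$ injects $\{C\in\Cl_H^{(1)}(d):\kappa(C)=t\}$ into $\{G\in\Gen_H(d):K(G)=t\}$, which gives the first inequality. For the second, send each genus $G$ with $K(G)=t$ to a class $C\in\Cl(G)$ realizing the minimum $\kappa(C)=t$. Since $C\geq H$ exactly when $\Ge(C)\geq H$, this class lies in $\Cl_H(d)$. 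Distinct genera give distinct classes, so this is an injection into $\{C\in\Cl_H(d):\kappa(C)=t\}$. The final equality is the disjoint splitting of $\Cl_H(d)$ according to whether $h(\Ge(C))=1$ or $h(\Ge(C))\geq2$.

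For part 3, the inequality $\lambda_H^{(2+)}(d,t)\leq h_H^{(2+)}(d)$ holds because one count is over a subset of the other. The sum bound then follows directly from Prop.~\ref{splitting}.

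Part 2 is the main step. Let $G\in\Gen_H(d)$, and put $H'=G|_{T(H)}$. Since $T(H)\supseteq\Supp(H)$ and $G_\rt=H$, we need $H'_\rt=H$ for $H'$ to lie in $\Packet_H$. This holds because every $p\in R(D(G))$ already lies in $\Supp(H)\subseteq T(H)$, and the primes of $T(H)$ outside $R(D(G))$ have $v_p\leq1$. The map $G\mapsto G|_{T(H)}$ therefore partitions $\Gen_H(d)$ into the sets $\Gen_{H'}(d)$ with $\Supp(H')=T(H)$. Corollary~\ref{minimal-ripe-new} gives $K^*(G)=K^*(H')$. Hence
\[\sum_{H'\in\mathcal F(H,t)}g_{H'}(d)=\#\{G\in\Gen_H(d):K^*(G)=t\}.\]
It remains to compare $K^*(G)$ with $K(G)$ for $t>0$. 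The two agree unless $0\in\mathcal R^{\textup{(abs)}}(G)$, that is, unless $G$ is locally isotropic everywhere, equivalently isotropic by Hasse--Minkowski. In that case $K(G)=0\neq t$, while $K^*(G)$ may equal $t$.

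The obstacle is to show that at most one isotropic genus with $K^*(G)=t$ occurs in $\Gen_H(d)$, which is where the error of at most $1$ comes from. An isotropic genus is determined by its local classes having trivial Hasse invariant behaviour (Prop.~\ref{isotropics} is used in the squarefree case for exactly this). I plan to invoke that proposition, or its general form, to see that for fixed $d$ and fixed root packet $H$ there is at most one isotropic genus. If this uniqueness is only available per $2$-adic type, I would instead control the discrepancy by the number of isotropic genera in $\Gen_H(d)$. That would still give a bound of $O(1)$ per $d$, which suffices for later use, but the stated bound of $1$ requires the uniqueness above.
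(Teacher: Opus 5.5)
Your argument follows the paper's proof in all three parts. Part 1 goes through the genus map and $K(G)=\min_{\Ge(C)=G}\kappa(C)$. Part 3 comes from Prop.~\ref{splitting}. Part 2 partitions $\Gen_H(d)$ according to $G|_{T(H)}$, applies Corollary~\ref{minimal-ripe-new}, and then discards the genera with $K(G)=0$. The one step you leave unresolved is that $\Gen_H(d)$ contains at most one isotropic genus. The paper also asserts this without comment, but your proposal does not establish it, so as written it does not prove part 2. Your fallback of bounding the discrepancy by the number of isotropic genera would not give the stated bound $1$.

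The missing argument is short, and the worry about a separate \emph{$2$-adic type} is unfounded. For $G\in\Gen_H(d)$ we have $\Supp(H)=R(d)$, which always contains $2$. So the components of $G$ at $2$ and at every odd $p$ with $v_p(d)\geq 2$ are fixed by $H$. At odd $p\nmid d$ and at the real place there is only one local class. The only freedom is at the primes $p\mid s(d)$, where $v_p(d)=1$. There are exactly two $\Z_p$-classes there, $\langle 1,1,d\rangle$ and $\langle 1,\epsilon,\epsilon d\rangle$, with Hasse invariants $\left(\frac{-1}{p}\right)$ and $-\left(\frac{-1}{p}\right)$ respectively. This is the computation in the proof of Prop.~\ref{Mar-sf}; it is purely local at $p$ and does not use that $d$ is squarefree. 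A ternary form is $\Q_p$-isotropic exactly when $c_p=1$ (Prop.~\ref{isotropics}), so exactly one of the two classes is isotropic at $p$. Hence the local data of an isotropic genus in $\Gen_H(d)$ are forced at every prime, and there is at most one such genus. The bound $\bigl|\lambda_H^g(d,t)-\sum_{H'\in\mathcal F(H,t)}g_{H'}(d)\bigr|\leq 1$ then follows exactly as you set it up.
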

\begin{proof}\ 

1. The first inequality is clear. The second inequality follows from the fact that in each genus $G$ there is at least one class representing $\pm K(G)$. 

2.  By Corollary~\ref{minimal-ripe-new}, if $G\in\Gen_H(d)$ then $K^*(G)=K^*(G|_{T(H)})$. It follows that
\[\{G\in\Gen_H(d):K^*(G)=t\}=\{G\in\Gen_H(d):G|_{T(H)}\in\mathcal F(H,t)\}.\]
Since we have
\begin{align*}
    &\{G\in\Gen_H(d):K(G)=t\}\quad\\
    &\quad\quad =\{G\in\Gen_H(d):K^*(G)=t\}\setminus\{G\in\Gen_H(d):K(G)=0\},
\end{align*}
and the set being subtracted has cardinality at most $1$, the desired result follows.

3. This follows from Prop.~\ref{splitting}.
\end{proof}


\section{\texorpdfstring{Asymptotic formula for $\text{MAR}(X)$}{Asymptotic formula for MAR(X)}}\label{Mar-sec}

 We now give a proof of Theorem \ref{IThm1}.  Following the outline in the introduction we exhibit a dominating function on $\mathfrak R$ for the functions $f_{X}^{(\bf{C})}(H)$ and then prove that they have pointwise limits as $X \to \infty$.  The dominating function is given explicitly in Proposition \ref{martini-tail}.  The proof of the bounds in Proposition \ref{martini-tail} relies on two key Propositions as well as the uniform bound in Proposition \ref{kappa-finite}.  Proposition \ref{mechanical} shows that if sufficiently many small points (in terms of a parameter $M$), satisfying auxiliary divisibility conditions, can be produced in $\mathcal{R}(H)$, then a combinatorial sieve argument yields uniform bounds for $f_{X}^{(\bf{C})}(H)$.  The divisibility conditions serve to execute the sieving as well as to apply the local to global representation conditions of Jones and Watson \cite{JW1956} -- see Proposition \ref{propD2}.  Proposition \ref{brun} establishes  the existence of the $\kappa_j$'s using a lower bound Brun sieve.  The requisite level of distribution is established using the method developed in Sec.~\ref{Watson}.  The pointwise limit of $f_{X}^{({\bf C})}(H)$ is proven in Proposition \ref{avg-martini-pointwise} using the results from Sec.~\ref{local-rep}.

For any root packet $H\in\mathfrak R$ and $X\geq2$, we define
\[\textrm{\textup{MAR}}_{X}(H)=\lvert\{C\in\Cl_H:\mu(C)\geq1/X\}\rvert,\]
and
\[f_{X}^{(\bf{C})}(H)=\frac{1}{X\log X}\textrm{\textup{MAR}}_X(H). \]
\begin{proposition}
\label{mechanical}
Let $H\in\mathfrak R$. Suppose there exist  pairwise coprime positive squarefree integers $k_1,\dots,k_\nu \in\mathcal R(H)$, all coprime to $2r(H)$.  

 Let $b$ be the least prime  dividing  $\prod_{i=1}^\nu k_i$,  $M=\max\{k_1,\cdots,k_\nu\}$, and  $\kappa_H=\max\{\kappa(C):C\in\Cl_H\}$. Then
\[f_{X}^{(\bf{C})}(H) \ll \frac{2^{\omega_3(r(H))}}{r(H)}\left[M^3\log M+\kappa_H^3\log\kappa_H\cdot\left(\frac{\log M}{\log b}\frac{2}{b}\right)^\nu \ \right],
\]
valid for all $X\geq2$, with the implied constant  independent of $X$ and $H$.
\end{proposition}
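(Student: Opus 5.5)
The plan is to split $\textup{MAR}_X(H)$ according to $\kappa(C)$. Every $C\in\Cl_H$ has $D(C)=r(H)s$ with $s$ odd, squarefree and coprime to $2r(H)$. By Lemma~\ref{lemgen1}, for each such $s$ there are at most $2^{2+\omega_3(r(H))}2^{\omega(s)}$ classes. The condition $\mu(C)\ge 1/X$ reads $r(H)s\le \kappa(C)^3X$. So for any threshold $t_0$ we have
\[
\textup{MAR}_X(H)\le \#\{C\in\Cl_H:\ D(C)\le t_0^3X\}+\#\{C\in\Cl_H:\ \kappa(C)>t_0,\ D(C)\le \kappa_H^3X\}.
\]
Taking $t_0=M$, the first term is bounded via Cor.~\ref{useful} with $A=M^3/r(H)$. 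After dividing by $X\log X$ this gives the term $\frac{2^{\omega_3(r)}}{r}M^3\log^+ (M^3/r)\ll \frac{2^{\omega_3(r)}}{r}M^3\log M$.

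For the second term I use the $k_i$. Fix $C$ with $\kappa(C)>M$. Then $C$ represents none of $k_1,\dots,k_\nu$, although each $k_i\in\mathcal R(H)$, and hence $k_i$ is locally represented by every genus $G\ge H$, because the primes of $G$ outside $\Supp(H)$ either do not divide $k_i r(H)s$ or are handled by Lemma~\ref{triv}. This last point needs care: if a prime $p\mid s$ divides $k_i$, local representability at $p$ may fail. I would therefore restrict to the case $p\nmid k_i$, or simply accept that some $k_i$ share primes with $s$. By Prop.~\ref{propA1} and Prop.~\ref{propD2}, if some prime $q\mid k_i$ satisfies $q\nmid D(C)$, then $q\parallel k_i$ (as $k_i$ is squarefree), and $k_i$ is represented by every form in the genus, including $C$. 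So $\kappa(C)>M$ forces, for each $i$, either $C$ failing local representability of $k_i$ at a prime dividing $s$, or every prime factor of $k_i$ dividing $s$. In both cases $s$ must share a prime factor with $k_i$. Since the $k_i$ are pairwise coprime, $s$ is divisible by some $\prod_i q_i$ with $q_i\mid k_i$ distinct primes, each $q_i\ge b$.

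Summing over these choices, write $s=q_1\cdots q_\nu s'$. The number of classes is at most $2^{2+\omega_3(r)}2^{\nu}2^{\omega(s')}$, and Cor.~\ref{useful} applied to $s'\le \kappa_H^3X/(r\prod q_i)$ gives a contribution
\[
\ll \frac{2^{\omega_3(r)}}{r}\kappa_H^3\log\kappa_H\prod_{i=1}^\nu\Big(\sum_{q\mid k_i}\frac{2}{q}\Big).
\]
Since $k_i\le M$ has at most $\log M/\log b$ prime factors, each at least $b$, each inner sum is at most $\frac{\log M}{\log b}\frac{2}{b}$. This yields the stated bound.

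The main obstacle is making the dichotomy precise at primes $p\mid s$ dividing some $k_i$. There the genus component at $p$ is one of the two classes from Prop.~\ref{propA3}, and representing $k_i$ (divisible by $p$ exactly once) may genuinely fail. This is why the argument needs only "$s$ meets $k_i$" rather than full divisibility, and why squarefreeness and coprimality to $2r(H)$ are imposed. I would also double-check the $\log^+$ factors coming out of Cor.~\ref{useful} when $\kappa_H^3/(r\prod q_i)<1$; in that case the sum is empty, or trivially bounded.
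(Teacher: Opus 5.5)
Your proposal is correct and is essentially the paper's proof: you split at $\kappa(C)=M$, bound the first part by Cor.~\ref{useful}, use Prop.~\ref{propA1} and Prop.~\ref{propD2} to force $(s(C),k_i)\neq 1$ for every $i$ when $\kappa(C)>M$, and count the remaining classes by extracting a prime $q_i\mid k_i$ from $s(C)$ and applying Lemma~\ref{lemgen1} and Cor.~\ref{useful}. Your intermediate factor $\prod_i\sum_{q\mid k_i}2/q$ is essentially the refinement recorded in the remark after the proposition. One small fix: local representability of $k_i$ at primes $p\mid s(C)$ with $p\nmid k_i$ does not follow from Lemma~\ref{triv}; it comes from Prop.~\ref{propA3}(1) applied to the unimodular binary part of $ax_1^2+bx_2^2+pcx_3^2$.
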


\begin{proof}
We note that if $C\in\Cl_H$ satisfies $\kappa(C)>M$, then we must have $(s(C),k_i)\neq 1$ for every $i=1,\dots,\nu$. Otherwise, if say $(s(C),k_1)=1$ then the genus of $C$ must represent $k_1$, and by Prop.~\ref{propD2} we see that $C$ represents $k_1$, so that $\kappa(C)\leq k_1 \leq M$. 

Hence we have 
\begin{align*}
\textrm{\textup{MAR}}_X(H)&=\sum_{\substack{C\in\Cl(H)\\D(C)\leq \kappa(C)^3X\\\kappa(C)\leq M}}1+\sum_{\substack{C\in\Cl(H)\\D(C)\leq\kappa(C)^3X\\\kappa(C)> M}}1\leq\sum_{1\leq d\leq M^3 X}h_H(d)+\sum_{\substack{C\in\Cl(H)\\D(C)\leq \kappa_H^3 X\\(s(C),k_i)\neq 1\forall i}}1.
\end{align*}

For the first sum, by Corollary~\ref{useful} we have
\[\begin{aligned}
\sum_{1\leq d\leq M^3 X}h_H(d) &\leq \sum_{\substack{s\leq M^3X/r(H)\\\text{$s$ odd squarefree}}}2^{\omega(s)}\cdot 2^{2+\omega_3(r(H))}, \\ &\ll X\log X\cdot\frac{2^{\omega_3(r(H))}}{r(H)}\cdot M^3\log M,
\end{aligned}\]
where the implied constants are independent of $X$ and $H$. 

To analyze the second sum, let $\mathcal P$ be the set of all products of the form $\prod_{i=1}^\nu p_i$ where each $p_i$ is a prime divisor of $k_i$. Note that $b^\nu\leq\min(\mathcal P)$. Also, if $t_i$ is the number of prime divisors of $k_i$ then $b^{t_i}\leq k_i\leq M$ so we have $t_i\leq(\log M)/(\log b)$. Using Lemma~\ref{lemgen1}, one has $g_H(r(H)Ps)\leq 2^\nu 2^{\omega(s)}$ and $i_H(r(H)Ps)\leq 2^{2+\omega_3(r(H))}$ for any $P\in\mathcal P$ and any squarefree $s$ coprime to $2r(H)P$, so that
\begin{align*}
\sum_{\substack{C\in\Cl(H)\\D(C)\leq \kappa_H^3 X\\(s(C),k_i)\neq 1\forall i}}1
&\ll2^{\omega_3(r(H))}\left(2\cdot\frac{\log M}{\log b}\right)^\nu\sum_{\substack{s\in S(1,\kappa_H^3 X/(r(H)b^\nu))}}2^{\omega(s)},\\
&\ll2^{\omega_3(r(H))}\left(2\cdot\frac{\log M}{\log b}\right)^\nu\left[X\log X\cdot \frac{1}{r(H)b^\nu}\cdot\kappa_H^3\log\kappa_H\right],\end{align*}
where the implied constants are independent of $X$ and $H$. Combining these bounds, we obtain the desired result.
\end{proof}

\begin{remark}
Under the hypothesis of Prop.~\ref{mechanical}, we can also derive the bound
\begin{multline}
\frac{1}{X\log X}\textrm{\textup{MAR}}_H(X)\ll  \frac{2^{\omega_3(r(H))}}{r(H)} \\\times  \left[\sum_{m=1}^\nu k_m^3\log k_m  \left(\prod_{i=1}^{m-1}\sum_{p\mid k_i}\frac{1}{p}\right)  
 +   \kappa_H^3\log\kappa_H\left(\prod_{i=1}^{\nu}\sum_{p\mid k_i}\frac{1}{p}\right)\right],\end{multline}
which may be of independent interest.
\end{remark}

The existence in Prop.~\ref{mechanical} of the required  $k_1, k_2, \ldots ,k_{\nu}$, with appropriate properties, follows from an application of a one dimensional  lower-bound Brun sieve given as follows.

\begin{proposition}\label{brun}\ 
Given $\alpha > \frac{1}{4}$, there are constants $\delta=\delta(\alpha)$ and $\delta'=\delta^{(1)}(\alpha)$ such that for any odd squarefree $q=p_1\ldots p_l$ and $a_1, \ldots,\, a_l$ all coprime to $q$, one has:

For $y\ge q^{\alpha}$, and $y$ large enough in terms of $\alpha$ alone, there are squarefree integers $k_1,\ldots ,\, k_{\nu}$ all pairwise coprime and satisfying 
\[
\left(\frac{k_ja_1}{p_1}\right)=1,\ \left(\frac{k_ja_2}{p_2}\right)=1,\  \ldots,\ \left(\frac{k_ja_l}{p_l}\right)=1, \quad \text{for}\  j=1, \ldots, \nu\, ,
\]
 with
\[ k_j \leq y, \quad \text{and} \quad \nu = \lfloor{y^{\delta'}\rfloor},
\]
and such that if $p \! \mid \! k_j$, then $p\geq y^{\delta}$.
\end{proposition}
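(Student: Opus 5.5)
We will use a lower-bound sieve (dimension one) on the set of integers with prescribed quadratic characters, and the level of distribution comes from the Burgess bound exactly as in Sec.~\ref{Watson}. The plan is to construct the $k_j$ greedily, one at a time. At stage $j$ we already have $k_1,\dots,k_{j-1}$, and we set $q_j=q\prod_{i<j}k_i$. We then look for an integer $k\le y$ that satisfies three conditions: it is coprime to $q_j$; it satisfies $\left(\frac{ka_i}{p_i}\right)=1$ for $i=1,\dots,l$; and it has no prime factor below $z=y^{\delta}$. Squarefreeness is automatic once we also discard the (negligible) $k$ with $p^2\mid k$ for some $p\ge z$; there are $O(y^{1-\delta})$ of these.

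The key step is the sieve input. Consider
\[
\mathcal A=\Big\{n\le y:\ (n,q)=1,\ \prod_{i=1}^l\tfrac12\Big(1+\big(\tfrac{na_i}{p_i}\big)\Big)=1\Big\}.
\]
For a squarefree $m$ coprime to $q$, expand the product into characters as in \eqref{D.0.1}. This gives
\[
|\mathcal A_m|=\#\{n\in\mathcal A: m\mid n\}=\frac{2^{-l}\varphi(q)}{q}\frac{y}{m}+R_m,
\]
where $R_m$ is bounded by $2^{-l}\sum_{\chi\ne\chi_0}\big|\sum_{n\le y/m,(n,q)=1}\chi(n)\big|$. The characters here are real, non-principal, and have squarefree modulus dividing $q$. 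By Prop.~\ref{propC1}(3) with $k=1$, together with Möbius removal of the coprimality condition, we get
\[
R_m\ll (y/m)^{1-1/r}q^{\frac{r+1}{4r^2}+\ve}.
\]
We sum this over $m\le D=y^{\eta}$. Since $y\ge q^{\alpha}$ and $\alpha>\tfrac14$, choosing $r$ large makes $q^{(r+1)/4r^2}\le y^{(1/4+\ve')/(\alpha r)}$. So for small $\eta=\eta(\alpha)$ the total remainder is $\ll 2^{-l}(\varphi(q)/q)\,y^{1-\eta}$, that is, a power saving relative to the main term. The density function is $g(p)=1/p$, which gives sieve dimension $\kappa=1$. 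The fundamental lemma (or Brun's lower-bound sieve) with $z=D^{1/s}$, $s$ large, then gives
\[
S(\mathcal A,z)\gg 2^{-l}\frac{\varphi(q)}{q}\frac{y}{\log z}.
\]
We also need the extra condition $(n,q_j)=1$. The primes dividing $k_1\cdots k_{j-1}$ are all at least $z$, and their number is at most $j\log y/(\delta\log y)\le \nu/\delta$. We remove them crudely: each prime $p\ge z$ kills at most $y/p+1\le y^{1-\delta}+1$ elements. Taking $\nu=\lfloor y^{\delta'}\rfloor$ with $\delta'<\delta$ small, the total loss is $O(y^{1-\delta+\delta'})$.

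The main obstacle is keeping the lower bound positive uniformly in $l$ and $q$. The main term carries the factor $2^{-l}\varphi(q)/q$, and $2^{l}\ll q^{\ve}\le y^{\ve/\alpha}$ and $q/\varphi(q)\ll\log\log q$. So the main term is $\gg y^{1-\ve''}$, and this beats both the sieve remainder $y^{1-\eta}$ and the removal loss $y^{1-\delta+\delta'}$, provided $\ve''$ is chosen small relative to $\eta$ and $\delta$. Here we fix the order of choices: first $r$, then $\eta$, then $\delta<\eta/s$, and finally $\delta'<\delta/2$, all in terms of $\alpha$ alone. With these choices $S>0$ at every stage $j\le\nu$, once $y$ is large in terms of $\alpha$. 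This yields the pairwise coprime squarefree $k_j$ with the required character values and with all prime factors at least $y^{\delta}$.
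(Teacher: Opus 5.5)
Your proposal is correct and follows essentially the same route as the paper. You expand the Legendre-symbol conditions into real characters and use Burgess (with $r$ large, exploiting $y\ge q^{\alpha}$, $\alpha>\frac14$) to get a level of distribution. You then apply a one-dimensional Brun lower-bound sieve with $z=y^{\delta}$, and extract the $k_j$ greedily by discarding multiples of the at most $\delta^{-1}$ large primes contributed by each earlier choice.

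Two points that the paper's proof passes over quickly are made precise in your version. First, you explicitly remove the $O(y^{1-\delta})$ integers divisible by $p^2$ with $p\ge z$, which is what actually guarantees the $k_j$ are squarefree. Second, you track the factor $2^{-l}\varphi(q)/q=y^{-o(1)}$ against the sieve remainder and the removal losses.
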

\begin{proof}

 The proof combines the method in Sec.~\ref{Watson} together with a lower-bound Brun sieve.

For $m\geq 1$, put
\[ b_m =2^{-l}\left[1+ \left(\frac{a_1m}{p_1}\right)\right]\left[1+ \left(\frac{a_2m}{p_2}\right)\right]\ldots \left[1+ \left(\frac{a_lm}{p_l}\right)\right],
\]
if $(m,q)=1$, and zero otherwise. Thus $b_m$ is zero unless 
\begin{equation}\label{br1}
\left(\frac{a_jm}{p_j}\right)=1 \quad \text{for all}\ \  1\leq j\leq \nu.
\end{equation}  
We seek many values of $m$ that satisfy \eqref{br1}, for which we consider the sieved sums
\[
\mathcal{S}(\mathcal{B},P'_z,y) = \sum_{\substack{m\leq y \\ (m,P'_z)=1}} b_m,
\]
where $\mathcal{B}$ denotes the sequence $m$ with $b_m\neq 0$, and $P'_z=\prod_{\substack{p\leq z , p\nmid q}}p$.

To apply the sieve, we need to study
\[ \sum_{\substack{ m\leq y \\ m \equiv 0 \Mod d }} b_m,\quad \text{for}\   (d,q)=1,
\]
for which we have

\[
\begin{aligned} \sum_{\substack{m\leq y \\ d\mid m }} b_m &= \sum_{\substack{m\leq y \\ d\mid m  \\ (m,q)=1 }}2^{-l}\left[1+ \left(\frac{a_1m}{p_1}\right)\right]\left[1+ \left(\frac{a_2m}{p_2}\right)\right]\ldots \left[1+ \left(\frac{a_lm}{p_l}\right)\right],\\
&= 2^{-l}\sum_{\substack{m\leq y \\ d\mid m }}\sum_{d_1|(q,m)}\mu(d_1)\left[1+ \left(\frac{a_1m}{p_1}\right)\right]\ldots \left[1+ \left(\frac{a_lm}{p_l}\right)\right],\\
&= 2^{-l} \sum_{d_1|q}\mu(d_1)\sum_{\substack{d_1m_1\leq y \\ d\mid m_1  }}\left[1+ \left(\frac{a_1d_1m_1}{p_1}\right)\right]\ldots \left[1+ \left(\frac{a_ld_1m_1}{p_l}\right)\right],\\
&=2^{-l} \sum_{d_1|q}\mu(d_1)\sum_{\substack{d_1dm_2\leq y}}\left[1+ \left(\frac{a_1d_1dm_2}{p_1}\right)\right]\ldots \left[1+ \left(\frac{a_ld_1dm_2}{p_l}\right)\right],
\end{aligned}
\]
since $(d_1,d)=1$. Expanding the product on the right, we have
\[
\sum_{\substack{m\leq y \\ d\mid m }} b_m = 2^{-l}\sum_{d_1|q}\mu(d_1)\sum_{\substack{\mathcal{V}\subset \{1,\ldots,l\}\\ \mathcal{V}=\{j_1,\ldots,j_s\} \\ p_{j_i}\nmid d_1}}\sum_{\substack{m_2\leq \frac{y}{d_1d}}}\left(\frac{a_{j_1}d_1dm_2}{p_{j_1}}\right)\ldots  \left(\frac{a_{j_s}d_1dm_2}{p_{j_s}}\right).
\]
When $\mathcal{V}$ is the empty set, we get a contribution of 
\[
2^{-l}\sum_{d_1|q}\mu(d_1)\left(\frac{y}{dd_1} + O(1)\right) = 2^{-l}\frac{\phi(q)}{q}\frac{y}{d} + O(1).
\]
We write
\[
\sum_{\substack{m\leq y \\ d\mid m}} b_m  = \frac{Y}{d} + R(d,\mathcal{B},y), \quad \text{with}\ Y=2^{-l}\frac{\phi(q)}{q}y\ .
\]
Applying Burgess' estimate to the sum over $m_2$ (see Prop.~\ref{propC1}, with $r\to \infty$) we get the bound
\[
 R(d,\mathcal{B},y) \ll \left(\frac{y}{d}\right)^{1-\frac{1}{r}}q^{\frac{r+1}{4r^2}}(yq)^{\ve}.
\]
Then, one has 
\[
\sum_{\substack{d\leq D \\ (d,q)=1}} |R(d,\mathcal{B},y)| \ll y^{1-\frac{1}{r}}D^{\frac{1}{r}}q^{\frac{r+1}{4r^2}}(yq)^{\ve}.
\]
If $y\geq q^{\alpha}$ with $\alpha>\frac{1}{4}$, then there are positive constants $C_1$, $C_2$ and $C_3$ depending at most on $\alpha$  such that with $r=C_1\ve^{-\half}$ and $\sigma =C_2\ve^{\half}$ we have 
\[\sum_{\substack{d\leq D \\ (d,q)=1}} |R(d,\mathcal{B},y)|\ll y^{1-C_3\ve}\quad  \text{for}\ \  D\ll (yq^{-\frac{1}{4}})^{1-\sigma}.\] This, together with a 1-dimensional Brun lower-bound sieve yields:\ 
 
for $\alpha>\frac{1}{4}$, there is a $\delta=\delta(\alpha) >0$ and a $C_{\alpha}>0$ such that
\[
\mathcal{S}(\mathcal{B},P'_z,y) \geq C_{\alpha}\frac{Y}{\log z}\quad  \text{for} \ \  z=y^{\delta}.
\]
In particular, there are at least 
\[C_{\alpha}2^{-l}\frac{\phi(q)}{q}\frac{y}{\delta\log y},
\]
solutions $m$ to \eqref{br1} with $m\leq y$, and if $p \! \mid \! m$, then $p\geq y^{\delta}$. Given this $\alpha$ and $\delta$, it follows that the number of solutions $m$ to \eqref{br1} is $\gg \delta^{-1}x^{1-\frac{\delta}{2}}$ for $x\geq \max(q^\alpha,C_{\delta})$ for some $C_{\delta}$ depending at most on $\delta$.

Now let $k_1$ be the least solution $m$ as above. Then, $k_1$ is squarefree, $(k_1,q)=1$, $k_1\leq y$ and if $p \! \mid \! k_1$, then $p\geq y^{\delta}$. This implies that there are at most $\delta^{-1}$ such prime factors $p$. The union of the at most $\delta^{-1}$ progressions $mp\leq y$ consists of at most $\delta^{-1}y^{1-\delta}$ of our solutions, so that we remove them and still have at least $\delta^{-1}y^{1-\frac{\delta}{2}} - \delta^{-1}y^{1-\delta}$ remaining. We now choose $k_2$ to be the least of these remaining solutions; it being squarefree and coprime to $k_1$. We now remove all multiples $pm\leq x$ with $p \! \mid \! k_2$, leaving us at least $\delta^{-1}y^{1-\frac{\delta}{2}} - 2\delta^{-1}y^{1-\delta}$ solutions  from which we choose $k_3$, squarefree, with $(k_3,k_2)=(k_3,k_1)=1$. Repeating in this way $\nu$-times with $\nu = \lfloor y^{\frac{\delta}{2}}\rfloor$ gives the result in the lemma.
\end{proof}

\begin{remark}
One may extend Prop.~\ref{brun} to include the prime $p=2$ by replacing $q$  with $2q$, and supplement with an odd integer $a_0$ with minor modifications. Then the definition of $b_m$ is modified to
\[ b_m =2^{-l-2}\left[1+ \left(\frac{2}{a_0m}\right)\right]\left[1+ \left(\frac{-2}{a_0m}\right)\right]\left[1+ \left(\frac{a_1m}{p_1}\right)\right]\ldots \left[1+ \left(\frac{a_lm}{p_l}\right)\right],
\]
so that $b_m=0$ unless $(m,2q)=1$. The rest of the argument follows without much change, with Burgess' estimate Prop.~\ref{propC1} applied with a character modulo $8q$. The integers $k_i$ obtained will be odd and squarefree.
\end{remark}

The following provides the key integrable dominating function of $H$.

\begin{proposition}\label{martini-tail}
For every root packet $H\in\mathfrak R$ and every $X\geq 2$,
\[\frac{1}{X\log X}\textrm{\textup{MAR}}_X(H)\ll \frac{2^{\omega_3(r(H))}\log^+\!r(H)}{r(H)^{9/16}},\]
where the implied constant is independent of $X$ and $H$.
\end{proposition}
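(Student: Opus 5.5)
We will feed Proposition~\ref{mechanical} with the integers produced by Proposition~\ref{brun} (in the form of the Remark following it, so that the prime $2$ is handled too). We use the uniform bound $\kappa_H\ll r(H)^{7/2}$ of Proposition~\ref{kappa-finite} for the second term. Write $r=r(H)$ and let $q$ be the product of the odd primes dividing $r$. Then $q\le P(H)\le \sqrt r$.

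\textbf{Step 1: the integers $k_j$.} For each odd $p\mid r$, the proof of Proposition~\ref{upper-bound} (Prop.~\ref{propA3}(1)) shows that $H_p$ represents $p^{e_p(H)}m$ for every unit $m$ in a prescribed square class $a_p$. At $p=2$ it represents $2^{e_2(H)}m$ for every odd $m$ in a prescribed class $a_0 \bmod 8$. Absorbing the fixed factor $N=\prod p^{e_p(H)}\mid 2r$ is awkward, since Proposition~\ref{mechanical} wants $k_i\in\mathcal R(H)$ coprime to $2r$. So instead I will use the unit component directly: because $H$ is primitive, each $H_p$ represents a unit $a_p$ (the leading diagonal coefficient), and every $m$ with $(m/p)=(a_p/p)$ lies in $\mathcal R_p(H)$. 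At $p=2$, any odd $m\equiv a_0 \bmod 8$ is represented, exactly as in Lemma~\ref{lemD1}. Primes outside $\Supp(H)$ impose no condition on $\mathcal R(H)$, because $H$ is a packet.

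Apply Proposition~\ref{brun}, extended to $2q$, with $y=q^{\alpha}$ and some fixed $\alpha$ slightly larger than $1/4$ (e.g. $\alpha=9/32$). This produces pairwise coprime odd squarefree $k_1,\dots,k_\nu\in\mathcal R(H)$, coprime to $2r$, with $k_j\le y$, all prime factors $\ge y^{\delta}$, and $\nu=\lfloor y^{\delta'}\rfloor$. When $q$ is bounded (so $y$ is not large enough for Proposition~\ref{brun}), we instead take $y$ to be a large absolute constant; this case is absorbed into the implied constant.

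\textbf{Step 2: plug into Proposition~\ref{mechanical}.} Take $M\le y\le r^{\alpha/2}$ and $b\ge y^{\delta}$. Then $\frac{\log M}{\log b}\le \delta^{-1}$, so the second term is at most
\[
\kappa_H^3\log\kappa_H\,(2\delta^{-1}y^{-\delta})^{\nu}\ll r^{21/2}\log r\cdot \exp\!\bigl(-c\,y^{\delta'}\log y\bigr),
\]
which is superpolynomially small in $r$ whenever $q$ is a positive power of $r$. The first term is $M^3\log M\ll r^{3\alpha/2}\log r$. Multiplying by $2^{\omega_3(r)}/r$ gives $2^{\omega_3(r)}\log^+ r\cdot r^{-1+3\alpha/2}$. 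With $\alpha$ near $1/4$ this is about $r^{-5/8}$, comfortably at most $r^{-9/16}$ (the exponent $9/16$ corresponds to $\alpha=7/24$, which leaves some room).

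\textbf{Main obstacle.} The delicate point is that $q$ may be much smaller than $r$. For instance, $r$ could be a high power of $2$ times $p^2$ with $p$ small, or $q$ could be bounded while $r$ is huge. Then $y=q^\alpha$ is small, $\nu$ is small, and the decay $(2/\delta y^\delta)^\nu$ no longer beats $\kappa_H^3\le r^{21/2}$.

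To handle this, I will choose $y=\max(q^\alpha, r^{\beta})$ for a small fixed $\beta$ (e.g. $\beta=\alpha/2$), which Proposition~\ref{brun} permits since $y\ge q^\alpha$ is all that is needed. The sieve's level of distribution only gets better as $y$ grows relative to $q$. Then $\nu\ge r^{\beta\delta'}$, so the second term is $\exp(-r^{c})\cdot r^{O(1)}$, uniformly bounded by $r^{-1}$. Meanwhile the first term remains at most $r^{3\beta}$ or $q^{3\alpha}\le r^{3\alpha/2}$. Checking that these exponents can be tuned simultaneously to give the stated $r^{-9/16}$ (together with $\log^+ r$ from $\log M$) is the bookkeeping step. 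Finally, sum the bound over both regimes; the implied constants depend only on the fixed choices of $\alpha$, $\beta$, $\delta$, $\delta'$.
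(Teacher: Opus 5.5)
Your corrected argument is the paper's proof. Since $q\le\sqrt{r}$, your final choice $y=\max(q^{\alpha},r^{\alpha/2})$ is just $y\asymp r^{\alpha/2}$. That is the paper's $Y(H)=AB(\alpha)\,r(H)^{7/48}$ with $\alpha=7/24$, which is where the exponent $1-\tfrac{3}{2}\cdot\tfrac{7}{24}=\tfrac{9}{16}$ comes from. The rest is identical. The first term of Prop.~\ref{mechanical} gives $r^{-1}y^{3}\log y$. The second term is negligible because $(2\delta^{-1}y^{-\delta})^{\nu}$ with $\nu=\lfloor y^{\delta'}\rfloor$ decays faster than any power of $r$, which beats $\kappa_H^3\log\kappa_H\ll r^{21/2}\log r$ from Prop.~\ref{kappa-finite}. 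Your first attempt $y=q^{\alpha}$, with bounded $q$ ``absorbed into the constant'', would fail, as you noticed. The paper avoids the two-regime discussion by putting a constant $A$ in front of $r^{7/48}$, chosen so the needed inequality holds for every $r\ge1$.

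One step in your Step~1 is false as written, and the paper's proof passes over the same point without comment. In the paper's convention (primitivity of the matrix $A$), $H_2$ can be improperly primitive, $H_2\cong V_i(x_1,x_2)+2^{v}c\,x_3^2$ with $v\ge1$; the Type-2 forms in the proof of Prop.~\ref{Mar-sf} are examples. Such a form takes only even values. So it represents no unit, $\mathcal R(H)$ contains no odd integer, and Prop.~\ref{mechanical} has no admissible $k_i$. Lemma~\ref{lemD1} does not help, since it is stated for forms primitive in Watson's sense.

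The repair is routine. $V_i$ represents $2m$ for every odd $m$ (Prop.~\ref{binquad-lem}). So sieve with the conditions $\left(\frac{2k_ia_p}{p}\right)=1$ at the odd $p\mid q$, and use $2k_i$ in place of $k_i$. The proof of Prop.~\ref{mechanical} then goes through unchanged with $M$ replaced by $2M$, which does not affect the bound. Prop.~\ref{propD2} still applies through any prime $q'\mid k_i$: it is odd, exactly divides $2k_i$, and does not divide $D(C)$ when $(s(C),k_i)=1$.
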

 
\begin{proof} 
Let us choose $\alpha=7/24$. Then there exist $\delta(\alpha)>0$, $\delta^{(1)}(\alpha)>0$, and $B(\alpha)>1$ (if $y\geq B(\alpha)$ then $y$ is ``large enough in terms of $\alpha$ alone'') so that the conclusion of Prop.~\ref{brun} can be applied. Let $c\geq1$ be a constant such that \[\kappa_H=\max \{\kappa(C):C\in\Cl(H)\}\leq c r(H)^{\frac{7}{2}}, \]
for every $H\in\mathfrak R$; such $c$ exists by Prop.~\ref{kappa-finite}. Let $A\geq1$ be a large enough constant determined solely by $\delta$, $\delta'$, and $c$ chosen so that the inequality
\[(cr^{\frac{7}{2}})\log (cr^{\frac{7}{2}})\leq\left(\frac{\delta(A B(\alpha) r^{7/48})^\delta}{2}\right)^{\lfloor (A B(\alpha) r^{7/48})^{\delta'}\rfloor} ,\]
holds for all $r\geq1$.

Putting $Y(H)=AB(\alpha) r(H)^{\frac{7}{48}}$, we can then find $k_1,\dots,k_\nu$ as in Prop.~\ref{brun} where $\nu=\lfloor Y(H)^{\delta'}\rfloor$ and $k_i\leq Y(H)$ for all $i=1,\dots,\nu$ such that $Y(H)^{\delta}\leq p$ for all primes $p \! \mid \!  \prod_{i=1}^\nu k_i$. Using the notation of Prop.~\ref{mechanical}, since $M\leq Y(H)$ and $b\geq Y(H)^{\delta}$ we have
\begin{align*}
\frac{1}{X\log X}\textrm{\textup{MAR}}_X(H)&\ll \frac{2^{\omega_3(r(H))}}{r(H)}\left[M^3\log M+\kappa_H^3\log\kappa_H\cdot\left(\frac{\log M}{\log b}\frac{2}{b}\right)^\nu\right],\\
&\ll \frac{2^{\omega_3(r(H))}}{r(H)}\left(Y(H)^3\log Y(H)+1\right),\\
&\ll \frac{2^{\omega_3(r(H))}\log r(H)}{r(H)^{9/16}},
\end{align*}
for all $H\in\mathfrak R$ and all $X\geq2$, with the implied constants independent of $H$ and $X$.
\end{proof}

The next proposition establishes the pointwise limits as $X \to \infty$.

\begin{proposition}\label{avg-martini-pointwise}
For a fixed  $H\in\mathfrak R$, there is a constant $M(H)>0$ such that, as $X\to\infty$,
\[\textrm{\textup{MAR}}_{X}(H)\sim M(H)\cdot X\log X.\]
\end{proposition}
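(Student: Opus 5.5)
Fix the root packet $H\in\mathfrak R$, put $r=r(H)$, and write
\[
\textrm{\textup{MAR}}_X(H)=\sum_{t\geq1}\sum_{1\leq d\leq t^3X}\lambda_H(d,t).
\]
By Prop.~\ref{kappa-finite}, $\lambda_H(d,t)=0$ for $t>\kappa_H$, and $\kappa_H\ll r^{7/2}$ depends only on $H$. Hence the $t$-sum is a finite sum over $1\leq t\leq\kappa_H$, independent of $X$. (Isotropic classes have $\kappa=0$ and never satisfy $D(C)\leq\kappa(C)^3X$, so $t=0$ drops out.) It therefore suffices to show, for each fixed $t\geq1$,
\[
\sum_{1\leq d\leq Y}\lambda_H(d,t)\sim c(H,t)\,Y\log Y ,
\]
and then set $M(H)=\sum_{t}t^3c(H,t)$, using $t^3X\log(t^3X)\sim t^3X\log X$.

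For fixed $t$, I would use Prop.~\ref{lambdas}. Part 3 gives $\sum_{d\leq Y}\lambda_H^{(2+)}(d,t)=O_H(Y)$, and part 1 squeezes $\lambda_H^{(1)}$ between $\lambda_H^g$ and $\lambda_H$. So it is enough to find the asymptotics of $\sum_{d\leq Y}\lambda_H^g(d,t)$; this is also where the locality of $\kappa$ on genera with one class enters. Part 2 then replaces $\lambda_H^g(d,t)$ by $\sum_{H'\in\mathcal F(H,t)}g_{H'}(d)$ with an error of at most $1$ per $d$, hence $O(Y)$ in total. The set $\mathcal F(H,t)$ is finite, since its packets are supported on the finite set $T(H)$ and $v_p(D(H'))\leq1$ for $p\in T(H)\setminus R(D(H))$.

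Prop.~\ref{avg-class-pointwise} applied to each $H'$ then gives
\[
\sum_{d\leq Y}g_{H'}(d)\sim A(H')\,Y\log Y,
\]
so $c(H,t)=\sum_{H'\in\mathcal F(H,t)}A(H')$ and
\[
M(H)=\sum_{t\geq1}t^3\sum_{H'\in\mathcal F(H,t)}A(H').
\]

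The remaining point is positivity, $M(H)>0$. The sets $\mathcal F(H,t)$, $t\geq1$, partition the packets $H'\geq H$ with support $T(H)$ and $K^*(H')>0$; indeed $K^*\geq1$ always, by Prop.~\ref{upper-bound}. So it suffices that some such $H'$ exists. Any $H'\geq H$ supported on $T(H)$ with $v_p(H')=0$ for $p\in T(H)\setminus R(D(H))$ works, and each summand $A(H')>0$.

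The main subtlety is the interchange hidden in part 2. Prop.~\ref{avg-class-pointwise} is stated for general packets, which have non-root support, and I must check that its hypotheses apply to $H'$ with support $T(H)$ strictly larger than $R(D(H))$. I would verify that its proof goes through when $H'$ is a non-minimal packet. Once this is settled the argument is straightforward, because $H$ is fixed and so no uniformity in $H$ is needed.
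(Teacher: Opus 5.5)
Your proposal is correct and follows the paper's proof essentially step for step. The $t$-range is finite by Prop.~\ref{kappa-finite}; Prop.~\ref{lambdas}(1),(3) reduce to $\lambda_H^g$; Prop.~\ref{lambdas}(2) replaces this by $\sum_{H'\in\mathcal F(H,t)}g_{H'}$; and Prop.~\ref{avg-class-pointwise} finishes, giving the same $M(H)=\sum_t t^3\sum_{H'\in\mathcal F(H,t)}A(H')$. The subtlety you flag is already handled, since Prop.~\ref{avg-class-pointwise} is stated and proved for arbitrary $H\in\Packet$, and your explicit argument that $M(H)>0$ fills in a point the paper leaves implicit.
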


\begin{proof}
By Prop.~\ref{kappa-finite}, the set $L(H)=\{\kappa(C):C\in\Cl_H\}$ is finite. Under the notation of Def.~\ref{lambda-def}, we have
\[\textrm{\textup{MAR}}_X(H)=\sum_{0\neq t\in L(H)}\sum_{1\leq d\leq t^3 X}\lambda_H(d,t).\]
Let $L^g(H)=\{K(G):G\in\Gen_H\}\subseteq L(H)$. We note that if $t\in L(H)\setminus L^g(H)$ then
\[\sum_{1\leq d\leq t^3X}\lambda_H(d,t)=\sum_{1\leq d\leq t^3X}\lambda_H^{(2+)}(d,t)=O_{H,t}(X)\]
by Prop.~\ref{lambdas}.(3).
Thus, fixing $0\neq t\in L^g(H)$, it suffices to show there is a constant $c_{H,t}>0$ such that
\[\sum_{1\leq d\leq Y}\lambda_H(d,t)\sim c_{H,t}\cdot Y\log Y.\]
By parts (1) and (3) of Prop.~\ref{lambdas}, it is sufficient to show that there is a constant $c_{H,t}>0$ such that
\[\sum_{1\leq d\leq Y}\lambda_H^g(d,t)\sim c_{H,t}\cdot Y\log Y.\]
Using the notation of Prop.~\ref{lambdas}.(2), we have
\[\left|\lambda_H^g(d,t)-\sum_{H'\in\mathcal F(H,t)}g_{_{H'}}(d)\right|\leq 1,\]
where $\mathcal F(H,t)=\{H'\in\Packet_H:\Supp(H')=T(H),K^*(H')=t\}$ and where $T(H)=\{p:2\leq p\leq r(H)\}$ is defined as in Corollary~\ref{minimal-ripe-new}. Since $\mathcal F(H,t)$ is finite, by Prop.~\ref{avg-class-pointwise} it follows that
\begin{align*}
\sum_{1\leq d\leq Y}\lambda_H^g(d,t)&=\sum_{H'\in\mathcal F(H,t)}\left(\sum_{1\leq d\leq Y}g_{_{H'}}(d)\right) +O(Y),\\
&\sim \left(\sum_{H'\in\mathcal F(H,t)}A(H')\right)Y\log Y,
\end{align*}
as desired. Putting these results together, we have
\[M(H)=\sum_{0\neq t\in L^g(H)}t^3\sum_{H'\in\mathcal F(H,t)}A(H')\ ,\]
where 
\[A(H')=\frac{1}{ 2^{|\Supp(H')|+2}}\cdot\frac{1}{D(H')}\prod_{p\in\Supp(H')}\left(1-\frac{1}{p}\right)^2\prod_{p\notin\Supp(H')}\left(1+\frac{2}{p}\right)\left(1-\frac{1}{p}\right)^2.\]
Thus, we conclude that
\begin{equation}\label{MH} M(H)=\frac{c}{4\cdot 2^{|T(H)|}}\prod_{p\in T(H)}\left(1+\frac{2}{p}\right)^{-1}\sum_{\substack{H'\in\Packet_H,\\\Supp(H')= T(H)}}\frac{K^*(H')^3}{D(H')},\end{equation}
where $c=\prod_{p}(1+\frac{2}{p})(1-\frac{1}{p})^2$.
\end{proof}

We conclude this section with a proof of Theorem \ref{IThm1} asserting that there is a constant $\gamma >0$ such that, as $X\to\infty$,
\[\textrm{\textup{MAR}}(X)\sim \gamma \cdot X\log X.\]

\begin{proof}[Proof of Theorem~\ref{IThm1}]\

Recall that  the function $f_X^{({\bf C})}:\mathfrak R\to\R_{\geq0}$  was defined in Proposition \ref{mechanical} by
\[f_X^{({\bf C})}(H)=\frac{1}{X\log X}\textrm{\textup{MAR}}_X(H).\]
Let $N:\mathfrak R\to\R_{\geq0}$ be given by $N(H)=2^{\omega_3(r(H))}\log^+ r(H)/r(H)^{9/16}$.
We note that
\[\sum_{H\in\mathfrak R}N(H)\leq\sum_{r\in |R|}\frac{2^{\omega_3(r)}\cdot \log^+ r\cdot 4\cdot 2^{\omega(r)} \prod_{p\mid r}v_p(r)}{r^{9/16}}<\infty,\]
where $|R|=\{r(d):d\in\Z_{\geq1}\}$. So $N\in L^1(\mathfrak R)$ because the numerator in each summand of the second series is $O_\epsilon(r^\epsilon)$. It follows by Prop.~\ref{martini-tail} that $f_X^{({\bf C})}(H)\ll N(H)$
valid for all $H\in\mathfrak R$ and all $X\geq 2$, where the implied constant is independent of $H$ and $X$. Thus $f_X^{({\bf C})}\in L^1(\mathfrak R)$ for every $X\geq2$. By the dominated convergence theorem the pointwise limit $f^{({\bf C})}=\lim_{X\to\infty} f_X^{({\bf C})}$ lies in $L^1(\mathfrak R)$, and
\begin{equation}\label{mar-asym-form}\lim_{X\to\infty}\frac{1}{X\log X}\textrm{\textup{MAR}}(X)=\lim_{X\to\infty}\sum_{H\in\mathfrak R}f_X^{({\bf C})}(H)=\sum_{H\in\mathfrak R}f^{({\bf C})}(H)<\infty,\end{equation}
giving the desired result.
\end{proof}
\begin{remark}
    It follows from the proof of Prop.~\ref{martini-tail} that
    \[\gamma=\frac{c}{4}\sum_{H\in\mathfrak R}\frac{1}{2^{|T(H)|}}\prod_{p\in T(H)}\left(1+\frac{2}{p}\right)^{-1}\left(\sum_{\substack{H'\in\Packet_H,\\\Supp(H')= T(H)}}\frac{K^*(H')^3}{D(H')}\right)
    \]
    where $c=\prod_{p}(1+\frac{2}{p})(1-\frac{1}{p})^2$ and $T(H)=\{p:2\leq p\leq 2r(H)\}$.
\end{remark}
\begin{remark}\label{rmk-gamma}
That $\gamma>\frac{19}{20}\frac{\zeta(2)}{\zeta(4)}$ can be seen as follows. Recall that
\[\textup{MAR}(X)=\sum_{t=1}^\infty\mathcal S(t^3X,t),\quad\text{and}\quad \sum_{1\leq d\leq X}h(d)=\sum_{t=0}^\infty\mathcal S(X,t)\]
where $\mathcal S(X,t)=\sum_{1\leq d\leq X}\lambda(d,t)$ and $\lambda(d,t)=\#\{C\in\Cl(d):\kappa(C)=t\}$.
It is easy to show (cf.~proof of Prop.~\ref{avg-martini-pointwise}) that
\[\#\{C\in\Cl_H:1\leq D(C)\leq X,C\text{ isotropic}\}=O_H(X)\]
for each root packet $H$, so that $\mathcal S(X,0)=o(X\log X)$. It follows that
\begin{align*}
    \textup{MAR}(X)-\sum_{1\leq d\leq X}h(d)&=\sum_{t\geq2}(\mathcal S(t^3X,t)-\mathcal S(X,t))+o(X\log X)\\
    &=\sum_{t\geq2}\sum_{X<d\leq t^3X}\lambda(d,t)+o(X\log X).
\end{align*}
It follows that, by arguing as in the proof of Prop.~\ref{avg-martini-pointwise}, any occurrence of a root packet $H$ with $K(H)\geq2$ (of which there are many) will contribute a positive multiple of $X\log X$ to the right hand side. Alternatively, we may bound the last sum from below by
\begin{align*}
   \sum_{\substack{X<d\leq 8X\\d\text{ squarefree}}}\lambda(d,2)+o(X\log X)=\mathcal S^\#(8X,2)-\mathcal S^\#(X,2)+o(X\log X)\sim \frac{7}{8}cX\log X
\end{align*}
where $\mathcal S^\#(X,t)$ and $c>0$ are given as in the proof of Prop.~\ref{Mar-sf}. The desired lower bound on $\gamma$ then follows by comparing the above with formulas $\bf{(B)}$ and $\bf{(C)}$.
\end{remark}


\section{{\bf Part 2} : Isotropic forms}\label{serre-p}\

We turn to the proof of Theorem~\ref{IThm2}.  The first step is to use homogeneous dynamics to localize the counting of isotropic forms and to reduce the problem to the asymptotics ${\bf (D)}$.    There are subtle global normalizations that enter in the analysis so we begin by using this method to count all the indefinite integral forms and use Siegel's
\eqref{siegel-sum} to compute the constants.  With the normalizations in place we proceed to the isotropic counting problem which requires a direct evaluation of  ${\bf (D)}$.  That one can do so in terms of a product of local densities comes from special and fortuitous properties of isotropic forms (see Prop.~\ref{mult}).

In the remainder of this paper, let $\Sym\simeq\mathbb{A}^6$ be the affine space of $3\times 3$ symmetric matrices. Let $V^{+}(\mathbb{R}) \subset \Sym(\R)$ be the cone of the real symmetric $3\times3$ matrices of positive determinant and which are indefinite over $\mathbb{R}$, and let
\begin{equation} \label{81}
V^{+}_1(\mathbb{R}) := \{F \in V^{+}(\mathbb{R}) \, : \, D(F)=1 \}.
\end{equation}
$V^{+}_1(\mathbb{R})$ is a homogeneous space for the action of $\SL_3(\mathbb{R})$  given by $F \to g F g^t$.\footnote{In this section we consider equivalence in the ``narrow'' sense, working with $\SL_3(\Z)$ rather than $\GL_3(\Z)$. Note that the central element $-I$ has determinant $-1$, so this notion of equivalence agrees with the wider notion used in the rest of the paper.}    It  carries a (unique up to scalar multiple) $\SL_3(\mathbb{R})$ invariant measure $\mu_1$ which we normalize so that 
\begin{equation} \label{82}
d \lambda = d \mu_1 t^2 \frac{dt}{t},  
\end{equation}
where $d \lambda$ is the restriction  to $V^{+}(\mathbb{R})$ of Lebesgue measure on $\Sym(\mathbb{R})$, normalized to have volume one on the unit cube with $t=\det(F)$.

For $F\in V^{+}(\mathbb{R})$ denote by $\tilde{F}$ the projection of $F$ onto $V_1^{+}(\mathbb{R})$
\begin{equation} \label{83}
\tilde{F}=\frac{F}{(D(F))^{\frac{1}{3}}}.
\end{equation}

Fix a nice compact subset $\Omega$ of $V_1^{+}(\mathbb{R})$ (say the closure of a small neighborhood of a point) and for $C$ a class of integral forms\footnote{From here until \eqref{812} we allow imprimitive classes $C$.} in $V^{+}(\mathbb{R})$ set 
\begin{equation}\label{84}
N_{\Omega}(C):=\sum_{\substack{F\in C\\ \tilde{F}\in\Omega}}1=\sum_{\substack{\gamma \in \SL_3(\mathbb{Z}) / \text{Aut}_{F_0}(\mathbb{Z})\\ \widetilde{\gamma F_0 \gamma^{t}}\in \Omega}}1.
\end{equation}
Here $F_0$ is any member of $C$ and $\text{Aut}_{F_0}(\mathbb{Z}) = \{ \gamma \in \SL_3(\mathbb{Z}) : \gamma F_0 \gamma^{t} =F \}$,   which is a lattice in $H_{F_0}(\mathbb{R}) := \text{Aut}_{F_0}(\mathbb{R})  \simeq  \mathrm{SO}(2,1)$.

Now Theorem~$1.2$, together with Lemma~$2.2$ and Prop.$2.2$ of Eskin-Oh \cite{eskinoh}, and the results of Oh~\cite{heeoh} on symmetric matrices assert that these ``periodic'' normalized $H_{F_0}(\mathbb{R}) / H_{F_0}(\mathbb{Z})$ cycles equidistribute in $\SL_3(\mathbb{R})/ \SL_3(\mathbb{Z})$ as $D(F_0) \to \infty$. This in turn implies \cite{eskinoh} that as $D(C) \to \infty$ 
\begin{equation} \label{85}
N_{\Omega}(C) \sim \eta_1 \mu_1(\Omega) \frac{\text{vol}(H_{F_0}(\mathbb{R}) / H_{F_0}(\mathbb{Z}))}{\text{vol}(\SL_3(\mathbb{R})/ \SL_3(\mathbb{Z}))},
\end{equation}
where $\eta_1$  is a normalized constant relating our $\mu_1$ to the invariant Haar measure on $V_1^{+}(\mathbb{R})$ obtained from the Fubini formula (1.5) in \cite{eskinoh}, and where the Haar measures on $H_{F_0}(\mathbb{R})$ and $\SL_3(\mathbb{R})$ are normalized as in Siegel.  

The normalization constant $\eta_1$ can be computed but we prefer to determine the normalization directly with Siegel's $\rho(C)$ defined in \eqref{rho1}.   Using Siegel (page 121 \cite{siegelIAS}) relating $\rho(C)$ to  $\text{vol}(H_{F}(\mathbb{R}) / H_{F}(\mathbb{Z}))$, we see from \eqref{85} that as $D(C)\to \infty$ 
\begin{equation} \label{86}
N_{\Omega}(C) = \eta \mu_1(\Omega) D(C)^2 \rho(C) (1+o(1)), 
\end{equation}
where $\eta$ is a constant.

Let $W_{\Omega}(N)$ be the Euclidean cone subtended by $\Omega$
\begin{equation} \label{87}
W_{\Omega}(N) =\{F \in W^{+}(\mathbb{R}) \, : \, D(F) \leq N, \tilde{F} \in \Omega \}.
\end{equation}
We have normalized $\mu_1$ so that 
\begin{equation} \label{88}
\text{vol}(W_{\Omega}(N))=\mu_1(\Omega) \int_{0}^{N} t dt = \frac{\mu_1(\Omega) N^2}{2}.
\end{equation}
Now from \eqref{86} as $N \to \infty$
\begin{equation}\label{89}
\sum_{\substack{F \, \text{integral}\\ F \in W_{\Omega}(N)}}1=\sum_{\substack{1\leq D(C) \leq N}} N_{\Omega}(C)=\eta \mu_1(\Omega) \sum_{\substack{1\leq D(C) \leq N}}D(C)^2 \rho(C) (1+o(1)).
\end{equation}
On the other hand, the left-hand side of \eqref{89} is elementarily \begin{equation} \label{810}
\sim \text{vol}(W_{\Omega}(N)) =\frac{\mu_1(\Omega) N^2}{2}.\end{equation}
Hence
\begin{equation}\label{811}
\eta \sum_{\substack{1\leq D(C)\leq N}} D(C)^2 \rho(C) \sim \frac{N^2}{2}.
\end{equation}
From Siegel's   \eqref{siegel-sum}  (applied to positive $D$'s) we find that 
\begin{equation} \label{812}
\eta=\frac{2}{\zeta(2) \zeta(3)},
\end{equation}
which normalizes \eqref{86} for our application to isotropic forms. 

To that end we have that as $N \to \infty$, and restricting ourselves again to primitive $C$'s,
\begin{equation} \label{813}
\sum_{\substack{F\in W_{\Omega}(N)\\ F \text{integral} \\F \text{isotropic}}}1=\sum_{\substack{C\in \Cl^{\iso}\\D(C)\leq N}}N_{\Omega}(C)=\frac{2}{\zeta(2)\zeta(3)}\mu_1(\Omega) \sum_{\substack{C\in \Cl^{\iso}\\D(C)\leq N}} D(C)^2 \rho(C) (1+o(1)).
\end{equation}

This brings us to  ${\bf (D)}$.  In the next section we use the Siegel mass formula to evaluate the right-hand side of \eqref{813}:
\begin{equation} \label{814}
\sum_{\substack{C\in \Cl^{\iso}\\D(C)\leq N}} D(C)^2 \rho(C) \sim \frac{\zeta(2)\zeta(3)}{2}\varpi\cdot\frac{1}{2}\frac{N^2}{\sqrt{\log N}}, \end{equation}
where $\varpi$ is an infinite product of local densities.  Hence as $N \to \infty$,
\begin{equation} \label{815}
\sum_{\substack{F\in W_{\Omega}(N)\\ F \text{isotropic}}}1 \sim \varpi \frac{\text{vol}(W_{\Omega}(N))}{\sqrt{\log N}} \sim \varpi\int_{W_{\Omega}(N)} \frac{dx}{\sqrt{\log^{+}D(\mathbf{x})}},
\end{equation}
where 
\[
D(\mathbf{x})=D(x_1,\ldots,x_6) = \det \begin{psmatrix}
		x_1 & x_2 & x_3\\
		 x_2 & x_4 &  x_5	\\
		x_3 & x_5& x_6	
		\end{psmatrix}.
\]

This proves Theorem~\ref{IThm2} for the cones $W_{\Omega}(N)$.  The passage from \eqref{815} to Theorem~\ref{IThm2} follows the approximation of sets in $W^+(\mathbb{R})$ as outlined in \cite{sarnakletter} and executed in detail in \cite{KotWoo} on a similar problem.  It involves truncating small sets near the locus $D(F)=0$, for which upper bound sieves suffice.  In our present setting we use Serre's upper bound sieve.  Since the details are the same as in \cite{KotWoo} we omit them here.


\section{Isotropic Siegel asymptotics}\label{sie-asy}\

In this section we prove {\bf(D)} directly, which together with \eqref{813} yields Theorem \ref{IThm2}.  In what follows we write $\zeta_p(s)=(1-1/p^s)^{-1}$ for convenience. We begin by examining the masses associated with $\rho(C)$ for isotropic $C$'s, and especially the multiplicativity in $d$ of the corresponding function $\nu^{\iso}(d)$ below. Here and elsewhere we make crucial use of the mass computations by Conway and Sloane \cite{CSMass}, and the table of masses at $p=2$ in Appendix \ref{2classes}.

\begin{definition}
Let $\Sym$ be the space of symmetric $3\times 3$ matrices. Let $p$ be prime.
\begin{enumerate}[label=\upshape\arabic*.\ ]
	\item Let $\Sym^{\iso}(\Z_p)\subseteq\Sym(\Z_p)$ be the locus of isotropic matrices.
	\item Let $\Sym_{\prim}(\Z_p)\subseteq\Sym(\Z_p)$ be the locus of nonsingular primitive matrices.
	\item We write $\Sym_{\prim}^{\iso}(\Z_p)=\Sym^{\iso}(\Z_p)\cap \Sym_{prim}(\Z_p)$.
    \item For each $d\in\Z_p/(\Z_p^*)^2$ we write $\Sym(d)=\{A\in\Sym:\det(A)\in d\}$ and
	\[\Sym_{\prim}^{\iso}(d)(\Z_p)=\Sym_{\prim}^{\iso}(\Z_p)\cap\Sym_{\prim}^{\iso}(d)(\Z_p).\]
    \item We write $\mu_p$ for the standard $p$-adic probability measure on $\Sym(\Z_p)\simeq\Z_p^{6}$. That is, for each $S\subseteq\Sym(\Z_p)$ we set
\[\mu_p(S)=\lim_{r\to\infty}\frac{|\{a\textup{ mod }p^r:a\in S\}|}{|\Sym(\Z/p^r\Z)|}.\]
\end{enumerate}
\end{definition}
\begin{remark}
    If $G$ is the $\GL(3,\Z_p)$-orbit of $A\in\Sym(\Z_p)$ then
\begin{align*}
\mu_p(G)&=\lim_{r\to\infty}\frac{|\GL(3,\Z/p^r\Z)|}{|\Aut_A(\Z/p^r\Z)|}\frac{1}{|\Sym(\Z/p^r\Z)|}=\frac{1}{\prod_{n=1}^3\zeta_p(n)}\lim_{r\to\infty}\frac{p^{3r}}{|\Aut_A(\Z/p^r\Z)|}.
\end{align*}
\end{remark}
\begin{definition}
Let $A\in\Sym(\Z_p)$. Let $G$ be its $\GL(3,\Z_p)$-orbit.
\begin{enumerate}[label=\upshape\arabic*.\ ]
    \item The \emph{Siegel $p$-density} of $A$ is $\delta_p(A)=\prod_{n=1}^3\zeta_p(n)\cdot\mu_p(G)$.
    \item The \emph{$p$-mass} of $A$ is $m_p(A)=p^{2v_p(\det(A))}\delta_p(A)$.
\end{enumerate}
\end{definition}

\begin{remark}
The above terminology for the  $p$-mass was introduced by Conway-Sloane \cite{CSMass} for quadratic forms in any number of variables. It is a modification of Siegel's local density; it depends on the $\GL(n,\Zp)$-equivalence class of $A$ and is also invariant under scaling of $A$. As such we may assume that $A$ is primitive when using it in computations.
\end{remark}

In order to analyze the sum \eqref{813} we use Siegel's mass formula which localizes the $\rho$ sum:

\begin{theorem}\label{massform}\textup{(\emph{The Siegel Mass Formula} \cite{siegel,siegelIAS})}  \\ 
If $G\in\Gen$, one has 
\[\nu(G) :=\sum_{\Ge(C)=G}\rho(C)=2\prod_{p}2\delta_p(G_p).\]\qed
\end{theorem}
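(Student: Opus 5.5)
The plan is to deduce the formula from the Tamagawa number of the special orthogonal group, $\tau(\mathrm{SO}_F)=2$, which is the adelic form of Siegel's theorem. The only real work is matching the normalizations of $\rho(C)$ and $\delta_p$ used here. Fix $F$ in the genus $G$ and put $H=\mathrm{SO}_F$, a $\Q$-form of $\mathrm{SO}(2,1)$.

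\emph{Step 1: classes as double cosets.} Let $K_f=\prod_p H(\Z_p)$. The classes in $G$, taken in the narrow sense used in Section~\ref{serre-p}, are in bijection with the double cosets
\[H(\Q)\backslash H(\mathbb A_f)/K_f .\]
This is the usual identification of lattices in the genus with adelic translates of $\Z^3$; it holds because every class in a genus of indefinite ternary forms meets $\SL_3$-equivalence. Unfolding $H(\Q)\backslash H(\mathbb A)$ along these double cosets gives
\[\mathrm{vol}\bigl(H(\Q)\backslash H(\mathbb A)\bigr)=\sum_{\Ge(C)=G}\mathrm{vol}\bigl(H_{F_C}(\R)/H_{F_C}(\Z)\bigr)\prod_{p}\mathrm{vol}\bigl(H(\Z_p)\bigr),\]
where $F_C$ is a representative of $C$. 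This uses that $H_{F_C}(\Z)$ is the stabilizer and that, for indefinite ternary forms, there is no extra finite-group weighting.

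\emph{Step 2: Tamagawa measure and local factors.} Take the Tamagawa measure $\prod_v \omega_v$ built from a gauge form $\omega$ on $H$. Choose $\omega$ as the quotient of the invariant forms on $\GL_3$ and on $\Sym$ under the orbit map $Z\mapsto Z^{\intercal}FZ$, as in Siegel's definition \eqref{rho1}.
\begin{enumerate}[label=\upshape(\alph*)]
\item At a finite prime $p$, $\omega_p(H(\Z_p))$ equals the limit $\lim_r |\Aut_F(\Z/p^r)|/p^{3r}$, up to an explicit power of $p$ coming from $v_p(D(F))$. By the Remark preceding the definition of $\delta_p$, this is $\delta_p(G_p)^{-1}$ up to that power. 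The powers of $p$ combine over all $p$ into a global power of $D(F)$, which cancels by the product formula.
\item At $\infty$, the definition \eqref{rho1} of $\rho(F)$ is by construction $\omega_\infty\bigl(H(\R)/H(\Z)\bigr)$ divided by the same determinant factor. This is Siegel's relation on p.~121 of \cite{siegelIAS}.
\end{enumerate}
No convergence factors are needed, because $H$ is semisimple and the product of the local volumes converges absolutely.

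\emph{Step 3: assemble the constants.} Inserting Step 2 into Step 1 and using $\tau(H)=2$ gives $\sum_{\Ge(C)=G}\rho(C)=c\prod_p\delta_p(G_p)$ for an explicit constant $c$. I then have to check that $c$ equals $2\prod_p 2$. The inner factor $2$ at each prime records the index $[O_F(\Z_p):\mathrm{SO}_F(\Z_p)]=2$, since $\delta_p$ is defined through the full $\GL(3,\Z_p)$-stabilizer. These factors combine with $\tau=2$ and the real factor $[O(\R):\mathrm{SO}(\R)]$ in Siegel's convention to give the stated shape. The infinite product $\prod_p 2$ should be read, as in Siegel and Conway--Sloane \cite{CSMass}, together with the standard mass $\prod_p\delta_p$. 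Equivalently, one writes the right side as Conway--Sloane's standard mass times the correction factors $m_p/\mathrm{std}_p$, which equal $1$ for $p\nmid 2D$, so the product becomes finite and convergent.

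\emph{Main obstacle.} I expect the hard step to be this constant bookkeeping in Step 3: reconciling the factors of $2$ from $O$ versus $\mathrm{SO}$, from narrow versus wide equivalence, and from the $p=2$ normalization of densities. To pin down the global constant without redoing every local computation, I would first check the formula on a single genus where everything is explicit, for instance determinant $1$ or squarefree $d$, where by Section~\ref{spec} each genus has one class. Alternatively, I would fix the constant by comparison with the Siegel average \eqref{siegel-sum}, in the same way $\eta$ was fixed in \eqref{812}.
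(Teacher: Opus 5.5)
Your route differs from the paper's, which gives no argument and simply quotes Siegel. You instead derive the formula from $\tau(\mathrm{SO}_F)=2$, Weil's adelic form of Siegel's theorem, so that only the normalization remains to be done. Steps 1 and 2 set up the right framework. However, the proposal stops exactly at the normalization, which is the remaining content of the statement, and your sketch of that step points the wrong way.

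You write the right side as $c\prod_p\delta_p(G_p)$, with $c$ to be matched against ``$2\prod_p 2$''. Neither product makes sense on its own: $\delta_p(G_p)=\tfrac12(1-p^{-2})^{-1}$ for $p\nmid 2D$, so $\prod_p\delta_p=0$ and $\prod_p 2=\infty$. You also allow unspecified powers of $p$ and $D$, and you expect a real factor $[\mathrm{O}(\R):\mathrm{SO}(\R)]$ to enter. With the gauge form you chose, none of these occur, and no calibration is needed. Your fallbacks would not have closed the gap anyway. Testing on one genus needs an independent covolume computation for $\mathrm{SO}_F(\Z)$. Calibrating against \eqref{siegel-sum} means summing over all genera, imprimitive ones included.

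Here is the exact bookkeeping. Let $\omega=dZ/dS$ with $dZ=\prod dz_{ij}$ and $dS=\prod_{i\le j}ds_{ij}$.

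\emph{Finite places.} At every finite $p$, including $p=2$ and $p\mid D$, the $p$-adic coarea formula gives exactly $|\omega|_p(\mathrm{O}_F(\Z_p))=\lim_r p^{-3r}|\Aut_F(\Z/p^r\Z)|=\delta_p(G_p)^{-1}$. Two facts justify this: for $r>v_p(D)$ every solution of $X^{\intercal}FX\equiv F \bmod p^r$ is invertible, and the fibre volume is locally constant near $F$, since nearby $S$ are $g^{\intercal}Fg$ with $g\in\GL_3(\Z_p)$, which preserves $\omega$. Since $-I$ has determinant $-1$, $\mathrm{O}_F(\Z_p)=\mathrm{SO}_F(\Z_p)\times\{\pm I\}$. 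Hence $|\omega|_p(\mathrm{SO}_F(\Z_p))=(2\delta_p(G_p))^{-1}$, which equals $1-p^{-2}$ for $p\nmid 2D$, so $\prod_p 2\delta_p(G_p)$ converges factor by factor.

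\emph{Real place.} The same coarea argument shows that \eqref{rho1} says exactly $\rho(F)=|\omega|_\infty(\Aut_F(\Z)\backslash\mathrm{O}_F(\R))$, with no determinant factor. The power of $D$ in Siegel's p.~121 relation appears only when one compares with an $F$-independent Haar measure on $\mathrm{SO}(2,1)$. Read $\Aut_F(\Z)$ as the full unit group $\mathrm{O}_F(\Z)$, as Siegel does. Then $-I$ lies in both $\mathrm{O}_F(\Z)$ and $\mathrm{O}_F(\R)$, so $\rho(F)=|\omega|_\infty(\mathrm{SO}_F(\Z)\backslash\mathrm{SO}_F(\R))$. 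The two index-$2$ factors cancel, and any surviving real factor would make your constant wrong by a factor $2$. The same $-I$, not indefiniteness, is what justifies your Step 1.

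\emph{Conclusion.} Different classes in the genus carry matching gauge forms, because a rational isometry $\phi$ with $\phi^{\intercal}F\phi=F_C$ has $\det\phi=\pm1$. Unfolding with $\tau=2$ then gives $2=\sum_{\Ge(C)=G}\rho(C)\prod_p(2\delta_p(G_p))^{-1}$, which is the stated formula.
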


We shall denote by $\Gen^{\iso}$ the subset of isotropic genera in $\Gen$, and also write $\Gen^{\iso}(d)=\Gen^{\iso}\cap\Gen(d)$. In what follows, we shall write

\begin{align}\label{H-iso}
 \nu^{\iso}(d)& :=\sum_{G\in\Gen^{\iso}(d)}\nu(G)=\sum_{C\in\Cl^{\iso}(d)}\rho(C),\\     
\intertext{and}
          \quad & \tilde{\nu}^{\iso}(d) :=\frac{\nu^\iso(d)}{\nu^\iso(1)}. \nonumber
\end{align}

Theorem \ref{massform} allows us to prove the following multiplicativity:

\begin{proposition}\label{mult}\ 
We have
\[\sum_{d=1}^\infty \frac{\tilde{\nu}^{\iso}(d)}{d^{s}}=\prod_p\left(\sum_{k=0}^\infty\frac{\mu_p(\Sym_{\prim}^\iso(p^k)(\Z_p))}{\mu_p(\Sym_{\prim}^\iso(1)(\Z_p))}p^{-s}\right).\]
Equivalently, $\tilde{\nu}^{\iso}(d)$ is a multiplicative function of $d$, and for every prime $p$ and integer $k\geq0$ we have \[\tilde{\nu}^{\iso}(p^k)=\mu_p(\Sym_{\prim}^\iso(p^k)(\Z_p))/\mu_p(\Sym_{\prim}^\iso(1)(\Z_p)).\] In particular,
\[\sum_{k=0}^\infty \tilde{\nu}^{\iso}(p^k)=\frac{\mu_p(\Sym_{\prim}^\iso(\Z_p))}{|\Sq^*(\Z_p)|\mu_p(\Sym_{\prim}^\iso(1)(\Z_p))}.\]
\end{proposition}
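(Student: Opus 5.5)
By Theorem~\ref{massform}, for $d\geq1$ we can write
\[\nu^{\iso}(d)=\sum_{G\in\Gen^{\iso}(d)}2\prod_p 2\delta_p(G_p).\]
The plan is to turn the sum over isotropic genera into a product of local sums over $\Z_p$-classes of determinant $d$. To do this we need an exact description of which tuples $(G_p)_{p\leq\infty}$ arise as isotropic genera. By Props.~\ref{jonesT29} and~\ref{jonesT46}, as used in the proof of Thm.~\ref{avg-genera}, a tuple of local classes with $\det G_p\equiv d$ (indefinite, $c_\infty$ fixed) comes from a global genus iff $\prod_p c_p=1$. A ternary form is isotropic over $\Q$ iff it is isotropic over every $\Q_p$. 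Over $\Q_p$, isotropy is equivalent to the Hasse invariant taking the specific value attached to the hyperbolic-plane-plus-line form of determinant $d$, i.e.\ to $c_p(G_p)$ being a prescribed sign $\epsilon_p(d)$ (up to normalization conventions). Moreover $\prod_p\epsilon_p(d)=1$ automatically, by Hilbert reciprocity applied to the split form $\langle 1,-1,-d\rangle$. So the global constraint is vacuous once each $G_p$ is locally isotropic. Hence $\Gen^{\iso}(d)$ is in bijection with $\prod_{p\mid 2d}\{G_p\in\Gen_p(d):G_p\text{ isotropic}\}$, with no parity condition. This is the ``fortuitous'' feature referred to in the introduction, and it is the step that makes multiplicativity work; I will check the $p=2$ and $p=\infty$ bookkeeping against the tables in Appendix~\ref{local-appendix}.

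Granting this, we obtain
\[\nu^{\iso}(d)=4\prod_p\Bigl(\sum_{G_p\in\Gen_p^{\iso}(d)}\delta_p(G_p)\Bigr)\]
(constants from the $\infty$ factor absorbed). Using $\delta_p(G_p)=\prod_n\zeta_p(n)\mu_p(G_p)$, the $p$-factor becomes $\prod_n\zeta_p(n)\,\mu_p(\Sym^{\iso}_{\prim}(d)(\Z_p))$. Here $\Sym^{\iso}_{\prim}(d)(\Z_p)$ means matrices whose determinant lies in the square class of $d$ in $\Z_p$.

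The main obstacle is that this local factor depends on $d$ through its square class in $\Z_p$, not only through $p^{v_p(d)}$. For multiplicativity we need $\mu_p(\Sym^{\iso}_{\prim}(d)(\Z_p))$ to depend only on $v_p(d)$, i.e.\ to be invariant under $d\mapsto ud$ for $u\in\Z_p^*$.

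For this invariance I would first try the scaling $A\mapsto uA$, which preserves primitivity, isotropy and $\mu_p$, and multiplies the determinant by $u^3\equiv u$ modulo squares. This gives a measure-preserving bijection $\Sym^{\iso}_{\prim}(d)\to\Sym^{\iso}_{\prim}(ud)$, which settles the invariance for all $p$, including $p=2$. I would cross-check this against the Conway--Sloane masses (Prop.~\ref{massprop}), which the text calls ``an extra invariance of the corresponding masses.''

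With the invariance in hand, for $p\nmid d$ the local factor equals that of $d=1$, so the ratio $\tilde\nu^{\iso}(d)=\prod_{p\mid d}\mu_p(\Sym^{\iso}_{\prim}(p^{v_p(d)}))/\mu_p(\Sym^{\iso}_{\prim}(1))$. The factor $2$ prime is always present and is treated identically. This gives multiplicativity and the Euler product. Finally, summing over $k$: $\Sym^{\iso}_{\prim}(\Z_p)$ is the disjoint union over all square classes of determinant, namely $p^k u$ with $u\in\Sq^*(\Z_p)$. By the invariance, each $k$ contributes $|\Sq^*(\Z_p)|\,\mu_p(\Sym^{\iso}_{\prim}(p^k))$, which yields the last identity.
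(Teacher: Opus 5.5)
Your proposal is correct and follows essentially the same route as the paper: the mass formula, the bijection $\Gen^{\iso}(d)\simeq\prod_{p<\infty}\Gen_p^{\iso}(d)$, and invariance of the local isotropic measure under scaling by a $p$-adic unit. The paper simply asserts the bijection, whereas you justify it by Hilbert reciprocity applied to $\langle 1,-1,-d\rangle$. The only slip is cosmetic: the formula should read $\nu^{\iso}(d)=2\prod_p 2\sum_{G_p}\delta_p(G_p)$ rather than $4\prod_p\sum_{G_p}\delta_p(G_p)$, and this does not matter because only the ratios $\tilde\nu^{\iso}$ enter.
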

\begin{proof}
Note first that $\nu^{\iso}(1)\neq0$. We show that $\nu^{\iso}$ is a nonzero scalar multiple of a multiplicative function.
Let $\Gen_p^{\iso}(d)$ denote the set of $\Zp$-equivalence classes of isotropic forms of determinant equal to $d\in\Zp/(\Zp^{*})^2$. Then the map $G\mapsto (G_p)_{p<\infty}$ gives us a bijection
\[\Gen^{\iso}(d)\simeq \prod_{p<\infty}\Gen_p^{\iso}(d).\]
It follows that
\begin{equation}\label{viso1}\nu^{\iso}(d)=\sum_{G\in\Gen^{\iso}(d)}2\prod_{p<\infty}2 \delta_p(G_p)=2\prod_{p<\infty} 2\delta_p(\Sym_{\prim}^{\iso}(d)(\Z_p)).\end{equation}
We claim that
$d\mapsto \delta_p(\Sym_{\prim}^{\iso}(d)(\Z_p))$
is a nonzero scalar multiple of a multiplicative function for each prime $p$. First, suppose that $d,e\geq1$ are positive integers such that $(e,p)=1$. Then
we have a bijection
\[\Gen_p^{\iso}(d)\simeq\Gen_p^{\iso}(de),\]
that preserves $\delta_p$-values, given by the scalar multiplication $f\mapsto e f$. It follows that $\delta_p(\Sym_{\prim}^{\iso}(d)(\Z_p))=\delta_p(\Sym_{\prim}^{\iso}(de)(\Z_p))$. Now, let us assume $a,b\geq1$ are positive integers such that $(a,b)=1$. Then without loss of generality $(b,p)=1$. Since we have $\delta_p(\Sym_{\prim}^{\iso}(1)(\Z_p))\neq0$, we see that
\[\frac{\delta_p(\Sym_{\prim}^{\iso}(ab)(\Z_p))}{\delta_p(\Sym_{\prim}^{\iso}(1)(\Z_p))}=\frac{\delta_p(\Sym_{\prim}^{\iso}(a)(\Z_p))}{\delta_p(\Sym_{\prim}^{\iso}(1)(\Z_p))}\cdot\frac{\delta_p(\Sym_{\prim}^{\iso}(b)(\Z_p))}{\delta_p(\Sym_{\prim}^{\iso}(1)(\Z_p))},\]
showing that $\delta_p(\Sym_{\prim}^{\iso}(-)(\Z_p))$ is a scalar multiple of a multiplicative function. Since $\delta_\ell(\Sym_{\prim}^\iso(p^k)(\Z_\ell))=\delta_\ell(\Sym_{\prim}^\iso(1)(\Z_\ell))$ for $(\ell,p)=1$, we observe that
\[
    \nu^{\iso}(p^k)=\nu^{\iso}(1)\cdot\frac{\delta_p(\Sym_{\prim}^{\iso}(p^k)(\Z_p))}{\delta_p(\Sym_{\prim}^{\iso}(1)(\Z_p))}=\nu^{\iso}(1)\cdot\frac{\mu_p(\Sym_{prim}^{\iso}(p^k)(\Z_p))}{\mu_p(\Sym_{\prim}^{\iso}(1)(\Z_p))},
\]
from which the desired result follows.
\end{proof}

In Section \ref{comp}, we shall give a detailed computation of an explicit formula for the Euler product
\[I(s)=\sum_{d=1}^\infty\frac{ d\, \tilde{\nu}^{\iso}(d)}{d^s}.\]
For now, combining Proposition \ref{mult} with elementary estimates of $p$-masses of isotropic forms (Lemma \ref{masslem} below), we establish the asymptotic formula {\bf(D)}.

\begin{proposition}\label{isotropic-siegel-2}
Let \[\varpi=\frac{2}{\Gamma(1/2)}\prod_p\mu_p(\Sym_{\prim}^{\iso}(\Z_p))\left(1-\frac{1}{p}\right)^{-1/2},\]
where $\mu_p(\Sym_{\prim}^{\iso}(\Z_p))$ is the $p$-adic probability measure of the locus of primitive isotropic matrices in the space of $3\times3$ symmetric $\Z_p$-matrices. Then, $0<\varpi <\infty$ and  as $X\to\infty$,
\[\sum_{\substack{C\in\Cl^{\iso}\\1\leq D(C)\leq X}}D(C)\rho(C)\ \sim\frac{\zeta(2)\zeta(3)}{2}\varpi\frac{X}{\sqrt{\log X}}.\]
\end{proposition}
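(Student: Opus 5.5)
The plan is to apply the Landau--Selberg--Delange method of \cite{GranK} to the multiplicative function $a(d)=d\,\tilde\nu^{\iso}(d)$. Since
\[\sum_{\substack{C\in\Cl^{\iso}\\1\leq D(C)\leq X}}D(C)\rho(C)=\nu^{\iso}(1)\sum_{1\leq d\leq X}d\,\tilde\nu^{\iso}(d),\]
Proposition~\ref{mult} gives
\[I(s)=\sum_d \frac{a(d)}{d^s}=\prod_p\Big(\sum_{k\geq0}\tilde\nu^{\iso}(p^k)p^{k(1-s)}\Big).\]
The first task is to understand $\tilde\nu^{\iso}(p)$ for odd $p$ via Lemma~\ref{masslem} (or the Conway--Sloane masses). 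For $p\nmid d$ the isotropic condition at $p$ is automatic. For $p\parallel d$ the two local classes $\langle1,1,d\rangle,\langle1,r,rd\rangle$ have opposite Hasse invariants, so exactly one is isotropic. I expect this to give $p\,\tilde\nu^{\iso}(p)=\tfrac12+O(1/p)$, with higher powers satisfying $p^k\tilde\nu^{\iso}(p^k)\ll k\,p^{-\lfloor k/2\rfloor}$ or similar. This is exactly what is needed for
\[I(s)=\zeta(s)^{1/2}\,E(s),\]
with $E(s)$ an Euler product that converges absolutely and is nonvanishing in a neighbourhood of $\Re s\geq1$ (at least $\Re s>1-\delta$ for the squarefull part).

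Next I would verify the hypotheses of \cite[Thm.~1]{GranK}. The prime sum satisfies $\sum_{p\leq x}a(p)\log p=\tfrac12 x+O(x/\log x)$ by the prime number theorem, and the bound $|a(d)|\leq\tau(d)$ (or a divisor-type bound) follows from the mass bounds. The theorem then yields
\[\sum_{d\leq X}a(d)\sim\frac{E(1)}{\Gamma(1/2)}\frac{X}{\sqrt{\log X}}.\]
Here $E(1)=\prod_p(1-1/p)^{1/2}\sum_k p^k\tilde\nu^{\iso}(p^k)$ at $s=1$. By the last formula of Proposition~\ref{mult},
\[\sum_k p^k\tilde\nu^{\iso}(p^k)\Big|_{s=1}=\sum_k\tilde\nu^{\iso}(p^k)=\frac{\mu_p(\Sym_{\prim}^{\iso}(\Z_p))}{|\Sq^*(\Z_p)|\,\mu_p(\Sym_{\prim}^{\iso}(1)(\Z_p))}.\]

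It then remains to match constants. We have
\[\nu^{\iso}(1)\prod_p\frac{1}{|\Sq^*(\Z_p)|\,\mu_p(\Sym^{\iso}_{\prim}(1)(\Z_p))}\]
and must show it equals $\frac{\zeta(2)\zeta(3)}{2}\cdot 2$, up to the factor $(1-1/p)^{1/2}$ versus $(1-1/p)^{-1/2}$ bookkeeping. Using \eqref{viso1}, $\nu^{\iso}(1)=2\prod_p 2\delta_p(\Sym_{\prim}^{\iso}(1)(\Z_p))$, and $\delta_p=\prod_{n\leq3}\zeta_p(n)\,\mu_p$ by definition. The $\mu_p(\Sym^{\iso}_{\prim}(1))$ factors therefore cancel, leaving $\prod_p\frac{2}{|\Sq^*(\Z_p)|}\zeta_p(1)\zeta_p(2)\zeta_p(3)$. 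For odd $p$ we have $|\Sq^*|=2$, while $p=2$ contributes $2/4$. The $\zeta_p(1)$ factors combine with $(1-1/p)^{1/2}$ from $E(1)$ to produce $(1-1/p)^{-1/2}$, and $\zeta(2)\zeta(3)$ emerges. The powers of $2$, together with the prefactor $2$, should give exactly $\zeta(2)\zeta(3)\varpi/2$.

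The convergence and product manipulations are divergent individually, so I would organise them as convergent products over $p$ of ratios. Positivity and finiteness of $\varpi$ follow because $\mu_p(\Sym^{\iso}_{\prim}(\Z_p))(1-1/p)^{-1/2}=1+O(p^{-2})$, since the probability of isotropy is $1-\tfrac12\cdot\tfrac1p+O(p^{-2})$ and primitivity costs $O(p^{-6})$. The main obstacle is the local step at $p=2$ and the uniform bound on $p^k\tilde\nu^{\iso}(p^k)$ needed for absolute convergence of $E$ and the hypothesis $|a(d)|\ll\tau(d)$. I would extract both from the explicit Conway--Sloane mass tables in Appendix~\ref{local-appendix}, bounding the isotropic local densities by the total density of determinant $p^k$, which is $\ll k\,p^{-k}$ times a bounded factor.
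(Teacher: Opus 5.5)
Your route is the paper's. You apply Landau--Selberg--Delange via \cite[Thm.~1]{GranK} to $a(d)=d\,\tilde\nu^{\iso}(d)$ from Proposition~\ref{mult}, using $a(p)=\frac{p+1}{2p}$ for odd $p$ and a $\tau_k$-majorant. You then do the same constant bookkeeping through \eqref{viso1}, $\delta_p=\zeta_p(1)\zeta_p(2)\zeta_p(3)\mu_p$ and $|\Sq^*(\Z_2)|=4$. Your constant matching and your argument that $0<\varpi<\infty$ are correct.

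There are two corrections.

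First, as written, your application of \cite[Thm.~1]{GranK} does not yield the asymptotic. When $|a|\le\tau_k$ and the prime sum equals $\frac12 x+O\bigl(x/(\log x)^A\bigr)$, that theorem's error term is $O\bigl(x(\log x)^{k-1-A}\bigr)$. With your inputs $|a|\le\tau=\tau_2$ and $A=1$, this is only $O(x)$, which swamps the main term $x/\sqrt{\log x}$. The fix is to use the prime number theorem with error $O_A\bigl(x/(\log x)^A\bigr)$ for every $A$, which is what the paper invokes. Alternatively, apply the classical Selberg--Delange theorem directly to your factorisation $I(s)=\zeta(s)^{1/2}E(s)$.

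Second, your guess $p^k\tilde\nu^{\iso}(p^k)\ll k\,p^{-\lfloor k/2\rfloor}$ is false. Proposition~\ref{massprop} gives $a(p^2)=1+\frac{p+1}{2p^2}$, reflecting the factor $\zeta(2s)$ in Theorem~\ref{exp-form}. This is harmless: the bound $a(p^k)\ll k$ that your last paragraph actually produces is enough for a $\tau_k$-majorant. It also gives absolute convergence of $E(s)$ in $\Re s>\frac12$.
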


\begin{proof}
We apply the Landau--Selberg--Delange method as in \cite{GranK}. As before, let us write $\tilde{\nu}^{\iso}(d)=\nu^{\iso}(d)/\nu^{\iso}(1)$, so that $\tilde{\nu}^{\iso}$ is a multiplicative function. By Lemma~\ref{masslem} below, \eqref{H-iso}, the prime number theorem, and Mertens' theorem, for every $A>0$ we have
\[\sum_{p\leq X\text{ odd}}p\,\tilde{\nu}^{\iso}(p)\log p=\sum_{p\leq X\text{ odd}}\frac{p+1}{2p}\log p=\frac{1}{2}X+O_A\left(\frac{X}{(\log X)^A}\right).\]
It can be seen that there exists $k>0$ such that $d\,\tilde{\nu}^{\iso}(d)\leq\tau_k(d)$ for all $d\geq1$, where $\tau_k(d)$ denotes the $d$-th coefficient of the Dirichlet series $\zeta(s)^k$. It then follows from \cite[Theorem~1, Sec.~7]{GranK} that
\begin{align*}
&\left(\frac{X}{\sqrt{\log X}}\right)^{-1}\sum_{1\leq d\leq X}d\,  \tilde{\nu}^{\iso}(d)\sim\frac{1}{\Gamma(\half)}\prod_p\left(\sum_{k=0}^\infty \tilde{\nu}^{\iso}(p^k)\right)\left(1-\frac{1}{p}\right)^{\half},\\
&\sim\frac{1}{\Gamma(\half)}\prod_p\frac{\mu_p(\Sym_{\prim}^{\iso}(\Z_p))}{|\Sq^*(\Z_p)|\mu_p(\Sym_{\prim}^{\iso}(1)(\Z_p))}\left(1-\frac{1}{p}\right)^{\half},\\
&\sim\frac{1}{\Gamma(\half)}\prod_p\frac{\mu_p(\Sym_{\prim}^{\iso}(\Z_p))}{|\Sq^*(\Z_p)|\delta_p(\Sym_{\prim}^{\iso}(1)(\Z_p))}\zeta_p(2)\zeta_p(3)\left(1-\frac{1}{p}\right)^{-\half},\\
&\sim\frac{1}{\nu^{\iso}(1)}\frac{\zeta(2)\zeta(3)}{\Gamma(\half)}\prod_p\mu_p(\Sym_{\prim}^{\iso}(\Z_p))\left(1-\frac{1}{p}\right)^{-\half},
\end{align*}
which yields the result.
\end{proof}

\begin{lemma}\label{masslem}
Let $G\in\Gen^{\iso}$.
\begin{enumerate}[label=\upshape\arabic*.\ ]
	\item Let $p$ be an odd prime.
	\begin{enumerate}[label=\upshape\alph*.\ ]
		\item If $v_p(G)=0$, then
		\[m_p(G)=\frac{1}{2}\left(1-p^{-2}\right)^{-1}.\]
		\item If $v_p(G)=1$, then
		\[m_p(G)=\frac{p}{4}\left(1-p^{-1}\right)^{-1}.\]
		\item If $v_p(G)\geq1$, then $m_p(G)\leq p^{v_p(G)}/2$.
	\end{enumerate}
	\item We have $m_2(G)\ll 2^{v_2(G)}$ where the constant is independent of $G$.
	\item We have $|D(G)|\nu(G)\ll1$ where the implied constant is independent of $G$.
\end{enumerate}
\end{lemma}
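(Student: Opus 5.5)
For parts 1(a), 1(b) and 1(c) the plan is to read off the $p$-masses from the Conway--Sloane formula \cite{CSMass} for odd $p$. That formula expresses $m_p$ of a form $\bigoplus_q q\,f_q$ (Jordan decomposition with $q=p^{t}$) as a product of \emph{diagonal factors} $\mathrm{std}_p(f_q)$, times a \emph{cross term} $\prod_{q<q'} (q'/q)^{\frac12 \dim f_q\dim f_{q'}}$. For a unimodular ternary form, the diagonal factor is $\tfrac12(1-p^{-2})^{-1}$, which gives 1(a) at once; isotropy is automatic there.

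For $v_p(G)=1$ the Jordan decomposition is $f_1\oplus p f_p$ with $\dim f_1=2$ and $\dim f_p=1$. The cross term is then $p^{\frac12\cdot2\cdot1}=p$. Isotropy of $G_p$ forces the binary unimodular constituent $f_1$ to be a hyperbolic plane ($-\det f_1$ a square mod $p$), because a ternary form $f_1\oplus p u x^2$ with anisotropic $f_1$ is anisotropic mod $p$ after descent. The standard factor for a split binary is $\tfrac12(1-p^{-1})^{-1}$, and the one-dimensional factor is $\tfrac12$. Multiplying, $m_p=\frac p4(1-p^{-1})^{-1}$, which is 1(b). I will double-check these normalizations against the tables in Appendix \ref{local-appendix}; that check also serves to confirm the convention for $\delta_p$ versus $m_p$ used above.

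For 1(c) I will use the same formula for a general isotropic primitive diagonal form $a x_1^2+p^{t_2}bx_2^2+p^{t_3}cx_3^2$ with $t_2+t_3=v_p(G)$ (Types I--III of \S\ref{genera}). The cross term is at most $p^{\frac12(t_2+t_3)+\dots}$. More precisely, the three pairwise cross terms multiply to $p^{\frac12[(t_2-0)+(t_3-0)+(t_3-t_2)]}=p^{t_3}\le p^{v_p}$, with the obvious modification when two Jordan components coincide. Each diagonal factor is at most $\tfrac12(1-p^{-1})^{-1}$ or $\tfrac12(1-p^{-2})^{-1}$, and the product of the diagonal factors has at least two factors $\le\tfrac12\cdot\tfrac{p}{p-1}$ or one factor $\le \tfrac12(1-p^{-2})^{-1}$ when components merge. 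The aim is to show the product is $\le p^{v_p}/2$. The bookkeeping is fiddly when $t_3<v_p$: there the cross term is $p^{t_3}$ and there is slack of $p^{t_2}\ge p$, which absorbs the $(1-1/p)^{-2}$ factors. Where a comparison is tight, as in $v_p=1$ where $\frac p4\frac{p}{p-1}\le \frac p2$, it will be checked directly.

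For part 2 the plan is to use the $p=2$ mass table in Appendix \ref{2classes}. From the classification in \S\ref{genera}, the number of Jordan shapes is bounded. The cross term is at most $2^{v_2}$, and the diagonal $2$-adic factors (including type/octane corrections) lie in a finite set of constants, so $m_2\ll 2^{v_2(G)}$.

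For part 3, combine Theorem \ref{massform} with the definition $m_p=p^{2v_p}\delta_p$. This gives $\nu(G)=2\prod_p 2\delta_p(G_p)=2\prod_p 2m_p(G_p)p^{-2v_p(G)}$, so that $|D(G)|\nu(G)=2\prod_p 2m_p p^{-v_p}\cdot D^{-1}$. With $D=\prod p^{v_p}$ this is $2\prod_p 2m_p\,p^{-2v_p}\cdot p^{v_p}$, and the main issue is convergence. For $p\nmid 2D$, part 1(a) gives a factor $(1-p^{-2})^{-1}$, whose product converges to $\zeta(2)$. For odd $p\mid D$, part 1(c) gives $2m_p p^{-v_p}\le 1$, and then the extra $p^{-v_p}$ only helps. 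Part 2 handles $p=2$. Hence $|D(G)|\nu(G)\le 2\zeta(2)\cdot O(1)$.

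The main obstacle is the uniform inequality 1(c) over all Jordan shapes, especially for repeated scales and for $p=3$, where $(1-1/p)^{-1}$ factors are largest. If the clean bound fails in some shape, I would fall back on a bound $m_p\le c\,p^{v_p}$ for odd $p$ with $c<1$ valid for $p\ge 5$, handle $p=3$ by an absolute constant, and this still suffices for part 3.
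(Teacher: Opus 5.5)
Your route is the paper's: parts 1 and 2 are read off from the Conway--Sloane $p$-masses. This is Prop.~\ref{massprop} together with the $p=2$ tables, where $64m_2/2^{u+v}\le 16$ gives $m_2\le 2^{u+v}/4\le 2^{v_2}/4$ directly. Part 3 then follows by inserting $\delta_p=m_p\,p^{-2v_p}$ into Theorem~\ref{massform}. Two linked points need fixing, though.

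First, in part 3 there is no ``extra $p^{-v_p}$''. The correct identity is $|D(G)|\nu(G)=2\prod_p 2m_p(G_p)\,p^{-v_p(G)}$; your intermediate expression with the factor $D^{-1}$ is just $\nu(G)$. So the factor at each odd $p\mid D$ is exactly $2m_p p^{-v_p}$, and the constant $\tfrac12$ in 1(c) is precisely what is needed. Consequently your fallback $m_p\le c\,p^{v_p}$ with only $c<1$ would not suffice: the product over $p\mid D$ could grow like $(2c)^{\omega(D)}$.

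Second, 1(c) must therefore be proved with the sharp constant, and your bookkeeping as written does not do so. Bounding two diagonal factors by $\tfrac12\cdot\tfrac{p}{p-1}$ gives $\tfrac{9}{16}>\tfrac12$ at $p=3$. The fix rests on three facts:
\begin{itemize}
\item a ternary form has at most one binary Jordan constituent;
\item each one-dimensional constituent contributes exactly $\tfrac12$;
\item the cross term is always exactly $p^{t_3}$.
\end{itemize}
Hence $m_p=\tfrac18 p^{t_3}$ when there are three constituents, and $m_p=\tfrac14(1\mp p^{-1})^{-1}p^{t_3}\le\tfrac38 p^{t_3}$ when there are two. Both are $\le p^{v_p}/2$ once $v_p\ge1$, with no special treatment of $p=3$ and no use of isotropy. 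These are exactly the values listed in Prop.~\ref{massprop}.
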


\begin{proof}
(1) and (2) follow from Prop.~\ref{massprop}, obtained by an examination of \cite{CSMass}, while (3) can be deduced from Theorem~\ref{massform} and the bounds obtained in parts (1) and (2).
\end{proof}

\begin{remark}\label{rem9.9}
The density constant $\varpi$ is given by a regularized product of local probabilities that a form is primitive and isotropic. We discuss this further in Remark~\ref{rem10.5} after evaluating these probabilities explicitly.
\end{remark}

\subsection{Alternative proof of the limit in Prop.~\ref{isotropic-siegel-2}}\label{isotropic-siegel-b}\ 

The proof of Prop.~\ref{isotropic-siegel-2}  exploited the particular feature of multiplicativity for  isotropic ternary forms stated in Prop.~\ref{mult}. We give here another proof using the method of packets (as was used to prove the asymptotics for the class number), as this may be useful in cases where such full multiplicativity fails. The method renders the constant as a sum over the packets.

\begin{proposition}\label{sqfree-mass}
Let $r\geq1$ be an integer. There is a constant $k(r)>0$ such that, as $X\to\infty$,
\[\sum_{\substack{1\leq s\leq X, (s,2r)=1 \\ s \textup{ squarefree}}}\prod_{p\mid s}\frac{1}{2}\left(1+\frac{1}{p}\right)\sim k(r)\cdot \frac{X}{\sqrt{\log X}}.\]
\end{proposition}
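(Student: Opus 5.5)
This is a direct application of the Landau--Selberg--Delange method in the form of \cite[Theorem~1]{GranK}, exactly as in the proofs of Props.~\ref{sqfree} and \ref{isotropic-siegel-2}. Define the multiplicative function $a_n$ by the Euler product
\[\sum_{n=1}^\infty\frac{a_n}{n^s}=\prod_{p\nmid 2r}\left(1+\frac{1}{2}\left(1+\frac{1}{p}\right)p^{-s}\right).\]
Thus $a_n=\prod_{p\mid n}\frac12(1+\frac1p)$ when $n$ is squarefree and coprime to $2r$, and $a_n=0$ otherwise. The sum in the statement is then $\sum_{n\leq X}a_n$. The coefficients satisfy $0\leq a_n\leq 1\leq\tau(n)$, so the divisor-bound hypothesis of \cite{GranK} holds with $k=1$.

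The key input is the prime sum. For $p\nmid 2r$ we have $a_p\log p=\frac12\log p+\frac{\log p}{2p}$. By the prime number theorem (with the de la Vall\'ee Poussin error term) and Mertens' estimate $\sum_{p\leq X}\frac{\log p}{p}=\log X+O(1)$, and since discarding the finitely many primes dividing $2r$ costs only $O_r(1)$,
\[\sum_{p\leq X}a_p\log p=\frac12X+O_A\!\left(\frac{X}{(\log X)^A}\right)\]
for every $A>0$. So the ``dimension'' is $\kappa=\frac12$, and \cite[Theorem~1]{GranK} gives
\[\sum_{n\leq X}a_n\sim\frac{1}{\Gamma(\frac12)}\prod_p\left(\sum_{j\geq0}a_{p^j}p^{-j}\right)\left(1-\frac1p\right)^{1/2}\cdot\frac{X}{\sqrt{\log X}}.\]

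It remains to identify $k(r)$ and check that it is finite and positive. The product equals
\[k(r)=\frac{1}{\Gamma(\frac12)}\prod_{p\mid 2r}\left(1-\frac1p\right)^{1/2}\prod_{p\nmid 2r}\left(1+\frac{p+1}{2p^2}\right)\left(1-\frac1p\right)^{1/2}.\]
Each factor is positive. For $p\nmid 2r$ the Euler factor expands as
\[\left(1+\frac{1}{2p}+\frac{1}{2p^2}\right)\left(1-\frac{1}{2p}-\frac{1}{8p^2}+O(p^{-3})\right)=1+O(p^{-2}),\]
because the $1/p$ terms cancel. Hence the product converges absolutely to a positive limit, which gives $k(r)>0$.

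The only point needing care is this $O(p^{-2})$ cancellation. It is exactly what the prime-sum computation with density $\frac12$ guarantees, so I expect no genuine obstacle. If one prefers to avoid \cite{GranK}, an alternative is to write the Dirichlet series as $\zeta(s)^{1/2}G(s)$ with $G$ holomorphic and nonvanishing in $\Re s>\frac12$, and apply the classical Selberg--Delange contour argument; this yields the same constant.
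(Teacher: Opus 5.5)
Your proposal is correct and is essentially the paper's proof: the same Euler product over odd primes coprime to $r$, the same prime-sum input $\sum_{p\le X}a_p\log p=\frac12X+O(X/\log X)$ from the prime number theorem and Mertens' theorem, and the same appeal to \cite[Theorem~1]{GranK} using $|a_n|\le1$. Your explicit formula for $k(r)$, together with the check that the Euler factors are $1+O(p^{-2})$ so that $k(r)>0$, spells out what the paper leaves implicit in that citation.
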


\begin{proof}
Let $\mathbb{P}_r$ denote the set of odd primes $p$ coprime to $r$. Then our sum can be written as $\sum_{1\leq n\leq X} a_n$ where the terms $a_n$ are defined by the Dirichlet series
\[\sum_{n=1}^\infty\frac{a_n}{n^s}=\prod_{p\in\mathbb{P}_r}\left(1+\frac{1}{2}\left(1+\frac{1}{p}\right)\frac{1}{p^s}\right).\]
By the prime number theorem and Mertens' theorem,
\[\sum_{p\leq X}a_p\log p=\sum_{\substack{p\leq X,\\p\in\mathbb P_r}}\frac{1}{2}\left(1+\frac{1}{p}\right)\log p=\frac{1}{2}X+O\left(\frac{X}{\log X}\right),\]
and moreover clearly $|a_n|\leq1$ for all $n$. The desired result follows by an application of  \cite[Theorem 1]{GranK}.
\end{proof}

\vspace{6pt}

\begin{corollary}\label{useful-2}
For all real numbers $A>0$ and $X\geq2$, we have
\[\sum_{\substack{1\leq s\leq AX\\\textup{$s$ odd}\\ \textup{ squarefree}}}\prod_{p\mid s}\frac{1}{2}\left(1+\frac{1}{p}\right)\ll\frac{X}{\sqrt{\log X}}A(\log^+\!A)^{1/2}, \]
where the implied constant is independent of $A$ and $X$.
\end{corollary}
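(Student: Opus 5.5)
The plan mirrors the proof of Corollary~\ref{useful}, with Prop.~\ref{sqfree-mass} (taking $r=1$) playing the role of Prop.~\ref{sqfree}.

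\emph{Reduction.} Write $S(Y)$ for the sum over odd squarefree $s\leq Y$ of $a(s)=\prod_{p\mid s}\frac{1}{2}(1+\frac{1}{p})$. If $AX<1$ the sum is empty, so we may assume $AX\geq1$. By Prop.~\ref{sqfree-mass} with $r=1$, $S(Y)\sim k(1)\,Y/\sqrt{\log Y}$ as $Y\to\infty$. Since $0\leq a(s)\leq1$, $S(Y)\leq Y$ for small $Y$. Together these give a constant $c\geq1$ with
\[S(Y)\leq c\,\frac{Y}{\sqrt{\log^+ Y}}+1\qquad\text{for all } Y\geq1.\]
Apply this with $Y=AX$. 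It then remains to compare $AX/\sqrt{\log^+(AX)}$ with $AX(\log^+A)^{1/2}/\sqrt{\log X}$, and to absorb the additive $1$, which is harmless because $AX\geq1$ and $X\geq2$.

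\emph{Case $A\geq1$.} Here $\log^+(AX)\geq\log X$, hence $AX/\sqrt{\log^+(AX)}\leq AX/\sqrt{\log X}$. This is at most $AX(\log^+A)^{1/2}/\sqrt{\log X}$, because $\log^+A\geq1$. For the additive term, $1\leq AX\leq AX\sqrt{\log^+A}\cdot\sqrt{\log 2}/\sqrt{\log X}\cdot(\log X/\log 2)^{1/2}$; more simply, $\sqrt{\log X}\leq X$ gives $1\leq AX/\sqrt{\log X}$.

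\emph{Case $A<1$.} This is the only delicate point, since $\log(AX)$ can be much smaller than $\log X$. The aim is to show $\frac{AX}{\sqrt{\log^+(AX)}}\ll\frac{AX}{\sqrt{\log X}}$ uniformly.
\begin{itemize}
\item If $A\geq X^{-1/2}$, then $\log(AX)\geq\frac{1}{2}\log X$, and the bound follows at once.
\item If $A<X^{-1/2}$, use instead the trivial bound $S(AX)\leq AX$. It suffices that $AX\ll AX/\sqrt{\log X}$, which fails as written. So in this range, avoid the trivial bound and use the monotone bound $Y/\sqrt{\log^+Y}$ with $Y=AX\leq\sqrt X$. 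This gives $S(AX)\leq c\,AX/\sqrt{\log^+(AX)}+1$. Since $Y/\sqrt{\log^+Y}$ is not comparable to $Y/\sqrt{\log X}$ when $Y$ is tiny, one needs a genuine uniform estimate here.
\end{itemize}

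\emph{The main obstacle.} The issue is uniformity for $A<X^{-1/2}$. The cleanest approach is to note that the statement mirrors Cor.~\ref{useful}, where $\log(AX)\leq\log X$ was the favorable direction. In the present case, with the inverse power $(\log)^{-1/2}$, the estimate for small $A$ would follow from $Y\sqrt{\log X}\ll Y\sqrt{\log^+Y}$, which is false for $Y$ bounded. I expect this to force either a restriction (for instance $A\geq X^{-1/2}$, as in all later applications, where $A=\kappa_H^3/r(H)$ or $1/D(H)$ is fixed in terms of $H$ while $X\to\infty$), or checking that for $A<1$ the intended bound is really $AX/\sqrt{\log^+(AX)}$. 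I would first try to verify the inequality in the regime actually used, namely $AX\geq\sqrt X$, and handle the complementary range by the trivial bound $S(AX)\leq AX\leq\sqrt X$, absorbing it into the main terms of the subsequent domination arguments.
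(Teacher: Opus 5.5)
Your proposal does not reach a proof, and it cannot: your case analysis is correct, and what it shows is that the corollary is false as stated. Take $A=1/X$, or any $A$ with $1\le AX<3$. The left side is then just the term $s=1$, which equals $1$. Since $A<e$ we have $\log^+A=1$, so the right side is $1/\sqrt{\log X}\to 0$, and no constant independent of $A$ and $X$ works. More generally, for $A<e$ the ratio of the two sides is of order $\sqrt{\log X/\log^+(AX)}$.

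The paper's own proof has exactly the flaw you located. It bounds the sum by $c\,AX/\sqrt{\log AX}$ and then invokes $\log(AX)/\log X\ll\log^+A$. That comparison is the one that served in Cor.~\ref{useful}, where the logarithm appears to a positive power. Here it points the wrong way: one would need $\log X/\log^+(AX)\ll\log^+A$, and this fails whenever $AX=X^{o(1)}$.

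What your argument does prove is $S(AX)\ll AX/\sqrt{\log^+(AX)}$ for $AX\ge1$. This gives the stated bound for $A\ge1$ (even without the factor $(\log^+A)^{1/2}$). It also gives it for $A\ge X^{-1+\delta}$, with a constant depending on $\delta$.

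One correction to your last paragraph: the regime $A\ge X^{-1/2}$ is not the only one used. The corollary's only application is Prop.~\ref{siegel-estimates}(2), with $A=1/r(H)$. The dominated convergence argument in Prop.~\ref{isotropic-siegel} needs that bound uniformly in both $H$ and $X$, including the range $\sqrt X<r(H)\le X$. In that range Prop.~\ref{siegel-estimates}(2) also fails. For a suitable $H$ with $D(H)=r(H)=p^2\asymp X$, the term $s=1$ alone contributes $D(H)\nu(H)\asymp1$, whereas the claimed bound is $\asymp1/\sqrt{\log X}$.

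Your idea of treating that range separately does repair the application. When $r(H)>\sqrt X$ we have $\log X<2\log r(H)$, so the correct bound above gives $f_X^{({\bf D})}(H)\ll\sqrt{\log^+r(H)}/r(H)$ uniformly in $H$ and $X$. This is still summable over $\mathfrak R$, by the same counting the paper uses. Note also that the main proof of {\bf (D)}, Prop.~\ref{isotropic-siegel-2}, does not use this corollary at all.
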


\begin{proof}
Since the sum on the left hand side is empty if $AX<1$, we may assume without loss of generality that $AX\geq1$. By Prop.~\ref{sqfree-mass} with $r=1$, there is a constant $c\geq1$ such that for all $A>0$ and $X\geq2$ we have
\[\sum_{\substack{1\leq s\leq AX\\\textup{$s$ odd}\\  \textup{squarefree}}}\prod_{p\mid s}\frac{1}{2}\left(1+\frac{1}{p}\right)\leq c\cdot \frac{AX}{\sqrt{\log AX}}.\]
On the other hand,
\[\frac{\log AX}{\log X}\ll\log^+\!A,\]
for all $X\geq2$ and all $A>0$ with the implied constant independent of $A$ and $X$. Combining these two, we get the  result.
\end{proof}

\vspace{6pt}

\begin{proposition}\label{siegel-estimates}
Let $H\in\mathfrak R$ be the root packet of an isotropic genus.
\begin{enumerate}[label=\upshape\arabic*.\ ]
	\item There is a constant $S(H)>0$ such that, as $X\to\infty$,
\[\sum_{\substack{G\in\Gen^{\iso}_H\\1\leq D(G)\leq X}}D(G)\nu(G)\sim S(H)\frac{X}{\sqrt{\log X}}.\]
	\item For every $X\geq2$,
\[\sum_{\substack{G\in\Gen^{\iso}_H\\1\leq D(G)\leq X}}D(G)\nu(G)\ll \frac{X}{\sqrt{\log X}}\frac{1}{r(H)},\]
where the implied constant is independent of $X$ and $H$.
\end{enumerate}
\end{proposition}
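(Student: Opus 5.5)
The plan is to parametrize the isotropic genera above $H$ by their tame part and reduce both statements to the squarefree sums of Prop.~\ref{sqfree-mass} and Cor.~\ref{useful-2}, mirroring the proofs of Props.~\ref{avg-class-pointwise} and \ref{class-tail}. Write $r=r(H)$. Any $G\in\Gen_H$ has $D(G)=rs$ with $s$ odd squarefree and $(s,2r)=1$, and by Theorem~\ref{massform}, $\nu(G)=2\prod_p 2\delta_p(G_p)$. For an isotropic genus the local components are constrained (Hasse invariants forced by isotropy), and as in the proof of Prop.~\ref{mult} we have $\Gen^{\iso}(d)\simeq\prod_p\Gen^{\iso}_p(d)$. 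Summing over the genera above $H$ with a fixed $s$, I will write
\[
\sum_{\substack{G\in\Gen^{\iso}_H\\ D(G)=rs}} D(G)\nu(G)=rs\cdot 2\prod_{p\in\Supp(H)}2\delta_p(H_p)\prod_{p\mid s}2\delta_p(\Sym^{\iso}_{\prim}(rs)(\Z_p))\prod_{p\nmid 2rs}2\delta_p(\Sym^{\iso}_\prim(1)(\Z_p)).
\]
Here I use that for $p\nmid D$ odd there is a single class, and that for $p\parallel s$ the isotropic classes of determinant $p$ are summed.

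Next I rewrite the summand in terms of the $p$-masses. Using $m_p=p^{2v_p}\delta_p$ together with Lemma~\ref{masslem}(1a,1b), each $p\mid s$ contributes, relative to the unramified factor, a ratio of the form $\frac{1}{2}(1+\frac1p)\cdot p^{-1}$. This follows from $m_p/m_p^{(0)}=\frac{p}{2}(1-p^{-1})^{-1}(1-p^{-2})=\frac p2(1+\frac1p)$, divided by $p^2$, with the count of isotropic classes for $p\parallel D$ absorbed; I expect it to be consistent with $p\,\tilde\nu^{\iso}(p)=\frac{p+1}{2p}$ as in the proof of Prop.~\ref{isotropic-siegel-2}. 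The total therefore takes the form
\[
c(H)\sum_{\substack{s\le X/r,\ (s,2r)=1\\ s\ \text{squarefree}}}\prod_{p\mid s}\tfrac12\left(1+\tfrac1p\right),
\]
where $c(H)=r\cdot\nu(H_{\mathrm{loc}})$ collects the factors at $p\in\Supp(H)$ and the convergent product $\prod_{p}2\delta_p(\cdot)$ over unramified primes. The isotropy condition at the tame primes is only a Hasse-type constraint, but for isotropic genera the Hasse invariants are determined. So, unlike Prop.~\ref{avg-class-pointwise}, no characters of $s$ appear, since no parity choice is left free. Part (1) then follows from Prop.~\ref{sqfree-mass} with $S(H)=c(H)k(r)r^{-1}>0$.

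For part (2), uniformity in $H$ is needed. I will bound $c(H)$ using Lemma~\ref{masslem}(1c) and (2): $m_p(H_p)\le p^{v_p}/2$ for odd $p\in\Supp(H)$ and $m_2\ll 2^{v_2}$. This gives $\prod_{p\in\Supp(H)}2\delta_p(H_p)\ll \prod p^{v_p}\cdot p^{-2v_p}=r^{-1}$, uniformly, with the factor $2^{|\Supp|}$ cancelled by the $\frac12$ in the $m_p$ bound. (Alternatively, Lemma~\ref{masslem}(3) gives $D(G)\nu(G)\ll1$ directly, but the per-genus bound loses the needed density.) Multiplying by $rs$ and applying Cor.~\ref{useful-2} with $A=1/r$ then gives $\ll \frac{X}{\sqrt{\log X}}\cdot\frac1r$, since $\log^+(1/r)=1$.

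The main obstacle I expect is the bookkeeping at $p\mid s$. The number of isotropic local classes of determinant exactly $p$ depends on the other local data through the global Hasse product, so one must check that summing over isotropic genera above $H$ with fixed $s$ really factors as above, with no residual sign condition linking $s$ to $H$. If such a condition does survive, I would handle it as in Prop.~\ref{avg-class-pointwise}: expand it in quadratic characters, keep the principal term, and kill the others with an analogue of Prop.~\ref{sqfree-twist} for the weight $\frac12(1+\frac1p)$, via \cite{GranK}.
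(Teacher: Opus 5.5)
Your proposal follows the paper's argument. The paper writes the sum as a single constant $D(H)\nu(H)$ (your $c(H)$) times $\sum_{s}\prod_{p\mid s}\frac12(1+\frac1p)$, summed over squarefree $s\le X/r(H)$ with $(s,2r(H))=1$. It then uses Prop.~\ref{sqfree-mass} for (1), and Cor.~\ref{useful-2} with $A=1/r(H)$ together with Lemma~\ref{masslem}(3) for (2). Your bound on $c(H)$ via Lemma~\ref{masslem}(1c),(2) is equivalent. The paper applies (3) once, to that single constant, not genus by genus, so the loss of density you mention does not occur there.

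One claim in your main line needs correcting. It is exactly the contingency you flagged, but it arises at different primes than you expected.
\begin{itemize}
\item At the primes $p\mid s$ nothing happens: there is a unique isotropic local class with $v_p=1$, and all Hasse invariants equal $1$.
\item At $p\in\Supp(H)$, however, $G\geq H$ forces $G_p=H_p$, hence $rs\in D(H_p)(\Z_p^*)^2$. This fixes the image of $s$ in $\prod_{p\in\Supp(H)}\Sq^*(\Z_p)$ (for instance, its class mod $8$), exactly as in Prop.~\ref{avg-class-pointwise}.
\end{itemize}
So characters of $s$ do appear, and your fallback is what actually proves (1). Detect the coset with the $2^{|\Supp(H)|+1}$ quadratic characters, take the principal term from Prop.~\ref{sqfree-mass}, and bound each twisted sum (whose prime sums are $O(X/\log X)$) by $o(X/\sqrt{\log X})$ via \cite{GranK}. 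The resulting $S(H)$ carries an extra factor $2^{-|\Supp(H)|-1}$.

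The paper's displayed identity also omits this restriction, and the genus of determinant exactly $D(H)$ it refers to need not exist. Neither point affects the statement. Part (2) needs no change, since dropping the restriction only enlarges a sum of nonnegative terms.
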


\begin{proof}
First, note that $\Gen_H^{\iso}$ contains a unique isotropic genus of determinant equal to $D(H)$, which we shall also denote $H$ by abuse of notation.
By Prop.~\ref{massprop}, we have
\begin{align*}
\sum_{\substack{G\in\Gen^{\iso}_H\\1\leq D(G)\leq X}}D(G)\nu(G)&=D(H)\nu(H)\sum_{\substack{1\leq s\leq X/r(H)\\ (s,2r(H))=1 \\ \textup{s   squarefree} }}\prod_{p\mid s}\frac{1}{2}\left(1+\frac{1}{p}\right).
\end{align*}
Then (1) follows from Prop.~\ref{sqfree-mass}, and (2) from Cor.~\ref{useful-2} and Lemma~\ref{masslem}(3).
\end{proof}

\vspace{6pt}

\begin{proposition}\label{isotropic-siegel}(Alternative Prop.~\ref{isotropic-siegel-2})\  

There is a constant $\alpha>0$ such that, as $X\to\infty$,
\[\sum_{\substack{C\in\Cl^{\iso}\\1\leq D(C)\leq X}}D(C)\rho(C)\sim\alpha\frac{X}{\sqrt{\log X}}.\]
\end{proposition}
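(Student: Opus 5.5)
The plan is to run the packet method used for Theorem~\ref{avg-class} and Theorem~\ref{IThm1}: a dominated convergence argument over the set $\mathfrak R$ of root packets. Mass-formula weights replace class counts, and the normalization is $X/\sqrt{\log X}$ instead of $X\log X$. First I rewrite the class sum as a genus sum. By Theorem~\ref{massform} and the definition of $\nu(G)$,
\[\sum_{\substack{C\in\Cl^{\iso}\\1\leq D(C)\leq X}}D(C)\rho(C)=\sum_{\substack{G\in\Gen^{\iso}\\1\leq D(G)\leq X}}D(G)\nu(G).\]
This uses that isotropy is a genus invariant (Hasse--Minkowski), so every class of an isotropic genus is isotropic. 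By Lemma~\ref{trivlem} the isotropic genera are partitioned according to their root packet $H\in\mathfrak R$. Only root packets of isotropic genera contribute; call this set $\mathfrak R^{\iso}$. This gives
\[\sum_{\substack{G\in\Gen^{\iso}\\1\leq D(G)\leq X}}D(G)\nu(G)=\sum_{H\in\mathfrak R^{\iso}}f_X^{({\bf D})}(H)\cdot\frac{X}{\sqrt{\log X}},\qquad f_X^{({\bf D})}(H)=\frac{\sqrt{\log X}}{X}\sum_{\substack{G\in\Gen^{\iso}_H\\1\leq D(G)\leq X}}D(G)\nu(G).\]

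Next I check the two hypotheses of dominated convergence on $\mathfrak R^{\iso}$ with counting measure. Pointwise convergence is Prop.~\ref{siegel-estimates}(1): $f_X^{({\bf D})}(H)\to S(H)>0$ for each fixed $H$. Domination is Prop.~\ref{siegel-estimates}(2): $f_X^{({\bf D})}(H)\ll 1/r(H)$ uniformly in $X\geq2$ and $H$. It then remains to show that $N(H)=1/r(H)$ is summable over $\mathfrak R$.

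For that I group root packets by $r=r(H)\in|R|=\{r(d):d\geq1\}$. The number of root packets with given $r$ is at most $\prod_{p\in R}g_p$, where $R$ is the support $\{2\}\cup\{p: p^2\mid r\}$. By the local counts in the proof of Theorem~\ref{avg-genera} ($2v_p$ classes for odd $p$, at most $8v_2$ or $2$ at $p=2$), this is at most $4\cdot 2^{\omega(r)}\prod_{p\mid r}v_p(r)=O_\epsilon(r^\epsilon)$, as already used in the proof of Theorem~\ref{avg-class}. The set $|R|$ consists of $2^a$ times odd powerful numbers, so $\sum_{r\in|R|} r^{\epsilon-1}$ is bounded by
\[\sum_{a\ge0}2^{-a(1-\epsilon)}\sum_{m\text{ powerful}}m^{\epsilon-1},\]
and the inner sum converges for $\epsilon<1/2$ because powerful numbers up to $Y$ number $O(\sqrt Y)$. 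Hence $N\in L^1(\mathfrak R)$.

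Dominated convergence then gives
\[\frac{\sqrt{\log X}}{X}\sum_{\substack{C\in\Cl^{\iso}\\1\leq D(C)\leq X}}D(C)\rho(C)\longrightarrow\alpha:=\sum_{H\in\mathfrak R^{\iso}}S(H),\]
with $\alpha<\infty$. Positivity of $\alpha$ follows because $\mathfrak R^{\iso}$ is nonempty (the isotropic form $V_1(x_1,x_2)+x_3^2$, of determinant $-1$ and hence of positive determinant after negation, gives an isotropic genus) and each $S(H)>0$. The only delicate point is the uniformity in $H$ of the domination, but that is exactly what Prop.~\ref{siegel-estimates}(2) provides through Lemma~\ref{masslem}(3) and Cor.~\ref{useful-2}. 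So the main check left is the elementary summability count above, which I expect to go through easily.
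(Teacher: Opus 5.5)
Your proposal is correct and follows the paper's own proof: dominated convergence over root packets $H\in\mathfrak R$, with pointwise limits from Prop.~\ref{siegel-estimates}(1), the uniform bound $f_X(H)\ll 1/r(H)$ from Prop.~\ref{siegel-estimates}(2), and summability of $1/r(H)$ via an $O_\epsilon(r^\epsilon)$ count of root packets with given $r$. Your explicit rewriting of the class sum as a genus sum via the mass formula, and your check that $\alpha>0$, are details the paper leaves implicit. One small correction, shared with the paper: the count of root packets with given $r$ should also allow for the unit square class of each local determinant, but this only multiplies the count by another $O_\epsilon(r^\epsilon)$ factor and does not affect summability.
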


\begin{proof}
For each $X\geq2$, consider the function $S_X:\mathfrak R\to\R_{\geq0}$ defined by
\[f_X^{({\bf D})}(H)=\frac{1}{X(\log X)^{-1/2}}\sum_{\substack{G\in\Gen^{\iso}_H\\1\leq D(G)\leq X}}D(G)\nu(G),\]
and let $N:\mathfrak R\to\R_{\geq0}$ be given by $N(H)=1/r(H)$. We note that
\[\sum_{H\in\mathfrak R}N(H)=\sum_{H\in\mathfrak R}\frac{1}{r(H)}\leq\sum_{r\in |R|}\frac{4\cdot 2^{\omega(r)}\prod_{p\mid r}v_p(r)}{r},\]
where $|R|=\{r(d):d\in\Z_{\geq1}\}$. Because the numerator in each summand of the last series is $O_\epsilon(r^\epsilon)$, we conclude that $N\in L^1(\mathfrak R)$. It follows by Prop.~\ref{siegel-estimates}(2) that
$f_X^{({\bf D})}(H)\ll N(H)$
valid for all $H\in\mathfrak R$ and all $X\geq2$, where the implied constant is independent of $H$ and $X$. Thus $f_X^{({\bf D})}\in L^1(\mathfrak R)$ for every $X\geq2$. By the dominated convergence theorem the pointwise limit $f^{({\bf D})}=\lim_{X\to\infty} f_X^{({\bf D})}$ is in $L^1(\mathfrak R)$ and
\[\lim_{X\to\infty}\frac{\sqrt{\log X}}{X}\sum_{\substack{G\in\Gen^{\iso}\\1\leq D(G)\leq X}}D(G)\nu(G)=\lim_{X\to\infty}\sum_{H\in\mathfrak R}f_X^{({\bf D})}(H)=\sum_{H\in\mathfrak R}f^{({\bf D})}(H)<\infty.\]
The stated result follows.
\end{proof}


\section{Refined analysis of isotropic forms}
\subsection{Explicit Euler product}\label{comp}\

The multiplicative function $d\tilde{\nu}(d)$ from Sec.~\ref{sie-asy} governs the asymptotics in Prop.~\ref{isotropic-siegel-2}. In this section, we investigate the corresponding Euler product 
\[
I(s)=\sum_{d=1}^\infty\frac{d\,\tilde{\nu}^{\iso}(d)}{d^s}, 
\] as a function of $s$. 

We show that it is of finite degree (that is the Euler factors are rational functions of $p^{-s}$) and that it factors into elementary zeta functions times a degree two factor which has a natural boundary. We apply this to calculate $\varpi$ explicitly and to compare it with related calculations from the literature. We also use it to give a local density version for isotropic forms.

\begin{theorem}\label{exp-form}
    We have
    \[I(s)=\frac{\zeta(s+1)\zeta(2s)\zeta(2s+1)}{\zeta(3s+3)}\prod_{p}A_p(s),\]
    where
    \[A_p(s)=\begin{cases}1+\frac{1}{3}2^{-s}-\frac{1}{2}2^{-2s}, \quad\text{if} \ \ p=2,\quad\\ 1+ \frac{1}{2}p^{-s} - \frac{1}{2}p^{-s-1} -p^{-(2s+1)}, \quad\text{if} \ \ p>2. \end{cases}\]
\end{theorem}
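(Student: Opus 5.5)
By Proposition~\ref{mult}, $d\,\tilde{\nu}^{\iso}(d)$ is multiplicative, so $I(s)=\prod_p I_p(s)$ with
\[I_p(s)=\sum_{k\geq0}p^{k}\,\tilde{\nu}^{\iso}(p^k)\,p^{-ks},\qquad \tilde{\nu}^{\iso}(p^k)=\frac{\mu_p(\Sym_{\prim}^{\iso}(p^k)(\Z_p))}{\mu_p(\Sym_{\prim}^{\iso}(1)(\Z_p))}.\]
The plan is to compute each local factor $I_p(s)$ in closed form as a rational function of $x=p^{-s}$. I will then verify the factorization
\[I_p(s)=\frac{(1-p^{-3s-3})}{(1-p^{-s-1})(1-p^{-2s})(1-p^{-2s-1})}\,A_p(s),\]
which I will treat as an identity of rational functions in $x$, separately for $p$ odd and for $p=2$. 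Since $\mu_p(G)$ is proportional to $\delta_p(G)=p^{-2v_p(\det G)}m_p(G)$, I will rewrite $p^k\tilde{\nu}^{\iso}(p^k)$ as
\[p^{k}\cdot p^{-2k}\sum_{G\in\Gen_p^{\iso}(p^k)} m_p(G)\Big/\sum_{G\in\Gen_p^{\iso}(1)}m_p(G).\]
This reduces the problem to summing the Conway--Sloane $p$-masses over isotropic local classes of each determinant $p^k$, using Prop.~\ref{massprop} and the tables of Appendix~\ref{local-appendix}.

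For odd $p$, I will use the explicit local classification from the proof of Thm.~\ref{avg-genera}: Types I, II and III with their Hasse invariants $c_p$. From these I will select the isotropic classes. A ternary form over $\Q_p$ is isotropic iff its Hasse invariant equals the value $(-1,-\det)_p$ prescribed by the determinant, so for each $k$ I keep exactly those classes with the correct $c_p$. The normalized Conway--Sloane mass of a diagonal form $a x_1^2+p^{t_2}bx_2^2+p^{t_3}cx_3^2$ depends only on the Jordan shape $(t_2,t_3)$ together with a few sign data. I will therefore organize $\sum_k$ as a double sum over $0\le t_2\le t_3$ weighted by $x^{t_2+t_3}$, and count the isotropic sign choices in each shape. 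The exponents $t_2=t_3$, $t_2=0<t_3$ and $0<t_2<t_3$ should give geometric series producing the three zeta factors $\zeta_p(s+1),\zeta_p(2s),\zeta_p(2s+1)$. The boundary corrections should combine into the numerator $(1-p^{-3s-3})A_p(s)$. Lemma~\ref{masslem}(1a,1b) provides checks on the coefficients of $x^0$ and $x^1$: the linear coefficient must be $\tfrac12(1+p^{-1})$, which matches $\tfrac12 x - \tfrac12 p^{-1}x + p^{-1}x$ from the product.

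For $p=2$, I will do the same computation directly from the table of primitive $\Z_2$-classes indexed by $(u,v)$ in Appendix~\ref{local-appendix} and the mass table in Appendix~\ref{2classes}. I will select the isotropic classes by Hasse invariant, sum the masses for each $k$, and form the generating function. By analogy with the computation of $\sum_k h_2(2^k)x^k$, I expect the series to become eventually periodic, with period dictated by the table stabilizing for $u,v\geq3$. This would make it rational, and dividing by the three zeta factors should leave $(1-2^{-3s-3})(1+\tfrac13x-\tfrac12x^2)$.

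The main obstacle is the bookkeeping, especially at $p=2$ and in the Type III cases at odd $p$ with $v_p$ even, where the $c_p$ values are distributed unevenly. I will first work out the odd-$p$ identity symbolically by summing over shapes. Then I will check the first several coefficients against a direct finite computation of $\mu_p(\Sym_{\prim}^{\iso}(p^k)(\Z_p))$ modulo $p^r$, for example using SageMath for small $p$, before trusting the closed form for $p=2$.
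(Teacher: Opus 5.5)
Your plan is the paper's proof: factor $I(s)$ using the multiplicativity in Prop.~\ref{mult}, write $p^k\tilde{\nu}^{\iso}(p^k)$ as a normalized sum of Conway--Sloane masses over isotropic local classes, and sum the resulting geometric series over Jordan shapes, using Prop.~\ref{massprop} for odd $p$ and Prop.~\ref{massprop3} with the tables of Appendix~\ref{local-appendix} for $p=2$. One caution: the $c_p$ in the proof of Thm.~\ref{avg-genera} are Jones's invariants (Def.~\ref{def3.1}), for which isotropy simply means $c_p=1$ (Prop.~\ref{isotropics}), while your criterion ``Hasse invariant $=(-1,-\det)_p$'' is the Cassels--Serre normalization and, applied to these $c_p$, would select the anisotropic classes when $p\equiv 3 \bmod 4$ and $k$ is odd (your linear-coefficient check would catch this), so it is cleaner to read the isotropic classes and their masses directly off Prop.~\ref{massprop}, as the paper does.
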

To prove Prop.~\ref{comp} we compute the values of ${\tilde \nu}^{\iso}(p^{k})$, for which we need the values $m_p(f)$ of the $\Z_p$-equivalence classes of forms along with their multiplicity. The computation of $m_p(f)$ uses the algorithms given in \cite{CSMass}(Tables 1 and 2). We state the results without detailed proofs.

\begin{proposition}\label{massprop}
Let $p$ be an odd prime and $d\in\Z\backslash\{0\} $ and write $d=p^k d_0$ where $p\nmid d_0$  $($so $k=v_p(d)\geq 0$$)$.  Any form with $D(f)=d$ is $\mathbb{Z}_p$-equivalent to 
\[f_p({\bf x}) = f_p(x_1,x_2,x_3)=ax_1^2+p^{t_2}bx_2^2+p^{t_3}cx_3^2,\]
where $a,b,c\in\Zp$ are coprime to $p$, and $0\leq t_2\leq t_3$ with $t_2+t_3=k$.   Let $\epsilon$ be a fixed $p$-adic quadratic nonresidue.  Then up to $\Zp$-equivalence, exactly one of the following holds, and these are the only isotropic forms. We have
\begin{enumerate}[label=\upshape\arabic*.\ ]
	\item for $t_2=t_3=0$,  
	\[f_p({\bf x})=x_1^2+x_2^2+d_0x_3^2 \quad \text{with} \quad m_p(f)=\frac{1}{2}\left(1-p^{-2}\right)^{-1};\]
	\item for $0=t_2<t_3=k$,
	\[f_p({\bf x})=x_1^2-x_2^2-p^{k} d_0x_3^2 \quad \text{with} \quad m_p(f)=\frac{1}{4}\left(1-p^{-1}\right)^{-1}p^{k};\]
	\item for $0=t_2<t_3=k$ with $k$ even,  
	\[f_p({\bf x})=x_1^2-\epsilon x_2^2-p^{k}(d_0/\epsilon)x_3^2 \quad \text{with} \quad m_p(f)=\frac{1}{4}\left(1+p^{-1}\right)^{-1}p^{k};\]
	\item for $0<t_2=t_3=k/2$ with $k$ even, 
	\[f_p({\bf x})=-d_0 x_1^2+p^{k/2}(x_2^2-x_3^2) \quad \text{with} \quad m_p(f)=\frac{1}{4}\left(1-p^{-1}\right)^{-1}p^{k/2};\]
	\item for $0<t_2=t_3=k/2$ with $k/2$ even,
	\[ f_p({\bf x})=-d_0 x_1^2+p^{k/2}(x_2^2-\epsilon x_3^2)\quad \text{with} \quad m_p(f)=\frac{1}{4}\left(1+p^{-1}\right)^{-1}p^{k/2};\]
	\item for $0<t_2<t_3$, we have in all cases
	\[m_p(f)=\frac{1}{8}p^{t_3}.\]
	The inequivalent forms are as follows:
	\begin{enumerate}[label=\upshape\roman*.\ ]
	\item if $t_2$ is even and $t_3$ odd, then \[f_p({\bf x})=\epsilon^i(x_1^2-p^{t_2}x_2^2)+p^{t_3}d_0 x_3^3,\]
	with $i\in\{0,1\}$;
	\item if $t_2$ odd and $t_3$ even we have
	\[f_p({\bf x})=\epsilon^i(x_1^2-p^{t_3}x_3^2)+p^{t_2}d_0 x_2^3,\]
	with $i\in\{0,1\}$;
	\item if $t_2$ and $t_3$ both odd we have
	\[f_p({\bf x})=d_0x_1^2+\epsilon^i(p^{t_2}x_2^2-p^{t_3}x_2^2),\]
	with $i\in\{0,1\}$;
	\item if $t_2$ and $t_3$ both even we have
	\[f_p({\bf x})=\epsilon^ix_1^2+\epsilon^jp^{t_2}x_2^2+\epsilon^{i+j}d_0p^{t_3}x_2^2,\]
	with $i,j\in\{0,1\}$.
	\end{enumerate}
\end{enumerate}
\end{proposition}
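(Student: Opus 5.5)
The plan is to start from Jordan decompositions. By Prop.~\ref{propA3}, for $p$ odd every primitive form of determinant $d=p^kd_0$ is $\Z_p$-equivalent to a diagonal form $ax_1^2+p^{t_2}bx_2^2+p^{t_3}cx_3^2$ with $0=t_1\le t_2\le t_3$, $t_2+t_3=k$. Its $\Z_p$-class is determined by the Jordan constituents: their scales $p^{t_i}$, dimensions, and determinant square classes. I will sort by the shape of $(t_2,t_3)$:
\begin{enumerate}[label=\upshape(\roman*)]
\item $t_2=t_3=0$;
\item $0=t_2<t_3$ (a binary unimodular constituent plus a one-dimensional one);
\item $0<t_2=t_3$;
\item $0<t_2<t_3$ (three one-dimensional constituents).
\end{enumerate}
In each case I list the classes by the choice of unit square classes, subject to the determinant constraint.

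For each candidate I then decide isotropy over $\Q_p$. A ternary form is isotropic iff its Hasse invariant takes the value forced by the determinant (equivalently, iff $-d$ is represented by the associated binary part). Concretely, I will use these criteria:
\begin{enumerate}[label=\upshape(\alph*)]
\item a form containing a hyperbolic plane $x^2-y^2$ (up to scaling) is isotropic;
\item a unimodular binary $x^2-\epsilon y^2$ is anisotropic, so adding $p^kcx_3^2$ gives an isotropic form iff $p^kc$ is represented by $-(x^2-\epsilon y^2)$ over $\Q_p$, which means $k$ even;
\item in the three-distinct-scale case, the Hasse computation already done in the proof of Thm.~\ref{avg-genera} (Type III) shows that, among the four sign choices, exactly those listed in item 6 are isotropic.
\end{enumerate}
This yields the list in items 1--6, together with the claim that these are the only isotropic classes.

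For the masses, I apply the Conway--Sloane recipe \cite{CSMass} for odd $p$ to $m_p(f)$. For each Jordan constituent $f_q$ of dimension $n$ and scale $q$, it assigns a standard factor: $\frac12(1-p^{-2})^{-1}$ for odd dimension $1$ or $3$ (with the appropriate $\zeta$-type factor), and $\frac12(1\mp p^{-1})^{-1}$ for dimension $2$ according to whether the constituent is split ($x^2-y^2$) or nonsplit ($x^2-\epsilon y^2$). It then multiplies by the cross term $\prod_{q<q'}(q'/q)^{\frac12 n(q)n(q')}$ and an overall normalization. I substitute each case:
\begin{enumerate}[label=\upshape(\roman*)]
\item Dimension $3$ gives $\frac12(1-p^{-2})^{-1}$.
\item Dimensions $(2,1)$ with scales $1,p^k$ give a cross factor $(p^k)^{\frac12\cdot2\cdot1}=p^k$, times $\frac12$ (dimension $1$) times $\frac12(1\mp p^{-1})^{-1}$.
\item Dimensions $(1,2)$ with ratio $p^{k/2}$ give the cross factor $p^{k/2}$.
\item Three one-dimensional constituents give a cross factor $(p^{t_2})^{1/2}(p^{t_3})^{1/2}(p^{t_3-t_2})^{1/2}=p^{t_3}$ and a factor $(\frac12)^3$.
\end{enumerate}
This gives $\frac18 p^{t_3}$ in case (iv).

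The main obstacle is bookkeeping in the Conway--Sloane normalization. I need the exact species/``bound'' conventions so that the dimension-$1$ and dimension-$2$ standard factors come out as $\frac12$ and $\frac12(1\mp p^{-1})^{-1}$, and I must make sure the cross-term exponents are consistent with $m_p=p^{2v_p(\det)}\delta_p$. To fix the normalization I will first check the unimodular case 1 against the direct count $|\mathrm{O}_3(\mathbb F_p)|=2p(p^2-1)$ via the Remark expressing $\mu_p(G)$ through $|\Aut_A(\Z/p^r\Z)|$. I will also check case 2 with $k=1$ against the count of $\Aut$ mod $p$ for $x_1^2-x_2^2-pd_0x_3^2$, which should reproduce Lemma~\ref{masslem}(1b), $\frac p4(1-p^{-1})^{-1}$. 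With the normalization pinned down by these two checks, the remaining cases follow by the same recipe.
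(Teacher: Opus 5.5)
Your proposal is correct and is essentially the paper's own (cited) argument: Jordan splitting via Prop.~\ref{propA3}, isotropy via the Hasse-invariant computations of Sec.~\ref{genera} (your Type~III check does single out exactly two of the four classes unless $t_2,t_3$ are both even), and the Conway--Sloane odd-$p$ recipe with diagonal factors $\tfrac12$, $\tfrac12(1\mp p^{-1})^{-1}$, $\tfrac12(1-p^{-2})^{-1}$ in dimensions $1,2,3$ times the cross term, which already computes $p^{2v_p(\det f)}\delta_p(f)$ with no extra normalization. The one thing to fix is your $k=1$ sanity check: $p^{3r}/|\Aut_A(\Z/p^r\Z)|$ equals $\delta_p(A)$ only once $r>t_3$, so you must count modulo $p^2$ (modulo $p$ you would get $p/(2(p-1)^2)$ instead of the correct $1/(4(p-1))$).
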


For $p=2$, we will consider only the case $d=2^k$ for our computations (for if $d=2^kd_0$ with $d_0$ odd, then the corresponding  forms can be obtained from those with $d_0=1$ by the map $f\mapsto d_0f$).

\begin{proposition}\label{massprop3} For $p=2$, and $d=2^k$, the following are the only primitive isotropic forms $($see Props.~\ref{propB1} and \ref{propB2} for notation$)$. Given a quadratic form
\[f(x_1,x_2,x_3)=ax_1^2+2^ubx_2^2+2^{u+v}cx_2^2\]
with  $abc \equiv 1 \pmod{8}$ and $u,v\geq0$, we define $m_2^{(1)}(f)=m_2(f)/2^{u+v}$. 

The first Table below gives a breakdown for the number of classes $f$ multiplied with the corresponding value of $16m'_2(f)$, while the second is the composite sum, all with given $u$ and $v$:

\renewcommand{\arraystretch}{1.5}%
\begin{tabular}{ |c|c| c| c |c |c| }
\hline
u,\,v & $0$ & $1$ & $2$ & $\geq3$ odd & $\geq 3$ even\\
		\hline
$0$ & $1\cdot 4$ & $1\cdot 2$ & $1\cdot 1+1\cdot 2$ & $2\cdot \frac{1}{2}+1\cdot 1$ & $2\cdot\frac{1}{2}+2\cdot1$\\
$1$ & $1\cdot 2$ & $2\cdot 1$ & $2\cdot 1$ & $4\cdot\frac{1}{2}$ & $4\cdot\frac{1}{2}$\\
$2$ & $1\cdot1+1\cdot 2$ & $2\cdot1$ & $3\cdot1$ & $4\cdot\frac{1}{2}$ & $6\cdot\frac{1}{2}$\\
$\geq3$ odd & $2\cdot\frac{1}{2}+1\cdot 1$ & $4\cdot\frac{1}{2}$ & $4\cdot\frac{1}{2}$ & $8\cdot\frac{1}{4}$ & $8\cdot\frac{1}{4}$\\
$\geq3$ even & $2\cdot\frac{1}{2}+2\cdot1$ & $4\cdot\frac{1}{2}$ & $6\cdot\frac{1}{2}$ & $8\cdot\frac{1}{4}$ & $12\cdot\frac{1}{4}$\\
\hline
\end{tabular}\vspace*{.5cm}

\renewcommand{\arraystretch}{1.1}%
\begin{tabular}{ |c|c | c |c | }
\hline
u,\,v & $0$ & $\geq1$ odd & $\geq 1$ even \\
		\hline
$0$ & $4$ & $2$ & $3$ \\
$\geq1$ odd & $2$ & $2$ & $2$ \\
$\geq1$ even & $3$ & $2$ & $3$ \\
\hline
\end{tabular}

In addition, for forms of the type $ax_1^2 + 2^{u}V_i$,  we have 
	\begin{enumerate}[label=\upshape\arabic*.\ ]
		\item $f_2({\bf x}) = -x_1^2 + 2V_1$ with $m_2(f) = \frac{1}{6}$,
		\item $f_2({\bf x}) = -x_1^2 + 2^{u}V_1$ for all $u\geq 2$ with $m_2(f) = 2^{u-4}$,
		\item $f_2({\bf x}) = 3x_1^2 + 2^{u}V_2$ for all $u\geq2$ odd with  $m_2(f) = \frac{1}{3}\cdot  2^{u-4}$,
	\end{enumerate}
and for forms of the type  $V_i + 2^v\cdot cx_3^2 $ we have
\begin{enumerate}[label=\upshape\arabic*.\ ]
	\item  $f_2({\bf x}) = V_1 - 2 x_3^2 $ for $k=1$ with $m_2(f) = \frac{1}{6}$,
	\item  $f_2({\bf x}) = V_1 - 2^v x_3^2 $ for all $v\geq2$, with $m_2(f) =  2^{u-4}$,
	\item  $f_2({\bf x}) = V_2 + 2^v\cdot 3x_3^2 $ for $v\geq 2$ odd, with $m_2(f) = \frac{1}{3}\cdot  2^{u-4}$.
\end{enumerate} 
\end{proposition}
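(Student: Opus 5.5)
This table is a finite local computation at $p=2$, so I would verify it in three steps: enumerate the classes, test each for isotropy, and then read off the masses from Conway--Sloane.

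\emph{Enumerating the classes.} Start from the classification of primitive $\Z_2$-forms of determinant $2^k$ in Appendix~\ref{local-appendix} (Props.~\ref{propB1}, \ref{propB2}). There are three shapes: the diagonal shape $ax_1^2+2^ubx_2^2+2^{u+v}cx_3^2$, the shape $ax_1^2+2^uV_i$, and the shape $V_i+2^vcx_3^2$, with $abc\equiv1 \pmod 8$ after normalizing $d_0=1$. The reduction to $d_0=1$ is legitimate because scaling by an odd unit preserves both isotropy and $m_2$. For each fixed $(u,v)$ the number of unit triples $(a,b,c)$ modulo $8$ is finite. I would reduce this list to genuine $\Z_2$-classes with the oddity-fusion and sign-walking rules of \cite{Conway1993}; this is how the class counts $2,3,5,8,\dots$ in the table of Sec.~\ref{genera} arose.

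\emph{Testing isotropy.} For each class, isotropy over $\Q_2$ is equivalent to the Hasse invariant condition $c_2(f)=(-1,-D(f))_2$. Since the determinant is fixed, this is a single Hilbert-symbol computation in $a,b,c$ and the parities of $u,u+v$. For the diagonal shape, this cuts the class list down to the multiplicities in the first table: one class at $(0,0)$, two classes at $u,v\ge3$ odd, and so on. For the $V_i$ shapes, note that $V_1$ is hyperbolic, so every form containing $V_1$ is isotropic. Forms containing $V_2$ are isotropic exactly when the Hasse condition holds, and I expect this to single out $3x_1^2+2^uV_2$ and $V_2+2^v\cdot3x_3^2$ with the parity restriction stated.

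\emph{Computing the masses.} Next I would apply the Conway--Sloane $2$-mass recipe \cite{CSMass} (Tables~1 and~2). Write $f$ as a Jordan decomposition $\bigoplus_q q^{j}f_q$. Then
\[
m_2(f)=\prod_q \mathrm{std}_2(f_q)\cdot\prod_{q<q'}(q'/q)^{\frac12\dim f_q\dim f_{q'}},
\]
where each standard factor depends only on the dimension, the type (I or II), the octane value, and whether the constituent is bound or free. Evaluating this per class and dividing by $2^{u+v}$ gives the entries $16m'_2(f)$. The stabilization for $u,v\ge3$ follows because once adjacent scales differ by a factor of at least $8$, every Jordan constituent is free and of fixed octane class, so the pattern depends only on the parities.

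\emph{Summing and cross-checking.} The second table is obtained by summing each row of the first. As consistency checks, I would recompute the total primitive isotropic density $\sum_k\mu_2(\Sym^{\iso}_{\prim}(2^k))$ from these masses and compare it with a direct count modulo $8$ or $16$. I would also confirm that the masses reproduce the Euler factor $A_2(s)$ in Theorem~\ref{exp-form}.

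The main obstacle is the Conway--Sloane bookkeeping at $p=2$ for small $u,v\in\{0,1,2\}$. In that range adjacent Jordan components are bound, the octane and compartment data interact, and the species counts (and hence the factors $\frac12$, $1$, $2$) change. I would handle each small case by hand, cross-check it numerically (e.g.\ in Sage) by counting automorphisms modulo $2^r$ for moderate $r$, and then use the mass identity as a global sanity check.
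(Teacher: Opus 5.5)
Your plan follows the paper's route. The paper's proof reads the $\Z_2$-classes (via Conway's $2$-adic symbols with oddity fusion and sign walking, cf.~\cite{AGM}), their isotropy, and their Conway--Sloane masses directly off the tables in Appendix~\ref{local-appendix}. Executed as you wrote it, though, two steps would give wrong numbers.

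First, the $2$-mass formula you display is not the Conway--Sloane one. At $p=2$ it carries an extra type factor $2^{n(I,I)-n(II)}$. Here $n(I,I)$ is the number of adjacent pairs $f_q,f_{2q}$ that are both of type I, and $n(II)$ is the total dimension of the type II constituents. Your per-constituent factors, which see only dimension, type, octane and bound/free status, omit this. Leaving it out makes the masses wrong by powers of $2$ in every cell with $u=1$ or $v=1$, e.g.\ $[1^12^2]_{-1}$ and $[1^12^14^1]_{-1}$. It also makes every form containing a $V_i$ wrong; for instance the value $m_2=\frac16$ for $-x_1^2+2V_1$ includes the factor $2^{-2}$.

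Second, you misread the tables. At $u,v\ge3$ odd there are eight isotropic classes out of sixteen, each with $64m_2/2^{u+v}=1$; this is the entry $8\cdot\frac14$ in the first table. The number $2$ in that position belongs to the second table. That second table is not obtained by summing rows of the first. Each of its entries is the total of a single cell of the first table, with cells grouped by the parities of $u$ and $v$.

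There is also a convention clash in your isotropy test. The paper normalizes $c_p$ as in Definition~\ref{def3.1}, which includes the extra factor $(-1,-d)_p$. In that normalization isotropy is simply $c_2(f)=1$ (Proposition~\ref{isotropics}). Your criterion $c_2(f)=(-1,-D(f))_2$ is correct only for the Hasse--Witt invariant $\prod_{i<j}(a_i,a_j)_2$.
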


\vspace{6pt}

\begin{proof}[Proofs]
	These follow from the analysis in Sec.~\ref{genera}, together with results in Appendix~\ref{prelim}, and the Tables in Appendix~\ref{local-appendix} for the determination of the inequivalent forms, isotropy and the $p$-masses, the latter using the methods in \cite{CSMass} and \cite{AGM}.
\end{proof}

\qquad

\begin{proof}[Proof of Theorem.~\ref{exp-form}]\ 

Since $\tilde{\nu}^{\iso}(d)=\nu^{\iso}(d)/\nu^{\iso}(1)$ is multiplicative,  we write
\begin{equation}\label{Ips0} I(s) = \prod_{p\geq 2} \left(\sum_{k\geq 0}p^{k(1-s)}\tilde{\nu}^{\iso}(p^k)\right) = I_2(s)\prod_{p\geq 3}I_p(s),\end{equation}
where we note by Prop.~\ref{mult} that
\[\tilde\nu^\iso(p^k)=\frac{1}{p^{2k}}\frac{\sum_{G\in\Gen_p^\iso(p^k)}m_p(G)}{\sum_{G\in\Gen_p^\iso(1)}m_p(G)}.\]
If $p$ is odd, then by Prop.~\ref{massprop}(1) we have
\begin{equation}\label{Ips} I_p(s) = \left(1-\frac{1}{p^2}\right)\sum_{k=0}^\infty p^{-k(s+1)} \sum_{G\in\Gen^{\iso}(p^k)}2m_p(G).\end{equation} We apply Prop.~\ref{massprop}(2-6) to evaluate the sum over $k\geq 1$ taking into account the $m_p(G)$ values together with their multiplicities. From Prop.~\ref{massprop}(2-3) the contribution is
\[
\left(1-\frac{1}{p^2}\right)^{-1}\sum_{\substack{k\geq 1\\ 2\mid k}}p^{-ks} +\frac{1}{2}\left(1-\frac{1}{p}\right)^{-1}\sum_{\substack{k\geq 1\\ 2\nmid k}}p^{-ks}\ ;
\]
from (4-5) we get
\[\left(1-\frac{1}{p^2}\right)^{-1}\sum_{\substack{k\geq 1\\ 4\mid k}}p^{-k(s+\frac{1}{2})} +\frac{1}{2}\left(1-\frac{1}{p}\right)^{-1}\sum_{\substack{k\geq 1\\ 2\parallel k}}p^{-k(s+\frac{1}{2})}\ ;\]
and from (6), we have
\[
\frac{1}{2}\sum_{1\leq t_2<t_3}p^{-t_2(s+1)}p^{-t_3s} + \frac{1}{2}\sum_{\substack{1\leq t_2<t_3\\ 2\mid t_2,t_3}}p^{-t_2(s+1)}p^{-t_3s} .
\]
Combining these contributions into \eqref{Ips} and simplifying gives us for $p\geq 3$
\[ I_p(s) =\frac{1-p^{-3(s+1)}}{1-p^{-(s+1)}}\left[\left( 1-\frac{1}{p^{2s}}\right)  \left( 1-\frac{1}{p^{2s+1}}\right)\right]^{-1}A_p(s).   \]
To compute $I_2(s)$, we note by Appendix \ref{local-appendix} that $\sum_{G\in\Gen_2^{\iso}(1)}m_2(G)=1/4$ so
\[I_2(s)=4\sum_{k=0}^\infty\frac{1}{2^{k}}\left(\sum_{G\in\Gen_2^{\iso}(2^k)}m_2(G)\right)2^{-ks}.\]
Inputting the data from Prop~\ref{massprop3} in the same manner as we have done for $p$ odd above, we find that 
\[ I_2(s) = \frac{1-2^{-3(s+1)}}{1-2^{-(s+1)}}\left( 1-\frac{1}{2^{2s}}\right)^{-1}  \left( 1-\frac{1}{2^{2s+1}}\right)^{-1}A_2(s).\]
Thus, from \eqref{Ips0}, we have
\[
I(s) =\frac{\zeta(s+1)\zeta(2s)\zeta(2s+1)}{\zeta(3s+3)}\prod_{p}A_p(s),\]
as required.
\end{proof}

\begin{corollary}\label{exp-cor}\
\begin{enumerate}[label=\upshape\arabic*.\ ]
    \item We have $I(s)= \zeta(s)^{\frac{1}{2}}K(s)$, where $K(s)$ is regular for $\Re(s)> \frac{1}{2}$, and 
    \[ K(1) = \frac{25}{24}\cdot \frac{\zeta(3)}{\zeta(6)}\cdot\prod_{p\geq 2}\left[1 - \frac{p}{2(p+1)^2}\right]\left(1-\frac{1}{p}\right)^{-\frac{1}{2}}. \]
    \item For each odd prime $p$, we have
\[\mu_p(\Sym_{\prim}^\iso(\Z_p))=\frac{1}{\zeta_p(6)}\left(1-\frac{p}{2(p+1)^2}\right).\]
On the other hand, we have
\[\mu_2(\Sym_{\prim}^\iso(\Z_2))=\frac{25}{36}\cdot\frac{1}{\zeta_2(6)}.\]
\end{enumerate}
\end{corollary}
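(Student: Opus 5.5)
We will read both parts of the corollary off the Euler product in Theorem~\ref{exp-form} together with Prop.~\ref{mult}.

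\emph{Part (1).} Write
\[I(s)=\zeta(s)^{1/2}\cdot\Big[\zeta(s)^{-1/2}\zeta(s+1)\zeta(2s)\zeta(2s+1)\zeta(3s+3)^{-1}\textstyle\prod_pA_p(s)\Big]\]
and call the bracket $K(s)$. Since $A_p(s)=1+\tfrac12p^{-s}+O(p^{-\Re s-1}+p^{-2\Re s})$, the product $\prod_p A_p(s)(1-p^{-s})^{1/2}$ has local factors $1+O(p^{-2\Re s}+p^{-\Re s-1})$. It therefore converges absolutely and is holomorphic for $\Re s>1/2$. The factors $\zeta(s+1)$, $\zeta(2s+1)$ and $1/\zeta(3s+3)$ are holomorphic there.

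The factor $\zeta(2s)$ has a pole only at $s=1/2$, which lies on the boundary. So ``regular for $\Re(s)>\frac12$'' holds provided we interpret $K$ through the Euler product $\prod_p A_p(s)(1-p^{-s})^{1/2}$. Note that $\zeta(s)^{-1/2}\cdot\prod_p(1-p^{-s})^{-1/2}$ is the identity, so we absorb $\zeta(s)^{-1/2}$ into the product rather than treating it separately.

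To evaluate $K(1)$ we use the values $\zeta(2)\zeta(2)\zeta(3)/\zeta(6)$ and combine them with $A_p(1)$:
\begin{itemize}
\item for $p$ odd, $A_p(1)=1+\frac{1}{2p}-\frac{1}{2p^2}-\frac{1}{p^3}$;
\item for $p=2$, $A_2(1)=1+\frac16-\frac18=\frac{25}{24}$.
\end{itemize}
For odd $p$ we check the factorization
\[(1-p^{-2})^{-1}A_p(1)=1-\frac{p}{2(p+1)^2}.\]
The key identity is $A_p(1)=(1-p^{-1})(1+p^{-1})\big(1-p/(2(p+1)^2)\big)$ up to the factor bookkeeping. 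One $\zeta(2)$ Euler factor is absorbed by this identity and the other is used against $\zeta(s+1)$ at $s=1$. At $p=2$ we get the separate factor $\frac{25}{24}$. Carrying out this rational-function verification is the main computational step, and the most likely place for slips in the power bookkeeping. We will do it by expanding $(1+p)^2(p^3+\tfrac12p^2-\tfrac12p-1)$ directly.

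\emph{Part (2).} Prop.~\ref{mult} gives
\[\mu_p(\Sym_{\prim}^{\iso}(\Z_p))=|\Sq^*(\Z_p)|\,\mu_p(\Sym_{\prim}^{\iso}(1)(\Z_p))\sum_{k\ge0}\tilde\nu^{\iso}(p^k).\]
The sum on the right is the local factor $I_p(1)$ of the series at $s=1$ with weights $p^{-k}$, i.e.\ $I_p$ evaluated at $s=1$ after removing the factor $d$. Equivalently it is the Euler factor of $I(s)$ at $s=2$.

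For odd $p$ we take:
\begin{itemize}
\item $|\Sq^*(\Z_p)|=2$;
\item $\mu_p(\Sym_{\prim}^{\iso}(1)(\Z_p))$ from Prop.~\ref{massprop}(1) via the Remark relating $\mu_p$ to $\delta_p$: it equals $m_p/\prod_{n\le3}\zeta_p(n)$ summed over the single unit-determinant isotropic class;
\item the closed form of $I_p(2)$ from the proof of Theorem~\ref{exp-form}.
\end{itemize}
Multiplying these, the factors $(1-p^{-2})$, $(1-p^{-3})$, $(1-p^{-4})$, $(1-p^{-5})$ should telescope to $1/\zeta_p(6)$, leaving $A_p$-type residue $1-\frac{p}{2(p+1)^2}$.

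For $p=2$ we do the same with:
\begin{itemize}
\item $|\Sq^*(\Z_2)|=4$;
\item $\sum_{G\in\Gen_2^{\iso}(1)} m_2=\frac14$;
\item $A_2(2)=1+\frac1{12}-\frac1{32}$, which combines to $\frac{25}{36}\cdot\zeta_2(6)^{-1}$.
\end{itemize}

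The main obstacle is purely the exact bookkeeping of normalizations between $\mu_p$, $\delta_p$, $m_p$ and the shift $s\mapsto s+1$. We will fix these by first checking the formula at a generic odd $p$ numerically against Prop.~\ref{massprop}.
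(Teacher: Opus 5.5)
Part (1) follows the paper's route: read everything off Theorem~\ref{exp-form} at $s=1$. Part (2), however, has a genuine error in the evaluation point. Prop.~\ref{mult} concerns the unweighted sum $\sum_{k\ge0}\tilde\nu^{\iso}(p^k)$. The Euler factor of $I(s)$ at $p$ is $I_p(s)=\sum_k p^{k(1-s)}\tilde\nu^{\iso}(p^k)$, so this sum is exactly $I_p(1)$: the factor $d$ in $d\,\tilde\nu^{\iso}(d)d^{-s}$ cancels $d^{-1}$ at $s=1$. You first write $I_p(1)$, but then reinterpret it as carrying weights $p^{-k}$ and as the Euler factor of $I(s)$ at $s=2$. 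You actually compute with $I_p(2)$ and $A_2(2)$, which gives $\sum_k\tilde\nu^{\iso}(p^k)p^{-k}$, a quantity Prop.~\ref{mult} does not relate to $\mu_p(\Sym_{\prim}^{\iso}(\Z_p))$. The computation then cannot close. At $s=2$ the closed form from the proof of Theorem~\ref{exp-form} contains $1-p^{-9}$ (from $\zeta(3s+3)$) and $1-p^{-3},1-p^{-4},1-p^{-5}$, so no factor $1/\zeta_p(6)$ can appear. Likewise $A_2(2)=101/96$ is incompatible with $\frac{25}{36}$. The correct bookkeeping, which is the paper's, is
\[
\mu_p(\Sym_{\prim}^{\iso}(\Z_p))=\frac{|\Sq^*(\Z_p)|\,m_p(\Sym_{\prim}^{\iso}(1)(\Z_p))}{\zeta_p(1)\zeta_p(2)\zeta_p(3)}\,I_p(1),\qquad I_p(1)=\frac{\zeta_p(2)^2\zeta_p(3)}{\zeta_p(6)}\,A_p(1).
\]
Here $|\Sq^*(\Z_p)|\,m_p(\Sym_{\prim}^{\iso}(1)(\Z_p))$ equals $2\cdot\frac12\zeta_p(2)=\zeta_p(2)$ for odd $p$, and $4\cdot\frac14=1$ for $p=2$. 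This gives $\frac{\zeta_p(2)^2}{\zeta_p(1)\zeta_p(6)}A_p(1)$ for odd $p$. For $p=2$ it gives $\frac{25}{24}\cdot\frac{\zeta_2(2)}{\zeta_2(1)\zeta_2(6)}=\frac{25}{36}\cdot\frac{1}{\zeta_2(6)}$.

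Both parts then rest on one rational identity, which you have misstated. It is not $(1-p^{-2})^{-1}A_p(1)=1-\frac{p}{2(p+1)^2}$; the left side is $(1+p^{-1})$ times the right, and your ``key identity'' is off by the same factor. Since $2p^3A_p(1)=(p-1)(2p^2+3p+2)$ and $2(p+1)^2-p=2p^2+3p+2$, one gets
\[
(1-p^{-2})^{-2}A_p(1)=(1-p^{-1})^{-1}\Bigl(1-\frac{p}{2(p+1)^2}\Bigr).
\]
This is a polynomial identity in $p$, so it also holds at $p=2$ for the generic factor, which equals $1$ there at $s=1$. Consequently both $\zeta(2)$'s at $s=1$, namely $\zeta(s+1)$ and $\zeta(2s)$, are absorbed into $A_p(1)$. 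There is no further $\zeta(2)$ to ``use against $\zeta(s+1)$''. The leftover $(1-p^{-1})^{-1}$ combines with the $(1-p^{-1})^{1/2}$ from $\zeta(s)^{-1/2}$ to give $(1-p^{-1})^{-1/2}$. The quotient $\zeta(2s+1)/\zeta(3s+3)$ gives $\zeta(3)/\zeta(6)$, and $A_2(1)=\frac{25}{24}$ accounts for the extra factor at $2$. The same identity finishes Part (2) for odd $p$. With these two corrections your argument is the paper's, and your justification of regularity for $\Re(s)>\frac12$ is fine.
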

\begin{proof}\ \\
(1). By Thm.~\ref{exp-form}, we have $I(s) = \zeta(s)^{\frac{1}{2}}K(s)$ for $K(s)$ regular for $\Re(s)>\frac{1}{2}$ and explicitly given by
\[ \begin{split} K(s) = \frac{(1+\frac{1}{3}\cdot\frac{1}{2^{s}}-\frac{1}{2^{2s+1}})}{(1+\frac{1}{4}\cdot\frac{1}{2^{s}}-\frac{1}{2^{2s+1}})}\cdot & \frac{\zeta(s+1)\zeta(2s)\zeta(2s+1)}{\zeta(3s+3)} \\
& \times\prod_{p}\left(1+ \frac{p^{-s}}{2} - \frac{p^{-s-1}}{2} -\frac{1}{p^{2s+1}}\right)\left(1-p^{-s}\right)^{\frac{1}{2}},\end{split}\]
implying
\[ K(1) = \frac{25}{24}\cdot \frac{\zeta(3)}{\zeta(6)}\cdot\prod_{p\geq 2}\left[1 - \frac{p}{2(p+1)^2}\right]\left(1-\frac{1}{p}\right)^{-\frac{1}{2}}. \]
(2). Note that by Prop.~\ref{mult} we have \[\mu_p(\Sym_\prim^\iso(\Z_p))=\frac{|\Sq^*(\Z_p)| m_p(\Sym_\prim^\iso(1)(\Z_p))}{\zeta_p(1)\zeta_p(2)\zeta_p(3)}\cdot I_p(1),\]  where we write $\zeta_p(s)=(1-p^{-s})^{-1}$ as in Sect.~\ref{sie-asy}. Thus, for odd prime $p$, we have by direct computation
\[\mu_p(\Sym_\prim^\iso(\Z_p))=\frac{1}{\zeta_p(1)\zeta_p(3)} I_p(1)=\frac{\zeta_p(2)^2}{\zeta_p(1)\zeta_p(6)}A_p(1)=\frac{1}{\zeta_p(6)}\left(1-\frac{p}{2(p+1)^2}\right).\]
On the other hand, for $p=2$ we have
\[\mu_2(\Sym_{\prim}^\iso(\Z_2))=\frac{1}{\zeta_2(1)\zeta_2(2)\zeta_2(3)}I_2(1)=\frac{25}{24}\frac{\zeta_2(2)}{\zeta_2(1)\zeta_2(6)}=\frac{25}{36}\cdot\frac{1}{\zeta_2(6)},\]
which gives us the result.\end{proof}

\begin{remark}[$\varpi$ as a product and further interpretations]\label{rem10.5}\ 

		   The explicit determination  of the local factors $\mu_p(\Sym_{\prim}^\iso(\Z_p))$ in Corl.~\ref{exp-cor}(2) allows us to compare these probabilities with the calculations of Bhargava et al. (\cite{bhargavaetal}) of the same, namely that an integral form is (primitive and) $\mathbb{Z}_p$-isotropic. These agree for $p>2$. For $p=2$ they differ in view of the notion of being primitive integral considered there is in terms of the coefficients of $F$, while it is in terms of the entries of the matrix of $F$ in our work. As far as the archimedean factor in the asymptotics in Theorem~\ref{IThm2}, it depends on $\Omega$ (unlike the finite primes) and has a natural probabilistic interpretation, being the fraction $\text{VOL}(\Omega^{\iso})/\text{VOL}(\Omega)$ of the real isotropic forms in $\Omega$.
	
    In our local density formulation of Theorem.~\ref{IThm2} in Remark~\ref{rem1.4} and in Sec.~\ref{iso-revisited} below, the dependence on $\Omega$ is naturally absent. In the recent paper (\cite{LRS}) on the leading constant for rational points in families, the authors give a conjectured constant for the asymptotics in Thm.~\ref{IThm2} with $\Omega = [-1,1]^6$ (they also check and prove their conjecture for the family of diagonal isotropic ternaries, extending Guo \cite{Guo}). The non-archimedean factors in their conjecture are our $\mu_p(\Sym_{\prim}^\iso(\Z_p))(1-p^{-1})^{-1}$, being Peyre's local Tamagawa measure for the locus of isotropic forms in $\mathbb{P}^6(\mathbb{Q})$. Moreover, their convergence factors agree with ours after incorporating their $(1-p^{-1})^{\frac{1}{2}}$ factor (\cite{LRS} Sec.~3.6). Their archimedean, as well as extra global factors involve special integrals connected with their choice of an archimedean height, and we have not verified that these agree with Theorem~\ref{IThm2}.\qed
\end{remark}

\subsection{Local density of isotropic forms: Theorem \ref{IThm2} revisited}\label{iso-revisited}\

	Theorem \ref{IThm2} gives the asymptotics for the number of (primitive) isotropic forms $\textup{ISO}(\Omega X)$ that lie in the dilates of $\Omega$ by $X$ with $X\to\infty$. Our analysis provides a natural local density for such forms:
	\begin{equation}\label{101}
	d\lambda^{\iso}=\varpi\frac{dx_1\cdots dx_6}{\sqrt{\log^+D(x)}}\quad\text{on $V^+(\R)$.}
	\end{equation}
	Serre \cite{serre} points to the significance of the divisor $D(x)$ in the exponent $\frac{1}{2}$ of the log in (\ref{iso1}), and (\ref{101}) highlights its further significance in the asymptotic count. For $W\subset V^+(\R)$, (\ref{101}) gives the expected value for the number $\text{ISO}(W)$ of isotropic forms lying in $W$, namely
	\begin{equation}\label{102}
	\int_Wd\lambda^{\iso}.
	\end{equation}
	Our proof of Theorem \ref{IThm2} yields local versions for sets $W$ for which the expected value is valid asymptotically. We give one such family of $W$'s. For $X$ large and $H\leq X$ consider the ``annuli''
	\begin{equation}\label{103}
	W_\Omega(X,H)=\{F\in V^+(\R):\tilde F\in\Omega, X\leq D(F)\leq X+H\}.
	\end{equation}
	Then for $H\geq X^{17/30}$ we claim that
	\begin{equation}\label{104}
	\text{ISO}(W_{\Omega}(X,H))\sim\int_{W_{\Omega}(X,H)}d\lambda^{\iso},
	\end{equation}
	as $X\to\infty$. Note that $\log D(X)$ is asymptotically constant at $\log X$ on $W_\Omega(X,H)$, so that the right hand side is
	\[\int_{W_{\Omega}(X,H)}d\lambda^{iso}\sim\varpi\frac{\Vol(W_\Omega(X,H))}{\sqrt{\log X}}\sim \varpi\mu_1(\Omega)\frac{XH}{\sqrt{\log X}}.\]
	To deduce (\ref{104}) we use the local level asymptotics (\ref{86}) and (\ref{H-iso}) which give
	\begin{equation}
	\sum_{\substack{D(F)=d\\\tilde F\in\Omega \\\textup{$F$ primitive isotropic}}}1\sim\frac{2}{\zeta(2)\zeta(3)}\mu_1(\Omega)d^2\nu^{\iso}(d),
	\end{equation}
	as $d\to\infty$.
	While $d^2\nu^{\iso}(d)$ varies arithmetically with $d$ in that \[d^{1-\epsilon}\ll_\epsilon d^2\nu^{\iso}(d)\ll_\epsilon d^{1+\epsilon},\] the following gives its averages over short intervals.
	\begin{proposition}
		For $X^{17/30}\leq H\leq X$,
		\begin{equation}
		\sum_{X\leq d\leq X+H}d^2\nu^{\iso}(d)\sim\frac{\zeta(2)\zeta(3)}{2}\varpi\frac{XH}{\sqrt{\log X}},
		\end{equation}
		as $X\to\infty$.
	\end{proposition}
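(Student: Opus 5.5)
We reduce the short-interval sum to a short-interval sum of a multiplicative function and apply a Landau--Selberg--Delange type result in short intervals. By \eqref{H-iso} and Prop.~\ref{mult}, $d^2\nu^{\iso}(d)=\nu^{\iso}(1)\,d\cdot a(d)$ with $a(d):=d\,\tilde\nu^{\iso}(d)$ multiplicative. Since $d$ varies by a factor $1+O(H/X)$ on $[X,X+H]$, it suffices to show
\[\sum_{X\leq d\leq X+H}a(d)\sim c\,\frac{H}{\sqrt{\log X}},\qquad c=\frac{K(1)}{\Gamma(\frac12)},\]
with $K$ as in Corollary~\ref{exp-cor}. Multiplying by $\nu^{\iso}(1)X$, the constant then matches the one in Proposition~\ref{isotropic-siegel-2}: the long-interval asymptotic there, differentiated in $X$, gives the same density $\frac{\zeta(2)\zeta(3)}{2}\varpi/\sqrt{\log X}$.

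\textbf{Step 1 (structure).} By Theorem~\ref{exp-form}, the generating function is
\[I(s)=\sum_d a(d)d^{-s}=\zeta(s+1)\zeta(2s)\zeta(2s+1)\zeta(3s+3)^{-1}\prod_pA_p(s),\]
and $A_p(s)=1+\frac12p^{-s}+O(p^{-\sigma-1}+p^{-2\sigma})$. We write $I(s)=\zeta(s)^{1/2}\zeta(2s)\,G(s)$, where $G$ is an absolutely convergent Euler product in $\sigma>\frac12+\eta$ for any $\eta>0$. Equivalently, $a=b*c$ is a Dirichlet convolution, where $b$ has generating series $\zeta(s)^{1/2}$ (the coefficients $\tau_{1/2}$) and $c$ has generating series $\zeta(2s)G(s)$. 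The coefficients $c(n)$ are supported essentially on squarefull-times-small parts and satisfy
\[\sum_{n\leq y}|c(n)|\ll y^{1/2+\eta}.\]

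\textbf{Step 2 (short intervals for $\tau_{1/2}$).} The key input is a short-interval asymptotic
\[\sum_{x<m\leq x+h}\tau_{1/2}(m)\sim \frac{h}{\Gamma(\frac12)\sqrt{\log x}}\qquad\text{for } h\geq x^{\theta}.\]
This follows from the classical Ramachandra/Huxley method for $\zeta(s)^{z}$. One applies Perron's formula with a truncated contour and bounds $\zeta(s)^{1/2}$ near the line $\sigma=1$ via zero-density estimates (Huxley's estimate gives exponents such as $7/12$). Alternatively, Motohashi's treatment of $\sum\tau_z$ in short intervals supplies an admissible $\theta$. We expect to reach $\theta=17/30$, or something close to it, by using the zero-density bound $N(\sigma,T)\ll T^{c(1-\sigma)}$ with a suitable $c$ and handling the branch cut of $\zeta^{1/2}$ by a Hankel-type contour around $s=1$. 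This is the main obstacle: controlling $\log\zeta$ in a zero-free-like region requires excluding rectangles around the zeros, and counting those rectangles with a density estimate is where the exponent $17/30$ should come from.

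\textbf{Step 3 (convolution).} We write
\[\sum_{X\leq d\leq X+H}a(d)=\sum_{n}c(n)\sum_{X/n\leq m\leq (X+H)/n}\tau_{1/2}(m).\]
For $n\leq N=X^{\epsilon'}$, Step 2 applies with $h=H/n\geq (X/n)^{\theta}$, which holds since $H\geq X^{17/30}$ and $\theta$ is slightly below $17/30$ after the loss from $n$. Each such $n$ contributes $\frac{c(n)}{n}\cdot\frac{H}{\Gamma(\frac12)\sqrt{\log X}}$, and the sum of these main terms gives $\sum_n c(n)/n=\zeta(2)G(1)=K(1)$. For $n>N$, we bound trivially using $\tau_{1/2}\leq 1$:
\[\sum_{n>N}|c(n)|\left(\frac{H}{n}+1\right)\ll HN^{-1/2+\eta}+X^{1/2+\eta},\]
which is $o(H/\sqrt{\log X})$ because $H\geq X^{17/30}>X^{1/2+\eta}$. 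Combining the two ranges and restoring the factor $d\sim X$ gives the proposition.
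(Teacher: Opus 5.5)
Your overall route is the paper's. The paper writes $\sum_d d^2\nu^{\iso}(d)d^{-s}=\sqrt{\zeta(s-1)}\,K_1(s)$ and applies Perron's formula with a Landau-type contour around the square-root branch points, which come from the pole of $\zeta$ and from its zeros with real part $>\frac12$; those zeros are controlled by a density theorem. Your Steps 1--2 are this mechanism. Step 3 is a harmless reduction to the model coefficients $\tau_{1/2}$, and its bookkeeping is correct: $\tau_{1/2}\leq1$, $\sum_{n\leq y}|c(n)|\ll y^{1/2+\eta}$, $\sum_n c(n)/n=\zeta(2)G(1)=K(1)$, and $\nu^{\iso}(1)K(1)/\Gamma(\frac12)=\frac{\zeta(2)\zeta(3)}{2}\varpi$ by the proof of Proposition~\ref{isotropic-siegel-2}.

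The gap is Step 2. It carries the entire content of the range of $H$, yet you leave it at ``a suitable $c$'' and ``we expect''. The input that produces $\frac{17}{30}$ is the Guth--Maynard zero-density estimate $N(\sigma,T)\ll T^{30(1-\sigma)/13+\epsilon}$. A density exponent $A$, combined with the Vinogradov--Korobov zero-free region and the classical density bounds for $\sigma$ near $1$, gives the asymptotic in intervals of length $x^{1-1/A+\epsilon}$. Thus Huxley's $A=\frac{12}{5}$ gives your $\frac{7}{12}$, while $A=\frac{30}{13}$ gives $\frac{17}{30}$.

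Note the $\epsilon$: Step 2 is available only with $\theta=\frac{17}{30}+\epsilon$. It is not available ``slightly below $\frac{17}{30}$'', which is what you require in Step 3 to absorb the loss $n^{1-\theta}$ for $n\leq X^{\epsilon'}$. Since $\epsilon'$ is at your disposal, the argument completed this way proves the proposition for $H\geq X^{17/30+\epsilon}$. That is also exactly what the Guth--Maynard input yields in the paper's direct contour argument.

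Finally, your opening reduction (``$d$ varies by a factor $1+O(H/X)$'') only produces an asymptotic when $H=o(X)$. This restriction is forced by the statement, not by your method. For $H=\lambda X$ with $\lambda>0$ fixed, partial summation from Proposition~\ref{isotropic-siegel-2} gives $\sum_{X\leq d\leq X+H}d^2\nu^{\iso}(d)\sim\left(1+\frac{\lambda}{2}\right)\frac{\zeta(2)\zeta(3)}{2}\varpi\frac{XH}{\sqrt{\log X}}$. So the proposition has to be read with $H=o(X)$.
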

	\begin{proof}
		We have by Thm.~\ref{exp-form} that
		\begin{align*}\sum_{d=1}^\infty d^2\nu^{\iso}(d)d^{-s}&=\prod_p A_2(s-1)\cdot\frac{\zeta(s)\zeta(2s-2)\zeta(2s-1)}{\zeta(3s)},\\
		&=\sqrt{\zeta(s-1)}K_1(s),
		\end{align*}
		where $K_1(s)$ is analytic and uniformly bounded in $\Re(s)>3/2$. Now $\sqrt{\zeta(s-1)}$ has a square-root algebraic singularity coming from the pole at $s=2$, and square-root algebraic singularities at $1+\rho$ in $\Re(s)>3/2$ corresponding to the zeros $\rho$ of $\zeta(s)$ in $\Re(s)>1/2$. So if there are no such zeros then applying Perron's formula and Landau's contour shift \cite{Landau} gives the proposition with $H\gg X^{1/2+\epsilon}$. If there are zeros $\rho$ in $\Re(s)>1/2$ one uses a density theorem as noted by Hooley \cite[p.98]{Hooley-bk}. Indeed, any one such zero contributes a negligible amount to the right hand side of the proposition. So the issue is how many $\rho$ there may be and where are they located in terms of their real parts in $(1/2,1)$. That is controlled by a density theorem for $\zeta(s)$. The recent advance by Guth and Maynard \cite{GuthMaynard2024} allows us to take $H\geq X^{17/30}$.
\end{proof}


\part*{Appendix}
\begin{appendices}

We collect here in Appendix \ref{prelim} some classical results about ternary quadratic forms. In Appendix \ref{2classes}, we give  a detailed breakdown of classes, together with mass computations for $p=2$ local forms following work of Conway-Sloane.  

\section{Preliminaries on integral  quadratic forms}\label{prelim}\ 

\begin{proposition}\label{propA1} \upn{({\cite{Cassels} Thm.~1.3, Sec.~9.1})}\ \\
\indent Let $F$ be a non-singular form over $\mathbb{Z}$, and let $a\in \mathbb{Z}\backslash\{0\}$ such that it is represented by $F$ over $\mathbb{Z}_p$ for all $p\leq \infty$. Then $a$ is represented over $\mathbb{Z}$ by some form $F^*\in{\it G}(F)$. In particular, if  ${\it G}(F)$ has one equivalence class, then $a$ is represented by $F$.
\end{proposition}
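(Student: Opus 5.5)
The plan is to derive the statement from the Hasse--Minkowski theorem together with the strong approximation property of the spin group, in the form used in Cassels' treatment of genera. I take as given that $F$ is non-singular of rank $3$ and that $a\neq 0$ is represented by $F$ over $\mathbb{Z}_p$ for every $p\le\infty$.

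The first step is rational. Since $a$ is represented by $F$ over $\mathbb{Q}_p$ for every $p\le\infty$, the form $F(\mathbf{x})-ay^2$ is isotropic over every $\mathbb{Q}_p$; its nonzero solutions can be chosen with $y\neq 0$ because $a$ is represented with $\mathbf x\neq\mathbf 0$. By Hasse--Minkowski, $a=F(\mathbf{v})$ for some $\mathbf{v}\in\mathbb{Q}^3$.

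The second step is to move the rational representation into an integral one inside the genus. Let $S$ be the finite set of primes at which $\mathbf{v}\notin\mathbb{Z}_p^3$, and let $\mathbf{w}_p\in\mathbb{Z}_p^3$ be the local solutions given by hypothesis. For each $p\in S$, Witt's theorem over $\mathbb{Q}_p$ (applicable since $F(\mathbf v)=F(\mathbf w_p)=a\neq0$) gives $\sigma_p\in O_F(\mathbb{Q}_p)$ with $\sigma_p\mathbf{v}=\mathbf{w}_p$. For $p\notin S$, set $\sigma_p=1$. The adelic element $(\sigma_p)$ sends the lattice $\mathbb{Z}^3$ to a lattice $L'$ with $L'_p=\sigma_p^{-1}\mathbb{Z}_p^3$. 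Then $L'$ is locally isometric to $\mathbb{Z}^3$ everywhere, so the form $F^*$ on $L'$ lies in $G(F)$, and $\mathbf{v}\in L'$ because $\sigma_p^{-1}\mathbf w_p=\mathbf v$ at $p\in S$ and $\mathbf v\in\mathbb Z_p^3$ elsewhere. Choosing a basis of $L'$ writes $F^*$ as an integral form, equivalent over each $\mathbb{Z}_p$ to $F$, which represents $a$ over $\mathbb{Z}$. The final assertion is then immediate: if $G(F)$ has one class, $F^*$ is $\mathbb{Z}$-equivalent to $F$, so $F$ represents $a$.

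The only delicate point is to check that the lattice $L'$ is genuinely global, i.e.\ that $\sigma_p=1$ for almost all $p$ (true by construction) and that $L'_p$ is $\mathbb{Z}_p$-isometric to $\mathbb{Z}_p^3$ (true since $\sigma_p$ is an isometry). No strong approximation is needed for this weaker genus statement. I expect the main obstacle to be the bookkeeping at $p=2$ and at $p=\infty$: at $\infty$ the local condition only ensures that the sign of $a$ is compatible with the indefinite form, and at $2$ Witt's theorem over $\mathbb{Q}_2$ still applies verbatim. Alternatively, one can simply cite Cassels, Thm.~1.3 of Ch.~9.
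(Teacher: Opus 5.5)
Your argument is correct and is the standard proof of the result the paper simply cites from Cassels (Ch.~9, Thm.~1.3). Hasse--Minkowski gives a rational representation $\mathbf v$, and Witt's theorem together with the local--global correspondence for lattices lets you modify $\mathbb Z^3$ at the finitely many primes where $\mathbf v$ is non-integral, producing a lattice in the genus that contains $\mathbf v$; contrary to your opening sentence, no strong approximation is needed, and the argument works in any rank, not just $3$. The one point to tidy is the first step: if the global zero of $F(\mathbf x)-ay^2$ has $y=0$, then $F$ is $\mathbb Q$-isotropic and hence universal over $\mathbb Q$, so it still represents $a$.
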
 

\begin{proposition}\label{propA2} \upn{({\cite{Conway1993} Thm.~21, Sec.~15})}\ \\
\indent For an indefinite form $F$ of dimension $n$ and determinant $d$, with more than one equivalence class in its genus, we must have
\[ k^{\binom{n}{2}}  \! \mid \!  4^{\left[\frac{n}{2}\right]}d ,\] for some non-square $k\in \mathbb{N}$ satisfying $k \equiv 0\ \text{or}\ 1  \Mod{4}$. In particular, if $d$ is squarefree, then all forms in ${\it G}(F)$ are equivalent.
\end{proposition}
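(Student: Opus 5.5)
The plan is to reduce, via Eichler's theorem, to counting spinor genera, and then to read off a quadratic character from a nontrivial spinor-genus quotient. Since $F$ is indefinite, we take $n\geq 3$, the case needed for Eichler's theorem. Eichler's theorem then says each spinor genus of $F$ contains a single class, so ``more than one class in $\Ge(F)$'' means ``more than one spinor genus''. As in the proof of Prop.~\ref{propE1}, the spinor genera in the genus are in bijection with an idelic quotient:
\[\frac{\prod_{q\in\Pi'}\Sq^*(\Z_q)}{\langle\Delta_p^{(\Pi')}(\mathcal A_p(F)):p\in\Pi'\rangle}.\]
Equivalently, they correspond to $J_\Q/\Q^*\,J^{F}$, where $J^F$ is generated by the local spinor norms $\mathcal A_p(F)$ at all places. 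If this quotient is nontrivial, it is an elementary abelian $2$-group, so it has a nontrivial character of order $2$. By class field theory, that character is the idele class character of a quadratic field $\Q(\sqrt{k})$, with $k$ a nonsquare fundamental discriminant, hence $k\equiv 0,1\pmod 4$. The defining property is that every local spinor norm $\mathcal A_p(F)$ lies in the local norm group $N_p$ of $\Q_p(\sqrt k)$.

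Next I will extract local constraints from ``$\mathcal A_p(F)\subseteq N_p$ for all $p$'', using Lemma~\ref{p-autonum}. At an unramified prime $p\nmid k$ there is no constraint needed. At a ramified odd prime $p\mid k$, $N_p$ is an index-$2$ subgroup of $\Sq^*(\Q_p)$ that does not contain all of $\Sq^*(\Z_p)$. So by Lemma~\ref{p-autonum} no two exponents $t_i$ in the diagonalization $\sum a_ip^{t_i}x_i^2$ may coincide, since otherwise $S_2=\Sq^*(\Z_p)$ would lie in $\mathcal A_p(F)$. With $t_1<\dots<t_n$ distinct we get $v_p(d)=\sum t_i\geq 0+1+\dots+(n-1)=\binom n2$, i.e.\ $p^{\binom n2}\mid d$. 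The archimedean place is harmless: indefiniteness makes $\mathcal A_\infty$ contain $-1$, which forces $k>0$.

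The main obstacle is $p=2$, where $k$ may be divisible by $4$ or $8$. Here the Jordan decomposition can contain binary blocks $V_1,V_2$, and the spinor norms of $\Z_2$-forms are given by Conway--Sloane's rules (compartments, trains) rather than by Lemma~\ref{p-autonum}. I would first try to show that for $\mathcal A_2(F)$ to lie in the index-$2$ norm group of a ramified extension, the $2$-adic scales must again be spread out. The loss of at most a factor $2$ per binary block is what accounts for the correction factor $4^{[n/2]}$. This gives $2^{v_2(k)\binom n2}\mid 4^{[n/2]}d$, or more precisely the needed power for the $2$-part of $k$. I will take these $2$-adic computations from \cite[Ch.~9]{Conway1993} rather than redo them. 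Multiplying the local conditions over $p\mid k$ gives $k^{\binom n2}\mid 4^{[n/2]}d$.

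For the final assertion take $n=3$, so the condition reads $k^3\mid 4d$. If $d$ is squarefree, an odd prime $p\mid k$ would need $p^3\mid d$, which is impossible. So $k$ would be a power of $2$ times a unit; as a nonsquare fundamental discriminant it is $\pm4$ or $\pm8$. But then $k^3\mid 4d$ would force $2^4\mid d$, again impossible. Hence $h(\Ge(F))=1$.
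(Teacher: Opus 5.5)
The paper does not prove this statement; it simply quotes \cite[Thm.~21]{Conway1993}. Your framework is the standard route: Eichler's theorem, the idelic description of spinor genera, a nontrivial quadratic idele class character attached to a real quadratic field of discriminant $k>0$, and the condition $\mathcal A_p(F)\subseteq N_p$ at every $p$. Your odd-prime step is correct: for odd ramified $p$ one has $N_p\cap\Z_p^*=(\Z_p^*)^2$, so Lemma~\ref{p-autonum} forces distinct scales and hence $v_p(d)\geq\binom n2=v_p(k)\binom n2$. The genuine gap is at $p=2$, which is exactly where the factor $4^{[n/2]}$ and the exponent $v_2(k)\in\{2,3\}$ enter. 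You must show that if $\mathcal A_2(F)$ lies in the norm group of a ramified quadratic extension of $\Q_2$, then $v_2(d)+2[n/2]\geq v_2(k)\binom n2$. For $n=3$ this means $v_2(d)\geq 4$ when $4\parallel k$ and $v_2(d)\geq 7$ when $8\parallel k$. Your proposal gives no argument for this and defers to \cite{Conway1993}, which is the source of the very statement being proved.

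Your heuristic also does not point toward a proof. Even if you showed that the $2$-adic scales are distinct, you would get only $v_2(d)\geq 3$ for $n=3$, short of the required $4$ or $7$. Moreover, the mechanism of Lemma~\ref{p-autonum} breaks at $2$ in both directions:
\begin{itemize}
\item A repeated scale does not force $\Sq^*(\Z_2)\subseteq\mathcal A_2$. The odd unimodular binary constituent $x_1^2+x_2^2$ has all its spinor norms in $\{1,2,5,10\}$, which is the norm group of $\Q_2(\sqrt{-1})=\Q_2(\sqrt 7)$.
\item Reflections in vectors mixing different constituents can be integral. In $x_1^2+4x_2^2+8x_3^2$ the reflection in $(2,0,1)$, of norm $12$, is integral and puts $3$ into $\mathcal A_2$.
\end{itemize}
Closing the step requires the complete $2$-adic rules for automorphous numbers (compartments and trains), together with a count of the resulting valuations.

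Your final assertion inherits the gap. After excluding odd $p\mid k$, the only candidate left is $k=8$ (you list $\pm4,\pm8$, but $k>0$ and $4$ is a square), and you dispose of it with the unproven $2$-adic bound. For $n=3$ and squarefree $d$ this is easy to patch directly. Since $v_2(d)\leq 1$, $F$ is $\Z_2$-equivalent to $\langle a,b,c\rangle$, to $\langle a,b\rangle\perp\langle 2c\rangle$, or to $V_i\perp\langle 2c\rangle$. In each case integral reflections give $\mathcal A_2(F)\supseteq\Sq^*(\Z_2)$: use vectors of norms $a,b,c,5a,a+b+c$ in the first case, $a,5a,a+2c$ in the second, and $2u$ inside $V_i$ for every unit $u$ in the third. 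So the character is unramified at $2$ and $k$ is odd. Then any odd $p\mid k$ would need $p^3\mid d$, which is impossible for squarefree $d$.
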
 

\begin{proposition}\label{propA3} \ Suppose $p$ is odd.
\begin{enumerate}[label=\upshape\arabic*.\ ]
\item The equation $ax^2+by^2=c$ is always solvable over $\Zp$ when $a$,$b$ and $c$ are units.
\item \upn{({\cite{jones-bk} Thm.~32, Sec.~4})}:\ \\ Every form $F \in \mathbb{Z}_p[{\bf x}]$ is $\mathbb{Z}_p$-equivalent to a diagonal form over $\mathbb{Z}_p$.
\item \upn{({\cite{Cassels} Lem.~3.4, Sec.~8.3})}:\ \\  Let $h\in \mathbb{Z}_p[{\bf x}]$ be a unit determinant diagonal form. Then, it is equivalent to a form of the type $x_1^2 + \ldots + x_{n-1}^2 + \det(h)x_n^2$. 
\item \upn{({\cite{Cassels} Thm.~3.1, Sec.~8.3})}:\ \\   Let $r$ be a fixed quadratic nonresidue of $p$. Then, every non-singular $F$ over $\mathbb{Q}_p$ is $\mathbb{Z}_p$-equivalent to a form of the type $g = p^{e_1}h_1 \oplus p^{e_2}h_2 \oplus \ldots \oplus p^{e_J}h_J$ with $e_1 <e_2 < \ldots < e_J$, with $\dim(h_j) = m_j$, where $m_1 + \ldots + m_J = \dim(F)$, and where the $h$-forms are of the type  $x_1^2 + \ldots + x_{m-1}^2 + \eta x_m^2$ with $\eta = 1$ or $r$. Moreover, the forms in the direct sum are inequivalent.
\end{enumerate}

\end{proposition}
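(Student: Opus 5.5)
Proposition~\ref{propA3} collects four classical facts about quadratic forms over $\Z_p$ for odd $p$. Parts (2)--(4) are cited directly from \cite{jones-bk} and \cite{Cassels}, so the plan is to prove part (1) in full and sketch the standard argument behind the others, to make clear that only $p$ odd is used.

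For part (1), the first step is a counting argument modulo $p$. As $x$ runs over $\mathbb{F}_p$, the values $ax^2$ take $(p+1)/2$ distinct values, and likewise the values $c-by^2$ take $(p+1)/2$ distinct values. Since $(p+1)/2+(p+1)/2>p$, the two sets meet. This gives $x_0,y_0$ with $ax_0^2+by_0^2\equiv c \Mod{p}$. Because $c$ is a unit, $x_0$ and $y_0$ cannot both vanish mod $p$; say $x_0\not\equiv 0$. The partial derivative $2ax_0$ is then a unit, as $p$ is odd. Hensel's lemma, applied to $g(x)=ax^2+by_0^2-c$, lifts $x_0$ to a root in $\Z_p$.

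For parts (2)--(4), the argument is the usual Jordan splitting. Let $p^{e}$ be the minimal valuation among the matrix entries. If some diagonal entry has valuation $e$, it anisotropically splits off an orthogonal summand $p^e u x^2$. Otherwise some off-diagonal entry $f_{ij}$ has valuation $e$, and then $f_{ii}+2f_{ij}+f_{jj}$ has valuation exactly $e$ (here $2$ is a unit), so a change of variables produces a diagonal entry of minimal valuation. Induction on dimension gives (2).

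For (3), a unit diagonal form is handled by using (1) to represent $1$ by $ax_1^2+bx_2^2$ with unit coefficients. This splits off $x_1^2$ with complement $ab\,x_2^2$ up to squares, and iterating normalises all coefficients but the last to $1$, with the last forced to equal the determinant square class. For (4), group the diagonal terms by valuation and apply (3) to each block, noting that unit square classes are only $\{1,r\}$. Inequivalence of the resulting forms follows from the invariance of the Jordan invariants (dimensions and determinant square classes of the scaled blocks) for $p$ odd.

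The only genuinely delicate step is this uniqueness claim in (4), i.e. that the Jordan decomposition is unique up to the stated invariants. For it I would simply invoke \cite[Thm.~3.1, Sec.~8.3]{Cassels} rather than reprove it.
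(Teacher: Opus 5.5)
Your proposal is correct, and it is essentially the standard argument behind the results the paper cites. The paper itself gives no proof: it cites \cite{jones-bk} and \cite{Cassels} for (2)--(4) and states (1) without argument. Your pieces are exactly those standard proofs: counting plus Hensel for (1), the minimal-valuation splitting for (2) (with $p$ odd ensuring $f_{ii}+2f_{ij}+f_{jj}$ has valuation exactly $e$), and iterating (1) for (3) and (4). Deferring the uniqueness in (4) to \cite[Thm.~3.1, Sec.~8.3]{Cassels} is also what the paper does. One cosmetic point: in (2) the diagonal entry of minimal valuation splits off \emph{orthogonally}, not ``anisotropically''.
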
 

\begin{proposition}\label{binquad-lem}
Let $a,b,c\in\mathbb{Z}$ be odd integers.
\begin{enumerate}[label=\upshape\arabic*.\ ]
\item $x^2+xy+y^2=c$ is solvable over $\mathbb{Z}_2$.
\item At least one of $ax^2+by^2=c$ and $ax^2+by^2=-c$ is solvable over $\mathbb{Z}_2$.
\end{enumerate}
\end{proposition}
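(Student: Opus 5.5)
Part 1 reduces to Hensel's lemma: it suffices to find a solution modulo $8$ with $x$ or $y$ odd, and then lift. For part 2 I would first normalise by a unit square and $a$, reducing to the case $a=1$, and then use the structure of $\Z_2^*/(\Z_2^*)^2$ together with Hilbert symbols.

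\emph{Part 1.} The form $x^2+xy+y^2$ takes only odd values when $(x,y)\not\equiv(0,0)\pmod 2$. Its partial derivative $\partial_x=2x+y$ is odd whenever $y$ is odd. So I will find $x,y$ with $y$ odd and $x^2+xy+y^2\equiv c\pmod 2$; this always holds for odd $c$ (take $x=0$, $y=1$). Then simple Hensel lifting in $x$ gives an exact solution over $\Z_2$. The lifting works because $f(x)=x^2+yx+(y^2-c)$ has $f(x)\equiv 0\pmod 2$ and $f'(x)=2x+y$ is a unit, so the simple-root version of Hensel's lemma applies. Equivalently, $X^2+X+1-c$ with $c$ odd has an odd constant term mod $2$, so one should instead take $y=1$ and solve for $x$. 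Concretely, with $y=1$ the equation is $x^2+x+(1-c)=0$, whose reduction $x^2+x\equiv 0$ has the simple roots $0$ and $1$ mod $2$. Hensel's lemma then lifts either root.

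\emph{Part 2.} Dividing by $a$, which is a unit, I reduce to $x^2+by^2=\pm c'$ with $b,c'$ odd. Over $\Z_2$, an odd $n$ is a square if and only if $n\equiv1\pmod 8$, and the unit square classes are $\{1,3,5,7\}$ mod $8$. It therefore suffices to show that for each odd $b$ the values $x^2+by^2$, with $x$ odd and $y$ even or with $x$ even and $y$ odd, cover at least one of $c'$ and $-c'$ modulo $8$. Hensel's lemma with $x$ (respectively $y$) odd then lifts the congruence, since for a unit $u$, $u\equiv m\pmod 8$ means that $u/m$ is a square.

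The key step is a finite check. The values $x^2+by^2$ modulo $8$ with exactly one of $x,y$ odd are $1+4\epsilon$ and $b+4\epsilon$ for $\epsilon\in\{0,1\}$. Their classes therefore cover $\{1,5\}\cup\{b,b+4\}$ mod $8$. If $b\equiv1\pmod 4$ this set is $\{1,5\}$, and among $c'$ and $-c'$ exactly one is $\equiv1\pmod 4$, so it lies in $\{1,5\}$. If $b\equiv3\pmod4$ the set is all odd classes. In both cases one of $\pm c'$ is represented.

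The only mild obstacle is making sure the Hensel lifts apply. I handle this by always lifting in the odd variable: with $x$ odd, $\partial_x(x^2)=2x$ has valuation exactly $1$, so the stronger form of Hensel's lemma (a congruence mod $8=2^{2\cdot1+1}$ suffices) lifts the solution. The same applies to $by^2$ when $y$ is odd.
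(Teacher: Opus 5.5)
Your proof is correct and follows essentially the paper's route. Part~1 is the same Hensel lift of $x^2+x+(1-c)$ from a simple root mod $2$; your remark about an ``odd constant term'' is a slip, since $1-c$ is even. Part~2 likewise finds a solution mod $8$ with one variable odd and lifts it using a partial derivative of valuation exactly $1$, just as the paper does. The paper instead replaces $c$ by $-c$ so that $a\equiv c\pmod 4$ and takes $(x_0,y_0)=(1,0)$ or $(1,2)$. This shows your case split on $b \bmod 4$ is unnecessary: the classes $\{1,5\}$ coming from $x$ odd, $y$ even already contain one of $\pm c'$ for every $b$.
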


\begin{proof}\ 

1. Letting $y=1$, it suffices to show that $f(x)=x^2+x+c-1=0$ is solvable over $\mathbb{Z}_2$. Then $f(1)\equiv 0\mod2 $ but $f^{(1)}(1)\not\equiv0\mod2$, so the result follows by Hensel's lemma. 

2. Up to replacing $c$ by $-c$, we may assume that $a\equiv c\mod 4$. Then $ax^2+by^2\equiv c\mod 8$ is solvable with $(x_0,y_0)=(1,0)$ or $(1,2)$. Since $f(x,y)=ax^2+by^2-c$ satisfies $|f(x_0,y_0)|_2\leq\frac{1}{8}<\frac{1}{4}=|\partial_x f(x_0,y_0)|_2^2$, the result again follows by Hensel's lemma.
\end{proof}

\vspace{6pt}

\begin{definition}\label{def3.1}\cite{jones-bk}\ \\
\indent For each prime $p$, let $c_p(F)$ denote the Hasse invariant of $F$, where 
\[
c_p(F)= (-1,-d)_p(D_1,-D_2)_p(D_2,-d)_p ,
\]
with $D_1=f_{11}$, $D_2= f_{11}f_{22}-f_{12}^2$, for $F$ with matrix entries $f_{ij}$. Here $(\ ,\ )_p$ is the Hilbert symbol. Note that this differs from Cassels' definition by the factor $(-1,-d)_p$.

\end{definition}

\begin{proposition}\label{isotropics}\upn{(\cite{jones-bk}, {Thm.~14b})}\ \\
\indent $F$ is an isotropic form over $\Zp$ if and only if $c_p(F)=1$ for all primes $p$.
\end{proposition}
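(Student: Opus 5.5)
The plan is to reduce the statement to the Hasse--Minkowski theorem and then to a local computation with Hilbert symbols. First, a primitive integral form $F$ represents $0$ nontrivially over $\Z$ if and only if it does so over $\Q$, since a rational zero can be scaled to an integral one. By Hasse--Minkowski, $F$ is isotropic over $\Q$ if and only if it is isotropic over $\Q_p$ for every $p\leq\infty$. A $p$-adic zero can likewise be scaled into $\Z_p^3$, so ``isotropic over $\Z_p$'' and ``over $\Q_p$'' coincide. It therefore suffices to prove the local statement: for each $p$, the ternary form $F$ is $\Q_p$-isotropic if and only if $c_p(F)=1$. We must also treat the archimedean place separately.

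\emph{Step 1: diagonal formula for $c_p$.} Diagonalize $F$ over $\Q$ as $\langle a_1,a_2,a_3\rangle$ using the leading principal minors. If $D_1,D_2\neq0$, then the diagonal entries are $a_1=D_1$, $a_2=D_2/D_1$, $a_3=d/D_2$. If a minor vanishes, first apply a rational change of variables making the minors nonzero. This is legitimate because $c_p$ is a $\Q_p$-equivalence invariant; I take that from \cite{jones-bk}, where it is the content of Definition~\ref{def3.1}. Using bilinearity and $(x,-x)_p=1$, we get
\[(D_1,-D_2)_p=(a_1,-a_1a_2)_p=(a_1,a_2)_p,\qquad (D_2,-d)_p=(a_1a_2,-a_1a_2a_3)_p=(a_1a_2,a_3)_p,\]
so that
\[c_p(F)=(-1,-d)_p\prod_{i<j}(a_i,a_j)_p .\]

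\emph{Step 2: local isotropy criterion.} The form $\langle a_1,a_2,a_3\rangle$ is isotropic over $\Q_p$ if and only if $\langle a_1,a_2\rangle$ represents $-a_3$. After rescaling by $-a_3$, this holds if and only if $(-a_1a_3,-a_2a_3)_p=1$. I expand this symbol by bilinearity, writing $(a_3,a_3)_p=(a_3,-1)_p$ and collecting the $(-1,\cdot)_p$ factors into $(-1,-a_1a_2a_3)_p=(-1,-d)_p$. The result should be
\[(-a_1a_3,-a_2a_3)_p=(-1,-d)_p\prod_{i<j}(a_i,a_j)_p=c_p(F).\]
This bookkeeping of the $(-1,-1)_p$ and $(-1,a_i)_p$ factors is the step I expect to need the most care. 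It is exactly where Jones' extra factor $(-1,-d)_p$, relative to Cassels' normalization, is needed to make the criterion read ``$c_p=1$'' uniformly in $p$. Combining Steps~1 and~2 gives the local equivalence for every finite $p$.

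\emph{Step 3: the archimedean place.} The identity in Step~2 holds verbatim at $p=\infty$, and the forms in $\mathfrak F$ are indefinite by hypothesis, so $c_\infty(F)=1$ automatically. Alternatively, by the product formula $\prod_{p\leq\infty}c_p(F)=1$, the conditions at the finite primes already force $c_\infty=1$, hence real isotropy. Either way, the condition ``$c_p(F)=1$ for all primes $p$'' is equivalent to local isotropy everywhere, and Hasse--Minkowski concludes the proof.
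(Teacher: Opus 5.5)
Your proof is correct. The paper gives no argument of its own for this proposition; it simply imports Jones, Thm.~14b. So what you have written is a self-contained derivation of the cited fact, not a different route through the paper's machinery.

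Your steps are the standard ones. You diagonalize via leading minors and rewrite Jones' symbol as $c_p(F)=(-1,-d)_p\prod_{i<j}(a_i,a_j)_p$. You identify this with $(-a_1a_3,-a_2a_3)_p$; the bilinearity bookkeeping does come out exactly right. Then Hasse--Minkowski finishes the global statement. Your write-up makes two things explicit that the bare citation hides:
\begin{itemize}
\item Why Jones' extra factor $(-1,-d)_p$ makes the ternary criterion read $c_p=1$ uniformly for all $p\le\infty$.
\item Why, by Hilbert reciprocity, $c_\infty=1$ is forced once all finite $c_p=1$. So the statement is unambiguous whether or not ``all primes'' is meant to include the real place. (The paper treats $-1$ as a prime in Section~4 but not elsewhere.)
\end{itemize}

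One small correction. The $\Q_p$-invariance of $c_p$ that you use when a leading minor vanishes is a genuine theorem: it is the independence of $\prod_{i<j}(a_i,a_j)_p$ from the chosen diagonalization. It is not the content of Definition~\ref{def3.1}, which as written is not even meaningful when $D_1D_2=0$, because a Hilbert symbol with a zero entry is undefined. In the nondegenerate case you do not need invariance at all. Your diagonalization is an explicit $\Q$-equivalence, and isotropy is trivially preserved under it.
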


\begin{proposition}\label{jonesT29}\upn{(\cite{jones-bk}, {Thm.~29})} \ \\
\indent Let $d\geq 1$ and a set of values for $c_p(F) = \pm1$, for $p\leq \infty$. Then necessary and sufficient conditions for the existence of an integral form $F$ of determinant $d$ with specified Hasse invariants and index $i$ are:
\begin{enumerate}[label=\upshape\arabic*.\ ]
\item $c_p(F)=1$ for finite $p\nmid 2d$,
\item $\prod_{p\leq \infty} c_p(F) = 1$, and
\item $3-i  \equiv c_{\infty}(F)+ 1 \Mod{4}$.
\end{enumerate}
\end{proposition}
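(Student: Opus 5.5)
The plan is to treat necessity and sufficiency separately. For sufficiency I will first build a rational quadratic space with the prescribed local invariants. I will then find inside it a $\Z$-lattice of determinant exactly $d$, assembled from local lattices by the local–global principle for lattices.

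\emph{Necessity.}
\begin{enumerate}[label=\upshape\arabic*.\ ]
\item For finite $p\nmid 2d$, Prop.~\ref{propA3}(3) makes $F$ $\Z_p$-equivalent to $x_1^2+x_2^2+dx_3^2$. Every Hilbert symbol in Def.~\ref{def3.1} then involves only $p$-adic units with $p$ odd, so $c_p(F)=1$. This is condition 1.
\item Condition 2 follows from Hilbert reciprocity $\prod_{p\le\infty}(a,b)_p=1$, applied to each of the three factors in $c_p(F)$. The factors are built from the same rational numbers $-1,-d,D_1,D_2$ at every place.
\item Condition 3 comes from evaluating $c_\infty$ on a real diagonalization. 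With $d>0$ the signature is $(3,0)$ or $(1,2)$. Since $(a,b)_\infty=-1$ exactly when $a,b<0$, checking the two cases against the index $i$ gives $3-i\equiv c_\infty+1 \pmod 4$.
\end{enumerate}

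\emph{Sufficiency, rational step.} Given data satisfying conditions 1--3, I will invoke the classical existence theorem for rational quadratic forms (Hasse–Minkowski classification together with the existence half). For $n=3$ it produces a $\Q$-form $\Phi$ with determinant $d$ in $\Q^*/(\Q^*)^2$, the prescribed $c_p$ at every $p\le\infty$, and signature determined by $c_\infty$ and $i$. It applies because only finitely many $c_p$ equal $-1$ (condition 1), the product formula holds (condition 2), and the archimedean signature is compatible (condition 3). In dimension three every choice of $c_p$ is realized by a local space with a given determinant.

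\emph{Sufficiency, local lattices.} For each finite $p$ I will choose a $\Z_p$-lattice $L_p\subset \Q_p^3$ on which $\Phi$ has determinant $d$ up to $(\Z_p^*)^2$, and which is the standard lattice for almost all $p$.
\begin{enumerate}[label=\upshape\arabic*.\ ]
\item For $p\nmid 2d$, take $L_p$ unimodular with Gram form $\langle 1,1,d\rangle$. Its Hasse invariant is $1$, which matches the prescribed value by condition 1, so it lies in $\Phi\otimes\Q_p$ by uniqueness of local spaces with given determinant and Hasse invariant.
\item For odd $p\mid d$, I will use the explicit local classes of Sec.~\ref{genera}, Types I--III, and check that for $v=v_p(d)$ both values $c_p=\pm1$ occur among classes of determinant $d$.
  \begin{enumerate}[label=\upshape\alph*.]
  \item For odd $v$, Type II already gives both values.
  \item For $2\parallel v$, Type I gives both values.
  \item For $4\mid v$, Types I and II give only $c_p=1$, so I will use Type III with $(t_2,t_3)$ chosen of mixed parity. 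By the displayed formula for $c_p$, varying $a,b$ then flips the sign.
  \item The boundary case $v=4$ with $(t_2,t_3)=(1,3)$ must be checked by hand.
  \end{enumerate}
\item For $p=2$, I will run the analogous check using the $\Z_2$ classification in Appendix~\ref{local-appendix} and the oddity and sign-walking bookkeeping.
\end{enumerate}

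\emph{Sufficiency, gluing.} Given the family $(L_p)$, there is a unique lattice $L\subset\Q^3$ with $L\otimes\Z_p=L_p$ for all $p$. Its Gram matrix is integral with determinant in $d\cdot(\Q^*)^2\cap\prod_p d(\Z_p^*)^2$, that is, $\pm d$. The sign is fixed by the signature, so the determinant is $d$, and the resulting integral form $F$ has the prescribed invariants and index.

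\emph{Main obstacle.} The hard step is the local existence at primes dividing $2d$: showing that every prescribed $c_p$ is attained by a lattice of exact determinant $d$. This is most delicate at $p=2$, and in the odd case when $v_p(d)\equiv0\pmod 4$ is small. I would first try to settle it by direct enumeration from the tables, falling back on Jones's original computation if the tables leave a gap.
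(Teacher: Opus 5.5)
Your argument is correct in outline, but it does not follow the paper, because the paper proves nothing here. It quotes Jones's Theorem~29. Whenever it actually builds integral forms (proofs of Prop.~\ref{Mar-sf} and Thm.~\ref{avg-genera}), it pairs that result with Jones's Theorem~46 (Prop.~\ref{jonesT46}) to get the prescribed local classes.

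You split the content differently:
\begin{itemize}
\item the existence theorem for rational ternary spaces with prescribed $(d,c_v)$ and signature, which is where conditions 1--3 enter;
\item the local--global correspondence for lattices, which is in effect a proof of the special case of Prop.~\ref{jonesT46} you need.
\end{itemize}
This is self-contained from standard facts: Hilbert reciprocity, classification of $\Q_p$-spaces by determinant and Hasse invariant, and gluing of lattices. It also explains why no further condition appears at primes dividing $2d$. The price is the local existence check, which the citation hides.

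Two steps need fixing.
\begin{itemize}
\item For odd $p$ with $4\mid v=v_p(d)$, no $(t_2,t_3)$ of mixed parity exists, since $t_2+t_3=v$ is even. What you want is $t_2$, hence $t_3$, odd, e.g.\ $(t_2,t_3)=(1,v-1)$. There the Type~III formula reduces to $c_p=\left(\frac{-ad_0}{p}\right)$, which takes both values as $a$ varies. The pair $(1,3)$ is just the case $v=4$, not a separate boundary case.
\item Carry out the archimedean check rather than asserting it, because it depends on the convention for $i$. For $d>0$, the paper's $c_\infty$ is $-1$ on a positive definite form (only $(-1,-d)_\infty=-1$) and $+1$ on signature $(1,2)$. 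So $3-i\equiv c_\infty+1\pmod 4$ holds exactly when $i$ counts positive squares ($i=3$, resp.\ $i=1$). This matches the paper's pairing $c_\infty=1$, $i=1$ for indefinite forms. If $i$ counted negative squares, the congruence would fail.
\end{itemize}
Also, the formula for $c_p$ presupposes $D_1D_2\neq 0$. You may assume this after a rational change of variables, since $c_p$ is an invariant.

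The step you call the main obstacle needs no tables and no fallback on Jones.
\begin{itemize}
\item The integral form $\langle 1,-1,-d\rangle$ has determinant exactly $d$ and is isotropic. It therefore serves as $L_p$ at every $p$ with $c_p=1$ simultaneously.
\item For $c_p=-1$, it suffices to have, in each unit square class $d_0$, one anisotropic integral lattice whose determinant has valuation at most $v_p(d)$ and of the same parity. The sublattice $\Z_pe_1+\Z_pe_2+p^j\Z_pe_3$ then multiplies the determinant by $p^{2j}$ without changing its class.
\item For odd $p$, with $\epsilon$ a nonresidue, take $\langle 1,-\epsilon,-pd_0/\epsilon\rangle$ (valuation $1$) and $\langle -d_0/\epsilon,\,p,\,-p\epsilon\rangle$ (valuation $2$).
\item For $p=2$, take $\langle1,1,1\rangle$, $\langle1,1,5\rangle$, $\langle3,3,3\rangle$, $\langle3,3,7\rangle$ in valuation $0$. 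In valuation $1$, take $\langle1,1,2c\rangle$ and $\langle1,3,2c\rangle$ with $c\equiv1\pmod 4$. These cover all four unit classes.
\end{itemize}
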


\begin{proposition}\label{jonesT46}\upn{(\cite{jones-bk}, {Thm.~46})} \ \\
\indent Given a set of field invariants satisfying Prop.~\ref{jonesT29},and a set of ring invariants satisfying the conditions given by the forms $F_p$ in Section~\ref{genera}, and consistent with the field invariants, there is a form of determinant $d$ with integral coefficients having all the given invariants. Consistency here means:
\begin{enumerate}[label=\upshape\arabic*.\ ]
\item the sum of the dimensions in the Jordan decomposition is 3,
\item the sum powers of $p$, with multiplity the dimensions,  match $v_p(d)$,
\item the product of the determinants of the forms in the Jordan decomposition is $d_0u^2$ for some unit in $\Zp$, with $d=v_p(d)d_0$, and
\item the improperly primitive forms $V_i$ over $\mathbb{Z}_2$ satisfy $\det(V_i) \equiv -1 \mod{4}$.
\end{enumerate}
All these consistency conditions are satisfied by the forms given in Section~\ref{genera}.
\end{proposition}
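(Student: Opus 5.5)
The first part of the statement is Theorem~46 of \cite{jones-bk}, and I would take it as given: Jones shows that a system of rational (field) invariants satisfying Prop.~\ref{jonesT29}, together with local Jordan data at each $p\mid 2d$ compatible with them, is realized by an integral form of determinant $d$. What needs an argument is the last sentence: the $\Z_p$-forms listed in Section~\ref{genera} (and in Appendix~\ref{local-appendix} for $p=2$) satisfy the consistency conditions (1)--(4). I would check these one family at a time.

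For odd $p$ I would use the three types of Section~\ref{genera}.
\begin{itemize}
\item Type~I is $ax_1^2+p^{v/2}(x_2^2+bx_3^2)$.
\item Type~II is $(x_1^2+ax_2^2)+p^{v}bx_3^2$.
\item Type~III is $ax_1^2+p^{t_2}bx_2^2+p^{t_3}cx_3^2$ with $t_2+t_3=v$.
\end{itemize}
In each case the Jordan constituents have dimensions summing to $3$, which is condition (1). The exponents weighted by dimension sum to $v_p(d)$, which is condition (2). The product of the unit determinants was imposed to be $d_0u^2$ (via $ab=d_0u^2$, $ab=u^2$, resp.\ $abc=d_0u^2$), which is condition (3). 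Condition (4) is vacuous at odd $p$. For $p\nmid 2d$ the single class $\langle 1,1,d\rangle$ trivially satisfies (1)--(3).

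For $p=2$ I would go through the three normal shapes.
\begin{itemize}
\item For $ax_1^2+2^ubx_2^2+2^{u+v}cx_3^2$, conditions (1)--(3) follow exactly as for odd $p$: the $2$-power count is $2u+v=v_2(d)$, and the unit part $abc$ is fixed to lie in the square class of $d/2^{v_2(d)}$.
\item For $V_i(x_1,x_2)+2^vcx_3^2$, condition (1) holds since $\dim V_i=2$. For (2) and (3) one uses that $\det V_1=-1$ and $\det V_2=3$ in the Gaussian normalization, so the determinant is $2^v\cdot(\text{unit})$ with the unit adjusted by $c$. Condition (4) is precisely $\det V_i\equiv -1\pmod 4$, which holds for both $-1$ and $3$.
\item The shape $ax_1^2+2^uV_i(x_2,x_3)$ is handled in the same way as the previous one.
\end{itemize}

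The main obstacle is bookkeeping at $p=2$. The appendix tables list many classes, and one must confirm that the representatives chosen there use the normalization of $V_1,V_2$ under which Jones' condition (4) is stated. If Jones' convention differs by a factor of $2$ in the off-diagonal entries, I would first translate the forms $2xy$ and $2(x^2+xy+y^2)$ into his convention, and then recheck the determinant congruence modulo $4$.
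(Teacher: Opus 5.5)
Your approach is the paper's own treatment. Prop.~\ref{jonesT46} is quoted from Jones without proof, and the closing sentence is asserted without argument, so the content is exactly the routine check of (1)--(4) that you make explicit. One step of that check is wrong as written, however, and one worry is unnecessary.

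\textbf{The Type~II step.} The determinant of $(x_1^2+ax_2^2)+p^{v}bx_3^2$ is $p^{v}ab$, so condition (3) requires $ab\in d_0(\Z_p^*)^2$. The constraint ``$ab=u^2$'' that you quote from Section~\ref{genera} therefore gives (3) only when $d_0$ is a square modulo $p$. That constraint is a misprint for $ab=d_0u^2$. With the corrected constraint there are still exactly two inequivalent classes, one for each square class of $a$. The Hasse invariant $\left(\frac{-a}{p}\right)^{v_p(d)}$ stated there is also unaffected, since it does not involve $b$. You should compute the determinant of each listed shape directly rather than reading the constraint off the text.

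\textbf{The normalization worry.} Your concern about Jones' convention for $V_1,V_2$ is moot. Prop.~\ref{propB1} fixes $\mathcal V_1,\mathcal V_2$ as explicit matrices, with determinants $-1$ and $3$. More to the point, any improperly primitive unimodular binary $\Z_2$-block has even diagonal entries $2\alpha,2\gamma$ and an odd off-diagonal entry $\beta$. Its determinant $4\alpha\gamma-\beta^2$ is therefore automatically $\equiv-1\pmod 4$, so condition (4) imposes no restriction on the forms at all.
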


\begin{remark}
These last two propositions have their analogues in \cite{Conway1993} in Thms. 9 and 11.
\end{remark}


\section{Primitive forms and classes for p=2.}\label{2classes}

\subsection{Forms and matrices over \texorpdfstring{$\mathbb{Z}_2$}{{\bf Z}2}.}\label{Z2}\ 

For $p=2$, a form and its matrix are said to be
{\it improperly primitive} if the matrix has some unit element but has no unit element
on its diagonal. If a matrix or form has a unit diagonal element it is called
{\it properly primitive}.

\begin{proposition}\label{propB1}\upn{({\cite{Jones1944}, Thm.~1})}\ \\ 
\indent Every symmetric matrix  $\mathcal{U}$  over $\mathbb{Z}_2$ is $\mathbb{Z}_2$-equivalent   to a matrix given in block form
\[ \mathcal{U} \cong  \{2^{t_1}\mathcal{U}_1, \ldots, 2^{t_m}\mathcal{U}_m\}, \]
where for every $j$, $t_j<t_{j+1}$ and $\det(\mathcal{U}_j)$ is a unit. Furthermore, for ternary forms, each $\mathcal{U}_j$ is a diagonal matrix giving forms  in Prop.~\ref{propB2}, or is of the form  $\mathcal{V}_1 =  \left[\begin{smallmatrix} 0&1\\ 1&0 \end{smallmatrix}\right]$ or   $\mathcal{V}_2 =  \left[\begin{smallmatrix} 2&1\\ 1&2 \end{smallmatrix}\right]$. The latter give rise to forms we denote by $V_j(x,y)$.
\end{proposition}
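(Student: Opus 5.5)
This is Jones' 2-adic Jordan decomposition, and I would prove it by induction on the dimension $n$, splitting off one unimodular block at a time. The split-off step is the same lemma at each stage.

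Let $\mathcal U$ be a nonzero symmetric matrix over $\mathbb{Z}_2$. Let $2^{t}$ be the minimal valuation among its entries, and write $\mathcal U=2^{t}\mathcal U'$ with $\mathcal U'$ primitive.

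\emph{Properly primitive case.} Suppose some diagonal entry $u_{ii}$ of $\mathcal U'$ is a unit. After a permutation we may take $u_{11}\in\mathbb{Z}_2^*$. The elementary column operations $x_j\mapsto x_j-u_{11}^{-1}u_{1j}x_1$, together with the matching row operations, clear the first row and column. This splits off the $1\times1$ block $2^{t}u_{11}$, and we induct on the complement.

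\emph{Improperly primitive case.} Suppose every diagonal entry is even but some off-diagonal entry $u_{ij}$ is a unit. Then the $2\times2$ block
\[
B=\begin{psmatrix} u_{ii} & u_{ij}\\ u_{ij} & u_{jj}\end{psmatrix}
\]
has determinant $u_{ii}u_{jj}-u_{ij}^2\equiv -1 \pmod 2$, so $B$ is invertible over $\mathbb{Z}_2$. The standard projection $x\mapsto x-B^{-1}(\cdots)$ then splits $B$ off orthogonally.

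The split-off block must also be normalized. Write $B=\begin{psmatrix} 2a & b\\ b & 2c\end{psmatrix}$ with $b$ a unit. First rescale by $\mathrm{diag}(b^{-1},1)$ to reduce to $b=1$. Then $B$ is equivalent to $\mathcal V_1$ or $\mathcal V_2$ according as $-\det B=1-4ac$ is $\equiv 1$ or $\equiv 5 \pmod 8$. This follows by solving $2ax^2+2xy+2cy^2=0$ in the first case and $=2$ in the second, by Hensel's lemma. The binary form $x^2+xy+y^2$ represents every odd value (Prop.~\ref{binquad-lem}(1)), and $ax^2+xy+cy^2$ is isotropic exactly when its discriminant $1-4ac$ is a square.

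After all blocks are split off, group blocks with the same power $2^{t}$. This gives $\{2^{t_1}\mathcal U_1,\dots,2^{t_m}\mathcal U_m\}$ with $t_j<t_{j+1}$ and each $\det\mathcal U_j$ a unit. Ordering the blocks by valuation at the end is harmless, since the pieces are orthogonal.

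For the ternary refinement, each $\mathcal U_j$ has size at most $3$. If a block contains a $\mathcal V_i$ alongside a diagonal unit, it may be rediagonalized using the relation $\langle u\rangle\oplus\mathcal V_i\cong\langle u',u'',u'''\rangle$, which is valid for unimodular blocks of mixed type. So the only non-diagonal pieces are blocks that are genuinely $\mathcal V_1$ or $\mathcal V_2$, and the diagonal ones are those listed in Prop.~\ref{propB2}.

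The main obstacle is the normalization of the improper $2\times2$ block, namely the mod $8$ Hensel argument. The rediagonalization identity also needs checking, by computing an explicit change of basis.
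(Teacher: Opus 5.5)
Your proposal is correct. The paper does not prove Proposition~\ref{propB1}; it quotes it from Jones (1944, Thm.~1). What you have written is therefore a self-contained replacement for that citation rather than a variant of an argument in the text. It runs as follows:
\begin{itemize}
\item the standard Jordan splitting, peeling off either a unit diagonal entry or a unimodular $2\times2$ block with even diagonal;
\item normalization of the even binary blocks to $\mathcal V_1$ or $\mathcal V_2$, using the square class of the determinant modulo $8$ and Hensel's lemma;
\item grouping of the pieces by valuation;
\item rediagonalization of $\langle u\rangle\perp\mathcal V_i$ in rank $3$.
\end{itemize}
The citation buys brevity. Your route explains why only these shapes occur in the ternary case: an even unimodular $\mathbb{Z}_2$-block has even rank, so a $3\times3$ unimodular block is odd and hence diagonalizable. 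It also explains why $\mathcal V_1\not\cong\mathcal V_2$. As in the paper, $\mathcal U$ must be taken nonsingular, otherwise a zero block survives your induction.

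The two steps you left to check are routine. Write $\beta(v,w)=v^{t}\mathcal U w$ and $Q(v)=\beta(v,v)$ on the block.

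\emph{Normalization.} Suppose $e$ is primitive with $Q(e)=0$ (resp.\ $2$). Unimodularity gives $f$ with $\beta(e,f)=1$, and $\{e,f\}$ is a basis because its Gram determinant is a unit.
\begin{itemize}
\item In the first case, $f-\tfrac12 Q(f)e$ completes a $\mathcal V_1$-basis.
\item In the second case, $Q(f)=2k$ with $k$ odd, since the Gram determinant $4k-1$ must be $\equiv 3\pmod 8$. Put $f'=(2\lambda+1)^{-1}(f+\lambda e)$. Then $\beta(e,f')=1$, and $Q(f')=2$ as soon as $3\lambda^2+3\lambda+1-k=0$. Hensel's lemma solves this starting from $\lambda\equiv 0\pmod 2$.
\end{itemize}

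\emph{Rediagonalization.} Let $e_0$ span $\langle u\rangle$ and let $e_1,e_2$ be the standard basis of $\mathcal V_i$. The vector $f_1=e_0+e_1$ has unit norm, so it splits off. Its orthogonal complement contains $e_0-ue_2$, of odd norm $u$ (resp.\ $u+2u^2$). Hence the complement is odd unimodular of rank $2$, and it diagonalizes by your first splitting step.
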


\begin{proposition}\label{propB2}\upn{({\cite{Jones1944}, Lem.~3})}\ \\  
\indent Let $\mathcal{T} = \{1, 3, 5, 7\}$. A diagonal properly primitive form over $\mathbb{Z}$ in $r\leq 3$ variables and of odd determinant $d$ is $\mathbb{Z}_2$-equivalent to a unique form of the type $\sum_{i=1}^{r} a_ix_i^2$ where
\begin{enumerate}[label=\upshape\arabic*.\ ]
\item if $r=1$, $a_1 \equiv d \Mod{8}$ with $a_1 \in \mathcal{T}$;
\item if $r=2$, then $(a_1,a_2)$ takes the following set of values
 \begin{flalign*} 
   (1,1)\ \text{or}\ (3,3)\quad  & \text{if}\  d \equiv 1 \Mod{8}, \\
  (1,5)\ \text{or}\ (3,7)\quad  & \text{if}\  d \equiv 5 \Mod{8}, \\
  (1,3)\quad \quad   & \text{if}\  d \equiv 3 \Mod{8}, \\
  (1,7)\quad \quad   & \text{if}\  d \equiv 7 \Mod{8};
\end{flalign*}
\item if $r=3$, then $(a_1,a_2,a_3)$ takes the following set of values
\begin{flalign*} 
   (1,1,1)\ \text{or}\ (1,3,3)\quad  & \text{if}\  d \equiv 1 \Mod{8}, \\
  (1,1,5)\ \text{or}\ (1,3,7)\quad  & \text{if}\  d \equiv 5 \Mod{8}, \\
  (1,1,3)\ \text{or}\ (3,3,3)\quad    & \text{if}\  d \equiv 3 \Mod{8}, \\
  (1,1,7)\ \text{or}\ (3,3,7)\quad  & \text{if}\  d \equiv 7 \Mod{8}.
\end{flalign*}
\end{enumerate}
\end{proposition}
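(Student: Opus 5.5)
The plan is to rely on the classical theory of Jordan decompositions over $\Z_2$, in the form developed by Jones and Watson, and not to reprove it from scratch. The statement has two parts, existence and uniqueness, and I would treat them separately.

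\textbf{Existence.} Let the form be $f=\sum a_ix_i^2$, diagonal and properly primitive, with $d=a_1\cdots a_r$ odd, so every $a_i$ is odd. Each $a_i$ may be changed by a unit square, since $u^2\equiv 1 \Mod{8}$ for odd $u$ and Hensel's lemma applies. Hence only the classes $a_i \bmod 8$ in $\mathcal T$ matter.

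1. The case $r=1$ is immediate: $a_1\equiv d \Mod 8$.

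2. For $r=2$ there are 16 ordered pairs, and I would reduce them with the following moves.
\begin{enumerate}[label=\upshape(\roman*)]
\item Permute the variables.
\item Apply the substitution $(x,y)\mapsto(x+2y,\,x-y)$ type changes. More concretely, use the fact that $a x^2+by^2$ represents $a+b$ or $a+4b$ (both odd), and complete to a diagonal basis.
\end{enumerate}
The cleanest tool is the standard lemma that $ax^2+by^2\cong a'x^2+b'y^2$ whenever $a'$ is represented by the binary form, $a'$ is odd, and $a'b'\equiv ab$ modulo squares. By Proposition~\ref{binquad-lem}, this lets me move any pair to the listed representatives. The entries with $d\equiv 3,7 \Mod 8$ collapse to a single class. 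The entries with $d\equiv 1,5 \Mod 8$ split into two classes, separated by whether $1$ is represented. For example, $3x^2+3y^2$ does not represent $1$ mod $8$, since $3(x^2+y^2)\equiv 1$ would need $x^2+y^2\equiv 3$, which is impossible.

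3. For $r=3$, I would first split off a variable. Any odd ternary form represents $1$ or $3$ times a unit square, so it can be written as $a_1x_1^2\oplus g$ with $g$ binary. The binary case then normalizes $g$. A second application of the representation lemma across the first and second coordinates reduces the list to the stated pairs for each $d \bmod 8$.

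\textbf{Uniqueness.} This is the main obstacle: showing that the two listed forms for each residue class of $d$ are inequivalent.

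For $r=2$ the argument is elementary, as above: use the residues represented mod $8$.

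For $r=3$, residues mod $8$ are not enough. For instance, $\langle 1,1,1\rangle$ and $\langle 1,3,3\rangle$ both represent all odd classes except possibly $7$. I would separate them by the Hasse invariant of Definition~\ref{def3.1}, $c_2$, computed from Hilbert symbols. This matches the earlier computation in the proof of Proposition~\ref{Mar-sf}, where $c_2(G_{1,j})=-c_2(G_{2,j})$. Since $c_2$ is a $\Q_2$-invariant, differing values rule out $\Z_2$-equivalence.

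Completeness then follows by counting. Odd-determinant unimodular ternary $\Z_2$-lattices are classified by $d \bmod 8$ and $c_2$, and both values of $c_2$ are realized. So there are exactly two classes per residue, consistent with Conway--Sloane oddity and sign-walking rules~\cite{Conway1993}.

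In the written proof, I would cite \cite{Jones1944} and \cite{Conway1993} for the normalization moves. I would give explicitly only the Hilbert symbol computations showing $c_2(\langle 1,1,1\rangle)=-1$, $c_2(\langle 1,3,3\rangle)=+1$, and the analogous values for the other three residues.
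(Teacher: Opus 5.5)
The paper does not prove this statement. It simply quotes Jones~\cite{Jones1944}, and the only related computation in the paper is the list of $c_2$-values in the proof of Proposition~\ref{Mar-sf}. Your proposal is therefore a genuinely self-contained alternative, and its core is sound.

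\textbf{Existence.} Splitting off a vector $v$ with $f(v)$ a unit is legitimate over $\Z_2$, and it normalizes every odd binary pair. At most two such binary normalizations, plus a permutation, bring every odd diagonal ternary to one of the listed triples.

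\textbf{Uniqueness.} The determinant modulo $(\Z_2^*)^2$ together with the invariant $\prod_{i<j}(a_i,a_j)_2$ separates the listed forms. For ternary forms this invariant differs from the paper's $c_2$ of Definition~\ref{def3.1} only by the factor $(-1,-d)_2$, which is constant once $d$ is fixed. For units, $(u,v)_2=-1$ exactly when $u\equiv v\equiv 3\pmod 4$, so each listed pair, binary or ternary, takes opposite values.

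Compared with the bare citation, your route buys an actual verification of the list. It also shows that the two classes for each $d$ are already $\Q_2$-inequivalent (one isotropic, one anisotropic), which is the information the paper extracts from them in Proposition~\ref{Mar-sf}.

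A few details need repair.
\begin{enumerate}
\item For odd $a,b$ the value $a+b$ is even; only $a+4b$ is odd.
\item Proposition~\ref{binquad-lem} does not supply the representations you need. For $\langle 5,7\rangle$ it guarantees only one of $1,7$ and one of $3,5$, and splitting off $7$ or $5$ just returns $\langle 5,7\rangle$. Instead, compute directly: $5+7\cdot 4\equiv 1$, $3\cdot 4+5\equiv 1$, $5+5\cdot 4\equiv 1$ and $7+7\cdot 4\equiv 3\pmod 8$. Then use that a unit $\equiv 1\pmod 8$ is a square in $\Z_2$. This gives $\langle5,7\rangle\cong\langle1,3\rangle$, $\langle3,5\rangle\cong\langle1,7\rangle$, $\langle5,5\rangle\cong\langle1,1\rangle$ and $\langle7,7\rangle\cong\langle3,3\rangle$, which is all the binary input either case requires.
\item Your preliminary claim that every odd ternary represents $1$ or $3$ is true but unnecessary, since the form is already diagonal.
\item Your remark that represented residues mod $8$ cannot separate $\langle1,1,1\rangle$ from $\langle1,3,3\rangle$ is false. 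The form $x^2+3y^2+3z^2$ takes the value $7$ at $(2,1,0)$, whereas a sum of three squares is never $\equiv 7\pmod 8$. This is harmless, since you rely on $c_2$ anyway.
\item The closing ``completeness by counting'' is redundant after the reduction step. It would be circular if used in place of that step, because the classification of odd unimodular ternary $\Z_2$-lattices by $d$ and $c_2$ is exactly what the proposition asserts.
\end{enumerate}
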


\subsection{Mass computations}\label{local-appendix}\ 

		Given a nonsingular primitive ternary quadratic form $f$ with $\Z_2$-coefficients, there exist unique $u,v\geq0$ such that $f$ can be written up to equivalence as
 \[f(x_1,x_2,x_3)=\begin{cases}ax_1^2+2^ubx_2^2+2^{u+v}cx_3^3 ,\quad\text{or}\\
		 V_i(x_1,x_2)+2^vcx_3^2 ,\quad\text{or}\\
		 ax_1^2+2^uV_i(x_2,x_3),
		\end{cases}\]
		for some $a,b,c\in\Z_2$ with $(abc,2)=1$ and $i\in\{1,2\}$.
		 We note however that the classes $a,b,c\in\Sq^*(\Z_2)$ are not $\GL(3,\Z_2)$-invariants of $f$ in general \cite[Sect. 7]{Conway1993}, due to the possibility of oddity fusion and sign-walking, (see \cite{AGM} for further details, where some minor inaccuracies in \cite{Conway1993} concerning canonical symbols are also corrected).
        In the tables below, we give a complete classification of the $\mathbb{Z}_2$-inequivalent classes of determinant $d=2^k\in\Z_2/(\Z_2^*)^2$, along with their normalized $2$-masses which are read off from \cite{CSMass}. The classes are listed by their $2$-adic symbols; in particular, $1_{I\!I}$ denotes the components $V_i$ mentioned above. The column labeled ``$\iso$'' indicates whether or not the listed classes are isotropic (Y) or anisotropic (N). We write $q=2^{u}$ and $r=2^{u+v}$.

\begin{table}[H]
\centering
    \renewcommand{\arraystretch}{2}%
    \scriptsize{\begin{tabular}{ |c|c|c|c||l|c| }
				\hline
				$u$ & $v$ & $\frac{64m_2}{2^{u+v}}$ & $\iso$& Classes & Total $\#$ \\  \hhline{|=|=|=|=||=|=|}
				$0$ & $0$ & $16$ & Y & $1_{-1}^3$  & $1$\\\hline
				$0$ & $0$ & $\frac{16}{3}$ & N & $1_3^3$ & $1$\\\thickhline
                $0$ & $1$ & $8$ & Y & ${[1^22^1]}_{-1}$ & $1$\\\hline
                $0$ & $1$ & $8$ & N & ${[1^22^1]}_{3}$ & $1$\\\hline
                $0$ & $1$ & $\frac{16}{3}$ & Y & $1_{I\!I}^{2}2_{-1}^1$ & $1$\\\thickhline
                $0$ & $2$ & $8$ & Y & $1_0^24_{-1}^1$ & $1$\\\hline
                $0$ & $2$ & $4$ & Y & $1_{-2}^{2}4_{1}^1,1_{I\!I}^24_{-1}^1$ & $2$\\\hline
                $0$ & $2$ & $4$ & N & $1_2^24_1^1$ & $1$\\\hline
                $0$ & $2$ & $\frac{4}{3}$ & N & $1_{I\!I}^{-2}4_{3}^{-1}$ & $1$\\ \hline
                \end{tabular}}
                \end{table}
                
\begin{table}[H]
\centering
    \renewcommand{\arraystretch}{2}%
    \scriptsize{\vspace*{1cm}\begin{tabular}{ |c|c|c|c||l|c| }
				\hline
                $0$ & $\geq3$ odd & $4$& Y & $1_0^2 r_{-1}^1,1_{I\!I}^2r_{-1}^1$&$2$\\\hline
                $0$ & $\geq3$ odd & $4$& N & $1_4^{-2}r_{3}^{-1}$ &$1$\\\hline
                $0$ & $\geq3$ odd & $2$& Y & $1_{-2}^{-2}r_{-3}^{-1},1_{-2}^{2}r_1^1$&$2$\\\hline
                $0$ & $\geq3$ odd & $2$& N & $1_2^{2}r_1^1,1_2^{-2}r_{-3}^{-1}$&$2$\\\hline
                $0$ & $\geq3$ odd & $\frac{4}{3}$& Y & $1_{I\!I}^{-2}r_{3}^{-1}$&$1$\\\thickhline
                $0$ & $\geq3$ even & $4$& Y & $1_0^2 r_{-1}^1,1_{I\!I}^2r_{-1}^1,1_4^{-2}r_{3}^{-1}$&$3$\\\hline
                $0$ & $\geq3$ even & $2$& Y & $1_{-2}^2r_1^1,1_2^{-2}r_{-3}^{-1}$ & $2$ \\ \hline
                $0$ & $\geq3$ even & $2$& N & $1_2^2r_1^1,1_{-2}^{-2}r_{-3}^{-1},$&$2$\\ \hline
                $0$ & $\geq3$ even & $\frac{4}{3}$& N & $1_{I\!I}^{-2}r_{3}^{-1}$&$1$\\ \thickhline
                $1$ & $0$ & $8$ & Y& ${[1^12^2]}_{-1}$& $1$\\\hline
				$1$ & $0$ & $8$ & N& ${[1^12^2]}_{3}$& $1$\\\hline
				$1$ & $0$ & $\frac{16}{3}$ & Y& $1_{-1}^12_{I\!I}^{2}$& $1$\\\thickhline
				$1$ & $1$ & $8$& Y & ${[1^{1}2^14^1]}_{-1}$ & $1$\\\hline
				$1$ & $1$ & $8$& N & ${[1^{1}2^14^1]}_{3}$ & $1$\\\thickhline
				$1$ & $2$ & $4$& Y & ${[1^12^1]}_08_{-1}^1,{[1^12^1]}_{-2}8_1^1,$ &$2$\\\hline
                $1$ & $2$ & $4$& N & ${[1^{-1}2^{-1}]}_08_{-1}^1,{[1^12^1]}_28_1^1$ &$2$\\\thickhline
                $1$ & $\geq 3$ odd & $2$& Y & ${[1^12^1]}_0r_{-1}^1,{[1^12^{-1}]}_4r_{3}^{-1},{[1^{1}2^{-1}]}_{2}r_{-3}^{-1}$, & $4$ \\
                & & & &   ${[1^12^1]}_{-2}r_1^1$ &\\\hline 
				$1$ & $\geq 3$ odd & $2$& N & ${[1^{-1}2^{-1}]}_0r_{-1}^1,{[1^{-1}2^1]}_4r_{3}^{-1},{[1^{1}2^{-1}]}_{-2}r_{-3}^{-1}$,  & $4$\\
                & & & & ${[1^12^1]}_2r_1^1$ & \\ \thickhline
				$1$ & $\geq 3$ even & $2$& Y & ${[1^12^1]}_0r_{-1}^1,{[1^12^1]}_{-2}r_1^1,[1^12^{-1}]_4r_3^{-1}$, & $4$\\
                & & & & ${[1^12^{-1}]}_{-2}r_{-3}^{-1}$ & \\ \hline
				$1$ & $\geq 3$ even & $2$& N & ${[1^{-1}2^{-1}]}_0r_{-1}^1,{[1^12^1]}_2r_1^1,{[1^{-1}2^1]}_4r_3^{-1}$,  & $4$\\
                & & & & ${[1^12^{-1}]}_2r_{-3}^{-1}$ & \\ \thickhline 
                \end{tabular}}
                \end{table}

\begin{table}[H]
\centering
                \renewcommand{\arraystretch}{1.8}%
    \scriptsize{\begin{tabular}{ |c|c|c|c||l|c| }
				\hline
				$u$ & $v$ & $\frac{64m_2}{2^{u+v}}$ & $\iso$& Classes & Total $\#$ \\  \hhline{|=|=|=|=||=|=|}
                $2$ & $0$ & $8$& Y&  $1_{-1}^14_0^2$ & $1$\\\hline
                $2$ & $0$ & $4$& Y&  $1_1^14_{-2}^{2},1_{-1}^14_{I\!I}^2$ & $2$\\\hline
                $2$ & $0$ & $4$ & N&  $1_1^14_2^2$ & $1$\\\hline
                $2$ & $0$ & $\frac{4}{3}$ & N&  $1_{3}^{-1}4_{I\!I}^{-2}$ & $1$\\\thickhline
				$2$ & $1$ & $4$ & Y& $1_{-1}^1{[4^18^1]}_0,1_1^1{[4^18^1]}_{-2}$ & $2$\\\hline
                $2$ & $1$ & $4$ & N&
                $1_{-1}^1{[4^{-1}8^{-1}]}_0,1_1^1{[4^18^1]}_2$ & $2$ \\\thickhline
                $2$ & $2$ & $4$ & Y & $1_1^14_{-1}^116_{-1}^1,1_{-1}^14_1^1 16_{-1}^1,1_{-1}^14_{-1}^116_1^1$ & $3$\\\hline
                $2$ & $2$ & $4$ & N & $1_1^14_1^116_1^1$& $1$\\\thickhline
				$2$ & $\geq3$ odd & $2$& Y  & $1_{-1}^14_{-1}^1r_1^1,1_{-1}^14_1^1r_{-1}^1,1_{1}^14_{-1}^1r_{-1}^1,1_{-3}^{-1}4_1^1r_{-3}^{-1}$ & $4$\\\hline
                $2$ & $\geq3$ odd & $2$ & N & $1_1^14_1^1r_1^1,1_3^{-1}4_1^1r_3^{-1},1_3^{-1}4_{-1}^1r_{-3}^{-1},1_{-3}^{-1}4_{-1}^1r_3^{-1}$ & $4$\\\hline
				$2$ & $\geq3$ even & $2$& Y  & $1_{-1}^14_{-1}^1r_1^1,1_3^{-1}4_{-1}^1r_{-3}^{-1},1_{-1}^14_1^1r_{-1}^1,1_{1}^14_{-1}^1r_{-1}^1$, & $6$\\
				& & & & $1_3^{-1}4_1^1r_{3}^{-1},1_{-3}^{-1}4_{-1}^1r_{3}^{-1}$ &\\\hline
                $2$ & $\geq3$ even & $2$ & N & $1_1^14_1^1r_1^1,1_{-3}^{-1}4_1^1r_{-3}^{-1}$ & $2$\\\hline
  \end{tabular}}
  \end{table}

\begin{table}[htb!]
\centering
     \renewcommand{\arraystretch}{1.8}%
     	\scriptsize{\begin{tabular}{ |c|c|c|c||l|c| }
     \hline
				$ \underset{\text{odd}}{u\geq 3}$ & $v$ & $\frac{64m_2}{2^{u+v}}$ & $\iso$& Classes & Total $\#$ \\  \hhline{|=|=|=|=||=|=|}
				 & $0$ & $4$ & Y & $1_{-1}^1q_0^2,1_{-1}^1q_{I\!I}^2$ & $2$\\\hline
				 & $0$ & $4$ & N & $1_{3}^{-1}q_4^{-2}$ & $1$\\\hline
                 & $0$ & $2$ & Y & $1_1q_{-2}^{2},1_{-3}^{-1}q_{-2}^{-2},$ & $2$ \\\hline
                 & $0$ & $2$ & N & $1_1^1q_2^{2},1_{-3}^{-1}q_2^{-2}$ & $2$\\\hline
                 & $0$ & $\frac{4}{3}$ & Y & $1_{3}^{-1}q_{I\!I}^{-2}$ & $1$\\\thickhline
				 & $1$ & $2$ & Y & $1_{-1}^1{[q^1r^1]}_0,1_{3}^{-1}{[q^1r^{-1}]}_4,1_{-3}^{-1}{[q^{-1}r^1]}_{-2},1_1^1{[q^1r^1]}_{-2}$ & $4$\\\hline
                 & $1$ & $2$ & N & $1_{-1}^1{[q^{-1}r^{-1}]}_0,1_{3}^{-1}{[q^{-1}r^1]}_4,1_1^1{[q^1r^1]}_2,1_{-3}^{-1}{[q^{-1}r^1]}_2$ & $4$\\\thickhline
                 & $2$ & $2$ & Y & $1_{-1}^1q_{-1}^1r_1^1,1_{-1}^1q_1^1r_{-1}^1,1_1^1q_{-1}^1r_{-1}^1,1_{-3}^{-1}q_1^1r_{-3}^{-1}$ & $4$\\\hline
                 & $2$ & $2$ & N & $1_1^1q_1^1r_1^1,1_3^{-1}q_{-1}^1r_{-3}^{-1},1_3^{-1}q_1^1r_{3}^{-1},1_{-3}^{-1}q_{-1}^1r_{3}^{-1}$ & $4$ \\\thickhline
				 &$\underset{\text{odd}}{v\geq 3}$  & $1$& Y& $1_1^1q_{-3}^{-1}r_{-3}^{-1},1_1^1q_{-1}^1r_{-1}^1,1_3^{-1}q_1^1r_3^{-1},1_3^{-1}q_{-1}^1r_{-3}^{-1},$& $8$\\
                &  & & & $1_{-3}^{-1}q_{-3}^{-1}r_1^1,1_{-3}^{-1}q_{-1}^1r_{3}^{-1},1_{-1}^1q_{-1}^1r_1^1,1_{-1}^1q_1^1r_{-1}^1$ & \\\hline
                 &$\underset{\text{odd}}{v\geq 3}$  & $1$& N& $1_1^1q_1^1r_1^1,1_1^1q_3^{-1}r_3^{-1},1_{-1}^1q_3^{-1}r_{-3}^{-1},1_{-1}^1q_{-3}^{-1}r_3^{-1}$ & $8$\\
                & & & & $1_3^{-1}q_3^{-1}r_1^1,1_3^{-1}1_{-3}^{-1}r_{-1}^1,1_{-3}^{-1}q_1^1r_{-3}^{-1},1_{-3}^{-1}q_3^{-1}r_{-1}^1$ & \\\hline
				 &$\underset{\text{even}}{v\geq 3}$  & $1$& Y& $1_1^1q_{3}^{-1}r_{3}^{-1},1_1^1q_{-1}^1r_{-1}^1,1_{-3}^{-1}q_{-3}^{-1}r_1^1,1_{-3}^{-1}q_1^1r_{-3}^{-1},$ & $8$\\
                & & & & $1_{-1}^1q_{-1}^1r_1^1,1_{-1}^1q_{-3}^{-1}r_3^{-1},1_{-1}^1q_{3}^{-1}r_{-3}^{-1},1_{-1}^1q_1^1r_{-1}^1$ & \\\hline
				 &$\underset{\text{even}}{v\geq 3}$  & $1$& N& $1_1^1q_1^1r_1^1,1_3^{-1}q_{-3}^{-1}r_{-1}^{1},1_1^1q_{-3}^{-1}r_{-3}^{-1},1_3^{-1}q_1^1r_3^{-1},$ & $8$\\
                & & & & $1_3^{-1}q_{-1}^1r_{-3}^{-1},1_3^{-1}q_3^{-1}r_1^1,1_{-3}^{-1}q_{-1}^1r_3^{-1},1_{-3}^{-1}q_{3}^{-1}r_{-1}^1$& \\\hline
                	\end{tabular}} \end{table}

\begin{table}[htb!]
\centering
  
                     \renewcommand{\arraystretch}{2}%
                		\scriptsize{\begin{tabular}{ |c|c|c|c||l|c| }
                			\hline
                			$ \underset{\text{even}}{u\geq 3}$ & $v$ & $\frac{64m_2}{2^{u+v}}$ & $\iso$& Classes & Total $\#$ \\  \hhline{|=|=|=|=||=|=|}
				 & $0$ & $4$ & Y & $1_{-1}^1q_0^2,1_{3}^{-1}q_4^{-2},1_{-1}^1q_{I\!I}^2$ & $3$\\\hline
                 & $0$ & $2$ & Y & $1_1^1q_{-2}^2,1_{-3}^{-1}q_{2}^{-2}$ & $2$ \\\hline
                & $0$ & $2$ & N & $1_1^1q_2^2,1_{-3}^{-1}q_{-2}^{-2}$ & $2$\\\hline
                & $0$ & $\frac{4}{3}$ & N & $1_{3}^{-1}q_{I\!I}^{-2}$ & $1$\\\thickhline
                 & $1$ & $2$ & Y & $1_{-1}^1{[q^1r^1]}_0,1_{3}^{-1}{[q^{-1}r^1]}_4,1_1^1{[q^1r^1]}_{-2},1_{-3}^{-1}{[q^{-1}r^1]}_2$ & $4$\\\hline
                 & $1$ & $2$ & N & $1_{-1}^1{[q^{-1}r^{-1}]}_0,1_{3}^{-1}{[q^1r^{-1}]}_4,1_1^1{[q^{1}r^1]}_2,1_{-3}^{-1}{[q^{-1}r^1]}_{-2}$ & $4$\\\thickhline
                 & $2$ & $2$ & Y & $1_{-1}^1q_{-1}^1r_1^1,1_3^{-1}q_{-1}^1r_{-3}^{-1},1_{-1}^1q_1^1r_{-1}^1,$ & $6$\\
                &  & &  & $1_1^1q_{-1}^1r_{-1}^1,1_{3}^{-1}q_1^1r_{3}^{-1},1_{-3}^{-1}q_{-1}^1r_{3}^{-1}$ & \\\hline
                 & $2$ & $2$ & N & $1_1^1q_1^1r_1^1,1_{-3}^{-1}q_1^1r_{-3}^{-1}$ & $2$\\\thickhline
				 &$ \underset{\text{odd}}{v\geq 3}$ & $1$& Y& $1_1^1q_{-3}^{-1}r_{-3}^{-1},1_1^1q_{-1}^1r_{-1}^1,1_3^{-1}q_3^{-1}r_1^1,1_3^{-1}q_{-3}^{-1}r_{-1}^1,$ & $8$\\
                & & & & $1_{-3}^{-1}q_1^1r_{-3}^{-1}, 1_{-3}^{-1}q_3^{-1}r_{-1}^1,1_{-1}^1q_{-1}^1r_1^1,1_{-1}^1q_1^1r_{-1}^1$ &\\\hline
                 &$ \underset{\text{odd}}{v\geq 3}$ & $1$& N& $1_1^1q_1^1r_1^1,1_1^1q_3^{-1}r_3^{-1},1_{-1}^1q_3^{-1}r_{-3}^{-1},1_{-1}^1q_{-3}^{-1}r_3^{-1}$ & $8$\\
                &  & & & $1_3^{-1}q_1^1r_{3}^{-1},1_3^{-1}q_{-1}^1r_{-3}^{-1},1_{-3}^{-1}q_{-1}^1r_3^{-1},1_{-3}^{-1}q_{-3}^{-1}r_1^1$&\\
                \hline
				 &$ \underset{\text{even}}{v\geq 3}$ & $1$& Y& $1_1^1q_3^{-1}r_3^{-1},1_1^1q_{-1}^1r_{-1}^1,1_3^{-1}q_3^{-1}r_1^1,1_3^{-1}q_1^1r_3^{-1}$, & $12$\\
                &  & & & $1_3^{-1}q_{-1}^1r_{-3}^{-1},1_3^{-1}q_{-3}^{-1}r_{-1}^1, 1_{-3}^{-1}q_{-1}^1r_3^{-1},1_{-3}^{-1}q_3^{-1}r_{-1}^1$, &\\
                & & & & $1_{-1}^1q_{-1}^1r_1^1,1_{-1}^1q_{-3}^{-1}r_3^{-1},1_{-1}^1q_3^{-1}r_{-3}^{-1},1_{-1}^1q_1^1r_{-1}^1$ &\\\hline
                 &$ \underset{\text{even}}{v\geq 3}$ & $1$& N& $1_1^1q_1^1r_1^1,1_1^1q_{-3}^{-1}r_{-3}^{-1},1_{-3}^{-1}q_1^1r_{-3}^{-1},1_{-3}^{-1}q_{-3}^1r_1^1$ & $4$ \\\hline
	\end{tabular}} \end{table}

\newpage
\section{Frequently used notations}\label{notations}

\begin{table}[H]
\renewcommand{\arraystretch}{1.2}%
\centering
\small{\begin{tabular}{|l| l |l|}
\hline
Notation & Definition & 1st occ. \\
\hline

$P_d$
& Product of distinct odd primes $p$ such that $p^2\mid d$
& Def.~2.2 \\

$r(d)$
& Highly ramified part of $d$
& Def.~4.1 \\

$s(d)$
& Odd squarefree part of $d$, so that $d=r(d)s(d)$
& Def.~4.1 \\

\hline

$\Packet$
& Set of packets
& Def.~4.8 \\

$\Packet_H$
& Packets above $H$
& Def.~4.12 \\

$\Supp(H)$
& Support of a packet $H$
& Def.~4.8 \\

$F|_S$
& $S$-restriction of class, genus, or packet $F$
& Def.~4.8 \\

$D(H)$
& Determinant of a packet $H$
& Def.~4.9 \\

$F_{\rt}$
& Root packet of $F$
& Def.~4.11 \\

$\mathfrak R$
& Set of root packets
& Def.~4.11 \\

\hline
\end{tabular} }
\end{table}

\begin{table}[H]
\renewcommand{\arraystretch}{1.4}%
\centering
\small{\begin{tabular}{|l| l |l|}
\hline
Notation & Definition & 1st occ. \\
\hline
$\Gen$
& Set of genera
& Def.\,\ref{pre-def} \\

$\Gen_H$
& Genera above $H$
& Def.\,4.12 \\

$\Gen_H(d)$
& Genera of determinant $d$ above $H$
& Def.\,4.12 \\

$g_H(d)$
& Cardinality $|\Gen_H(d)|$
& Def.\,4.12 \\

$h(G)$
& Number of classes in genus $G$
& Def. \ref{pre-def} \\

\hline

$\Cl$
& Set of classes
& \S1, Def.\,\ref{pre-def} \\

$\Cl_H$
& Classes above $H$
& Def.\,4.12 \\

$\Cl_H(d);\  h_H(d)$
& Classes above $H$ with determinant $d$; cardinality 
& Def.\,4.12 \\

$\Cl_H^{(1)}(d);\  h_H^{(1)}(d)$
& $C\in\Cl_H(d)$ with 1 class in genus; cardinality
& Def.\,5.1 \\

$\Cl_H^{(2+)}(d);\ h_H^{(2+)}(d)$
& $C\in\Cl_H(d)$ with $>1$ class in genus; cardinality
& Def.\,5.1 \\

$i_H(d)$
& Common number of classes in a genus in $\Gen_H(d)$
& Lem.\,5.2 \\

\hline

$\mathcal R_p(F)$
& Integers represented by $F$ over $\mathbb Z_p$
& Def.\,6.1 \\

$\mathcal R(F)$
& Integers represented by $F$ over $\mathbb Z_p$ for all $p$
& Def.\,6.1 \\

$\mathcal R^{(\mathrm{abs})}(F)$
& Set $\{|t|:t\in \mathcal R(F)\}$
& Def.\,6.1 \\

$K(F)$
& Least element of $\mathcal R^{(\mathrm{abs})}(F)$
& Def.\,6.1 \\

$K^\ast(F)$
& Least nonzero element of $\mathcal R^{(\mathrm{abs})}(F)$
& Def.\,6.1 \\

$\kappa(F)$
& Least absolute value of integers  $F$ represents
& (1.0.5) \\

$\kappa^*(F)$
& Least absolute value of $\neq0$ integers  $F$ represents
& Def.\,6.1 \\

$\lambda_H(d,t)$
& Number of classes $C\in \Cl_H(d)$ with $\kappa(C)=t$
& Def.\,6.7 \\

$\lambda_H^{(1)}(d,t)$
& Number of classes $C\in \Cl_H^{(1)}(d)$ with $\kappa(C)=t$
& Def.\,6.7 \\

$\lambda_H^{(2+)}(d,t)$
& Number of classes $C\in \Cl_H^{(2+)}(d)$ with $\kappa(C)=t$
& Def.\,6.7 \\

$\lambda_H^g(d,t)$
& Number of genera $G\in \Gen_H(d)$ with $K(G)=t$
& Def.\,6.7 \\

\hline

$\rho(C)$
& Siegel class invariant attached to a class $C$
& (1.0.8) \\

$\nu(G)$
& Siegel mass of a genus $G$
& Thm.\,\ref{massform} \\

$\nu^{\mathrm{iso}}(d)$
& Total Siegel mass of isotropic classes with Det $=d$
& (9.0.1) \\

$\widetilde{\nu}^{\mathrm{iso}}(d)$
& Normalized isotropic mass $\nu^{\mathrm{iso}}(d)/\nu^{\mathrm{iso}}(1)$
& (9.0.1) \\

$\delta_p(A)$
& Siegel $p$-density of $A$
& Def.\,9.3 \\

$m_p(A)$
& Local $p$-mass of $A$
& Def.\,9.3 \\

$I(s)$
& Dirichlet series associated with $d\,\widetilde{\nu}^{\mathrm{iso}}(d)$
& Sec.\,10.1 \\

$d\lambda^{\mathrm{iso}}$
& Local density measure for primitive isotropic forms
& (10.2.1) \\

\hline
\end{tabular} }
\end{table}

\end{appendices}

\footnotesize
\setlength{\parskip=0.0pt}
\setlength{\lineskip=0.0pt} 


\printbibliography

\end{document}